\numberwithin{equation}{section}
\newtheorem{theorem}{Theorem}[section]
\newtheorem{lemma}[theorem]{Lemma}
\newtheorem{corollary}[theorem]{Corollary}
\newtheorem{definition}[theorem]{Definition}
\newtheorem{remark}[theorem]{Remark}        
\newtheorem{assumption}[theorem]{Assumption}  
\numberwithin{equation}{section}
\def\aa{\mathcal{A}}
\newcommand{\lamot}{\La_0,\La_1}
\newcommand{\vwspace}{L^{\pp-\be}(-T,T; W_0^{1,\pp-\be}(\Om))}
\newcommand{\pa}{\partial}
\newcommand{\ddt}[1]{\frac{d#1}{dt}}
\newcommand{\dds}[1]{\frac{d#1}{ds}}
\newcommand{\vlh}{\lsbt{v}{\la,h}}
\newcommand{\vl}{\lsbt{v}{\la}}
\newcommand{\elam}{\lsbo{E}{\lambda}}
\newcommand{\bM}{{\bf M}}
\newcommand{\mm}{\mathcal{M}}
\newcommand{\pp}{p(\cdot)}
\newcommand{\sss}{s(\cdot)}
\newcommand{\modp}{\om_{\pp}}
\newcommand{\plog}{p^{\pm}_{\log}}
\newcommand{\slog}{s^{\pm}_{\log}}
\newcommand{\logh}{\log^{\pm}}
\newcommand{\tQ}{\tilde{Q}}
\newcommand{\tx}{\tilde{x}}
\newcommand{\tlt}{\tilde{t}}
\newcommand{\tz}{\tilde{z}}
\newcommand{\hz}{\hat{z}}
\newcommand{\lz}{\overline{z}}
\newcommand{\vo}{\vec{o}\@ifnextchar{^}{\,}{}}
\def\Yint#1{\mathchoice
    {\YYint\displaystyle\textstyle{#1}}%
    {\YYint\textstyle\scriptstyle{#1}}%
    {\YYint\scriptstyle\scriptscriptstyle{#1}}%
    {\YYint\scriptscriptstyle\scriptscriptstyle{#1}}%
      \!\iint}
\def\YYint#1#2#3{{\setbox0=\hbox{$#1{#2#3}{\iint}$}
    \vcenter{\hbox{$#2#3$}}\kern-.50\wd0}}
\def\longdash{-\mkern-9.5mu-} 
\def\tiltlongdash{\rotatebox[origin=c]{18}{$\longdash$}}
\def\fiint{\Yint\tiltlongdash}
\def\Xint#1{\mathchoice
    {\XXint\displaystyle\textstyle{#1}}%
    {\XXint\textstyle\scriptstyle{#1}}%
    {\XXint\scriptstyle\scriptscriptstyle{#1}}%
    {\XXint\scriptscriptstyle\scriptscriptstyle{#1}}%
      \!\int}
\def\XXint#1#2#3{{\setbox0=\hbox{$#1{#2#3}{\int}$}
    \vcenter{\hbox{$#2#3$}}\kern-.50\wd0}}
\def\hlongdash{-\mkern-13.5mu-}
\def\tilthlongdash{\rotatebox[origin=c]{18}{$\hlongdash$}}
\def\hint{\Xint\tilthlongdash}
\def\namedlabel#1#2{\begingroup
   \def\@currentlabel{#2}%
   \label{#1}\endgroup
}
\newcommand{\rmh}[1]{\mathpalette{\raisem@th{#1}}}
\newcommand{\raisem@th}[3]{\hspace*{-1pt}\raisebox{#1}{$#2#3$}}
\newcommand{\lsb}[2]{#1_{\rmh{-3pt}{#2}}}
\newcommand{\lsbo}[2]{#1_{\rmh{-1pt}{#2}}}
\newcommand{\lsbt}[2]{#1_{\rmh{-2pt}{#2}}}
\newcommand{\redref}[2]{\texorpdfstring{\protect\hyperlink{#1}{\textcolor{black}{(}\textcolor{red}{#2}\textcolor{black}{)}}}{}}
\newcommand{\redlabel}[2]{\hypertarget{#1}{\textcolor{black}{(}\textcolor{red}{#2}\textcolor{black}{)}}}
\newcounter{desccount}
\newcommand{\descitem}[2]{\item[#1] \refstepcounter{desccount}\label{#2}}
\newcommand{\descref}[2]{\hyperref[#1]{\textcolor{black}{(}\textcolor{blue}{\bf #2}\textcolor{black}{)}}}
\newcommand{\dref}[2]{\hyperref[#1]{\textcolor{black}{(}\textcolor{blue}{\bf #2}\textcolor{black}{)}}}
\newcommand{\tuh}{\tilde{u}_h}
\newcommand{\tp}{\tilde{p}}
\newcommand{\tq}{\tilde{q}}
\newcommand{\tu}{\tilde{u}}
\newcommand{\mfz}{\mathfrak{z}}
\newcommand{\mft}{\mathfrak{t}}
\newcommand{\mfi}{\mathfrak{X}}
\newcommand{\mfq}{\mathfrak{Q}}
\newcommand{\mcc}{\mathcal{C}}
\newcommand{\mcf}{\mathcal{F}}
\newcommand{\mcg}{\mathcal{G}}
\newcommand{\mch}{\mathcal{H}}
\newcommand{\mci}{\mathcal{I}}
\newcommand{\mbfq}{{\bf \mathfrak{Q}}}
\newcommand\RR{\mathbb{R}}
\newcommand\NN{\mathbb{N}}
\newcommand{\al}{\alpha}
\newcommand{\be}{\beta}
\newcommand{\ga}{\gamma}
\newcommand{\de}{\delta}
\newcommand{\ve}{\varepsilon}
\newcommand{\tht}{\theta}
\newcommand{\sig}{\sigma}
\newcommand{\om}{\omega}
\newcommand{\la}{\lambda}
\newcommand{\vt}{\vartheta}
\newcommand{\Th}{\Theta}
\newcommand{\Om}{\Omega}
\newcommand{\La}{\Lambda}
\newcommand{\tTh}{{\Upsilon}}
\DeclareMathOperator{\dv}{div}
\DeclareMathOperator{\spt}{spt}
\DeclareMathOperator{\diam}{diam}
\newcommand{\iprod}[2]{\langle #1 \ ,  #2\rangle}
\newcommand{\abs}[1]{\left| #1\right|}
\newcommand{\mgh}[1]{\left\{ #1\right\}}
\newcommand{\lbr}[1][(]{\left#1}
\newcommand{\rbr}[1][)]{\right#1}
\newcommand{\avgs}[2]{\lsbo{\lbr #1 \rbr}{#2}}
\newcommand{\txt}[1]{\qquad \text{#1} \qquad}
\newcommand{\qnot}{Q_{\rho}^{\al_0}(z_0)}
\newcommand{\qone}{Q_{\rho_a}^{\al_0}(z_0)}
\newcommand{\qthr}{Q_{\rho_a+\frac23(\rho_b - \rho_a)}^{\al_0}(z_0)}
\newcommand{\qfur}{Q_{\rho_b}^{\al_0}(z_0)}
\newcommand{\qfurr}{Q_{2\rho_b}^{\al_0}(z_0)}
\newcommand{\qtoo}{Q_{2\rho}^{\al_0}(z_0)}
\newcommand{\qfor}{Q_{4\rho}^{\al_0}(z_0)}
\newcommand{\qfiv}{Q_{8\rho}^{\al_0}(z_0)}
\newcommand{\qfve}{Q_{16\rho}^{\al_0}(z_0)}
\newcommand{\qzero}{Q_{\rho_{z_0}}^{\al_0}(z_0)}
\newcommand{\qzeros}{Q_{2\rho_{z_0}}^{\al_0}(z_0)}
\newcommand{\qzerof}{Q_{4\rho_{z_0}}^{\al_0}(z_0)}
\newcommand{\qzerofi}{Q_{4\rho_{z_i}}^{\al_0}(z_i)}
\newcommand{\qzeross}{Q_{4\mfi\rho_{z_0}}^{\al_0}(z_0)}
\newcommand{\qzerossi}{Q_{4\mfi\rho_{z_i}}^{\al_0}(z_i)}
\newcommand{\bnot}{B_{\rho}^{\al_0}(x_0)}
\newcommand{\btoo}{B_{2\rho}^{\al_0}(x_0)}
\newcommand{\bone}{B_{\rho_a}^{\al_0}(x_0)}
\newcommand{\bfur}{B_{\rho_b}^{\al_0}(x_0)}
\newcommand{\bfve}{B_{16\rho}^{\al_0}(x_0)}
\newcommand{\itoo}{I_{2\rho}^{\al_0}(t_0)}
\newcommand{\ione}{I_{\rho_a}^{\al_0}(t_0)}
\newcommand{\qthrs}{Q_{\rho_2}^{\al_0}(z_0)}
\newcommand{\bthrs}{B_{\rho_2}^{\al_0}(x_0)}
\newcommand{\itwos}{I_{\rho_1}^{\al_0}(t_0)}
\newcommand{\musym}{\lbr \frac{16\rho}{\rho_b-\rho_a} \rbr}
\newcommand{\htq}{\hat{Q}}
\newcommand{\lnq}{\overline{Q}}
\newcommand{\htb}{\hat{B}}
\newcommand{\lnb}{\overline{B}}
\newcommand{\hti}{\hat{I}}
\newcommand{\lni}{\overline{I}}
\newcommand{\htx}{\hat{x}}
\newcommand{\lnx}{\overline{x}}
\newcommand{\httm}{\hat{t}}
\newcommand{\lntm}{\overline{t}}
\newcommand{\htr}{\hat{r}}
\newcommand{\rzb}{\rho_{\overline{z}}}
\newcommand{\rzh}{\rho_{\hat{z}}}
\newcommand{\scaletexp}[2]{-1+d}
\newcommand{\scalexexpn}[2]{-\frac{n}{p(#2)}+\frac{nd}{2}}
\newcommand{\scalex}[2]{#1^{-\frac{1}{p(#2)}+\frac{d}{2}}}
\newcommand{\scalet}[2]{#1^{-1+d}}
\newcommand{\scalexn}[2]{#1^{-\frac{n}{p(#2)}+\frac{nd}{2}}}
\newcommand{\nscalex}[2]{#1^{\frac{1}{p(#2)}-\frac{d}{2}}}
\newcommand{\nscalet}[2]{#1^{1-d}}
\newcounter{whitney}
\newcounter{ineqcounter}
\g@addto@macro\normalsize{%
  \setlength\abovedisplayskip{3pt}
  \setlength\belowdisplayskip{3pt}
  \setlength\abovedisplayshortskip{1pt}
  \setlength\belowdisplayshortskip{3pt}
}
\def\ps@pprintTitle{%
 \let\@oddhead\@empty
 \let\@evenhead\@empty
 \def\@oddfoot{}%
 \let\@evenfoot\@oddfoot}
\begin{document}

\begin{frontmatter}

\title{Interior and boundary higher integrability of very weak solutions for quasilinear parabolic equations with variable exponents.}

\author[myaddress]{Karthik Adimurthi\corref{mycorrespondingauthor}\tnoteref{thanksfirstauthor}}
\cortext[mycorrespondingauthor]{Corresponding author}
\ead{karthikaditi@gmail.com and kadimurthi@snu.ac.kr}
\tnotetext[thanksfirstauthor]{Supported by the National Research Foundation of Korea grant NRF-2015R1A4A1041675.}

\author[myaddress,myaddresstwo]{Sun-Sig Byun\tnoteref{thankssecondauthor}}
\ead{byun@snu.ac.kr}
\tnotetext[thankssecondauthor]{Supported by the National Research Foundation of Korea grant  NRF-2017R1A2B2003877.}

\author[myaddress]{Jehan Oh\tnoteref{thanksthirdauthor}}
\ead{ojhan0306@snu.ac.kr}

\address[myaddress]{Department of Mathematical Sciences, Seoul National University, Seoul 08826, Korea.}
\address[myaddresstwo]{Research Institute of Mathematics, Seoul National University, Seoul 08826, Korea.}

\begin{abstract}
We prove boundary higher integrability for the (spatial) gradient of \emph{very weak} solutions of quasilinear parabolic equations of the form 
\[
\left\{
 \begin{array}{ll}
u_t - \dv  \aa(x,t,\nabla u) = 0 &\quad  \text{on} \ \Om \times (-T,T), \\
u = 0 &\quad  \text{on} \ \partial \Om \times (-T,T),
 \end{array}
\right.
\]
 where the non-linear structure $\mathcal{A}(x, t,\nabla u)$ is modelled after the variable exponent $p(x,t)$-Laplace operator given by $|\nabla u|^{p(x,t)-2} \nabla u$.   To this end, we prove that the gradients satisfy a reverse H\"older inequality near the boundary by constructing a suitable test function which is Lipschitz continuous and  preserves the boundary values. In the interior case, such a result was proved in \cite{bogelein2014very} provided $p(x,t) \geq \mathfrak{p}^- \geq 2$ holds  and was then extended to the singular case  $\frac{2n}{n+2}< \mathfrak{p}^-\leq p(x,t)\leq \mathfrak{p}^+ \leq 2$  in \cite{li2017very}. This restriction was necessary because the intrinsic scalings for quasilinear parabolic problems are different in the case $\mathfrak{p}^+ \leq 2$ and $\mathfrak{p}^-\geq 2$. 

In this paper, we develop a new unified intrinsic scaling, using which, we are able to extend the results of \cite{bogelein2014very,li2017very} to the full range $\frac{2n}{n+2} < \mathfrak{p}^- \leq  p(x,t)\leq \mathfrak{p}^+<\infty$ and also obtain analogous results upto the boundary. \emph{The main novelty of this paper is that our methods are able to handle both the singular case and degenerate case simultaneously.} To simplify the exposition, we will only prove the higher integrability result near the boundary, provided the domain $\Om$ satisfies a uniform measure density condition. Our techniques are also applicable to higher order equations as well as systems.

\end{abstract}

\begin{keyword}
Quasilinear parabolic equations \sep Unified intrinsic scaling \sep Boundary higher integrability \sep Very weak solutions \sep $p(x,t)$-Laplacian\sep Variable exponent spaces.
\MSC[2010] 35K10\sep  35K92\sep 46E30\sep 46E35.
\end{keyword}

\end{frontmatter}

\tableofcontents

\section{Introduction}

In this paper, we consider the boundary regularity  of quasilinear parabolic equations of the form 
\begin{equation}
\label{main}
 \left\{
 \begin{array}{ll}
u_t - \dv  \aa(x,t,\nabla u) = 0 &\quad \text{on} \ \Om \times (-T,T), \\
u = 0 & \quad \text{on} \ \partial \Om \times (-T,T),
 \end{array}
\right.
\end{equation}
where the non-linear structure $\mathcal{A}(x, t,\nabla u)$ is modelled after the variable exponent $p(x,t)$-Laplace operator given by $|\nabla u|^{p(x,t)-2} \nabla u$ and the domain $\Om$ could potentially have non-smooth boundary (see Subsection \ref{domain_structure} and Subsection \ref{operator_structure} for the precise assumptions).

\emph{Weak solution} $u$ of \eqref{main} is in the space $ L^2(-T,T; L^2(\Om)) \cap L^{\pp}(-T,T; W_0^{1,\pp}(\Om))$ which allows one to use $u$ as a test function. But from the definition of weak solution, we see that the expression (see Definition \ref{very_weak_solution}) makes sense if we only assume $u \in L^2(-T,T; L^2(\Om)) \cap L^{s(\cdot)}(-T,T; W_0^{1,s(\cdot)}(\Om))$ for some $s(\cdot) > \pp-1$. But under this milder notion of solution called \emph{very weak solution}, we lose the ability to use $u$ as a test function.

In the constant exponent case $\pp \equiv p$, a suitable test function in the interior case was constructed in \cite{KL}  by  modifying $u$ on a \emph{bad} set and  partial interior higher integrability results were obtained in \cite{KL1,KL}. By suitably adapting the techniques from \cite{KL1}, in \cite{Par,Mik1}, the boundary higher integrability for \emph{weak} solutions were proved for domains satisfying a uniform thickness condition measured with respect to a suitable capacity. Under the same boundary regularity assumption from \cite{Mik1,Par}, in \cite{AB2}, the authors were able to extend the techniques of \cite{KL} to prove boundary higher integrability of \emph{very weak solutions}.

Unlike the constant exponent case, there is a genuine difficulty when trying to prove the higher integrability for gradients of \emph{very weak solutions} solving \eqref{main}. In the constant exponent case, the standard idea is to consider intrinsically scaled cylinders of the form
\begin{equation*}
Q_{R, \la^{2-p}R^2} \  \text{in the case}\  p \geq 2 \txt{or}
Q_{\la^{\frac{p-2}{2}}R, R^2} \  \text{in the case}\  p \leq 2,
\end{equation*}
for some $\la >0$ and radius $R>0$. This approach was used in \cite{bogelein2014very} where they considered cylinders of the form 
\begin{equation}
\label{shrink_one}
Q_{R, \la^{\frac{2-p(x,t)}{p(x,t)}}R^2}(x,t) \qquad   \text{in the case}\  2\leq \mathfrak{p}^- \leq \pp \leq \mathfrak{p}^+ < \infty,
\end{equation}
and in \cite{li2017very}, they considered cylinders of the form 
\begin{equation}
\label{shrink_two}
Q_{\la^{\frac{p(x,t)-2}{2p(x,t)}}R, R^2}(x,t) \qquad   \text{in the case}\  \frac{2n}{n+2} < \mathfrak{p}^- \leq \pp \leq \mathfrak{p}^+ \leq 2.
\end{equation}

One of the reason that there is a difference in the intrinsic scaling is because, the intrinsic cylinders in \eqref{shrink_one} and \eqref{shrink_two} shrink as $\la \nearrow \infty$.
If we were to consider the situation  where $\pp$ is allowed to cross the exponent $2$, i.e., suppose $\pp\leq 2$ in a subregion and $\pp \geq 2$ in the remaining part, then the techniques of \cite{bogelein2014very,li2017very} fail and thus, we are forced to come up with a different approach which should be capable of handling both the singular case and the degenerate case simultaneously. 
The main contribution of this paper is to  overcome the aforementioned difficulty and develop a \emph{unified intrinsic scaling} using which, we are able to prove \emph{interior and boundary} higher integrablity for \emph{very weak} solutions of \eqref{main} in the full range $\frac{2n}{n+2} < \mathfrak{p}^- \leq \pp \leq \mathfrak{p}^+ < \infty$.  \emph{As far as we know, this is the first technique that  can deal with both the singular case and degenerate case simultaneously.}

In our approach, we consider intrinsic cylinders of the form $Q_{\scalex{\la}{x,t}R,\scalet{\la}{z} R^2}(x,t)$ for a suitably chosen constant $d < \min \left\{ 1, \frac{2}{\mathfrak{p}^+} \right\}$.  It is easy to observe that by the restriction on the exponent $d$, the intrinsic cylinders $Q_{\scalex{\la}{x,t}R,\scalet{\la}{x,t} R^2}(x,t)$ shrink in both space and time simultaneously as $\la \nearrow \infty$. Two very important results that we need to prove are \emph{Vitaly-type}  and \emph{Whitney-type} covering lemmas over coverings based on these new intrinsically scaled cylinders (see Lemma \ref{lemma_vitali} and Lemma \ref{whitney_covering}). Once we have these covering lemmas, we can follow the strategy developed in \cite{KL,AB2} combined with the localization techniques developed in \cite{bogelein2014very,li2017very} and prove the desired boundary higher integrability result for \emph{very weak solutions}. {The localization techniques was first developed to prove higher integrability for \emph{weak solutions} in \cite{acerbi2004regularity} and suitably adapted to the setting of \emph{very weak solutions} in \cite{bogelein2014very,li2017very}.} The interior higher integrability estimates follow in an analogous way.

The plan of the paper is as follows: In Section \ref{section_preliminaries}, we describe all the structural assumptions and notations, in Section \ref{main_theorem}, we will state the main theorem that will be proved, in Section \ref{section_two_one}, we shall collect several useful lemmas,  in Section \ref{section_three}, we prove the required covering lemmas, in Section \ref{section_four}, we construct the Lipschitz function and prove its necessary properties, in Section \ref{section_five}, we use the previously obtained results to prove a Caccioppoli-type inequality, in Section \ref{section_six}, we prove a reverse H\"older type inequality and finally in Section \ref{section_seven}, we prove the main theorem.
 
 \subsection*{Acknowledgement} 

 The first author would like to thank Verena B\"ogelein, Ugo Gianazza, Juha Kinnunen and Qifan Li  for many helpful discussions. 

The first and second author would  like to thank the organisers of the conference \emph{Recent developments in Nonlinear Partial Differential Equations and Applications - NPDE2017} held at TIFR-CAM, Bangalore where part of this work was done.

 \section{Preliminaries}
 \label{section_preliminaries}
 In this section, we shall collect all the structure assumptions as well as recall several useful lemmas that are already available in existing literature. 
 
 \subsection{Parabolic metrics}
 Let us first collect a few metric's on $\RR^{n+1}$ that will be used throughout the paper:
 \begin{definition}
 \label{parabolic_metric}
 We define the parabolic metric $d_p$ on $\RR^{n+1}$ as follows: Let $z_1 = (x_1,t_1)$ and $z_2 = (x_2,t_2)$ be any two points on $\RR^{n+1}$, then 
 \begin{equation*}
 \label{par_met}
 d_p(z_1,z_2) := \max \mgh{|x_1-x_2|, \sqrt{|t_1-t_2|}}.
 \end{equation*}
 \end{definition}

 Since we  use intrinsically scaled cylinders where the scaling depends on the center of the cylinder, we will also need to consider the following localized parabolic metric:
 \begin{definition}
 \label{loc_parabolic_metric}
 Given any $z= (x,t) \in \RR^{n+1}$ and any $\mu > 0$, $d > 0$, we define the localized parabolic metric $d_z^{\mu,d}$ as follows: Let $z_1 = (x_1,t_1)$ and $z_2 = (x_2,t_2)$ be any two points on $\RR^{n+1}$, then 
 \begin{equation}
 \label{loc_par_met}
 d_z^{\mu,d}(z_1,z_2) := \max \mgh{\nscalex{\mu}{z}|x_1-x_2|, \sqrt{\nscalet{\mu}{z}|t_1-t_2|}}.
 \end{equation}
 \end{definition}

 \subsection{Structure of the domain}
 \label{domain_structure}
 Let us now introduce the assumption satisfied by the domain $\Om$.
 \begin{assumption}
 \label{uniform_measure}
 Let ${\Om}\subset \RR^n$ be a bounded domain which  satisfies a uniform measure density condition with constant $m_{\ve}>0$, i.e.,  for any $y \in \pa {\Om}$ and any $r > 0$, the following holds:
 \begin{equation*}
 |{\Om}^c \cap B_r(y) | \geq m_{\ve} |B_r(y)|.
 \end{equation*}
 \end{assumption}

 \begin{remark}
 In the constant exponent case, the domain $\Om^c$ was assumed to be uniformly thick where the thickness was measured with respect to a suitable capacity (see \cite{AB2}). Analogous to that theory, we can weaken Assumption \ref{uniform_measure} and instead assume that $\Om^c$ is also uniformly thick with the thickness measured with respect to a suitable capacity. While this improvement is important, we will refrain from pursuing that here and instead use a weaker condition as given in Assumption \ref{uniform_measure}. This will enable us to simplify many of the calculations while still retaining all the complexities of the problem.
 \end{remark}

 \subsection{Structure of the operator}
 \label{operator_structure}
 We shall now describe the assumptions on the nonlinear structure in \eqref{main}. We assume $\aa(x,t,\nabla u)$ is a Carath\'eodory function, i.e., we have $(x,t) \mapsto \aa(x,t,\zeta)$  is measurable for every $\zeta \in \RR^n$ and 
$\zeta \mapsto \aa(x,t,\zeta)$ is continuous for almost every  $(x,t) \in \Om \times (-T,T)$.

We further assume that for a.e. $(x,t) \in \Om \times (-T,T)$ and for any $\zeta \in \RR^n$, there exist two positive constants $\lamot$, such that  the following bounds are satisfied   by the nonlinear structure:
\begin{gather}
 \iprod{\aa(x,t,\zeta)}{\zeta} \geq \La_0 |\zeta |^{p(x,t)}  - h_1  \txt{and} |\aa(x,t,\zeta)| \leq \La_1  \lbr 1+ |\zeta|\rbr^{p(x,t)-1}, \label{abounded}
\end{gather}
where, $h_1\in \RR$ is a fixed constant.
 \emph{As in the constant exponent case (see \cite{KL,AB2}),  we do not make any  assumptions regarding the smoothness of $\aa(x,t,\zeta)$ with respect to $x,t,\zeta$}.

 \subsection{Structure of the variable exponent}

\begin{definition}
\label{definition_p_log}
 We say that a bounded measurable function $\pp : \RR^{n+1} \rightarrow \RR$  belongs to the $\log$-H\"older class $\logh$, if the following conditions are satisfied:
 \begin{itemize}
  \item There exist constants $\mathfrak{p}^-$ and $\mathfrak{p}^+$ such that $1< \mathfrak{p}^- \leq p(z) \leq \mathfrak{p}^+ < \infty$ for every $z \in \RR^{n+1}$.
  \item $ |p(z_1)  - p(z_2)| \leq \frac{L}{- \log |z_1-z_2|}$ holds  for every $ z_1,z_2 \in \RR^{n+1}$ with $ d_p(z_1,z_2) \leq \frac12 $ and for some  $L>0$.
 \end{itemize}
 
\end{definition}

\begin{remark}\label{remark_def_p_log} We remark that  $\pp$ is log-H\"{o}lder continuous in $\RR^{n+1}$ if and only if there is a non-decreasing continuous function ${\modp} : [0,\infty) \rightarrow [0,\infty)$ such that 
\begin{itemize}
 \item $\lim_{r\rightarrow 0} \modp(r) = 0$ and $|p(z_1)- p(z_2)| \leq \modp(d_p(z_1,z_2))$ for every $z_1,z_2 \in \RR^{n+1}$.
 \item $\modp(r) \log \lbr \frac{1}{r} \rbr \leq {L}$ holds for all $ 0< r \leq \frac12.$
\end{itemize}
The function $\modp(r)$ is called the modulus of continuity of the variable exponent $\pp$. 
 \end{remark}

  \subsection{Definition of very weak solution}

There is a well known difficulty in defining the notion of solution for \eqref{main} due to a lack of time derivative of $u$. To overcome this, one can either use Steklov average or convolution in time. In this paper, we shall use the former approach (see also \cite[Page 20, Eqn (2.5)]{DiB1} for further details).

Let us first define Steklov average as follows: let $h \in (0, 2T)$ be any positive number, then we define
\begin{equation}\label{stek1}
  u_{h}(\cdot,t) := \left\{ \begin{array}{ll}
                              \hint_t^{t+h} u(\cdot, \tau) \ d\tau \quad & t\in (-T,T-h), \\
                              0 & \text{else}.
                             \end{array}\right.
 \end{equation}
 
 \begin{definition}[Very weak solution] 
\label{very_weak_solution}
 
 Let $ \be \in (0,1)$ and $h \in (0,2T)$ be given and suppose that $\pp(1-\be) > 1$.  A very weak solution of \eqref{main} is a function $u \in L^2(-T,T; L^2(\Om)) \cap L^{\pp(1-\be)}(-T,T; W_0^{1,\pp(1-\be)}(\Om))$ such that
 \begin{equation}
 \label{def_weak_solution}
  \int_{\Om \times \{t\}} \frac{d [u]_{h}}{dt} \phi + \iprod{[\aa(x,t,\nabla u)]_{h}}{\nabla \phi} \, dx = 0 \qquad \text{for all} \ \, -T < t < T-h,
 \end{equation}
holds for any $\phi \in W_0^{1,\frac{\pp(1-\be)}{\pp(1-\be)-1}}(\Om) \cap L^{\infty}(\Om)$.
 
\end{definition}

 \subsection{Maximal Function}

For any $f \in L^1(\RR^{n+1})$, let us now define the strong maximal function in $\RR^{n+1}$ as follows:
\begin{equation}
 \label{not5.5}
 \mm(|f|)(x,t) := \sup_{\tQ \ni(x,t)} \fiint_{\tQ} |f(y,s)| \ dy \ ds
\end{equation}
where the supremum is  taken over all parabolic cylinders $\tQ_{a,b}$ with $a,b \in \RR^+$ such that $(x,t)\in \tQ_{a,b}$. An application of the Hardy-Littlewood maximal theorem in $x-$ and $t-$ directions shows that the Hardy-Littlewood maximal theorem still holds for this type of maximal function (see \cite[Lemma 7.9]{Gary} for details):
\begin{lemma}
\label{max_bnd}
 If $f \in L^1(\RR^{n+1})$, then for any $\al >0 $, there holds
 \[
  |\{ z \in \RR^{n+1} : \mm(|f|)(z) > \al\}| \leq \frac{5^{n+2}}{\al} \|f\|_{L^1(\RR^{n+1})}.
 \]

 and if $f \in L^{\vartheta}(\RR^{n+1})$ for some $1 < \vartheta \leq \infty$, then there holds
 \[
  \| \mm(|f|) \|_{L^{\vartheta}(\RR^{n+1})} \apprle_{(n,\vartheta)} \| f \|_{L^{\vartheta}(\RR^{n+1})}.
 \]

\end{lemma}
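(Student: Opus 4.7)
The plan is to dominate the parabolic maximal function by iterated one-variable Hardy--Littlewood maximal functions. Any cylinder $\tQ = B_a(x) \times I_b(t)$ containing $(x,t)$ factorises as a product, so by Fubini
\[
\fiint_{\tQ} |f(y,s)| \, dy \, ds = \hint_{I_b(t)} \lbr \hint_{B_a(x)} |f(y,s)| \, dy \rbr ds.
\]
Writing $\mm_x$ for the spatial Hardy--Littlewood maximal function over balls in $\RR^n$ (applied to $y \mapsto |f(y,s)|$) and $\mm_t$ for the one-dimensional Hardy--Littlewood maximal function over intervals in $\RR$, the inner average is at most $\mm_x(|f|)(x,s)$, and then the outer average is at most $\mm_t(\mm_x(|f|))(x,t)$. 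Taking the supremum over $\tQ \ni (x,t)$ yields the pointwise domination $\mm(|f|)(z) \leq \mm_t(\mm_x(|f|))(z)$.

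For the $L^{\vartheta}$ bound with $1 < \vartheta \leq \infty$, I would apply the classical Hardy--Littlewood theorem twice via Fubini: first in the spatial variable $y$ uniformly in $s$, which gives $\|\mm_x(|f|)(\cdot,s)\|_{L^{\vartheta}(\RR^n)} \leq C(n,\vartheta)\|f(\cdot,s)\|_{L^{\vartheta}(\RR^n)}$, and then in the temporal variable $t$ uniformly in $x$, which bounds $\|\mm_t(\mm_x(|f|))\|_{L^{\vartheta}(\RR^{n+1})}$ by $C(\vartheta)\|\mm_x(|f|)\|_{L^{\vartheta}(\RR^{n+1})}$. Combining the two inequalities with the pointwise domination gives the desired strong-type estimate.

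For the weak-type inequality, iteration alone does not suffice (the composition $\mm_t \circ \mm_x$ is not of weak type $(1,1)$ in general), so I would instead perform a direct Vitali $5r$-covering argument at the level of parabolic cylinders. Given $\al > 0$ and $E_{\al} := \mgh{z \in \RR^{n+1} : \mm(|f|)(z) > \al}$, I select for each $z \in E_{\al}$ a cylinder $\tQ_z \ni z$ satisfying $|\tQ_z| \leq \al^{-1} \int_{\tQ_z} |f|$. The $5r$-covering lemma then extracts a disjoint subfamily $\{\tQ_{z_i}\}$ whose $5$-fold enlargements cover $E_{\al}$, and in the parabolic scaling these enlargements multiply the $(n+1)$-dimensional Lebesgue measure by $5^n \cdot 5^2 = 5^{n+2}$ (a factor $5^n$ coming from the spatial ball and $5^2$ from the length-squared scaling of the time interval). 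Summing over the disjoint cylinders gives $|E_{\al}| \leq \frac{5^{n+2}}{\al}\|f\|_{L^1(\RR^{n+1})}$.

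The main obstacle, and the reason one must invoke \cite[Lemma 7.9]{Gary} rather than a one-line Fubini argument, is precisely this last covering step: verifying the Vitali-type selection with the sharp constant $5^{n+2}$ for parabolic cylinders whose spatial radius and temporal radius may vary independently. Once the covering is in place, the remainder of the proof reduces to standard bookkeeping.
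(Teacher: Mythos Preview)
Your strong-type $L^{\vartheta}$ argument is correct and is exactly what the paper sketches: pointwise domination $\mm \leq \mm_t \circ \mm_x$ followed by two applications of the classical maximal theorem via Fubini.

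The weak-type argument, however, has a genuine gap. The Vitali $5r$-covering lemma relies on the fact that among any two intersecting balls, the one of smaller radius is contained in five times the one of larger radius. For cylinders $B_a \times I_b$ with $a$ and $b$ varying \emph{independently} this comparison fails: a tall thin cylinder and a short wide cylinder can intersect while neither lies inside any fixed dilate of the other, so the disjoint subfamily you extract need not, after enlargement by $5$, cover $E_{\al}$. This is not a technicality to be resolved by bookkeeping; it is precisely the reason the strong maximal function over product-type rectangles is known \emph{not} to be of weak type $(1,1)$ in general (the sharp endpoint is an $L\log L$ inequality). If the supremum in \eqref{not5.5} were restricted to parabolic balls $Q_r = B_r \times I_r$, these are metric balls for $d_p$ and your Vitali argument with constant $5^{n+2}$ would go through verbatim; for the maximal function as actually written, no $5r$-type covering yields the stated weak $(1,1)$ bound, and you should check exactly what is established in \cite[Lemma~7.9]{Gary} rather than assume it reduces to a standard Vitali selection.
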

 
 \subsection{Notation}
We shall clarify all the notation that will be used in this paper.
\begin{enumerate}[(i)]
\item\label{not0} We shall fix a point $z_0 = (x_0,t_0) \in \pa \Om \times (-T,T)$. 
 \item\label{not1} We shall use $\nabla$ to denote derivatives with respect the space variable $x$.
\item\label{not2} We shall sometimes alternate between using $\ddt{f}$, $\pa_t f$ and $f'$ to denote the time derivative of a function $f$.
 \item\label{not3} We shall use $D$ to denote the derivative with respect to both the space variable $x$ and time variable $t$ in $\RR^{n+1}$. 
 \item\label{not3-1} In what follows, we shall always assume the following bounds are applicable for the variable exponent $\pp$:
 \begin{equation*}
 \label{bound_pp}
 \frac{2n}{n+2} < \mathfrak{p}^- \leq \pp \leq \mathfrak{p}^+ < \infty.
 \end{equation*}

 \item\label{not4}  Let $z_0 = (x_0,t_0) \in \RR^{n+1}$ be a point, $d >0$ be a fixed constant and $\rho, s >0$ be two given parameters and let $\la \in (0,\infty)$. We shall use the following notations:
 \begin{equation*}\label{notation_space_time}
\def\arraystretch{1.5}
 \begin{array}{ll}
 I_s(t_0) := (t_0 - s^2, t_0+s^2) \subset \RR,
& \quad Q_{\rho,s}(z_0) := B_{\rho}(x_0) \times I_{s}(t_0) \subset \RR^{n+1},\\ 
 I_s^{\la}(t_0) := (t_0 - \scalet{\la}{z_0}s^2, t_0+\scalet{\la}{z_0}s^2) \subset \RR,
& \quad B_{\rho}^{\la}(x_0) := B_{\scalex{\la}{z_0}\rho}(x_0) \subset \RR^n, \\
 \mcc^{\la}_{\rho}(x_0) := \lbr \Om \cap B_{\scalet{\la}{z_0}\rho}(x_0) \rbr \times \RR \subset \RR^{n+1}, & \quad Q^{\la}_{\rho,s}(z_0) := B_{\scalex{\la}{z_0}\rho}(x_0) \times I^{\la}_{s}(t_0) \subset \RR^{n+1},\\  
 Q_{\rho}^{\la} (z_0) := Q_{\rho, \rho^2}^{\la} (z_0). & \quad
 \end{array}
\end{equation*}

\emph{Note that in the above notation, we have dropped writing the exponent $d$ because this constant will be universally fixed in \eqref{exp_d}.}

\item\label{not10} Once we have fixed $d$ in \eqref{exp_d}, we will use the short form to denote $d_z^{\mu} := d_z^{\mu,d}$.

\item\label{not5} We shall use $\int$ to denote the integral with respect to either space variable or time variable and use $\iint$ to denote the integral with respect to both space and time variables simultaneously. 

Analogously, we will use $\fint$ and $\fiint$ to denote the average integrals as defined below: for any set $A \times B \subset \RR^n \times \RR$, we define
\begin{gather*}
\avgs{f}{A}:= \fint_A f(x) \ dx = \frac{1}{|A|} \int_A f(x) \ dx,\\
\avgs{f}{A\times B}:=\fiint_{A\times B} f(x,t) \ dx \ dt = \frac{1}{|A\times B|} \iint_{A\times B} f(x,t) \ dx \ dt.
\end{gather*}

\item\label{not6} Given any positive function $\mu$, we shall denote $\avgs{f}{\mu} := \int f\frac{\mu}{\|\mu\|_{L^1}}dm$ where the domain of integration is the domain of definition of $\mu$ and $dm$ denotes the associated measure.

\item\label{not8} In what follows, $m_{\ve}$ will denote the constants arising from the assumption that  $\Om^c$ satisfies a uniform measure density condition (see Assumption \ref{uniform_measure}).

\item We will use the notation $\plog$ to denote the constants $\mathfrak{p}^+,\mathfrak{p}^-$ and the constant coming from the $\log$-H\"older continuity of the variable exponent.

\item\label{not9} We will obtain an $\be_0 = \be_0 (n,\plog,\lamot,,m_{\ve}) \in (0,1)$ in Definition \ref{restriction_be_0} such that all the estimates hold for any $\be \in (0,\be_0)$.

\item\label{not11} We will use the notation $\apprle_{(a,b,\ldots)}$ to denote an inequality with a constant depending on $a,b,\ldots$.

\item We will use $\hat{r},\hat{Q},\ldots$ to denotes objects scaled by the constant $\hat{c}$ given by \descref{W4}{W4}.

\item We use $\mm$ to denote the Hardy-Littlewood Maximal function defined in \eqref{not5.5} and $\tilde{\mm}$ to denote the truncated maximal function as given in \eqref{def_mtilde}.
\end{enumerate}

 \section{Main Theorem}
 \label{main_theorem}
 
We now state the main theorem that will be proved. 
\begin{theorem}
\label{thm_main}
Let $\pp : \RR^{n+1} \rightarrow \RR$ be a variable exponent as in Definition \ref{definition_p_log} and $\Om$ be a bounded domain satisfying Assumption \ref{uniform_measure}.
Then there exists a constant $\be_0 = \be_0(n,\plog ,\La_0, \La_1,m_e) \in (0,1)$ such that the following holds: for any $\be \in (0,\be_0)$ and \emph{any very weak solution} $u \in L^2(-T,T; L^2(\Om)) \cap L^{\pp(1-\be)}(-T,T; W_0^{1,\pp(1-\be)}(\Om))$ of \eqref{main}, we have the improved integrability
$$
|\nabla u| \in L^{\pp}(\overline{\Om} \times (-T,T)).
$$
Moreover, for $\bM \geq 1$ there exists a radius $\rho_0 = \rho_0(n,\La_1,\bM)>0$ such that if
$$
\iint_{\Om_T} \lbr|u| +  |\nabla u|+1\rbr^{p(z)(1-\be)}  \, dz \leq \bM,
$$
then for any parabolic cylinder $Q_{2r}(\mathfrak{z}_0) \subset \RR^{n}\times (-T,T)$  with $\mfz_0 \in \pa \Om \times (-T,T)$ and $r \in (0,\rho_0]$, there holds
\begin{equation*}
\label{mainthm_estimate}
\fiint_{Q_r(\mathfrak{z}_0)} |\nabla u|^{p(z)} \, dz \apprle_{(n, \plog,\La_0, \La_1,m_e)} \left[ \left( \fiint_{Q_{2r}(\mathfrak{z}_0)} |\nabla u|^{p(z)(1-\be)} \, dz \right)^{1+\frac{\be}{-\frac{n}{p(\mathfrak{z}_0)}+\frac{(n+2)d}{2}-\be}} + 1 \right],
\end{equation*}
where $d \in (0,1)$ is the constant chosen in \eqref{exp_d}.
\end{theorem}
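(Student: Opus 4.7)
The plan is to prove Theorem \ref{thm_main} via a Calder\'on--Zygmund stopping-time decomposition built on the intrinsically scaled cylinders $Q^{\la}_{\rho}(z_0)$ of item \ref{not4}, combined with an intrinsic Lipschitz truncation of $u$ that preserves the zero boundary condition. The whole point of the unified scaling is that, with $d < \min\mgh{1, 2/\mathfrak{p}^+}$, both exponents $-\frac{1}{p(z)}+\frac{d}{2}$ and $-1+d$ are negative, so $Q^{\la}_{\rho}(z_0)$ shrinks in both space and time as $\la \nearrow \infty$ regardless of whether $p(z_0) \leq 2$ or $p(z_0) \geq 2$; this is precisely what lets the same covering, stopping, and truncation machinery handle the singular and degenerate regimes simultaneously.

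First I would localize at $Q_{2r}(\mfz_0)$ with $\mfz_0 \in \pa \Om \times (-T,T)$ and $r \in (0,\rho_0]$, extend $u$ by zero across $\pa \Om$ (legitimate thanks to Assumption \ref{uniform_measure}), and fix a base level
\[
\la_0 := 1 + \lbr \fiint_{Q_{2r}(\mfz_0)} \lbr |u| + |\nabla u| + 1 \rbr^{\pp(1-\be)} \, dz \rbr^{\ga_0}
\]
with $\ga_0$ dictated by the intrinsic scaling, so that for every $\la > \la_0$ the intrinsic average on the largest admissible cylinder inside $Q_r(\mfz_0)$ already sits below $\la^{\pp(1-\be)}$. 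For each point $z$ of the super-level set $E(\la) \subset Q_r(\mfz_0)$ of the intrinsic maximal function at height $\la^{\pp(1-\be)}$, a continuity-in-$\rho$ argument yields a stopping radius $\rho_z$ at which the intrinsic average of $|\nabla u|^{\pp(1-\be)}$ over $Q^{\la}_{\rho_z}(z)$ equals $\la^{\pp(1-\be)}$ and is strictly smaller on larger intrinsic cylinders. The intrinsic Vitali lemma (Lemma \ref{lemma_vitali}) then extracts a disjoint subfamily, while the intrinsic Whitney covering (Lemma \ref{whitney_covering}) supplies the finite-overlap enlargement used later to pass from local to global estimates; both lemmas hinge on the monotone shrinking of $Q^{\la}_{\rho}(z_0)$ in $\la$.

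On each stopping cylinder $Q^{\la}_{\rho_z}(z)$ I would construct a test function $v_{\la,h}$ by adapting the interior Lipschitz-truncation strategy of \cite{KL,bogelein2014very,li2017very} to the boundary as in \cite{AB2}: the function $v_{\la,h}$ coincides with the Steklov average $u_h$ outside the intrinsic bad set, has intrinsic Lipschitz constant of order $\la$, and, decisively near $\pa \Om$, vanishes on $\pa \Om \times (-T,T)$. The uniform measure density Assumption \ref{uniform_measure} is exactly what secures this last property: a definite fraction of every boundary ball lies in $\Om^c$, so the truncation cannot create a set of positive measure on which it fails to vanish across $\pa \Om$. Testing \eqref{def_weak_solution} with a parabolically localized $v_{\la,h}$, passing $h \to 0$, and absorbing the bad-set contributions via the stopping-time equality yields a Caccioppoli-type inequality on $Q^{\la}_{\rho_z}(z)$; coupling it with a variable-exponent Sobolev--Poincar\'e inequality on intrinsic cylinders produces a reverse H\"older estimate of the schematic form
\[
\fiint_{\hat{Q}^{\la}_{\rho_z}(z)} |\nabla u|^{\pp} \, dz \apprle \lbr \fiint_{2\hat{Q}^{\la}_{\rho_z}(z)} |\nabla u|^{\pp(1-\be)} \, dz \rbr^{\tht} + \text{lower order terms}
\]
for some $\tht > 1$ determined by the scaling exponents.

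A layer-cake/Fubini argument over $\la \geq \la_0$, together with the finite overlap of the Whitney cylinders, upgrades this per-cylinder estimate to the global quantitative bound of Theorem \ref{thm_main}; the exponent $1 + \be/\lbr -\frac{n}{p(\mfz_0)} + \frac{(n+2)d}{2} - \be\rbr$ emerges from tracking the intrinsic volume $|Q^{\la}_{\rho}(z_0)| \sim \la^{-\frac{n}{p(z_0)}+\frac{(n+2)d}{2}} \rho^{n+2}$ through the iteration. The main obstacle I anticipate is the simultaneous control of three tightly coupled ingredients near the boundary: (i) the localized metric $d^{\la,d}_z$ varies with its center $z$, so the Vitali/Whitney decompositions must remain doubling uniformly in $\la$ and in $z$; (ii) the Lipschitz truncation must respect this anisotropy, preserve $u \equiv 0$ on $\pa \Om$ under only a measure density hypothesis, and be admissible in the Steklov-averaged formulation \eqref{def_weak_solution}; (iii) because $\pp$ can cross $2$ within a single intrinsic cylinder, the log-H\"older modulus $\modp(\cdot)$ must be balanced against $\diam(Q^{\la}_{\rho_z}(z))$ so that quantities like $\la^{\modp(\diam)}$ stay uniformly bounded. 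Choosing $\be_0$ and $\rho_0$ small enough to absorb all three obstructions simultaneously is what ultimately fixes the dependencies $\be_0 = \be_0(n,\plog,\La_0,\La_1,m_\ve)$ and $\rho_0 = \rho_0(n,\La_1,\bM)$ claimed in the statement.
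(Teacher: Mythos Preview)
Your proposal is correct and follows essentially the same route as the paper: intrinsic stopping-time on the unified cylinders, the Vitali/Whitney lemmas of Section~\ref{section_three}, the boundary-preserving Lipschitz truncation of Section~\ref{section_four} feeding into the Caccioppoli inequality (Lemma~\ref{Caccioppoli}) and the reverse H\"older inequality (Lemma~\ref{lemma_reverse_Holder}), then a Fubini/layer-cake argument over levels combined with the iteration Lemma~\ref{iter_lemma}. The only organizational nuance is that the paper runs two separate level parameters---$\al_0$ for the stopping scale and $\la \geq c_e\al_0$ for the truncation level---so the layer-cake integration appears twice (once in deriving Lemma~\ref{Caccioppoli}, once in Section~\ref{section_seven}), and the intermediate reverse H\"older is stated at exponent $p(z)(1-\be)$ versus $p(z)(1-\be)/\tq$ rather than $p(z)$ versus $p(z)(1-\be)$; the gain to $p(z)$ comes only after the final Fubini step.
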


\begin{remark} The above theorem is stated only at the boundary, i.e., when $\mfz_0 \in \pa \Om \times (-T,T)$. The interior higher integrability result for \emph{very weak solution} follows similarly by combining our unified intrinsic scaling with the techniques of \cite{bogelein2014very,li2017very}.\end{remark}

 \begin{remark}
 In Theorem \ref{thm_main}, we extend the solution by $0$ outside $\Om$, i.e., we set 
 \[
 u = 0 \txt{on} \Om^c \times (-T,T).
 \]

 \end{remark}

 \section{Some useful lemmas}
 \label{section_two_one}
 
 Let us first recall a well known parabolic type Poincar\'e's inequality (see \cite[Lemma 2.13]{AB2} for a proof):
  \begin{lemma}
 \label{lemma_crucial_1}
 Let $f \in L^{\vt} (-T,T; W^{1,\vt}(\Om))$ with $\vt \in [1,\infty)$  and suppose that $B_{r} \Subset \Om$ be compactly contained ball of radius $r>0$. Let $I \subset (-T,T)$ be a time interval  and $\rho(x,t) \in L^1(B_r \times I)$ be any positive function such that $\|\rho\|_{L^{\infty}(B_r\times I)} \apprle_n \frac{\|\rho\|_{L^1(B_r\times I)}}{|B_r\times I|} $ and $\mu(x) \in C_c^{\infty}(B_r)$ such that $\int_{B_r} \mu(x) \ dx = 1$ with $|\mu| \leq \frac{C{(n)}}{r^n}$ and $|\nabla \mu| \leq  \frac{C(n)}{r^{n+1}}$. Then there holds
 \begin{equation*}
 \begin{array}{ll}
  \fiint_{B_r \times I} \left|\frac{f - \avgs{f}{\rho}}{r}\right|^{\vt} \ dz & \apprle_{(n,\vt)} \fiint_{B_r \times I} |\nabla f|^{\vt} \ dz + \sup_{t_1,t_2 \in I} \left| \frac{\avgs{f}{\mu}(t_2) - \avgs{f}{\mu}(t_1)}{r} \right|^{\vt},
  \end{array}
 \end{equation*}
where $\avgs{f}{\rho}:= \iint_{B_r\times I} f(z) \frac{\rho(z)}{\|\rho\|_{L^1(B_r\times I)}} \ dz $ and $\avgs{f}{\mu}(t_i) := \int_{B_r} f(x,t_i) \mu(x) \ dx$ for $i = 1,2$. 
\end{lemma}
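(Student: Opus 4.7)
The plan is to decompose $f - \avgs{f}{\rho}$ into a purely spatial fluctuation and a purely temporal fluctuation, treat each separately, and recombine via the triangle inequality. First I would introduce the time-dependent weighted spatial mean
\[
g(t) := \avgs{f}{\mu}(t) = \int_{B_r} f(x,t)\,\mu(x)\, dx,
\]
and write
\[
f(x,t) - \avgs{f}{\rho} = \bigl[f(x,t) - g(t)\bigr] + \bigl[g(t) - \avgs{f}{\rho}\bigr].
\]
The first bracket is a purely spatial deviation from a $\mu$-weighted mean on $B_r$, while the second is independent of $x$, so the $L^{\vt}(B_r \times I)$ norms of these two pieces can be estimated separately and then added.

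For the spatial piece, I would invoke a weighted Poincar\'e inequality on $B_r$ slice by slice: since $\mu \in C_c^\infty(B_r)$ satisfies $\int_{B_r}\mu = 1$, $\|\mu\|_{L^\infty} \apprle_n r^{-n}$ and $\|\nabla\mu\|_{L^\infty} \apprle_n r^{-n-1}$, one gets for almost every $t \in I$
\[
\int_{B_r} |f(x,t) - g(t)|^{\vt}\, dx \apprle_{(n,\vt)} r^{\vt} \int_{B_r} |\nabla f(x,t)|^{\vt}\, dx,
\]
and averaging in $t\in I$ and dividing by $r^{\vt}$ produces the first term on the right-hand side of the claimed inequality.

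For the temporal piece, writing $\|\rho\|_{1}$ for $\|\rho\|_{L^1(B_r\times I)}$, I would split
\[
g(t) - \avgs{f}{\rho} = \iint_{B_r\times I} \bigl(g(t) - g(s)\bigr) \frac{\rho(y,s)}{\|\rho\|_{1}}\, dy\,ds + \iint_{B_r\times I} \bigl(g(s) - f(y,s)\bigr)\frac{\rho(y,s)}{\|\rho\|_{1}}\, dy\,ds.
\]
The first integral on the right is bounded pointwise by $\sup_{t_1,t_2\in I}|g(t_2)-g(t_1)|$ because $\rho/\|\rho\|_{1}$ is a probability density. For the second, the hypothesis $\|\rho\|_{L^\infty} \apprle_n \|\rho\|_{1}/|B_r\times I|$ lets me dominate the $\rho$-weighted average by the plain average $\fiint_{B_r\times I}|g(s) - f(y,s)|^{\vt}\, dy\,ds$ up to a dimensional constant (raising to the $\vt$-th power first via Jensen), after which the slicewise spatial Poincar\'e estimate produces a bound by $r^{\vt}\fiint_{B_r\times I}|\nabla f|^{\vt}\, dz$. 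Dividing by $r^{\vt}$ and collecting the two contributions yields the claim.

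The main obstacle, and essentially the only delicate point, is the handling of the weight $\rho$ in the temporal step: no smoothness on $\rho$ is assumed, so one must rely purely on the $L^{\infty}\lesssim L^1/|B_r\times I|$ comparison, which is exactly what allows one to trade the $\rho$-weighted mean for the unweighted mean up to an $n$-dependent constant. Getting the order of operations right (apply Jensen to absorb the $\vt$-th power \emph{before} using the $L^\infty$ bound on $\rho$) is what preserves the correct $r$-scaling throughout.
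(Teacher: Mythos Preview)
Your proposal is correct and follows the standard decomposition for parabolic Poincar\'e inequalities of this type: split off the spatial oscillation $f-\avgs{f}{\mu}(t)$ via a slicewise weighted Poincar\'e inequality, then control the remaining constant-in-$x$ piece $\avgs{f}{\mu}(t)-\avgs{f}{\rho}$ by inserting $\avgs{f}{\mu}(s)$ and using the $L^\infty$-to-$L^1$ comparison on $\rho$ together with Jensen. The paper does not give its own proof here but simply refers to \cite[Lemma~2.13]{AB2}; the argument there is essentially the same as yours.
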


The following crucial lemma will be used throughout the paper:
\begin{lemma}
\label{lemma_crucial_2}
 Suppose that $u \in L^2(-T,T;L^2(\Om)) \cap \vwspace$ is a very weak solution of \eqref{main} for some $0 \leq \be \leq \min\{1,\mathfrak{p}^--1\}$. Let $\mathcal{B} \subset \Om$ be a compactly contained region and $(t_1,t_2) \subset (-T, T-h)$ for some $h \in (0,T)$ be a time interval. Let $\phi(x) \in C_c^{\infty}(\mathcal{B})$, $\varphi(t) \in C_c^{\infty}(t_1,t_2)$ be two non-negative functions and $[u]_h$ be the Steklov average as defined in \eqref{stek1}. Then   the following estimate holds:
 \begin{equation*}
  \label{lemma_crucial_2_est}
  \begin{array}{ll}
  |\avgs{{[u]_h\varphi}}{\phi} (t_2) - \avgs{{[u]_h\varphi}}{\phi}(t_1)| & \leq \La_1 \|\nabla \phi\|_{L^{\infty}{(\mathcal{B})}} \|\varphi\|_{L^{\infty}(t_1,t_2)} \iint_{\mathcal{B} \times (t_1,t_2)} {[(1+|\nabla u|)^{p(z)-1} ]_h} \, dz \\
   & \qquad +  \|\phi\|_{L^{\infty}{(\mathcal{B})}} \|\varphi'\|_{L^{\infty}(t_1,t_2)} \iint_{\mathcal{B} \times (t_1,t_2)} |[u]_h| \, dz.
  \end{array}
 \end{equation*}
\end{lemma}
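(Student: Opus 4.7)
The plan is to test the Steklov-averaged weak formulation \eqref{def_weak_solution} with the spatial function $\phi$, multiply by $\varphi(t)$, integrate in time over $(t_1,t_2)$, and carry out an integration by parts. Setting
\[
F(t) \;:=\; \int_{\mathcal{B}} [u]_h(x,t)\, \phi(x)\, dx,
\]
the convention from Lemma~\ref{lemma_crucial_1} gives $\avgs{[u]_h \varphi}{\phi}(t_i) = \varphi(t_i) F(t_i)$, so the left-hand side of the claim is precisely $|\varphi(t_2) F(t_2) - \varphi(t_1) F(t_1)|$. The two summands on the right-hand side will correspond, respectively, to the $\varphi'$-contribution arising from the IBP in time and to the contribution obtained by testing against $\aa$.

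Since $\mathcal{B} \Subset \Om$, the function $\phi \in C_c^\infty(\mathcal{B})$ belongs to $W_0^{1,q}(\Om) \cap L^\infty(\Om)$ for every $q$, hence is admissible in Definition~\ref{very_weak_solution}. For almost every $t \in (t_1, t_2) \subset (-T, T-h)$ I obtain the pointwise identity
\[
F'(t) \;=\; \int_{\mathcal{B}} \frac{d[u]_h}{dt}(x,t)\, \phi(x)\, dx \;=\; -\int_{\mathcal{B}} \iprod{[\aa(x,t,\nabla u)]_h}{\nabla \phi(x)}\, dx.
\]
Multiplying by $\varphi(t)$, integrating over $(t_1,t_2)$, and applying the standard IBP formula $\int_{t_1}^{t_2} \varphi F'\, dt = [\varphi F]_{t_1}^{t_2} - \int_{t_1}^{t_2} \varphi' F\, dt$ produces the identity
\[
\varphi(t_2) F(t_2) - \varphi(t_1) F(t_1) \;=\; \int_{t_1}^{t_2} \varphi'(t) F(t)\, dt \;-\; \int_{t_1}^{t_2} \varphi(t) \int_{\mathcal{B}} \iprod{[\aa]_h}{\nabla \phi}\, dx\, dt.
\]

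Taking absolute values and applying the triangle inequality, I estimate each summand separately. The first is controlled via $|F(t)| \leq \|\phi\|_{L^\infty(\mathcal{B})} \int_{\mathcal{B}} |[u]_h|\, dx$, yielding $\|\varphi'\|_{L^\infty(t_1,t_2)} \|\phi\|_{L^\infty(\mathcal{B})} \iint_{\mathcal{B} \times (t_1,t_2)} |[u]_h|\, dz$, which is the second summand in the claimed bound. For the second, I invoke the growth bound \eqref{abounded} together with Jensen's inequality applied to the time-average $[\cdot]_h$ (legitimate since $p(z) - 1 \geq 0$ and $[\cdot]_h$ is a probability average) to obtain $|[\aa(x,t,\nabla u)]_h| \leq \La_1\, [(1+|\nabla u|)^{p(z)-1}]_h$; pulling out $\|\varphi\|_{L^\infty(t_1,t_2)}$ and $\|\nabla \phi\|_{L^\infty(\mathcal{B})}$ produces the first summand.

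The only technical point is the justification of the IBP in time, but this is not really an obstacle: by construction $[u]_h \in W^{1,1}_{\mathrm{loc}}(-T, T-h; L^1(\mathcal{B}))$, so $F \in W^{1,1}_{\mathrm{loc}}(t_1, t_2)$ and the IBP against the smooth $\varphi$ is standard. Importantly, no structural hypothesis on $\aa$ beyond the growth bound \eqref{abounded} is used, which is what makes this estimate available at the very-weak level and hence a crucial building block for subsequent arguments.
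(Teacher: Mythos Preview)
Your proof is correct and follows essentially the same route as the paper: test the Steklov-averaged equation with the spatial function $\phi$, use the fundamental theorem of calculus (which you phrase as integration by parts against $\varphi$) to produce the two contributions, and bound the $\aa$-term via the growth condition \eqref{abounded}. The only cosmetic difference is that you name the auxiliary function $F(t)$ and integrate by parts, while the paper differentiates $\varphi(t)F(t)$ directly via the product rule and invokes the FTC; these are the same computation. A minor remark: the passage $|[\aa]_h| \le \La_1\,[(1+|\nabla u|)^{p(z)-1}]_h$ is just the triangle inequality for the time-average together with \eqref{abounded}, not Jensen, but the bound itself is correct.
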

\begin{proof}
 Let us use $\phi(x)\varphi(t)$ as a test function in \eqref{def_weak_solution} to get 
 \[
  \int_{\Om\times\{t\}} \lbr[[]\ddt{[u]_h} (x,t) \phi(x)\varphi(t)  +  \iprod{[\aa(x,t,\nabla u)]_h}{\nabla \phi}(x,t)\varphi(t) \rbr[]]\, dx =0.
 \]
 Using the Fundamental theorem of calculus, we get
 \begin{equation*}
  \begin{array}{ll}
   |\avgs{{[u]_h\varphi}}{\phi} (t_2) - \avgs{{[u]_h\varphi}}{\phi}(t_1)|
   & \leq \left|\iint_{\mathcal{B} \times (t_1,t_2)} \iprod{[\aa(x,t,\nabla u)]_h}{\nabla \phi}(x,t) \varphi(t)\, dz \right| \\
   & \qquad + \left| \iint_{\mathcal{B} \times (t_1,t_2)} [u]_h(x,t) \phi(x) \ddt{\varphi(t)} \, dz \right|  \\
   & \overset{\redlabel{lemma3.4.1}{a}}{\leq} \La_1\|\nabla \phi\|_{L^{\infty}{(\mathcal{B})}} \|\varphi\|_{L^{\infty}(t_1,t_2)} \iint_{\mathcal{B} \times (t_1,t_2)} {[(1+|\nabla u|)^{p(z)-1}]_h} \, dz \\
   & \qquad + \|\phi\|_{L^{\infty}{(\mathcal{B})}} \|\varphi'\|_{L^{\infty}(t_1,t_2)} \iint_{\mathcal{B} \times (t_1,t_2)} |[u]_h| \, dz.
  \end{array}
 \end{equation*}
 To obtain \redref{lemma3.4.1}{a} above, we made use of \eqref{abounded}. This completes the proof.
\end{proof}

We next state Gagliardo-Nirenberg's inequality (see \cite[Lemma 2.6]{AB2} for the proof).
\begin{lemma}
\label{lemma_crucial_3}
Let $B_{\rho}(x_0) \subset \RR^n$ with $0 < \rho \leq 1$, and let $1 \leq \sig, \ga_1, \ga_2 < \infty$, $\theta \in (0,1)$.
Suppose that
\begin{equation*}
-\frac{n}{\sig} \leq \theta \left( 1-\frac{n}{\ga_1} \right) - (1-\theta)\frac{n}{\ga_2}.
\end{equation*}
For any $v \in W^{1,\ga_1}(B_{\rho}(x_0))$ with
$$\left| \{x \in B_{\rho}(x_0) : v(x)=0 \} \right| \geq c_0 |B_{\rho}(x_0)|$$
for some universal $c_0>0$, then the following estimate holds
\begin{equation*}
\label{lemma_crucial_3_est}
\fint_{B_{\rho}(x_0)} \left| \frac{v}{\rho} \right|^{\sig} dx \apprle_{(n,\sig,c_0)} \left( \fint_{B_{\rho}(x_0)} |\nabla v|^{\ga_1} \, dx \right)^{\frac{\theta \sig}{\ga_1}} \left( \fint_{B_{\rho}(x_0)} \left| \frac{v}{\rho} \right|^{\ga_2} dx \right)^{\frac{(1-\theta) \sig}{\ga_2}}.
\end{equation*}
\end{lemma}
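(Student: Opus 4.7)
The plan is to reduce to the unit ball by rescaling and then combine a Sobolev--Poincaré inequality (which exploits the hypothesis that $v$ vanishes on a set of positive measure) with a Hölder interpolation between Lebesgue spaces. First I would rescale by setting $\tilde v(y) := \rho^{-1} v(x_0 + \rho y)$ for $y \in B_1(0)$, so that $\nabla \tilde v(y) = (\nabla v)(x_0+\rho y)$ and the hypothesis $|\{\tilde v = 0\}| \geq c_0 |B_1(0)|$ is preserved. A change of variables converts the normalized quantities $|v/\rho|$ appearing in the target estimate into $|\tilde v|$ on $B_1(0)$, so it suffices to prove the inequality in the unit-ball case.

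Next I would use the zero-set hypothesis to obtain a Poincaré--Sobolev inequality without subtracting a constant: since $\tilde v$ vanishes on a definite fraction of $B_1(0)$, one has for some exponent $\gamma_1^\star$ (namely $\gamma_1^\star = n\gamma_1/(n-\gamma_1)$ when $\gamma_1 < n$, and any finite value if $\gamma_1 \geq n$),
\begin{equation*}
\left( \fint_{B_1(0)} |\tilde v|^{\gamma_1^\star} \, dy \right)^{1/\gamma_1^\star} \leq C(n,\gamma_1,c_0) \left( \fint_{B_1(0)} |\nabla \tilde v|^{\gamma_1} \, dy \right)^{1/\gamma_1}.
\end{equation*}
This is the standard trick of applying Sobolev to $\tilde v - (\tilde v)_{B_1}$ and then bounding $|(\tilde v)_{B_1}|$ by $\fint |\tilde v - 0|$ over the zero set, using $|\{\tilde v=0\}| \geq c_0|B_1(0)|$.

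Then I would interpolate via Hölder. A direct rearrangement of the hypothesis $-n/\sigma \leq \theta(1-n/\gamma_1) - (1-\theta)n/\gamma_2$ gives
\begin{equation*}
\frac{1}{\sigma} \geq \theta\left(\frac{1}{\gamma_1}-\frac{1}{n}\right) + \frac{1-\theta}{\gamma_2} = \frac{\theta}{\gamma_1^\star} + \frac{1-\theta}{\gamma_2} =: \frac{1}{p},
\end{equation*}
where in the case $\gamma_1 \geq n$ one chooses $\gamma_1^\star$ large enough that this still holds. Hölder's inequality with exponents $\gamma_1^\star/(\theta p)$ and $\gamma_2/((1-\theta)p)$ yields
\begin{equation*}
\left( \fint_{B_1(0)} |\tilde v|^{p} \, dy \right)^{1/p} \leq \left( \fint_{B_1(0)} |\tilde v|^{\gamma_1^\star} \, dy \right)^{\theta/\gamma_1^\star} \left( \fint_{B_1(0)} |\tilde v|^{\gamma_2} \, dy \right)^{(1-\theta)/\gamma_2},
\end{equation*}
and since $\sigma \leq p$, Jensen's inequality on $B_1(0)$ gives $(\fint|\tilde v|^\sigma)^{1/\sigma} \leq (\fint|\tilde v|^p)^{1/p}$. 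Chaining these inequalities, raising to the power $\sigma$, and undoing the rescaling produces the claimed estimate.

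The principal technical point I expect to manage carefully is the borderline/supercritical regime $\gamma_1 \geq n$, where the usual Sobolev conjugate exponent is unavailable; there one selects any finite $\gamma_1^\star$ large enough that $\theta/\gamma_1^\star + (1-\theta)/\gamma_2 \leq 1/\sigma$, absorbing the resulting dependence into the constant (which already depends on $n$, $\sigma$, and $c_0$). Beyond that, the argument is a routine assembly of scaling, a Poincaré--Sobolev inequality valid because of the large null set of $v$, and Hölder interpolation.
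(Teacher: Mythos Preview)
Your proposal is correct and follows the standard route to this Gagliardo--Nirenberg interpolation inequality: rescale to the unit ball, apply a Sobolev--Poincar\'e inequality (available because $v$ vanishes on a set of uniformly positive measure), and interpolate via H\"older between $L^{\gamma_1^\star}$ and $L^{\gamma_2}$. The paper itself does not give an in-text proof but simply cites \cite[Lemma~2.6]{AB2}, whose argument is essentially the one you outline, so there is nothing to contrast. One small remark: your proof naturally produces a constant depending also on $\gamma_1$ (through the Sobolev--Poincar\'e step and the choice of $\gamma_1^\star$ in the supercritical case), whereas the stated dependence is only $(n,\sigma,c_0)$; this is a harmless imprecision in the lemma's statement rather than a defect in your argument.
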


We will also need the following  Poincar\'e's inequality proved in \cite[Theorem 4.13]{adimurthi2017sharp}.
\begin{theorem}
 \label{measure_density_poincare}
 Let $\sss \in \logh$ and let $\bM_p\geq 1$, $\varepsilon \in (0,1)$ be given constants. Define  ${\bf R}_p := \min \left\{\frac{1}{2\bM_p}, \frac1{|B_1|^{\frac{1}{n}}}, \frac12 \right\}$.  For any $\phi \in W^{1,\pp}(B_{2r})$ with $2r < {\bf R}_p$ satisfying 
 \begin{gather*}
 |\{ N(\phi)\}| := |\{ x \in B_r : \phi(x) =0\}|> \varepsilon |B_r|\txt{and} \int_{B_{2r}} |\nabla \phi (x)|^{s(x)} \ dx + 1 \leq \bM_p, 
 \end{gather*}
 then there holds
 \[
  \int_{B_r} \lbr \frac{|\phi|}{\diam(B_{r})}\rbr^{s(x)} \ dx \apprle_{(n,\slog)}  \int_{B_{2r}} |\nabla \phi(x)|^{s(x)} \ dx + |B_{r}|.
 \]
\end{theorem}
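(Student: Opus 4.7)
The strategy is to dominate $|\phi|$ pointwise by a truncated Riesz potential of $|\nabla\phi|$, using the large zero set, and then pass from the Riesz potential to the Hardy-Littlewood maximal function so as to exploit its boundedness in the variable-exponent Lebesgue space $L^{s(\cdot)}(\RR^n)$.

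First, for a.e.\ $x \in B_r$ with $\phi(x) \neq 0$ and any $y \in N(\phi)\cap B_r$, the fundamental theorem of calculus along the segment from $y$ to $x$ gives $|\phi(x)| \leq \int_0^{|x-y|}|\nabla\phi(x+t\omega_{x,y})|\,dt$, where $\omega_{x,y} = (y-x)/|y-x|$. Averaging in $y$ over $N(\phi)\cap B_r$ (whose measure is at least $\varepsilon|B_r|$), passing to polar coordinates centred at $x$, and bounding the radial integration by $2r$ yields the truncated Riesz potential estimate
\[
|\phi(x)| \leq C(n,\varepsilon)\int_{B_{2r}} \frac{|\nabla\phi(y)|}{|x-y|^{n-1}}\,dy \qquad \text{for a.e. } x\in B_r.
\]
A routine dyadic-annulus decomposition of $B_{2r}$ around $x$ then dominates the right-hand side by $C(n)\, r\, \mm(|\nabla\phi|\chi_{B_{2r}})(x)$; since $\diam(B_r) = 2r$, this gives the pointwise maximal-function bound
\[
\frac{|\phi(x)|}{\diam(B_r)} \leq C(n,\varepsilon)\, \mm(|\nabla\phi|\chi_{B_{2r}})(x) \qquad \text{for a.e. } x\in B_r.
\]

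Raising to the variable power $s(x)$ and integrating over $B_r$, the problem reduces to controlling $\int_{B_r}\mm(|\nabla\phi|\chi_{B_{2r}})^{s(x)}\,dx$. Since $s\in\logh$ with $1 < s^-\leq s^+ < \infty$, the Hardy-Littlewood maximal operator is bounded on $L^{s(\cdot)}(\RR^n)$ by Diening's theorem. The corresponding modular inequality takes the form
\[
\int_{\RR^n}\mm(g)^{s(x)}\,dx \leq c_1 \int_{B_{2r}}|g(x)|^{s(x)}\,dx + c_2 |B_{2r}|
\]
for any $g$ supported in $B_{2r}$ whose modular is under a universal threshold, with constants $c_1,c_2$ depending only on $n$ and $\slog$. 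Applying it to $g = |\nabla\phi|\chi_{B_{2r}}$ yields the claimed inequality.

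The main obstacle is ensuring that this modular maximal inequality is legitimately applicable with the correct constants. In its sharp form it requires $\int|g|^{s(x)}\leq 1$, so for general $g$ one must rescale by a power depending on $\int_{B_{2r}}|\nabla\phi|^{s(x)}\,dx$; the three smallness requirements packaged into ${\bf R}_p$ are precisely tuned to make this rescaling admissible. The condition $2r\leq(2\bM_p)^{-1}$ converts the a priori modular bound $\bM_p$ into unit size after the rescaling; $2r<|B_1|^{-1/n}$ forces $|B_{2r}|\leq 1$ so the additive term in the modular inequality stays proportional to $|B_r|$; and $2r\leq 1/2$ activates the $\logh$ oscillation estimate, allowing the exponent $s(x)$ to be "frozen" up to universal multiplicative constants across $B_{2r}$. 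The delicate point is to carry out this normalisation and de-normalisation while keeping the right-hand-side additive term at the level $|B_r|$, and verifying that the resulting constant depends solely on $n$ and $\slog$, as required by the statement.
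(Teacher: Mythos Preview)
The paper does not prove this theorem; it is quoted from \cite[Theorem 4.13]{adimurthi2017sharp}. So there is no in-paper argument to compare against, and your proposal must be judged on its own merits.

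Your overall strategy---pointwise Riesz potential bound from the zero set, dyadic domination by the maximal function, then a variable-exponent maximal estimate---is the standard route and is sound. Two comments are in order. First, the pointwise bound you derive carries a constant $C(n,\varepsilon)$, so the final Poincar\'e constant must depend on $\varepsilon$; the theorem as stated omits this, but the way the paper actually uses the result (the constants in \eqref{bond-J3}, \eqref{8.1pf_2}, \eqref{8.2pf_4} all carry $m_e$) confirms this is a typo in the statement rather than a defect in your argument. Second, and more substantively, the ``modular maximal inequality'' you write down,
\[
\int_{\RR^n}\mm(g)^{s(x)}\,dx \leq c_1 \int_{B_{2r}}|g(x)|^{s(x)}\,dx + c_2 |B_{2r}|,
\]
is not a direct consequence of Diening's norm inequality; converting the norm bound to a modular bound with an additive error of size $|B_{2r}|$ (rather than a dimensionless constant) requires work. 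You correctly locate the mechanism---rescale $g$ by a power of $\bM_p$ to bring the modular below $1$, then de-normalise---and correctly identify that the constraint $2r\le (2\bM_p)^{-1}$ is what keeps the de-normalisation factor $\bM_p^{(s^+_{B_{2r}}-s^-_{B_{2r}})/s^-}$ bounded via $\logh$ continuity. But you stop short of executing this, and the proposal reads as a plan rather than a proof at that point. The gap is real but closable: once you carry out the rescaling explicitly and check that both the multiplicative factor and the additive term land where they should, the argument goes through with constants depending only on $n$, $\slog$, and $\varepsilon$.
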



We conclude this section with a standard iteration lemma, see \cite[Lemma 6.1]{Giusti}.
\begin{lemma}
\label{iter_lemma}
Let $0< r< R<\infty$ be given and let $h : [r,R] \to \RR$ be a non-negative and bounded function. Furthermore, let $\theta \in (0,1)$ and $A,B,\gamma_1,\gamma_2 \geq 0$ be fixed constants and 
suppose that
$$
h(\rho_1) \leq \theta h(\rho_2) + \frac{A}{(\rho_2-\rho_1)^{\gamma_1}} + \frac{B}{(\rho_2-\rho_1)^{\gamma_2}},
$$
holds for all $r \leq \rho_1 < \rho_2 \leq R$, then the following conclusion holds:
$$
h(r) \apprle_{(\theta,\gamma_1,\gamma_2)} \frac{A}{(R-r)^{\gamma_1}} + \frac{B}{(R-r)^{\gamma_2}}.
$$
\end{lemma}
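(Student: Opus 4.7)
The plan is to carry out the standard geometric iteration scheme due to Giusti. I would fix a scale factor $\tau \in (0,1)$ to be chosen later and define a sequence of radii
\[
\rho_0 := r, \qquad \rho_{i+1} := \rho_i + (1-\tau)\tau^i (R-r), \qquad i=0,1,2,\ldots
\]
Since $\sum_{i\geq 0}(1-\tau)\tau^i = 1$, one has $\rho_i \nearrow R$ as $i\to\infty$, and in particular $\rho_i \in [r,R]$ for every $i$. The key observation is the telescoping identity $\rho_{i+1} - \rho_i = (1-\tau)\tau^i (R-r)$, which feeds cleanly into the hypothesis of the lemma.

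Next I would apply the assumed inequality to the pair $(\rho_i,\rho_{i+1})$:
\[
h(\rho_i) \leq \theta\, h(\rho_{i+1}) + \frac{A}{\bigl((1-\tau)\tau^i (R-r)\bigr)^{\gamma_1}} + \frac{B}{\bigl((1-\tau)\tau^i (R-r)\bigr)^{\gamma_2}}.
\]
Iterating this estimate $k$ times (substituting the bound for $h(\rho_{i+1})$ into the preceding one) yields
\[
h(r) \leq \theta^k h(\rho_k) + \sum_{i=0}^{k-1} \theta^i \left[\frac{A \, \tau^{-i\gamma_1}}{\bigl((1-\tau)(R-r)\bigr)^{\gamma_1}} + \frac{B\, \tau^{-i\gamma_2}}{\bigl((1-\tau)(R-r)\bigr)^{\gamma_2}}\right].
\]
The crucial choice is now to pick $\tau \in (0,1)$ so that the geometric series involved converge. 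Setting $\gamma := \max\{\gamma_1,\gamma_2\}$, I would choose $\tau$ with $\tau^{\gamma} > \theta$ (for example $\tau = \theta^{1/(2\gamma)}$ when $\gamma > 0$; the case $\gamma_1=\gamma_2=0$ is trivial since then the right-hand side is already a constant). With this choice, $\theta \tau^{-\gamma_j} < 1$ for $j=1,2$, so the two geometric series converge to constants depending only on $\theta$ and $\gamma_j$.

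Finally, since $h$ is bounded on $[r,R]$, the term $\theta^k h(\rho_k)$ tends to $0$ as $k\to\infty$ (here is where boundedness of $h$ is used, and this is the only mildly subtle point in the whole argument). Passing to the limit $k\to\infty$ gives
\[
h(r) \leq \frac{C(\theta,\gamma_1)\, A}{(R-r)^{\gamma_1}} + \frac{C(\theta,\gamma_2)\, B}{(R-r)^{\gamma_2}},
\]
which is the desired conclusion. There is no real obstacle: the entire argument is a telescoping application of the hypothesis with a geometric choice of radii, and the only step requiring a moment of care is calibrating $\tau$ so that both series $\sum_i (\theta\tau^{-\gamma_j})^i$ converge, together with invoking the hypothesis that $h$ is bounded in order to discard the tail term $\theta^k h(\rho_k)$.
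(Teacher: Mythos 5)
Your proof is correct and is the standard argument from the reference the paper cites (Giusti, Lemma 6.1); the paper itself does not reproduce the proof, only the citation. The geometric decomposition $\rho_{i+1}-\rho_i = (1-\tau)\tau^i(R-r)$, the calibration of $\tau$ so that $\theta\tau^{-\gamma_j}<1$, and the use of boundedness of $h$ to discard the tail $\theta^k h(\rho_k)$ are exactly the ingredients of the canonical proof.
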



 \section{Covering lemmas}
 \label{section_three}
 Henceforth, we shall assume there exists a \emph{very weak} solution $u \in  L^{\pp(1-\be)} ( -T,T; W_0^{\pp(1-\be)}(\Om))$ for some $\be \in (0,\be_0)$ with $\be_0 = \be_0(n,\lamot,\plog,m_e)$ to be suitably chosen (see Definition \ref{restriction_be_0}). For this choice of $\be$, let us now fix the following universal constant
\begin{equation}
\label{def_M_0}
\bM_0 := \iint_{\Om_T} \lbr|u| +  |\nabla u|+1\rbr^{p(z)(1-\be)}  \, dz.
\end{equation}

\begin{definition}
\label{restriction_rho_0}
We are going to fix a radius $\rho_0$ which satisfies the following bounds:
\begin{description}[leftmargin=\parindent,labelindent=\parindent]
\descitem{Restriction (i): }{rnone} We will assume that $\rho_0 \leq \frac{1}{1024\bM_0}$, where $\bM_0$ is from \eqref{def_M_0}. This restriction also ensures that we can apply \cite[Theorem 4.13]{adimurthi2017sharp}.
\descitem{Restriction (ii):}{rtwo} We will assume that 
\begin{equation}
\label{def_de_small}
p^+_{Q_{128\rho_0}} - p^-_{Q_{128\rho_0}} \leq \modp(512 \rho_0) \leq \frac12  \min \left\{\frac{1}{n+2}, \frac{1}{4} \right\}.
\end{equation}
\descitem{Restriction (iii):}{rthree} We will assume $\rho_0$ small such that 
\[
\modp(\rho_0) \leq \frac{L}{\log \lbr \frac{1}{\rho_0} \rbr} \leq \frac{1}{n}.
\]

\end{description}
\end{definition}

\begin{remark}
Let us consider the fixed  base cylinder of the form:
\begin{equation}
\label{def_q_0}
Q_0:= Q_{\rho}^{\al_0} (z_0) \qquad  \text{for some fixed} \ \, \al_0 \geq 1 \ \, \text{and} \ \, 128 \rho \leq \rho_0.
\end{equation}
\end{remark}

We now consider the following universal cylinder for $\rho_0$ as defined in Definition \ref{restriction_rho_0}:
\begin{equation*}
\label{large_cylinder}
\mbfq:= Q_{128\rho_0}(z_0) := B_{128\rho_0}(x_0) \times \lbr t_0 - (128\rho_0)^2,t_0 + (128\rho_0)^2\rbr.
\end{equation*}

\begin{remark}
\label{p_bound}
We shall henceforth denote $p^- := p^-_{\mbfq}$ and $p^+:= p^+_{\mbfq}$. This is possible since we will show that all the cylinders considered henceforth will be contained inside $\mbfq$.
\end{remark}

\begin{definition}
Let us choose an exponent $d$ satisfying the bound
\begin{equation}
\label{exp_d}
\frac{2n}{(n+2)p^-} < d < \min\left\{ \frac{2}{p^+}, 1\right\}.
\end{equation}
This choice is possible since we see from \eqref{def_de_small} that  $1-\frac{p^-}{p^+} <\frac{1}{(n+2)p^+} < \frac{1}{n+2}< \frac{2}{n+2}$. 
\end{definition}

In the case of constant exponent, i.e., $\pp \equiv p$,  we can take $d = \min\left\{ \frac{2}{p}, 1\right\}$ and in each of those choices, we recover the usual parabolic scaling considered for singular and degenerate equations. In a sense, our choice of $d$ helps us to obtain an intermediate scaling which lies between the standard intrinsic scaling for singular and degenerate equations. An important fact to note is that all the subsequent estimates obtained will be independent of $d$.

\begin{definition}
\label{restriction_be_0}
We are going to collect all the restrictions on $\be_0$:
\begin{description}[leftmargin=\parindent,labelindent=\parindent]
\descitem{Restriction (i):}{beone} We will take $0 < \be_0 < 1 - \frac{1}{q} \leq \frac{1}{p^+}$ where $q$ is a constant that satisfies $1 < q < \frac{p^-}{p^+-1}$ (see Remark \ref{p_bound} for definition of $p^+$ and $p^-$). This choice is possible due to \eqref{def_de_small}. 
\descitem{Restriction (ii):}{btwo} We will further assume $2\be_0 \leq \frac{d(n+2)}{2} - \frac{n}{p^-}$.
\descitem{Restriction (iii):}{bthree} We will assume $\be_0 \leq \frac12$ holds.
\descitem{Restriction (iv):}{four} We will assume $\mathfrak{p}^- - \be_0 > \frac{2n}{n+2}$ holds.
\end{description}
\end{definition}

\begin{definition}
\label{first_bound_al_0}
Throughout this section, we shall assume  the following holds:
\begin{equation}
\label{est_bnd_al_0}
{\al_0^{1-\be}} \leq  \fiint_{Q_{\rho}^{\al_0}} \lbr |\nabla u| + 1\rbr^{p(z)(1-\be)} \, dz  \txt{and} \fiint_{Q_{16\rho}^{\al_0}} \lbr |\nabla u| + 1\rbr^{p(z)(1-\be)} \, dz \leq  \al_0^{1-\be}.
\end{equation}

\end{definition}

\begin{definition}
\label{cylinders}
Let us now fix two radii $\rho_a$ and $\rho_b$ such that 
\begin{equation}
\label{rho_def}
\rho \leq \rho_a < \rho_1:=\rho_a + \frac13 (\rho_b -\rho_a) < \rho_2:=\rho_a + \frac23 (\rho_b -\rho_a) < \rho_b \leq 16\rho,
\end{equation}
and consider the following chain of cylinders:
\[
Q_{\rho}^{\al_0} (z_0) \subset Q_{\rho_a}^{\al_0} (z_0) \subset Q_{\rho_1}^{\al_0} (z_0) \subset Q_{\rho_2}^{\al_0} (z_0) \subset Q_{\rho_b}^{\al_0} (z_0) \subset Q_{16\rho}^{\al_0} (z_0)\subset Q_{32\rho}^{\al_0} (z_0).
\]
Note that since $128 \rho \leq \rho_0$ and $\al_0 \geq 1$, it is clear that $Q_{32\rho}^{\al_0}(z_0) \subset \mbfq$.
\end{definition}

Consider the following cut-off functions:
\begin{equation}
\label{cut_off_function}
\begin{array}{cccc}
\eta=\eta(x) \in C_c^{\infty}\lbr B_{\rho_2}^{\al_0}(x_0)\rbr, & \eta \equiv 1 \ \, \text{on} \ B_{\rho_1}^{\al_0}(x_0), & 0 \leq \eta \leq 1, & |\nabla \eta| \leq \frac{c}{\scalex{\al_0}{z_0} (\rho_b-\rho_a)}, \\
 \zeta=\zeta(t) \in C_c^{\infty}\lbr I_{\rho_2}^{\al_0}(t_0)\rbr, & \zeta \equiv 1 \ \, \text{on} \ I_{\rho_1}^{\al_0}(t_0), & 0 \leq \zeta \leq 1, & |\pa_t \zeta| \leq \frac{c}{\scalet{\al_0}{z_0}(\rho_b^2-\rho_a^2)}.
\end{array}
\end{equation}

Let us now define the truncated solution to be 
\begin{equation*}
\label{trunc_solution}
\tuh(x,t) := [u]_h(x,t) \eta(x) \zeta(t),
\end{equation*}
where $[u]_h$ denotes the Steklov average as defined in \eqref{stek1}. The boundary condition $u = 0 $ on $\pa_p\Om_T$ implies that the truncated function has support $\spt(\tuh) \subset (\Om \cap B_{\rho_2}^{\al_0}(x_0)) \times \RR$.  

From the first restriction in Definition \ref{restriction_be_0} which gives the choice of exponent $q$, let us now define the following function for some $\be \in (0,\be_0)$:
\begin{equation*}
\label{g_def}
g(z): = 
\mm\lbr \left[|\nabla u| + 1 +\frac{|u|}{\scalex{\al_0}{z_0}\rho}\right]^{\frac{\pp}{q}} \lsb{\chi}{\qfur} \rbr^{q(1-\be)}(z).
\end{equation*}
Recall that $\mm$ is the Hardy-Littlewood Maximal function defined in \eqref{not5.5}.

Given the function $g(z)$, we define the good set for some $\la \geq c_e \al_0$ (here $c_e$ is a universal constant to be fixed in Lemma \ref{fix_ce}):
\begin{equation}
\label{elam}
\elam := \{ z \in \RR^{n+1} : g(z) \leq \la^{1-\be} \}.
\end{equation}

We shall use the following notation $\elam(t) := \{ (\tx,\tlt) \in \elam: \tlt=t\}$ to denote a time slice of the \emph{good set}. 

\begin{lemma}\label{fix_ce}
There exists a constant $c_e= c_e(\plog,n)$ such that if $\la \geq c_e \al_0$ for $\al_0$ satisfying \eqref{est_bnd_al_0}, then $\elam \neq \emptyset$. 
\end{lemma}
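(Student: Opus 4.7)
The plan is a direct Chebyshev-type bound: show that the ``bad set'' $\{g > \la^{1-\be}\}$ occupies strictly less than the full measure of $\qfur$ once $\la \geq c_e\al_0$, from which $\elam \cap \qfur$ has positive measure and hence $\elam \neq \emptyset$. Writing $F(z) := |\nabla u| + 1 + \frac{|u|}{\scalex{\al_0}{z_0}\rho}$, the function inside the maximal operator is $f := F^{\pp/q}\lsb{\chi}{\qfur}$ and $g(z) = [\mm f(z)]^{q(1-\be)}$, so the goal is to bound $\iint g$ by a universal multiple of $\al_0^{1-\be}|\qfur|$.

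First I would use the strong-type $L^{q(1-\be)}$ bound from Lemma \ref{max_bnd}, which is admissible because the exponent $q(1-\be)$ is strictly larger than $1$ by restriction \descref{beone}{beone}. Combined with the pointwise algebraic identity $(F^{\pp/q})^{q(1-\be)} = F^{\pp(1-\be)}$, this gives
\begin{equation*}
\iint_{\RR^{n+1}} g(z)\,dz \apprle_{n,\plog} \iint f(z)^{q(1-\be)}\,dz = \iint_{\qfur} F(z)^{\pp(1-\be)}\,dz.
\end{equation*}

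Next, the $|u|$ contribution to $F$ must be absorbed into the gradient term, which is where the boundary assumption enters. Because $x_0 \in \pa\Om$, Assumption \ref{uniform_measure} together with the Dirichlet condition $u \equiv 0$ on $\Om^c$ yields $|B_r(x_0) \cap \{u(\cdot,t) = 0\}| \geq m_\ve|B_r(x_0)|$ for every $t$ and every $r > 0$. Applying Theorem \ref{measure_density_poincare} slice-by-slice to the shifted log-H\"older exponent $s(x) := p(x,t)(1-\be)$ (the radius hypothesis is secured by restriction \descref{rnone}{rnone} and the $\bM_p$-hypothesis by the definition of $\bM_0$ in \eqref{def_M_0}) and then integrating in time gives
\begin{equation*}
\iint_{\qfur} F(z)^{\pp(1-\be)}\,dz \apprle_{n,\plog,m_\ve} \iint_{\qfur}(|\nabla u|+1)^{\pp(1-\be)}\,dz,
\end{equation*}
where the radii bookkeeping uses $\scalex{\al_0}{z_0} = \al_0^{d/2 - 1/p(z_0)} \leq 1$ (the exponent is negative since $d < 2/p^+$ by \eqref{exp_d} and $\al_0 \geq 1$), so that all cylinders appearing in the Poincar\'e step remain inside $\mbfq$. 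The second inequality of \eqref{est_bnd_al_0} then bounds the right-hand side by $\al_0^{1-\be}|\qfur|$, and Chebyshev produces
\begin{equation*}
\la^{1-\be}|\{z \in \qfur: g(z)>\la^{1-\be}\}| \leq \iint_{\qfur} g\,dz \leq C\al_0^{1-\be}|\qfur|,
\end{equation*}
for some $C = C(n,\plog,m_\ve)$. Choosing $c_e := (2C)^{1/(1-\be)}$---uniformly bounded since $1-\be \geq 1/2$ by restriction \descref{bthree}{bthree}---ensures that $\la \geq c_e\al_0$ forces $|\{g>\la^{1-\be}\}\cap\qfur| \leq |\qfur|/2$, completing the proof.

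The main technical obstacle is the careful application of Theorem \ref{measure_density_poincare} with the variable \emph{shifted} exponent $p(\cdot)(1-\be)$ in the parabolic setting: one must verify that the modulus-of-continuity hypothesis transfers to the shifted exponent, confirm the radius bound uniformly across time slices, and cover any mild mismatch between the radii on the two sides of Poincar\'e via a short boundary covering argument (using $\scalex{\al_0}{z_0} \leq 1$ to keep everything inside $\mbfq$). Once this is handled, the remaining chain---strong-type maximal inequality, pointwise algebra, the hypothesis \eqref{est_bnd_al_0}, and Chebyshev---is entirely routine.
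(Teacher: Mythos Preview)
Your proposal is correct and follows essentially the same route as the paper: both arguments bound $\iint_{\qfur} g$ by a universal multiple of $\al_0^{1-\be}|\qfur|$ via the strong maximal inequality (Lemma \ref{max_bnd}), reduce the $|u|$-term to the gradient term using the measure-density Poincar\'e inequality (Theorem \ref{measure_density_poincare}), and then invoke the second inequality in \eqref{est_bnd_al_0}. The only cosmetic difference is that the paper phrases the argument by contradiction (assuming $\elam=\emptyset$ and deriving $\la^{1-\be}|\qfur|\apprle \al_0^{1-\be}|\qfve|$), whereas you proceed directly via Chebyshev to show the bad set has at most half the measure of $\qfur$; these are the same computation.
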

\begin{proof}
To show this, we proceed by contradiction.
Suppose $\elam = \emptyset$ for some $\la >0$, then we must have
\begin{equation}
\label{bond-1}\begin{array}{rcl}
\la^{1-\be} |\qfur| & \leq &\iint_{\qfur} g(z) \, dz \\
 & \overset{\text{Lemma \ref{max_bnd}}}{\apprle}& \iint_{\qfur} \left[|\nabla u|+1\right]^{\pp(1-\be)}  + \left[\frac{|u|}{\scalex{\al_0}{z_0}\rho}\right]^{\pp(1-\be)} dz \\
 & =& J_1 + J_2.
\end{array}\end{equation}
Let us now estimate each of the above terms:
\begin{description}
\item[Estimate for $J_1$:] Using \eqref{est_bnd_al_0}, we get
\begin{equation}
\label{bond-J1}
\iint_{\qfur} \left[|\nabla u|+1\right]^{\pp(1-\be)} \, dz \leq \iint_{\qfve} \left[|\nabla u|+1\right]^{\pp(1-\be)} \, dz \leq \al_0^{1-\be} |\qfve|.
\end{equation}


\item[Estimate for $J_2$:] We can estimate $J_2$ as follows:
\begin{equation}
\label{bond-J3} 
\begin{array}{rcl}
 \iint_{\qfur} \left[\frac{|u|}{\scalex{\al_0}{z_0}\rho}\right]^{\pp(1-\be)} dz & \leq & \iint_{\qfve}\left[\frac{|u|}{\scalex{\al_0}{z_0}\rho}\right]^{\pp(1-\be)} dz \\
 & \overset{\text{Theorem \ref{measure_density_poincare}}}{\apprle} & \iint_{\qfve} \left[|\nabla u| + 1\right]^{\pp(1-\be)} dz \\
& \overset{\eqref{est_bnd_al_0}}{\apprle} & \al_0^{1-\be} |\qfve|. 
\end{array}
\end{equation}
\end{description}

Combining the estimates \eqref{bond-J1}, \eqref{bond-J3} and \eqref{bond-1}, we see that 
\begin{equation}
\label{bond-2}
\la^{1-\be} |\qnot| \leq \la^{1-\be} |\qfur| \apprle_{(n,\plog,m_e)} \al_0^{1-\be} |\qfve|.
\end{equation}
Hence, if we denote 
\begin{equation*}\label{choice_ce}c_e := \lbr  C_{(n,\plog,m_e)} \frac{|\qfve|}{|\qnot|} \rbr^{\frac{1}{1-\be}}, \end{equation*} where $C_{(n,\plog,m_e)}$ is the constant in \eqref{bond-2}, then for all $\la > c_e \al_0$, the estimate \eqref{bond-2} fails which shows that $\elam \neq \emptyset$ whenever $\la > c_e \al_0$. Note that since $\be \leq \be_0 \leq \frac12$ (see Definition \ref{restriction_be_0}) , the constant $c_e$ is independent of $\be$. This proves the lemma. 

\end{proof}

\begin{lemma}
With $\al_0$ as in \eqref{est_bnd_al_0} and any $\rho$ satisfying $128\rho \leq \rho_0$, we have 
\begin{equation}
\label{bnd_al_0_rho}
\al_0^{p^+_{\qfve} - p^-_{\qfve}} \leq C_{(n,\plog,m_e)} \txt{and} \rho^{\pm \abs{p^+_{\qfve} - p^-_{\qfve}}} \leq C_{(n,\plog,m_e)}. 
\end{equation}
\end{lemma}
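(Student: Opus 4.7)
The two assertions rest on the log-Hölder regularity of $\pp$ together with the information bundled into \eqref{est_bnd_al_0} and the restrictions on $\rho_0$ in Definition \ref{restriction_rho_0}. The plan is to reduce both estimates to showing that $(p^+_{\qfve} - p^-_{\qfve}) \log \al_0$ and $(p^+_{\qfve} - p^-_{\qfve}) \log(1/\rho)$ are bounded by a universal constant, and to control $p^+_{\qfve} - p^-_{\qfve}$ via $\modp$ evaluated at the parabolic diameter of $\qfve$ in the metric $d_p$ of Definition \ref{parabolic_metric}.

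First I would bound the parabolic diameter of $\qfve$. Since $\al_0 \geq 1$ and the choice of $d$ in \eqref{exp_d} forces both $-\tfrac{1}{p(z_0)} + \tfrac{d}{2} < 0$ (using $d < 2/p^+ \leq 2/p(z_0)$) and $-1 + d < 0$, the scaling factors $\scalex{\al_0}{z_0}$ and $\scalet{\al_0}{z_0}$ are both at most one. Consequently $\diam_{d_p}(\qfve) \leq 32\rho$ and, by Remark \ref{remark_def_p_log},
\[
p^+_{\qfve} - p^-_{\qfve} \leq \modp(32\rho).
\]
The bound $\rho^{p^+_{\qfve}-p^-_{\qfve}} \leq 1$ is immediate from $\rho \leq \rho_0 \leq 1/2$, while $\rho^{-(p^+_{\qfve}-p^-_{\qfve})} \leq C$ follows from $\modp(32\rho) \log(1/\rho) \leq \modp(32\rho) \log(1/(32\rho)) + \modp(\rho_0) \log 32 \leq L + C$ via log-Hölder continuity.

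The main work is the bound on $\al_0^{p^+_{\qfve}-p^-_{\qfve}}$. The strategy is to extract a bound for $\log\al_0$ from \eqref{est_bnd_al_0}. Using $|Q_\rho^{\al_0}| = c_n \al_0^{-n/p(z_0) + (n+2)d/2 - 1}\rho^{n+2}$, the first inequality of \eqref{est_bnd_al_0} combined with $\iint_{\qnot}(|\nabla u|+1)^{\pp(1-\be)}\,dz \leq \bM_0$ yields
\[
\al_0^{-\be + (n+2)d/2 - n/p(z_0)}\, \rho^{n+2} \leq C_n \bM_0.
\]
By \descref{btwo}{btwo} of Definition \ref{restriction_be_0}, the exponent of $\al_0$ on the left is at least $\be_0$, hence
\[
\log \al_0 \leq \frac{1}{\be_0}\bigl[(n+2) \log(1/\rho) + \log \bM_0 + C\bigr].
\]
Multiplying by $(p^+_{\qfve} - p^-_{\qfve}) \leq \modp(32\rho)$, the first contribution $\modp(32\rho)\log(1/\rho)$ is bounded by log-Hölder; the second contribution $\modp(32\rho) \log \bM_0$ is handled through \descref{rnone}{rnone}, which enforces $\rho \leq \rho_0 \leq 1/(1024\bM_0)$ and thus $\log \bM_0 \leq \log(1/\rho) + C$. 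Both terms then collapse to universal constants depending only on $n$, $\plog$, $m_e$.

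The principal obstacle is precisely this interplay between $\log \bM_0$ and the smallness of $\rho$: without the calibration of $\rho_0$ to the data in \descref{rnone}{rnone}, no log-Hölder bound at a radius independent of $\bM_0$ could absorb $\log \bM_0$. The coupling between \eqref{est_bnd_al_0} and the restrictions in Definitions \ref{restriction_rho_0} and \ref{restriction_be_0} is tailor-made to close this gap, and once it is exploited the remaining computation is routine.
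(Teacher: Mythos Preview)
Your proposal is correct and follows essentially the same route as the paper: bound $p^+_{\qfve}-p^-_{\qfve}$ by $\modp$ at scale $\sim\rho$, use log-H\"older for the $\rho$-bound, and extract $\al_0^{\text{positive exponent}}\rho^{n+2}\le C\bM_0$ from \eqref{est_bnd_al_0} to control $\log\al_0$.

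One small dependency-tracking point: by phrasing the lower bound on the exponent of $\al_0$ as ``at least $\be_0$'' and then writing $\log\al_0\le \be_0^{-1}[\cdots]$, your final constant scales like $1/\be_0$, and $\be_0$ in Definition~\ref{restriction_be_0} is allowed to depend on $\Lambda_0,\Lambda_1$ as well. The paper instead records the lower bound directly as $\tilde\ve\ge \tfrac{d(n+2)}{4}-\tfrac{n}{2p^-}$, which depends only on $n,\plog$, matching the stated $C_{(n,\plog,m_e)}$. Your own use of \descref{btwo}{Restriction (ii)} already yields exactly this bound (the exponent is $-\be+\tfrac{d(n+2)}{2}-\tfrac{n}{p(z_0)}\ge \tfrac12\bigl(\tfrac{d(n+2)}{2}-\tfrac{n}{p^-}\bigr)$), so this is only a matter of which constant you name.
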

\begin{proof} Let us prove each of the bounds as follows:
\begin{description}
\item[Bound for $\rho$:] Since $\pp \in \plog$, we have from \eqref{parabolic_metric} that
\begin{align}
\nonumber p^+_{\qfve} - p^-_{\qfve} & \leq \modp \lbr \max\left\{ \scalex{\al_0}{z_0}16\rho, \sqrt{\scalet{\al_0}{z_0} (16\rho)^2}  \right\} \rbr \\
\label{2.2.5} & \leq \modp (32\rho)  \leq 32 \modp(\rho).
\end{align}
Note that to obtain \eqref{2.2.5}, we have made use of \eqref{exp_d} along with the fact that $\al_0 \geq 1$ to get $\scalex{\al_0}{z_0} \leq 1$ and $\scalet{\al_0}{z_0} \leq 1$.

Since we have $\rho_0 \leq 1$, we must also have $\rho \leq 1$. Thus the bound $\rho^{p^+_{\qfve} - p^-_{\qfve}} \leq 1$ is trivial. On the other hand, to bound $\rho^{-(p^+_{\qfve} - p^-_{\qfve})}$, we use the fact that $\modp$ is $\log$-H\"older continuous to get
\begin{equation*}
\rho^{-(p^+_{\qfve} - p^-_{\qfve})} \leq \rho^{-32\modp(\rho)} \leq e^{32\modp(\rho) \log \frac{1}{\rho}} \overset{\text{Remark \ref{remark_def_p_log}}}{\leq} e^{32L}.
\end{equation*}

This proves the second bound.
\item[Bound for $\al_0$:] From \eqref{est_bnd_al_0}, we see that 
\begin{equation*}
\al_0^{1-\be} \leq \frac{1}{\scalexn{\al_0}{z_0} \rho^{n+2} \scalet{\al_0}{z_0}} \iint_{\qnot} (|\nabla u| + 1)^{p(z)-\ve} \, dz.
\end{equation*}

Making use of \eqref{def_M_0}, we get
\begin{equation*}
\al_0^{1-\be \scalexexpn{\al_0}{z_0} \scaletexp{\al_0}{z_0}} \leq \frac{\bM_0}{\rho^{n+2}}.
\end{equation*}

From \eqref{exp_d}, we see that $\frac{d(n+2)}{2} - \frac{n}{p^-} > 0$ and from Definition \ref{restriction_be_0}, we have 
\begin{equation}\label{tildeve} \tilde{\ve}:= 1-\be -\frac{n}{p^-}+\frac{nd}{2}-1+d \geq \frac{d(n+2)}{4}-\frac{n}{2p^-} > 0.\end{equation}
Along with the fact that $\rho_0 \apprle \frac{1}{\bM_0}$ (see Definition \ref{restriction_rho_0}), we get
\begin{equation}
\label{2.2.11}
\al_0 \leq \lbr \frac{1}{\rho^{n+3}}\rbr^{\frac{1}{\tilde{\ve}}}. 
\end{equation}

Using \eqref{2.2.5} into \eqref{2.2.11}, we get
\begin{equation*}
\al_0^{p^+_{\qfve} - p^-_{\qfve}} \leq \rho^{-\frac{32(n+3)}{\tilde{\ve}} \om(\rho)} \leq e^{\frac{32(n+3)}{\tilde{\ve}} \om(\rho) \log \frac{1}{\rho}} \overset{\text{Remark \ref{remark_def_p_log}}}{\leq} C_{(n,\plog,m_e)}.
\end{equation*}

\end{description}

\end{proof}

\begin{lemma}
\label{lemma_vitali}
For every $z \in \qfur\setminus\elam$, consider the parabolic cylinders of the form 
\begin{equation*}
\label{q_rho_z}
Q_{\rho_z}^{\la}(z) := B_{\scalex{\la}{z}\rho_z}(x) \times (t - \scalet{\la}{z}\rho_z^2,t + \scalet{\la}{z}\rho_z^2)
 \end{equation*}
 where \begin{equation}
 \label{2.2.19}
 \rho_z := d^{\la}_z(z,\elam) := \inf_{\mfz \in \elam} d^{\la}_z(z,\mfz).
 \end{equation}
 Let $\mft \in (0,1)$ be a given constant and consider the open covering of $\qfur \setminus \elam$ given by 
 \begin{equation}
 \label{covering_F}
 \mcf := \left\{ Q_{\mft \rho_z}^{\la}(z)\right\}_{z \in \qfur \setminus \elam}.
 \end{equation}

 Then there exists a universal constant $\mfi = \mfi_{(n,\plog,m_e)}\geq 9$ and a countable disjoint sub-collection $ \mcg := \{Q_{\rho_i}^{\la}(z_i)\}_{i \in \NN}\subset \mcf$  such that there holds
 \begin{equation*}
 \bigcup_{\mcf}Q_{\rho_z}^{\la}(z) \subset \bigcup_{\mcg} Q_{\mfi \rho_{z_i}}^{\la} (z_i).
 \end{equation*}
\end{lemma}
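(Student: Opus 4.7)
\medskip

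\noindent\textbf{Proof proposal.} The plan is to adapt the classical greedy (Vitali) selection argument to the setting of intrinsically scaled cylinders $Q_{\rho_z}^{\la}(z)$. The essential difficulty, which is absent in the constant-exponent case, is that the cylinders at different centres $z,z'$ live in \emph{different} parabolic metrics $d_z^{\la}$ and $d_{z'}^{\la}$, since the scaling factors $\scalex{\la}{z}$ and $\scalet{\la}{z}$ depend on the centre through $p(z)$. Everything will reduce to a uniform comparison of these scalings, for which log-H\"older continuity of $\pp$ combined with the a priori bound \eqref{bnd_al_0_rho} is the crucial input.

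First I would observe that $\rho_z < \infty$ for every $z \in \qfur \setminus \elam$: by Lemma \ref{fix_ce} the set $\elam$ is non-empty (since we may assume $\la \geq c_e \al_0$), so the infimum in \eqref{2.2.19} is finite. Moreover, using that $z \in \qfur$ and the explicit formulas defining $d_z^\la$, one obtains a uniform upper bound $\sup_{z \in \qfur\setminus\elam} \rho_z \leq R_0 < \infty$, where $R_0$ depends only on $n,\plog,\al_0,\rho$ (and in fact will be absorbed by the constant). With $R_0$ in hand I would partition the cover into dyadic generations
\[
\mcf_k := \bigl\{ Q_{\mft\rho_z}^{\la}(z) \in \mcf \ : \ 2^{-k} R_0 < \rho_z \leq 2^{-k+1} R_0 \bigr\}, \qquad k \geq 1,
\]
and inductively extract, in each generation, a maximal family $\mcg_k \subset \mcf_k$ pairwise disjoint and also disjoint from $\mcg_1 \cup \cdots \cup \mcg_{k-1}$. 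Since the chosen cylinders are pairwise disjoint and contained in a bounded set of positive measure, $\mcg := \bigcup_k \mcg_k = \{Q_{\mft\rho_{z_i}}^{\la}(z_i)\}_{i \in \NN}$ is countable.

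The key technical lemma I must establish is the following scaling comparison: whenever $z, z' \in \qfur \setminus \elam$ are such that the cylinders $Q_{\mft\rho_z}^{\la}(z)$ and $Q_{\mft\rho_{z'}}^{\la}(z')$ overlap, then
\[
\frac{\scalex{\la}{z}}{\scalex{\la}{z'}} = \la^{\frac{p(z) - p(z')}{p(z)p(z')}} \leq C_{(n,\plog,m_e)},
\]
and analogously for the time scaling $\scalet{\la}{z}/\scalet{\la}{z'}$. Because $d < 2/p^+$ from \eqref{exp_d}, both $\scalex{\la}{z}$ and $\scalet{\la}{z}$ tend to zero as $\la\to\infty$, so the overlapping hypothesis forces $|z - z'|$ (in the Euclidean parabolic sense) to be small, namely bounded by a multiple of $\mft(\scalex{\la}{z}\rho_z + \scalex{\la}{z'}\rho_{z'})$. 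Log-H\"older continuity then gives $|p(z)-p(z')| \leq \modp(r)$ for some small $r$ depending on $\la$ and $\rho_z$, and one estimates $\la^{\modp(r)}$ uniformly by using that $-\log r \gtrsim \log \la + |\log \rho_z|$ (due to the shrinking of the cylinders as $\la\to\infty$) and mimicking the short computation used to prove the second half of \eqref{bnd_al_0_rho}. \emph{This is the main obstacle, and is where the unified intrinsic scaling crucially pays off.}

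Given this scaling comparison, the conclusion follows by the usual argument: any $Q_{\mft\rho_z}^{\la}(z) \in \mcf$ either already lies in $\mcg$ or intersects some $Q_{\mft\rho_{z_i}}^{\la}(z_i) \in \mcg$ of generation $\leq k$ when $Q_{\mft\rho_z}^{\la}(z) \in \mcf_k$, which in particular forces $\rho_{z_i} \geq \rho_z/2$. Combining this size relation with the scaling comparison of the previous paragraph, the triangle inequality in the single metric $d_{z_i}^{\la}$ yields the inclusion $Q_{\rho_z}^{\la}(z) \subset Q_{\mfi \rho_{z_i}}^{\la}(z_i)$ for a universal constant $\mfi = \mfi_{(n,\plog,m_e)} \geq 9$, completing the proof.
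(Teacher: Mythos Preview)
Your overall strategy --- dyadic greedy selection in the style of Vitali, with the key technical step being the comparison of the intrinsic scalings at different centres --- is exactly what the paper does. You correctly identify that the heart of the matter is a uniform bound on $\la^{|p(\bar z)-p(\hat z)|}$ when $Q_{\mft\rho_{\bar z}}^\la(\bar z)$ and $Q_{\mft\rho_{\hat z}}^\la(\hat z)$ overlap, and your endgame (the size relation $\rho_{z_i}\geq \rho_z/2$ from the generation index together with the scaling comparison forcing the inclusion in the $d_{z_i}^\la$-metric) matches the paper precisely.

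However, your sketch of \emph{how} to obtain the scaling comparison has a gap. You claim that from the overlap one gets $|p(\bar z)-p(\hat z)| \leq \modp(r)$ with $-\log r \gtrsim \log\la + |\log\rho_{\bar z}|$, and that this alone controls $\la^{\modp(r)}$. But the inequality $-\log r \gtrsim \log\la + |\log\rho_{\bar z}|$ fails when $\rho_{\bar z} > 1$: from $r\leq \la^{-c}\rho_{\bar z}$ one only has $-\log r \geq c\log\la - \log\rho_{\bar z}$, and since $\rho_{\bar z} = d_{\bar z}^\la(\bar z,\elam)$ can be of order $\nscalex{\la}{\bar z}\cdot\rho \approx \la^{1/p(\bar z)-d/2}\rho$, the term $-\log\rho_{\bar z}$ may be large negative. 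Pure geometry (shrinking of cylinders as $\la\to\infty$) is therefore not enough to bound $\la^{\modp(r)}$ for $\la$ arbitrary.

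The missing ingredient, which is what actually drives the paper's argument (see \eqref{2.2.33}--\eqref{2.2.36.1}), is that the centre $\bar z$ lies in $\elam^c$, so $g > \la^{1-\be}$ on the entire cylinder $\lnq = Q_{\mft\rho_{\bar z}}^\la(\bar z)\subset\elam^c$. Integrating and using the $L^{q(1-\be)}$-boundedness of the maximal function then yields $\la^{1-\be}|\lnq| \leq C\bM_0$, hence $\la \leq C\bigl(\bM_0/\rho_{\bar z}^{\,n+2}\bigr)^{1/\tilde{\ve}}$ with $\tilde\ve > 0$ as in \eqref{tildeve}. This is the genuine analogue for $\la$ of the computation behind \eqref{bnd_al_0_rho} (which relied on the stopping condition \eqref{est_bnd_al_0} to bound $\al_0$); your phrase ``mimicking the short computation'' may have intended this, but the crucial use of the level-set condition together with Lemma~\ref{max_bnd} must be made explicit --- it is not a consequence of cylinder geometry. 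Once $\la \lesssim (\bM_0/\rho_{\bar z}^{\,n+2})^{1/\tilde\ve}$ is in hand, log-H\"older continuity and the localization $\rho_0 \lesssim 1/\bM_0$ finish the scaling comparison exactly as you outline, and the rest of your Vitali argument goes through.
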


 \begin{proof}
 From \eqref{covering_F} and the fact that $\qfur \subset \qfve$, we see that 
 \begin{equation}
 \label{obse-1}
 \begin{array}{c}
 \scalex{\la}{z}\mft \rho_z \leq \scalex{\la}{z} \rho_z \leq 32 \scalex{\al_0}{z_0}\rho \leq 32\rho  \leq \frac{\rho_0}{4}, \\
 \scalet{\la}{z} (\mft\rho_z)^2  \leq \scalet{\la}{z} (\rho_z)^2 \leq \scalet{\al_0}{z_0} 2 (16 \rho)^2 \leq 512 \rho \leq 4 \rho_0.
 \end{array}
 \end{equation}
 Note that to obtain \eqref{obse-1}, we have used  $\rho_0 \leq 1$. Also from \eqref{obse-1}, it is easy to observe  that $Q_{\rho_z}^{\la} \subset \mfq$ for all $z \in \qfur \setminus \elam$.

 From \eqref{obse-1}, we see that $\sup_{z \in \qfur\setminus \elam} \rho_z := R < \infty$ and hence for any $j\in \NN$, consider the following sub-collection of \eqref{covering_F}:
 \begin{equation*}
 \mcf_j := \left\{ Q_{\rho_z}^{\la}(z) \in \mcf : \frac{R}{2^{j}} < \rho_z \leq \frac{R}{2^{j-1}} \right\}.
 \end{equation*}
Let us now extract a countable collection that satisfies the conclusions of this lemma.  Assume that the countable disjoint sub-collection $\mcg_1 \subset \mcf, \mcg_2\subset \mcf, \ldots, \mcg_{k-1}\subset \mcf$ has been chosen, we will now inductively choose $\mcg_k$ to be the maximal disjoint sub-collection from the set
 \begin{equation*}
 \left\{ Q \in \mathcal{F} : Q \cap \htq = \emptyset,  \, \forall \htq \in \bigcup_{j=1}^{k-1} \mcg_j\right\}.
 \end{equation*}

Now let us define 
 \begin{equation}
 \label{def_mcg}
 \mcg := \bigcup_{j=1}^{\infty} \mcg_j. 
 \end{equation}
It is easy to see that  $\mcg$ constructed in \eqref{def_mcg} is a disjoint countable sub-collection  of $\mcf$.

In order to prove the Vitali covering lemma, we need to show the following two properties hold:
\begin{itemize}
\item For any $\lnq \in \mcf$, there exists an $\htq\in \mcg$ such that $\lnq \cap \htq \neq \emptyset$ and 
\item There exists a universal constant $\mfi = \mfi_{(n,\plog,m_e)}$ such that $\lnq \subset \mfi \htq$ holds. 
\end{itemize}

Let us now fix  $\lnq \in \mcf$, then it must necessarily belong to $\mcf_k$ for some $k \in \NN$.  By the maximality of $\mcg_k$, there exists $\htq \in \bigcup_{j=1}^k \mcg_j$ such that $\lnq \cap \htq \neq \emptyset$. This proves the first assertion of the Vitali Covering.

Let us now prove the second assertion of the Vitali covering lemma.  Recall that $\lnq \in \mcf$ and $\htq \in \mcg$ are two fixed cylinders such that $\lnq \cap \htq \neq \emptyset$. 
We shall use the notation
\begin{equation*}
\label{notation_barzhatz}
\begin{array}{l}
\lnq = \lnb \times \lni = Q_{\rzb}^{\la}(\lz)= B_{\scalex{\la}{\lz}\rzb}(\lnx) \times (\lntm - \scalet{\la}{\lz} \rzb^2, \lntm + \scalet{\la}{\lz} \rzb^2),\\
\htq = \htb \times \hti = Q_{\rzh}^{\la}(\hz)= B_{\scalex{\la}{\hz}\rzh}(\htx) \times (\httm - \scalet{\la}{\hz} \rzh^2, \httm + \scalet{\la}{\hz} \rzh^2).
\end{array}
\end{equation*}
From \eqref{covering_F}, we also know that $\lz, \hz \in \qfur \setminus \elam$.  From the choice of $\htq$ and $\lnq$, we have
\begin{equation}
\label{rhobarzhatz}
\rzb  \leq \frac{R}{2^{k-1}} = \frac{2R}{2^k} \leq 2 \rzh.
\end{equation}

We shall now show $\lnb \subset \mfi \htb$ and $\lni \subset \mfi \hti$ for some $\mfi = \mfi_{(n,\plog,m_e)}$:
\begin{description}
\item[Inclusion $\lnb \subset \mfi \htb$:] It would suffice to show the following holds:
\begin{equation}
\label{2.2.27}
\scalex{\la}{\lz}\rzb \leq \mfi \scalex{\la}{\hz} \rzh.
\end{equation}

After using \eqref{rhobarzhatz}, proving \eqref{2.2.27} reduces to showing 
\begin{equation}
\label{2.2.28}
\la^{p(\lz) - p(\hz)} \leq C_{(n,\plog,m_e)}.
\end{equation}

Since $\la \geq 1$, if $p(\lz) \leq p(\hz)$, then we trivially have $\la^{p(\lz) - p(\hz)} \leq 1$.  Hence without loss of generality, we shall assume $p(\lz) \geq p(\hz)$.
Since we have $\lnq \cap \htq \neq \emptyset$, there exists a point $\tz \in \lnq \cap \htq$ which we use to obtain
\begin{equation}
\label{2.2.29}
\la^{p(\lz) - p(\hz)} = \la^{p(\lz) - p(\tz)} \la^{p(\tz) - p(\hz)}.
\end{equation}
%
We bound $\la^{p(\lz) - p(\tz)}$ as follows:
\begin{equation}
\label{2.2.31}
p(\lz) - p(\tz)  \leq \modp (d_p(\lz,\tz)) \leq \modp(2 \scalex{\la}{\lz} \rzb +  2 \scalet{\la}{\lz} \rzb) \leq 4 \modp(\rzb).
\end{equation}
To obtain the above bound, we have used $\la \geq 1$ combined with \eqref{exp_d}.

On $\lnq$, we have $g(\cdot) > \la^{1-\be}$, which combined with Lemma \ref{max_bnd} {and} Theorem \ref{measure_density_poincare} gives
%
\begin{equation}
\label{2.2.33}
\begin{array}{rcl}
\la^{1-\be} |\lnq| & \leq &\iint_{\lnq} g(z) \, dz \\
& \overset{\text{Lemma \ref{max_bnd}}}{\apprle} & \iint_{\qfur} \left[|\nabla u|+1\right]^{\pp(1-\be)}   + \left[\frac{|u|}{\scalex{\al_0}{z_0}\rho}\right]^{\pp(1-\be)} \, dz \\
& \overset{\text{Theorem \ref{measure_density_poincare}}}{\apprle} & \iint_{\qfurr} \left[|\nabla u|+1\right]^{\pp(1-\be)} \, dz\\
& \overset{\eqref{def_M_0}}{\leq} & C_{(n,\plog,m_e)} \bM_0.
\end{array}
\end{equation}

We also have 
\begin{equation}
\label{2.2.34}
|\bar{Q}| = \la^{\scalexexpn{\la}{\lz}} \rzb^{n+2} \scalet{\la}{\lz}.
\end{equation}

Combining \eqref{2.2.34} and \eqref{2.2.33}, we get
\begin{equation*}
\label{2.2.35}
\la^{1-\be + \frac{nd}{2} - \frac{n}{p^-} -1+d} \leq \la^{1-\be + \frac{nd}{2} - \frac{n}{p(\bar{z})} -1+d} \leq \frac{\bM_0}{\rho_{\bar{z}}^{n+2}}.
\end{equation*}

Similarly to the calculation in \eqref{tildeve}, we obtain
\begin{equation}
\label{2.2.36.1}
\la \leq \lbr \frac{\bM_0}{\rzb^{n+2}} \rbr^{\frac{1}{\tilde{\ve}}}. 
\end{equation}

Combining \eqref{2.2.31} and \eqref{2.2.36.1} gives
\begin{equation}
\label{2.2.36}
\la^{p(\lz) - p(\tz)} \leq \bM_0^{\frac{p(\lz) - p(\tz)}{\tilde{\ve}}}\lbr \frac{1}{\rzb} \rbr^{\frac{4\modp(\rzb)}{\tilde{\ve}}} \leq C_{(n,\plog,m_e)}\bM_0^{p(\bar{z}) - p(\tilde{z})}.
\end{equation}

Analogous to  \eqref{2.2.36}, there also holds
\begin{equation}
\label{2.2.37}
\la^{ p(\tz)-p(\hz)}  \leq C_{(n,\plog,m_e)}\bM_0^{ p(\tz)-p(\hz)}.
\end{equation}

Combining \eqref{2.2.36},\eqref{2.2.37} and \eqref{2.2.29}, we get
\begin{align}
\nonumber \la^{p(\bar{z}) - p(\hat{z})} & = \la^{p(\bar{z}) - p(\tilde{z})} \la^{p(\tilde{z}) - p(\hat{z})} \\
\nonumber & \leq C_{(n,\plog,m_e)}\bM_0^{p(\bar{z}) - p(\tilde{z})} \bM_0^{ p(\tilde{z})-p(\hat{z})} \\
\label{2.2.38} & = C_{(n,\plog,m_e)} \bM_0^{p(\bar{z}) -p(\hat{z})}.
\end{align}

Recall from the covering \eqref{covering_F}, we have $\lz,\hz \in \qfur$, which implies 
\begin{equation*}
\label{2.2.39}
{p(\lz) -p(\hz)} \leq p^+_{\qfve} - p^-_{\qfve} \overset{\eqref{2.2.5}}{\leq} 64 \modp (\rho).
\end{equation*}

Since we have restricted $\rho \apprle \rho_0 \apprle \frac{1}{\bM_0}$, we obtain
\begin{equation}
\label{2.2.40}
\bM_0^{p(\bar{z}) -p(\hat{z})} \leq \rho^{-16\om(\rho)} \leq C_{(n,\plog,m_e)}.
\end{equation}

Combining \eqref{2.2.40} and \eqref{2.2.38} implies the bound \eqref{2.2.28}.  This concludes the proof of the \emph{space inclusion}.

  \item[Inclusion $\lni \subset \mfi \hti$:] In order to show $\lni \subset \mfi \hti$, we need to show the following holds:
\begin{equation*}
\label{2.2.41}
\scalet{\la}{\rzb}\rzb^2 \leq \mfi \scalet{\la}{\rzh}\rzh^2.
\end{equation*}

But from \eqref{rhobarzhatz}, it is easy to get
\begin{equation*}
\label{2.2.42}
\scalet{\la}{\rzb}\rzb^2 \leq 4 \scalet{\la}{\rzh}\rzh^2 \quad \Longrightarrow \quad \lni \subset 9 \hti.
\end{equation*}

\end{description}

This completes the proof of the Vitali type lemma. 
 \end{proof}

  \begin{lemma}
 \label{whitney_covering}
 Let $\mcf$ be a covering of $\qfur \setminus \elam$ given by the cylinders:
 \begin{equation*}
\mcf := \left\{Q_{\frac{\de}{\mfi}\rho_z}^{\la}(z)\right\}_{z \in \qfur \setminus \elam},
 \end{equation*}
 with $\de= \frac{1}{4\hat{c}}$, where $\hat{c}$ is from \descref{W4}{W4} and $\mfi$ is the constant from Lemma \ref{lemma_vitali}.
 
 Subordinate to the covering $\mcf$, there exists a countable sub-collection 
 $
\mcg = \left\{ Q_{\de \rho_{z_i}}^{\la}(z_i)\right\}_{i \in \NN} = \{ Q_i\}_{i \in \NN} 
 $
 such that the following holds:
 
\begin{description}
\descitem{(W1)}{W1} $\qfur \setminus \elam \subset \bigcup_{i \in \NN}Q_i $.
\descitem{(W2)}{W2} Each point $z \in \qfur \setminus \elam$ belongs to at most $C_{(n,\plog,m_e)}$ cylinders of the form $2Q_i$. 
\descitem{(W3)}{W3} There exists a constant $C=C_{(n,\plog,m_e)}$ such that for any two cylinders $Q_i$ and $Q_j$ with $2Q_i \cap 2Q_j \neq \emptyset$, there holds
\begin{equation*}
|B_i| \leq C |B_j| \leq C |B_i| \txt{and} |I_i| \leq C |I_j| \leq C |I_i|.
\end{equation*}
In particular, there holds $|Q_i| \approx_{(n,\plog,m_e)} |Q_j|$.
\descitem{(W4)}{W4} There exists a constant $\hat{c} = \hat{c}_{(n,\plog,m_e)}\geq 9$ such that for all $i \in \NN$, there holds:
\begin{equation*}
\hat{c} Q_i \subset \RR^{n+1} \setminus \elam \txt{and} 8\hat{c} Q_i \cap \elam \neq \emptyset.
\end{equation*}
\descitem{(W5)}{W5} For the constant $\hat{c}$ from above, there holds $2Q_i \cap 2Q_j \neq \emptyset$ implies $2Q_i \subset \hat{c}Q_j$. 
\end{description}
\end{lemma}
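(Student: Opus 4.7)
The plan is to apply the Vitali-type covering lemma (Lemma \ref{lemma_vitali}) directly to the family $\mcf$ with its shrunken radii $\frac{\de}{\mfi}\rho_z$. This produces a countable disjoint sub-collection $\bigl\{Q^{\la}_{\frac{\de}{\mfi}\rho_{z_i}}(z_i)\bigr\}_{i\in\NN}$ whose $\mfi$-dilations, namely the Whitney cylinders $Q_i := Q^{\la}_{\de\rho_{z_i}}(z_i)$, cover $\qfur \setminus \elam$. This gives \descref{W1}{W1} immediately. The constant $\hat{c}\geq 9$ is to be fixed in the course of the proof, and $\de=\frac{1}{4\hat{c}}$ is determined by it.

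For \descref{W4}{W4}, observe that $\hat{c}Q_i = Q^{\la}_{\rho_{z_i}/4}(z_i)$ and $8\hat{c}Q_i = Q^{\la}_{2\rho_{z_i}}(z_i)$. Since $Q^{\la}_r(z_i)$ is precisely the open $d^{\la}_{z_i}$-ball of radius $r$ around $z_i$ (in the metric of Definition \ref{loc_parabolic_metric}), and since $\rho_{z_i} = d^{\la}_{z_i}(z_i, \elam)$ by \eqref{2.2.19}, every point of $\hat{c}Q_i$ has $d^{\la}_{z_i}$-distance at most $\rho_{z_i}/4 < \rho_{z_i}$ from $z_i$, so it cannot lie in $\elam$; on the other hand by definition of infimum some point of $\elam$ lies within $d^{\la}_{z_i}$-distance strictly less than $2\rho_{z_i}$ from $z_i$, establishing $8\hat{c}Q_i \cap \elam \neq \emptyset$.

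The core step is \descref{W3}{W3}. Suppose $2Q_i \cap 2Q_j \neq \emptyset$. Since $\scalet{\la}{z} = \la^{-1+d}$ is independent of the center, the metrics $d^{\la}_{z_i}$ and $d^{\la}_{z_j}$ differ only in their spatial part, by the factor $\la^{1/p(z_j) - 1/p(z_i)}$. Pick $\mfz^{*} \in 8\hat{c}Q_j \cap \elam$ using \descref{W4}{W4}; then the triangle inequality gives
\begin{equation*}
\rho_{z_i} \;\leq\; d^{\la}_{z_i}(z_i, \mfz^{*}) \;\leq\; d^{\la}_{z_i}(z_i, z_j) + d^{\la}_{z_i}(z_j, \mfz^{*}).
\end{equation*}
Rewriting each $d^{\la}_{z_i}$-distance as the corresponding $d^{\la}_{z_j}$-distance multiplied by at most $\max\bigl\{\la^{|p(z_i)-p(z_j)|/(p^-)^2},1\bigr\}$, the problem reduces to the bound $\la^{|p(z_i) - p(z_j)|} \leq C_{(n,\plog,m_e)}$. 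This is proved exactly as the key estimate \eqref{2.2.28} in Lemma \ref{lemma_vitali}: the containment $z_i,z_j \in \qfve$, log-Hölder continuity of $p(\cdot)$, and the upper bound \eqref{2.2.36.1} $\la \apprle (\bM_0/\rho_{z_i}^{n+2})^{1/\tilde{\ve}}$ together with $\modp(\rho_{z_i})\log(1/\rho_{z_i}) \leq L$, yield the universal bound. By symmetry $\rho_{z_j} \apprle \rho_{z_i}$, and then $|Q_i|\approx |Q_j|$ follows since $|Q_i|$ depends on $\rho_{z_i}$ and on the scaling factors $\la^{-n/p(z_i)+nd/2} \la^{-1+d}$, the latter ratio being universally bounded by the same estimate.

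With \descref{W3}{W3} established, \descref{W5}{W5} follows from the triangle inequality in $d^{\la}_{z_j}$: any point of $2Q_i$ lies within $d^{\la}_{z_j}$-distance at most $C_{(n,\plog,m_e)} \rho_{z_j}$ from $z_j$, so we \emph{define} $\hat{c}$ to be any integer $\geq 9$ with $\hat{c} \geq C/\de$, i.e.\ consistent with the choice $\de = 1/(4\hat{c})$ (this is a fixed-point condition on a universal constant and is easily arranged). Finally \descref{W2}{W2} is deduced from the disjointness of the Vitali pieces $Q^{\la}_{\de\rho_{z_i}/\mfi}(z_i)$ combined with \descref{W3}{W3}: any point belonging to $m$ doubled Whitney cylinders $2Q_i$ furnishes $m$ pairwise disjoint sub-cylinders of comparable volume all contained in a fixed dilate of any one of them, bounding $m$ universally.

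The main obstacle is the technical estimate $\la^{|p(z_i)-p(z_j)|} \leq C_{(n,\plog,m_e)}$ required to compare the center-dependent metrics; this is where the log-Hölder assumption on $\pp$ and the smallness of $\rho_0$ from Definition \ref{restriction_rho_0} are essential, and this is precisely the point where the calculations in the Vitali proof must be re-used almost verbatim.
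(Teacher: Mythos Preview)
Your proposal is correct and follows essentially the same strategy as the paper's proof: apply the Vitali-type Lemma~\ref{lemma_vitali} to extract the Whitney family, then establish the metric comparison $d^{\la}_{z_i} \approx d^{\la}_{z_j}$ via the log-H\"older bound $\la^{|p(z_i)-p(z_j)|} \leq C_{(n,\plog,m_e)}$ (reusing the computation from the Vitali lemma), from which \descref{W3}{W3}--\descref{W5}{W5} and the bounded-overlap property \descref{W2}{W2} follow. One clarification on your ``fixed-point'' remark: there is in fact no circularity in determining $\hat c$, because the \descref{W3}{W3} comparison $r_i \leq C r_j$ requires only $\de \leq \tfrac14$ (not its precise value), so the bound in \descref{W5}{W5} reads $d^{\la}_{z_j}(z_j,\mfz_i) \leq C' r_j$ with $C'$ universal, whence $\hat c := \max\{9,C'\}$ is fixed independently of $\de$, and then the choice $\de := \tfrac{1}{4\hat c} \leq \tfrac14$ is automatically consistent.
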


 \begin{proof}
 Applying Lemma \ref{lemma_vitali} to the covering $\mcf$, we obtain a countable disjoint collection $\mch = \{Q_i\}_{i\in\NN}$ such that 
  \begin{equation*}
  \label{inclusion_whitney}
  \qfur \setminus \elam \subset \bigcup_{i \in \NN} \mfi Q_i.
  \end{equation*}
Let us define
 \begin{equation*}
 \label{def_mcg_whitney}
 \mcg:= \left\{ \mfi Q_i =Q_{\de\rho_{z_i}}^{\la}(z) :  Q_i \in \mch \right\}.
 \end{equation*}
We shall now show that the countable collection $\mcg$ satisfies all the properties:
\begin{description}
\item[Proof of \descref{W1}{W1}:] Using Lemma \ref{lemma_vitali}, we constructed $\mcg$ and this property is automatically satisfied. 
\item[Proof of \descref{W3}{W3}:] Let $Q_i$ and $Q_j$ be two cylinders such that $2Q_i \cap 2Q_j \neq \emptyset$. Note that the centers of $Q_i$ and $Q_j$ denoted by $z_i, z_j \in \qfur$. Following the procedure in obtaining \eqref{2.2.38}, we get
\begin{equation}
\label{2.2.26-1}
\la^{p^+_{2Q_i} - p^-_{2Q_i}} \leq C_{(n,\plog,m_e)} M^{p^+_{2Q_i} - p^-_{2Q_i}}.
\end{equation}

Since $\de \in \lbr 0,\frac14\rbr$, we have the following easy calculation:
\begin{equation}
\label{2.2.27-1}
\begin{array}{rcl}
{p^+_{2Q_i} - p^-_{2Q_i}} & \leq & \modp \left( 2 \scalex{\la}{z_i}\de \rho_{z_i} + 2\sqrt{\scalet{\la}{z_i} (\de \rho_{z_i})^2} \right) \\
& \overset{\eqref{obse-1}}{\leq}&  \modp  \lbr  16 \scalex{\al_0}{z_0} \rho + \sqrt{ \scalet{\al_0}{z_0}  (16  \rho)^2 } \rbr \\
& \leq & 32 \modp (\rho).
\end{array}
\end{equation}

Using Definition \ref{restriction_rho_0} and \eqref{def_q_0} followed by combining \eqref{2.2.26-1} and \eqref{2.2.27-1}, we get
\begin{equation}
\label{2.2.28-1}
\la^{p^+_{2Q_i} - p^-_{2Q_i}} \leq C_{(n,\plog,m_e)}.
\end{equation}

Since we have $2Q_i \cap 2Q_j \neq \emptyset$, we can pick some $\tz \in 2Q_i \cap 2Q_j$ and using \eqref{2.2.28-1}, we get
\begin{equation}
\label{2.2.29-1}
\la^{p(z_i) - p(z_j)} = \la^{p(z_i) - p(\tz)} \la^{p(\tz) - p(z_j)} \leq \la^{p^+_{2Q_i} - p^-_{2Q_i}} \la^{p^+_{2Q_j} - p^-_{2Q_j}} \overset{\eqref{2.2.28-1}}{\leq} C_{(n,\plog,m_e)}.
\end{equation}

Using \eqref{2.2.29-1} and \eqref{2.2.19}, we can compare the parabolic metrics $d_{z_i}^{\la}$ and $d_{z_j}^{\la}$ (see \eqref{loc_par_met} and \ref{not10})  as follows: Let $z_i, z_2 \in \RR^{n+1}$ be two arbitrary points:
\begin{equation}
\label{2.2.30-1}
\begin{array}{rcl}
d_{z_i}(z_1,z_2) & := &\max \left\{ \scalex{\la}{z_i}|x_1-x_2|, \sqrt{\scalet{\la}{z_i} |t_1-t_2|}\right\}\\
& = & \max \left\{ \scalex{\la}{z_i}|x_1-x_2|\la^{\frac{1}{p(z_i)}-\frac{1}{p(z_j)}}, \sqrt{\scalet{\la}{z_i} |t_1-t_2|}\right\}\\
& \overset{\eqref{2.2.29-1}}{\leq} & C_{(n,\plog,m_e)} \max \left\{ \scalex{\la}{z_i}|x_1-x_2|, \sqrt{\scalet{\la}{z_i} |t_1-t_2|}\right\}\\
&=: & C_{(n,\plog,m_e)} d_{z_j}(z_1,z_2).
\end{array}
\end{equation}

Let us denote $r_i := \de \rho_{z_i}$ and let $\hz_j \in \elam$ be such that $d_{z_j}(z_j,\hz_j) = d_{z_j}(z_j,\elam)$  (possible since $\elam$ is a closed set) and let $\tz \in 2Q_i \cap 2Q_j$ be as before.   Then we have
\begin{equation}
\label{2.2.31-1}
\begin{array}{rcl}
r_i = \de d_{z_i}(z_i,\elam)  &\leq & \de d_{z_i}(z_i,\hat{z}_j) \leq \de \lbr d_{z_i}(z_i,\tz) + d_{z_i}(\tz, z_j) + d_{z_i}(z_j,\hat{z}_j) \rbr\\
& \overset{\eqref{2.2.30-1}}{\leq} & \de \lbr 2r_i + Cd_{z_j}(\tz, z_j) + Cd_{z_j}(z_j,\hat{z}_j) \rbr\\ 
& \leq &\de \lbr 2r_i + 2Cr_j +  \frac{2C}{\de} r_j \rbr \\
& \leq & \frac12 r_i + 3C r_j.
\end{array}
\end{equation}
To obtain  the last inequality of \eqref{2.2.31-1}, we used the fact that  $\de \in \lbr 0,\frac14\rbr$.  This implies there exists a constant $C=C_{(n,\plog,m_e)}$ such that 
\begin{equation}\label{2.2.32-1}r_i \leq C r_j.\end{equation}

\begin{description}[leftmargin=*]
\item[Comparison of  $|B_i|$ and $|B_j|$:] We proceed as follows:
\begin{equation*}
\label{2.2.33-1}
\begin{array}{rcl}
|B_i|  & = & \la^{\frac{nd}{2}-\frac{n}{p(z_i)}} r_i^n = \la^{\frac{nd}{2}-\frac{n}{p(z_j)}} r_i^n \la^{\frac{n}{p(z_j)}-\frac{n}{p(z_i)}}\\
& \overset{\eqref{2.2.28-1},\eqref{2.2.32-1}}{\leq}& C_{(n,\plog,m_e)} \la^{\frac{nd}{2}-\frac{n}{p(z_j)}} r_j^n  = C_{(n,\plog,m_e)} |B_j|.
\end{array}
\end{equation*}

\item[Comparison of  $|I_i|$ and $|I_j|$:]
\begin{equation*}
\label{2.2.34-1}
|I_i|  = \la^{-1+d} r_i^2 
 \overset{\eqref{2.2.32-1}}{\leq} C_{(n,\plog,m_e)} \la^{-1+d} r_j^2 = C_{(n,\plog,m_e)} |I_j|. 
\end{equation*}

\end{description}

\item[Proof of \descref{W5}{W5}:]  Let $\mfz_i \in 2Q_i$ be any point and let  $\tz \in 2Q_i \cap 2Q_j\neq \emptyset$ be as from before.  We now get
\begin{equation*}
\label{2.2.35-1}
\begin{array}{rcl}
d_{z_j}(z_j,\mfz_i) & \leq & d_{z_j}(z_j,\tz)+d_{z_j}(\tz,z_i)+d_{z_j}(z_i,\mfz_i) \\
& \overset{\eqref{2.2.30-1}}{\leq}& 2r_j + c d_{z_i}(\tz,z_i)+cd_{z_i}(z_i,\tz_i) \\
& \leq & 2r_j + 4cr_i \\
& \overset{\eqref{2.2.32-1}}{\leq} & \hat{c} r_j. 
\end{array}
\end{equation*}
This implies $2Q_i \subset \hat{c} Q_j$. \emph{This is where we obtain the constant $\hat{c}$}.

\item[Proof of \descref{W4}{W4}:] At this point, let us choose $\de = \frac{1}{4\hat{c}}$. Then we see that 
\[\hat{c} Q_i = Q_{\frac{\rho_{z_i}}{4}}^{\la}(z_i)\subset \RR^{n+1} \setminus \elam \txt{and} 8\hat{c}Q_i \cap \elam \neq \emptyset, \] since $\rho_{z_i} = d^{\la}_{z_i}(z_i,\elam)$. 

\item[Proof of \descref{W2}{W2}:]  Let us fix $z \in \qfur \setminus \elam$ and define the index set
\begin{equation}\label{2.2.36-1}\mci_z:= \{ i \in \NN : z \in 2Q_i\}.\end{equation}

We need to show that $\# I_z \leq C_{(n,\plog,m_e)} < \infty$.  Let us fix some $i \in \mci_z$, then for any $k \in \mci_z$, using \descref{W3}{W3}, we have $|B_k| \geq \frac{1}{C_{(n,\plog,m_e)}} |B_i|$ which implies $\#\mci_z < \infty$. 
Now we shall proceed with showing  $\#\mci_z \leq C_{(n,\plog,m_e)}$, i.e., a uniform bound exists.  Since $\#\mci_z <\infty$, there exists  $i_0 \in \mci_z$ such that 
\begin{equation*}
\label{2.2.38-1}
\min_{i \in \mci_z} |Q_i|  = |Q_{i_0}|.
\end{equation*}
 Moreover by \descref{W5}{W5}, we know that $Q_i \subset \hat{c} Q_{i_0}$ for any $i \in \mci_z$.  Taking into account that $\mfi^{-1} Q_i$ are disjoint, we get 
 \begin{equation}
 \label{2.2.39-1}
 |\mfi^{-1} Q_{i_0}| \# \mci_z  \leq |\hat{c} Q_{i_0}|.
 \end{equation}

 Thus from \eqref{2.2.39-1}, and using the fact that $\mfi = \mfi_{(n,\plog,m_e)}$ and $\hat{c} = \hat{c}_{(n,\plog,m_e)}$, we get \[\#\mci_z \leq (\hat{c} \mfi)^{n+2} \leq C_{(n,\plog,m_e)},\] which proves the desired assertion.

\end{description}
 
 \end{proof}

 \begin{lemma}
 \label{partition_unity}
 Subordinate to the covering $\mcg$ obtained in Lemma \ref{whitney_covering}, we obtain a partition of unity $\{ \psi\}_{i=1}^{\infty}$ on $\RR^{n+1} \setminus \elam$ that satisfies the following properties:
 \begin{itemize}
 \item $\sum_{i=1}^{\infty} \psi_i(z) = 1$ for all $z \in \qfur \setminus\elam$.
 \item $\psi_i \in C_c^{\infty}(2Q_i)$.
 \item $\|\psi_i\|_{\infty} + \scalex{\la}{z_i} r_i \| \nabla \psi_i\|_{\infty} + \scalet{\la}{z_i} r_i^2 \| \pa_t \psi_i\|_{\infty} \leq C_{(n,\plog,m_e)}$, where we have used the notation $r_i := \de \rho_{z_i}$ which is the parabolic radius of $Q_i$ with respect to the metric $d_{z_i}^{\la}$.
 \item $\psi_i \geq C_{(n,\plog,m_e)}$ on $Q_i$. 
 \end{itemize}

 \end{lemma}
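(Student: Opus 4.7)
The plan is to carry out the standard partition of unity construction, adapted to the anisotropic parabolic geometry of the intrinsic cylinders $Q_i = Q_{r_i}^{\la}(z_i)$.

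\textbf{Step 1 (Bump functions).} For each $i \in \NN$, I would first construct an auxiliary bump function $\tilde\psi_i \in C_c^{\infty}(2Q_i)$ satisfying $\tilde\psi_i \equiv 1$ on $Q_i$, $0 \leq \tilde\psi_i \leq 1$, and the anisotropic derivative estimates
\begin{equation*}
|\nabla \tilde\psi_i| \leq \frac{C(n)}{\scalex{\la}{z_i} r_i}, \qquad |\pa_t \tilde\psi_i| \leq \frac{C(n)}{\scalet{\la}{z_i} r_i^2}.
\end{equation*}
This is possible because $2Q_i$ and $Q_i$ differ by a factor of $2$ in the intrinsic metric $d_{z_i}^{\la}$: one can, for instance, take a product of standard one-dimensional bump functions on $B_{2\scalex{\la}{z_i} r_i}(x_i)$ and $I_{2\scalet{\la}{z_i} r_i^2}(t_i)$.

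\textbf{Step 2 (Normalization).} Set $\Sigma(z) := \sum_{j\in\NN} \tilde\psi_j(z)$ and define
\begin{equation*}
\psi_i(z) := \frac{\tilde\psi_i(z)}{\Sigma(z)}.
\end{equation*}
The sum defining $\Sigma$ is locally finite by the bounded overlap property \descref{W2}{W2}, with at most $C_{(n,\plog,m_e)}$ non-zero terms at each point, so $\Sigma \in C^{\infty}(\RR^{n+1}\setminus \elam)$. Moreover, every $z \in \qfur\setminus \elam$ lies in some $Q_i$ by \descref{W1}{W1}, hence $\Sigma(z) \geq \tilde\psi_i(z) = 1$ on $\qfur \setminus \elam$. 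This immediately gives the partition of unity property and $\psi_i \in C_c^{\infty}(2Q_i)$. The lower bound $\psi_i \geq C_{(n,\plog,m_e)}$ on $Q_i$ also follows, because there $\tilde\psi_i \equiv 1$ and $\Sigma \leq \#\{j : z \in 2Q_j\} \leq C_{(n,\plog,m_e)}$ by \descref{W2}{W2}.

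\textbf{Step 3 (Derivative bounds).} The core computation is to bound $\nabla \psi_i$ and $\pa_t \psi_i$ via the quotient rule; schematically,
\begin{equation*}
\nabla \psi_i = \frac{\nabla \tilde\psi_i}{\Sigma} - \frac{\tilde\psi_i \nabla \Sigma}{\Sigma^2}, \qquad \pa_t \psi_i = \frac{\pa_t\tilde\psi_i}{\Sigma} - \frac{\tilde\psi_i \pa_t\Sigma}{\Sigma^2}.
\end{equation*}
At any point $z \in 2Q_i$, only those indices $j$ with $z \in 2Q_j$ contribute to $\nabla \Sigma(z)$ and $\pa_t \Sigma(z)$. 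For such $j$, property \descref{W5}{W5} gives $2Q_j \cap 2Q_i \neq \emptyset$, hence by \descref{W3}{W3} we have $\scalex{\la}{z_j} r_j \approx_{(n,\plog,m_e)} \scalex{\la}{z_i} r_i$ and $\scalet{\la}{z_j} r_j^2 \approx_{(n,\plog,m_e)} \scalet{\la}{z_i} r_i^2$, so that the derivative bounds from Step 1 can be converted to bounds in terms of the parameters of $Q_i$. Combined with $\Sigma \geq 1$ and \descref{W2}{W2} to bound the number of summands, this yields the required estimate
\begin{equation*}
\|\psi_i\|_{\infty} + \scalex{\la}{z_i} r_i \|\nabla \psi_i\|_{\infty} + \scalet{\la}{z_i} r_i^2 \|\pa_t \psi_i\|_{\infty} \leq C_{(n,\plog,m_e)}.
\end{equation*}

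\textbf{Main obstacle.} The only nontrivial point is the careful bookkeeping in Step 3: since the scaling factors $\scalex{\la}{z_i}$ and $\scalet{\la}{z_i}$ depend on the base point through the variable exponent $\pp$, one must verify that property \descref{W3}{W3} (which is precisely engineered for this purpose in Lemma \ref{whitney_covering}) produces constants that are independent of $i$ and of $\la$. Once this comparability is invoked, the construction is routine and purely geometric.
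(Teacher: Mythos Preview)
The paper does not provide a proof of this lemma; it states the result as a standard consequence of the Whitney-type covering properties \descref{W1}{W1}--\descref{W5}{W5} established in Lemma \ref{whitney_covering}. Your sketch is exactly the standard construction and is correct: the only substantive point is the comparability of the anisotropic scales for overlapping cylinders, which you correctly derive from \descref{W3}{W3} (the invocation of \descref{W5}{W5} in Step 3 is harmless but unnecessary, since $z\in 2Q_i\cap 2Q_j$ already gives $2Q_i\cap 2Q_j\neq\emptyset$).
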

 
  \section{Method of Lipschitz truncation}
 \label{section_four}

 From Lemma \ref{whitney_covering}, let us define  the following enlarged cylinders (recall $\hat{c}$ is from \descref{W4}{W4})
 \begin{equation}
 \label{def_qihat}
 \htq_i := \hat{c}Q_i = Q_{\hat{r}_i}^{\la}(z_i), \txt{where} \hat{r}_i := \hat{c} r_i.
 \end{equation}

We shall also use the notation
\begin{equation}
\label{mcii}
\mci(i) := \{ j \in \NN : \spt(\psi_i) \cap \spt(\psi_j) \neq \emptyset\} \txt{and} \mci_z := \{ j \in \NN: z \in \spt(\psi_j) \}.
\end{equation}

 Let us now construct the Lipschitz truncation function:
 \begin{equation}
 \label{lipschitz_function}
 \vlh(z) = \tuh(z) - \sum_{i} \psi_i(z) \lbr \tuh(z) - \tuh^i\rbr,
 \end{equation}
 where 
 \begin{equation}
 \label{def_tuh}
 \tuh^i = \left\{
 \begin{array}{ll}
 \fiint_{2Q_i} \tuh(z) \ dz & \text{if} \ 2Q_i \subset (\Om \cap B_{\rho_2}^{\al_0}(x_0)) \times \RR, \\
 0 & \text{else}. 
 \end{array}\right.
 \end{equation}

 From construction in \eqref{lipschitz_function} and \eqref{def_tuh}, we have
 \begin{equation*}
 \spt(\vlh) \subset (\Om \cap B_{\rho_b}^{\al_0}(x_0)) \times \RR. 
 \end{equation*}

 We see that $\vlh$ has the right support for the test function and hence the rest of this section will be devoted to proving the Lipschitz regularity of $\vlh$ on a suitable region.

 Let us consider the following time slices:
 \begin{gather*}
 S_1 := \lbr[\{] t \in \RR : |t - t_0| \leq \scalet{\al_0}{z_0} \left(\rho_a + \frac19 (\rho_b - \rho_a)\right)^2 \rbr[\}] \label{S_1}, \\
 S_2 := \lbr[\{] t \in \RR : |t - t_0| \leq \scalet{\al_0}{z_0} \left(\rho_a + \frac29 (\rho_b - \rho_a)\right)^2 \rbr[\}] \label{S_2}. 
 \end{gather*}
 
 The rest of this section will be devoted to showing $\vlh \in C^{0,1}(B_{\rho_2}^{\al_0}(x_0) \times S_1)$. In this regard, let us further define
 \begin{gather}
 \Th := \{ i \in \NN : \spt(\psi_i) \cap S_1 \neq \emptyset\} \label{theta}, \\
 \Th_1 := \{ i \in \Th: \htq_i \subset \RR^n \times S_2\} \label{theta_1}, \\
 \Th_2 := \Th \setminus \Th_1, \nonumber
 \end{gather}
where $\htq_i$ is  defined as in \eqref{def_qihat}.

 \begin{remark}
 \label{bound_s}
 Let $2Q_i$ be a given Whitney-type cylinder for some $i \in \Th$. If $i \in \Th_2$, then we have $Q_i \cap S_1 \neq \emptyset$ and $\htq_i \cap S_2^c \neq \emptyset$. Let us define
 \begin{equation}
 \label{def_s}
 s:= \scalet{\al_0}{z_0} (\rho_b - \rho_a) \rho.
 \end{equation}
Then we have the following sequence of estimates:
\begin{equation*}
\label{est_low_s}
\begin{array}{rcl}
\hat{c} \scalet{\la}{z_i} r_i^2 & \geq & \scalet{\al_0}{z_0} \left[  \lbr \rho_a + \frac29 (\rho_b-\rho_a) \rbr^2 - \lbr \rho_a + \frac19 (\rho_b-\rho_a) \rbr^2\right] \\
& \geq & \scalet{\al_0}{z_0} \frac19 (\rho_b - \rho_a) \left[  2\rho_a + \frac39 (\rho_b-\rho_a)\right] \\
& \overset{\eqref{rho_def}}{\geq} & \scalet{\al_0}{z_0} \frac19 (\rho_b - \rho_a) \rho = \frac{s}{9}.
\end{array}
\end{equation*}
Thus for $i \in \Th_2$, there holds
\begin{equation}
\label{bound_low_s}
\scalet{\la}{z_i} r_i^2 \geq \frac{s}{9\hat{c}}.
\end{equation}

 \end{remark}

 \subsection{Preliminary estimates for the test function}
 
\begin{lemma}
\label{lemma3.6_pre}
Let $ z\in \qfur \setminus \elam$, then from \eqref{2.2.36-1}, we have that $z \in 2Q_i$ for some $i \in \mci_z$. For any $1 \leq \tht \leq \frac{p^-}{q}$,  there holds
\begin{gather}
|\tuh^i|^{\tht} \leq  \fiint_{2Q_i} |\tuh(\tz)|^{\tht} \ d\tz  \apprle_{(n,\plog,m_e)} \lbr \scalex{\al_0}{z_0} \rho\rbr^{\tht} \la^{\frac{\tht}{p(z_i)}},  \label{lemma3.6_pre_one}\\
\fiint_{2Q_i}|\nabla \tuh(\tz)|^{\tht} \ d\tz \apprle_{(n,\plog,m_e)} \lbr \frac{16\rho}{\rho_b - \rho_a} \rbr^{\tht} \la^{\frac{\tht}{p(z_i)}}.  \label{lemma3.6_pre_two}
\end{gather}
\end{lemma}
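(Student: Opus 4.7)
The plan is to leverage the Whitney property \descref{W4}{W4}, which guarantees a point $\mfz_i \in 8\hat c Q_i \cap \elam$. By the definition of $\elam$ in \eqref{elam}, $g(\mfz_i) \leq \la^{1-\be}$; unpacking,
\begin{equation*}
\mm\Bigl([|\nabla u|+1+|u|/(\scalex{\al_0}{z_0}\rho)]^{\pp/q}\,\chi_{\qfur}\Bigr)(\mfz_i) \leq \la^{1/q}.
\end{equation*}
Since $\mfz_i \in 8\hat c Q_i$, the metric comparability already used in the proof of Lemma \ref{lemma_vitali} (namely $\la^{|p(z_i)-p(\mfz_i)|} \apprle 1$) lets me enclose $2Q_i$ inside a standard parabolic cylinder $\tilde Q$ centered at $\mfz_i$ with $|\tilde Q| \apprle_{(n,\plog,m_e)} |2Q_i|$. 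Writing $f := |\nabla u|+1+|u|/(\scalex{\al_0}{z_0}\rho) \geq 1$, inserting $\tilde Q$ into the maximal function estimate at $\mfz_i$ yields
\begin{equation*}
\fiint_{2Q_i} f(z)^{p(z)/q}\,\chi_{\qfur}(z)\,dz \apprle_{(n,\plog,m_e)} \la^{1/q}.
\end{equation*}

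The next step is the variable-exponent exchange, replacing $\pp$ by the constant $p(z_i)$. On $2Q_i$ we have $|\pp - p(z_i)| \leq \modp(\operatorname{diam}(2Q_i)) \apprle \modp(\rho)$; combined with the a priori upper bound $\la \apprle \rho^{-C(n,\plog,m_e)}$ obtained exactly as in \eqref{2.2.36.1} (using the restrictions in Definition \ref{restriction_rho_0} and the exponent $\tilde{\ve}$ of \eqref{tildeve}), the log-H\"older property gives $\la^{|\pp - p(z_i)|} \apprle 1$ uniformly on $2Q_i$. A pointwise case split over $\{f \leq \la^{1/q}\}$ and $\{f > \la^{1/q}\}$, using inequalities of the type $a^{p(z_i)} \leq C(a^{\pp}+1)$ valid when $a$ is controlled by a power of $\la$, then upgrades the display to
\begin{equation*}
\fiint_{2Q_i} f(z)^{p(z_i)/q}\,dz \apprle_{(n,\plog,m_e)} \la^{1/q}.
\end{equation*}
This is the step I expect to be the main obstacle, since variable-exponent integrability requires careful tracking of how $\la$ controls pointwise fluctuations of $f^{|\pp-p(z_i)|/q}$; everything else is essentially Jensen bookkeeping.

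With the constant-exponent average in hand, the moment form of Jensen's inequality closes the estimate on $f$: since $1 \leq \tht \leq p^-/q \leq p(z_i)/q$,
\begin{equation*}
\fiint_{2Q_i} f^{\tht}\,dz \leq \Bigl(\fiint_{2Q_i} f^{p(z_i)/q}\,dz\Bigr)^{\tht q/p(z_i)} \apprle_{(n,\plog,m_e)} \la^{\tht/p(z_i)}.
\end{equation*}

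Finally I would transfer this to $\tuh$ and $\nabla\tuh$. For \eqref{lemma3.6_pre_one}, the first inequality is Jensen applied to the definition \eqref{def_tuh} of $\tuh^i$; the second follows from $|\tuh|\leq |[u]_h|\leq \scalex{\al_0}{z_0}\rho\,[f]_h$, Jensen for the Steklov average, and the $f$-bound above. For \eqref{lemma3.6_pre_two}, writing $\nabla\tuh = [\nabla u]_h\eta\zeta + [u]_h\nabla\eta\,\zeta$ and invoking the gradient bound $|\nabla\eta|\leq c/(\scalex{\al_0}{z_0}(\rho_b-\rho_a))$ from \eqref{cut_off_function} gives $|\nabla\tuh|\leq C\,\tfrac{16\rho}{\rho_b-\rho_a}\,[f]_h$; raising to the $\tht$-power, exchanging with the Steklov average by Jensen, and applying the $f$-bound yields \eqref{lemma3.6_pre_two}.
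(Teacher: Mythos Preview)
Your overall strategy matches the paper's: exploit \descref{W4}{W4} to locate $\mfz_i \in 8\hat c Q_i \cap \elam$, use the resulting maximal-function bound to control the $p(\cdot)/q$-average of $f$ over $2Q_i$, and then descend to the $\tht$-average. The minor point first: you do not need to re-center a cylinder at $\mfz_i$. The strong maximal function \eqref{not5.5} takes the supremum over \emph{all} product cylinders $B_a \times I_b$ containing the point, and $8\hat c Q_i$ is already such a cylinder containing $\mfz_i$; hence $\fiint_{8\hat c Q_i} f^{p(\cdot)/q}\chi_{\qfur} \le \la^{1/q}$ directly.

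The real issue is your ``variable-exponent exchange'' step upgrading $\fiint_{2Q_i} f^{p(z)/q} \apprle \la^{1/q}$ to $\fiint_{2Q_i} f^{p(z_i)/q} \apprle \la^{1/q}$. Your case split over $\{f \le \la^{1/q}\}$ versus $\{f > \la^{1/q}\}$ does not close: on the super-level set you would need $f^{(p(z_i)-p(z))/q} \le C$, which requires \emph{pointwise} control of $f$ by a power of $\la$, and no such bound is available (we only have integrability of $f^{p(\cdot)/q}$). The inequality $a^{p(z_i)} \le C(a^{p(\cdot)}+1)$ you invoke holds only under that missing pointwise hypothesis.

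The paper sidesteps this entirely, and the fix is elementary: replace $p(z_i)$ by $p^-_{2Q_i}$ rather than the other way around. Since $f \ge 1$ and $p^-_{2Q_i} \le p(z)$ on $2Q_i$, one has $f^{p^-_{2Q_i}/q} \le f^{p(z)/q}$ pointwise; hence $\fiint_{2Q_i} f^{p^-_{2Q_i}/q} \apprle \la^{1/q}$ trivially. Now H\"older (with constant exponent, since $\tht \le p^-/q \le p^-_{2Q_i}/q$) gives
\[
\fiint_{2Q_i} f^{\tht} \le \Bigl(\fiint_{2Q_i} f^{p^-_{2Q_i}/q}\Bigr)^{\tht q/p^-_{2Q_i}} \apprle \la^{\tht/p^-_{2Q_i}},
\]
and the single scalar correction $\la^{\tht/p^-_{2Q_i}} \apprle \la^{\tht/p(z_i)}$ follows from \eqref{2.2.28-1}. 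No pointwise exchange inside the integrand is needed. Your transfers to $\tuh$ and $\nabla\tuh$ via Jensen and the Steklov average are fine.
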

\begin{proof}
\begin{description}[leftmargin=*]
\item[Proof of \eqref{lemma3.6_pre_one}:] We prove this estimate as follows:
\begin{equation*}
\begin{array}{rcl}
|\tuh^i|^{\tht} & \leq & \lbr \scalex{\al_0}{z_0} \rho\rbr^{\tht} \fiint_{2Q_i} \abs{\frac{\tuh}{\scalex{\al_0}{z_0}\rho}}^{\tht} \ d\tz  \leq \lbr \scalex{\al_0}{z_0} \rho\rbr^{\tht} \fiint_{2Q_i} \abs{\frac{u(\tz)}{\scalex{\al_0}{z_0}\rho}}^{\tht} \ d\tz \\
& \apprle &\lbr \scalex{\al_0}{z_0} \rho\rbr^{\tht} \lbr \fiint_{8\hat{c}Q_i} \left[ 1 + \abs{\frac{u(\tz)}{\scalex{\al_0}{z_0}\rho}} \right]^{\frac{\pp}{q}}\ d\tz\rbr^{\frac{\tht q}{p^-_{2Q_i}}} \\
& \overset{\eqref{elam}}{\apprle} &\lbr \scalex{\al_0}{z_0} \rho\rbr^{\tht} \la^{\frac{\tht}{p^-_{2Q_i}}} \\
& \overset{\eqref{2.2.28-1}}{\apprle} &\lbr \scalex{\al_0}{z_0} \rho\rbr^{\tht} \la^{\frac{\tht}{p(z_i)}}.
\end{array}
\end{equation*}

\item[Proof of \eqref{lemma3.6_pre_two}:] We see that $\nabla \tuh (\tx,\tlt) = \zeta(\tlt)\eta(\tx) \nabla u(\tx,\tlt) + \zeta(\tlt) u(\tx,\tlt) \nabla \eta (\tx)$ which combined with \eqref{cut_off_function} implies
\begin{equation}
\label{lemma3.6_pre_1}
|\nabla \tuh| \leq |\nabla u| + \lbr \frac{16\rho}{\rho_b - \rho_a} \rbr \abs{\frac{u}{\scalex{\al_0}{z_0} 16\rho }}.
\end{equation}
We obtain from \eqref{rho_def} that $\lbr \frac{16\rho}{\rho_b - \rho_a} \rbr \geq 1$, which implies
\begin{equation*}
\begin{array}{rcl}
\fiint_{2Q_i} |\nabla \tuh|^{\tht} \ d\tz & \leq &  \lbr \frac{16\rho}{\rho_b - \rho_a} \rbr^{\tht}  \fiint_{2Q_i} |\nabla u|^{\tht} \ d\tz  + \lbr \frac{16\rho}{\rho_b - \rho_a} \rbr^{\tht} \fiint_{2Q_i} \abs{\frac{u(\tz)}{\scalex{\al_0}{z_0} 16\rho }}^{\tht}\ d\tz \\
& \apprle & \lbr \frac{16\rho}{\rho_b - \rho_a} \rbr^{\tht}  \lbr \fiint_{8\hat{c}Q_i} \left[|\nabla u| +1 + \abs{\frac{u(\tz)}{\scalex{\al_0}{z_0} 16\rho }}\right]^{\frac{\pp}{q}}\ d\tz  \rbr^{\frac{\tht q}{p^-_{2Q_i}}}\\
& \overset{\eqref{elam}}{\apprle} & \lbr \frac{16\rho}{\rho_b - \rho_a} \rbr^{\tht} \la^{\frac{\tht}{p^-_{2Q_i}}} \\
& \overset{\eqref{2.2.28-1}}{\apprle} & \lbr \frac{16\rho}{\rho_b - \rho_a} \rbr^{\tht} \la^{\frac{\tht}{p(z_i)}}.
\end{array}
\end{equation*}

\end{description}

\end{proof}

 \begin{corollary}
 \label{lemma3.6}
 For any $z \in \qfur \setminus \elam$,  from \eqref{2.2.36-1},  we have $z \in 2Q_i$ for some $i \in \mci_z$. Then the following bound holds:
 \begin{equation*}
 |\vlh(z)| \apprle_{(n,\plog,m_e)} \scalex{\al_0}{z_0} \rho \la^{\frac{1}{p(z_i)}}.
 \end{equation*}
 \end{corollary}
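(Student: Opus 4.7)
\textbf{Proof proposal for Corollary \ref{lemma3.6}.} The plan is to exploit that $\{\psi_j\}$ is a partition of unity on $\qfur\setminus\elam$ in order to rewrite $\vlh(z)$ as an average of the Whitney means $\tuh^j$, and then to invoke Lemma~\ref{lemma3.6_pre} together with the Whitney covering properties.

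First, since $z\in\qfur\setminus\elam$, Lemma~\ref{partition_unity} gives $\sum_{j}\psi_j(z)=1$. Substituting this into the definition \eqref{lipschitz_function} yields
\[
\vlh(z)=\tuh(z)-\sum_{j}\psi_j(z)\tuh(z)+\sum_{j}\psi_j(z)\tuh^j=\sum_{j\in\mci_z}\psi_j(z)\tuh^j,
\]
where the summation is restricted to $\mci_z$ (cf.\ \eqref{mcii}) because $\psi_j(z)=0$ otherwise. Taking absolute values and using $\|\psi_j\|_\infty\leq C_{(n,\plog,m_e)}$ together with the finite-overlap property $\#\mci_z\leq C_{(n,\plog,m_e)}$ from \descref{W2}{W2}, we reduce the claim to showing, for each $j\in\mci_z$, the bound $|\tuh^j|\apprle\scalex{\al_0}{z_0}\rho\,\la^{1/p(z_i)}$.

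For those indices $j\in\mci_z$ with $2Q_j\not\subset(\Om\cap B_{\rho_2}^{\al_0}(x_0))\times\RR$ we have $\tuh^j=0$ by \eqref{def_tuh}, so there is nothing to prove. For the remaining indices, apply Lemma~\ref{lemma3.6_pre} with $\tht=1$ (this exponent is admissible since $1\leq p^-/q$ by the choice of $q$ in Definition~\ref{restriction_be_0}) to obtain
\[
|\tuh^j|\apprle_{(n,\plog,m_e)}\scalex{\al_0}{z_0}\rho\,\la^{1/p(z_j)}.
\]
It remains to replace $p(z_j)$ by $p(z_i)$. Since both $i$ and $j$ lie in $\mci_z$, the cylinders $2Q_i$ and $2Q_j$ share the point $z$, so $2Q_i\cap 2Q_j\neq\emptyset$. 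Running the argument that produced \eqref{2.2.28-1}--\eqref{2.2.29-1} for the pair $(z_i,z_j)$ gives
\[
\la^{\,p(z_j)-p(z_i)}\leq C_{(n,\plog,m_e)},
\]
hence $\la^{1/p(z_j)}\apprle_{(n,\plog,m_e)}\la^{1/p(z_i)}$ (after noting also the trivial bound in the opposite direction when $p(z_j)\leq p(z_i)$, since $\la\geq 1$).

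Combining these ingredients yields
\[
|\vlh(z)|\leq\sum_{j\in\mci_z}\psi_j(z)|\tuh^j|\apprle_{(n,\plog,m_e)}\scalex{\al_0}{z_0}\rho\,\la^{1/p(z_i)},
\]
which is the desired estimate. The only delicate step is the comparison of $\la^{1/p(z_j)}$ with $\la^{1/p(z_i)}$, but this is precisely the content of the log-H\"older propagation \eqref{2.2.28-1} already established during the proof of the Whitney covering lemma; every other bound is a direct reading off of Lemma~\ref{partition_unity} and Lemma~\ref{lemma3.6_pre}.
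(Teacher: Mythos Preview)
Your proof is correct and follows essentially the same approach as the paper: rewrite $\vlh(z)=\sum_{j\in\mci_z}\psi_j(z)\tuh^j$ via the partition of unity, use the finite overlap \descref{W2}{W2}, and apply Lemma~\ref{lemma3.6_pre} with $\tht=1$. You are in fact more careful than the paper, which silently passes from $\la^{1/p(z_j)}$ to $\la^{1/p(z_i)}$; your explicit invocation of \eqref{2.2.28-1}--\eqref{2.2.29-1} to handle this exponent comparison fills that small gap.
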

 
\begin{proof}
From \eqref{lipschitz_function}, we see that for $z \in \qfur \setminus \elam$, there holds $\vlh(z) = \sum_{j \in \mci_z} \psi_j(z) \tuh^j$, where $\mci_z$ is as defined in \eqref{mcii}. Making use of \descref{W2}{W2}, we see that in order to prove the lemma, it is sufficient to bound $|\tuh^j|$ which is proved in Lemma \ref{lemma3.6_pre_1} with $\tht =1$. This completes the proof of the corollary.
%
%
%
%
%
%
\end{proof}

\begin{lemma}
\label{improved_est}
Let $2Q_i$ be a parabolic Whitney type cylinder and $i \in \Th_1$, where $\Th_1$ is as defined in \eqref{theta_1}. Then for any $1 \leq \tht \leq \frac{p^-}{q}$, there holds
\begin{equation*}
\fiint_{2Q_i} |\tuh(\tz)- \tuh^i|^{\tht} \  d\tz \apprle_{(n,\plog,\lamot,m_e)}\lbr \frac{16\rho}{\rho_b - \rho_a} \rbr^{\tht} \min\left\{ \scalex{\al_0}{z_0}\rho, \scalex{\la}{z_i}r_i\right\}^{\tht} \la^{\frac{\tht}{p(z_i)}}.
\end{equation*}
\end{lemma}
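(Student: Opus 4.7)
The statement packages two competing upper bounds joined by a minimum; the plan is to derive each one separately.

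The first candidate $\scalex{\al_0}{z_0}\rho$ is essentially free. Applying bound \eqref{lemma3.6_pre_one} of Lemma \ref{lemma3.6_pre} with exponent $\theta$ directly yields $\fiint_{2Q_i}|\tuh|^{\theta}\,d\tilde z \apprle_{(n,\plog,m_e)} \lbr\scalex{\al_0}{z_0}\rho\rbr^{\theta} \la^{\theta/p(z_i)}$, and the same estimate holds for $|\tuh^i|^{\theta}$, so a triangle inequality closes this half. The prefactor $(16\rho/(\rho_b-\rho_a))^{\theta}\ge 1$ coming from \eqref{rho_def} is then appended harmlessly.

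The substantive work is to establish the alternative bound with $R_i := \scalex{\la}{z_i}r_i$ in place of $\scalex{\al_0}{z_0}\rho$. If $2Q_i\not\subset(\Om\cap B_{\rho_2}^{\al_0}(x_0))\times\RR$, then $\tuh^i=0$ by \eqref{def_tuh} and $\tuh$ vanishes on a fixed fraction of $2B_i$ (either by the uniform measure density assumption on $\Om^c$, or by elementary geometry if $2B_i\not\subset B_{\rho_2}^{\al_0}(x_0)$); a slice-wise application of Lemma \ref{lemma_crucial_3} combined with \eqref{lemma3.6_pre_two} then delivers the desired estimate directly. In the opposite case we have $2B_i\Subset\Om$, and we apply the parabolic Poincar\'e inequality of Lemma \ref{lemma_crucial_1} on $2Q_i$, replacing $\tuh^i$ by the weighted average $\avgs{\tuh}{\rho}$ at the cost of a universal constant (using \descref{W3}{W3}). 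The spatial-gradient contribution $R_i^{\theta}\fiint_{2Q_i}|\nabla\tuh|^{\theta}\,d\tilde z$ is controlled by \eqref{lemma3.6_pre_two} exactly as above.

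The main obstacle is the time-oscillation term
\[
\sup_{t_1,t_2\in 2I_i}\bigl|\avgs{\tuh}{\mu}(t_2)-\avgs{\tuh}{\mu}(t_1)\bigr|,
\]
and this is precisely where the standing assumption $i\in\Th_1$ is essential: the inclusion $\htq_i\subset\RR^n\times S_2$ places $2Q_i$ strictly inside the time domain of $[u]_h$, so that Lemma \ref{lemma_crucial_2} applies with spatial cutoff $\phi\in C_c^{\infty}(2B_i)$ and temporal cutoff $\varphi\in C_c^{\infty}(2I_i)$. The resulting upper bound involves the Steklov-averaged integral of $[(1+|\nabla u|)^{p(z)-1}]_h$ over $2Q_i$; to convert this into a power of $\la$ we invoke \descref{W4}{W4} to select $\mfz\in 8\hat c Q_i\cap\elam$, bound the average by the strong maximal function $\mm$ evaluated at $\mfz$, and use $g(\mfz)\le\la^{1-\be}$ together with \eqref{2.2.28-1} to absorb the $p(z)$-oscillation on $\hat cQ_i$. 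Careful bookkeeping of the resulting prefactors $\scalex{\la}{z_i}$ and $\scalet{\la}{z_i}$ from Lemma \ref{lemma_crucial_2}, combined with the factor $R_i^{\theta}$ from the Poincar\'e inequality, then produces exactly the required $(16\rho/(\rho_b-\rho_a))^{\theta}R_i^{\theta}\la^{\theta/p(z_i)}$, completing the argument.
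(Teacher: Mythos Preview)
Your approach is essentially the paper's: handle the $\scalex{\al_0}{z_0}\rho$ bound by the triangle inequality together with \eqref{lemma3.6_pre_one}, and handle the $\scalex{\la}{z_i}r_i$ bound via the parabolic Poincar\'e inequality (Lemma~\ref{lemma_crucial_1}) followed by Lemma~\ref{lemma_crucial_2} for the time oscillation of the spatial averages. One correction: in Lemma~\ref{lemma_crucial_2} you should take $\varphi\equiv 1$, not a cutoff $\varphi\in C_c^\infty(2I_i)$ --- a $\varphi$ supported in $2I_i$ would vanish at the endpoints and make the left-hand side trivial. The role of $i\in\Th_1$ is precisely that $2I_i\subset S_2\subset I_{\rho_1}^{\al_0}(t_0)$, so $\zeta\equiv 1$ on $2I_i$ and hence $\tuh=[u]_h\eta$ there; taking $\varphi\equiv 1$ then drops the $\|\varphi'\|$ term entirely, which is exactly how the paper proceeds. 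Your explicit treatment of the subcase $2Q_i\not\subset(\Om\cap B_{\rho_2}^{\al_0}(x_0))\times\RR$ via slice-wise Poincar\'e on the vanishing set is in fact more careful than the paper, which applies Lemma~\ref{lemma_crucial_2} without isolating this case even though that lemma requires the spatial test function to be supported in $\Om$.
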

\begin{proof}
Let us consider the following two cases:
\begin{description}[leftmargin=*]
\item[Case $\scalex{\al_0}{z_0}\rho \leq  \scalex{\la}{z_i}r_i$:] In this case, we can use triangle inequality along with \eqref{lemma3.6_pre_one} to get
\begin{equation}
\label{6.18}
\fiint_{2Q_i} |\tuh(\tz)- \tuh^i|^{\tht} \  d\tz \apprle 2 \fiint_{2Q_i} |\tuh(\tz)|^{\tht} \ d\tz \overset{\eqref{lemma3.6_pre_one}}{\apprle} \lbr \scalex{\al_0}{z_0} \rho\rbr^{\tht} \la^{\frac{\tht}{p(z_i)}}.
\end{equation}

\item[Case $\scalex{\al_0}{z_0}\rho \geq  \scalex{\la}{z_i}r_i$:] We apply  Lemma \ref{lemma_crucial_2} with a test function $\mu \in C_c^{\infty}(2B_i)$ satisfying  the bounds $|\mu(x)| \apprle \frac{1}{\lbr \scalex{\la}{z_i} r_i\rbr^n}$ and $|\nabla \mu(x)| \apprle \frac{1}{\lbr \scalex{\la}{z_i} r_i\rbr^{n+1}}$, we get
\begin{equation}
\label{6.19}
\fiint_{2Q_i} |\tuh(\tz)- \tuh^i|^{\tht} \,  d\tz \leq \lbr \scalex{\la}{z_i} r_i\rbr^{\tht} \fiint_{2Q_i} |\nabla \tuh|^{\tht} \, d\tz + \sup_{t_1,t_2 \in 2I_i} |\avgs{\tuh}{\mu}(t_2) - \avgs{\tuh}{\mu}(t_1)|^{\tht}.
\end{equation}
The first term on the right-hand side of \eqref{6.19} can be estimated using \eqref{lemma3.6_pre_two}, which gives
\begin{equation}
\label{est_J_1}
\lbr \scalex{\la}{z_i} r_i\rbr^{\tht} \fiint_{2Q_i} |\nabla \tuh|^{\tht} \ d\tz \apprle \lbr \scalex{\la}{z_i} r_i\rbr^{\tht}   \lbr \frac{16\rho}{\rho_b - \rho_a} \rbr^{\tht} \la^{\frac{\tht}{p(z_i)}}.
\end{equation}

To estimate the second term on the right-hand side of \eqref{6.19}, we make use of Lemma \ref{lemma_crucial_2} with $\phi(x) = \mu(x)$ and $\varphi(t) \equiv 1$ (since $i \in \Th_1$) to get 
\begin{equation}
\label{est_J_2_one}
\begin{array}{rcl}
|\avgs{\tuh}{\mu}(t_2) - \avgs{\tuh}{\mu}(t_1)| & \apprle & \frac{|2Q_i|}{\lbr \scalex{\la}{z_i} r_i\rbr^{n+1}} \fiint_{2Q_i} (1 + |\nabla u|)^{p(\tz) -1} \ d\tz \\
& \apprle & \frac{\scalet{\la}{z_i}r_i^2}{\scalex{\la}{z_i} r_i} \lbr \fiint_{8\hat{c}Q_i} (1 + |\nabla u|)^{\frac{p(\tz)}{q}} \ d\tz \rbr^{\frac{q(p^+_{2Q_i} -1)}{p^-_{2Q_i}}}\\
& \overset{\eqref{elam}}{\apprle} & \la^{-1 + \frac{1}{p(z_i)} + \frac{d}{2}} r_i \la^{\frac{p^+_{2Q_i} -1}{p^-_{2Q_i}}}.
\end{array}
\end{equation}
Now making use of \eqref{2.2.28-1} along with the fact that $\la \geq 1$ and $p^-_{2Q_i} \leq p(z_i)$, we get
\begin{equation}
\label{6.22}
\la^{-1+ \frac{1}{p(z_i)} + \frac{p^+_{2Q_i}}{p^-_{2Q_i}} - \frac{1}{p^-_{2Q_i}}} = \la^{\frac{p^+_{2Q_i}-p^-_{2Q_i}}{p^-_{2Q_i}}} \la^{\frac{p^-_{2Q_i}-p(z_i)}{p(z_i)p^-_{2Q_i}}} \leq \la^{\frac{p^+_{2Q_i}-p^-_{2Q_i}}{p^-_{2Q_i}}} \overset{\eqref{2.2.28-1}}{\apprle} C_{(n,\plog,m_e)}. 
\end{equation}
Substituting \eqref{6.22} into \eqref{est_J_2_one}, we have
\begin{equation}
\label{est_J_2}
|\avgs{\tuh}{\mu}(t_2) - \avgs{\tuh}{\mu}(t_1)| \apprle \lbr \scalex{\la}{z_i} r_i \rbr \la^{\frac{1}{p(z_i)}}.
\end{equation}

Thus, combining \eqref{est_J_1},\eqref{est_J_2} and \eqref{6.19} followed by  making use of the fact that $\frac{16\rho}{\rho_b - \rho_a} \geq 1$, we finally obtain
\[
\fiint_{2Q_i} |\tuh(\tz)- \tuh^i|^{\tht} \  d\tz \apprle_{(n,\plog,\lamot,m_e)} \lbr \scalex{\la}{z_i} r_i\rbr^{\tht}   \lbr \frac{16\rho}{\rho_b - \rho_a} \rbr^{\tht} \la^{\frac{\tht}{p(z_i)}},
\]
which proves the lemma.

\end{description}

\end{proof}

\begin{corollary}
\label{corollary3.7}
For any $i \in \Th_1$ and any $j \in \mci_i$, there holds
\[
|\tuh^i - \tuh^j| \apprle_{(n,\plog,\lamot,m_e)}\lbr \frac{16\rho}{\rho_b - \rho_a} \rbr \min\left\{ \scalex{\al_0}{z_0}\rho, \scalex{\la}{z_i}r_i\right\} \la^{\frac{1}{p(z_i)}}.
\]
\end{corollary}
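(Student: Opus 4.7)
The plan is to reduce $|\tuh^i - \tuh^j|$ to a single mean oscillation of $\tuh$ on the dilated Whitney-type cylinder $\htq_i = \hat c\, Q_i$, and then rerun the estimates underlying Lemma \ref{improved_est} on $\htq_i$ in place of $2Q_i$. The two structural ingredients are: $j \in \mci(i)$ forces $\spt\psi_i \cap \spt\psi_j \neq \emptyset$, hence $2Q_i \cap 2Q_j \neq \emptyset$, so \descref{W5}{W5} gives $2Q_j \subset \htq_i$ while \descref{W3}{W3} gives $|2Q_i| \approx |2Q_j| \approx |\htq_i|$ with constants depending only on $(n,\plog,m_e)$; and \eqref{2.2.28-1} makes $\la^{1/p(z_i)}$ and $\la^{1/p(z_j)}$ comparable, so a bound on the scale of $z_i$ immediately implies the analogous bound on the scale of $z_j$.

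\emph{Interior case.} Assume both $\tuh^i$ and $\tuh^j$ are genuine averages in \eqref{def_tuh} and set $\overline u := \fiint_{\htq_i}\tuh$. From
\[
\tuh^i - \overline u = \fiint_{2Q_i}(\tuh - \overline u), \qquad \tuh^j - \overline u = \fiint_{2Q_j}(\tuh - \overline u),
\]
enlarging each integrand to $\htq_i$ and using $|2Q_i| \approx |2Q_j| \approx |\htq_i|$ yields $|\tuh^i - \tuh^j| \apprle_{(n,\plog,m_e)} \fiint_{\htq_i}|\tuh - \overline u|$. The right-hand side is then estimated by the argument of Lemma \ref{improved_est} applied verbatim to $\htq_i$: the gradient bound \eqref{lemma3.6_pre_two} transfers with $\la^{1/p(z_i)}$ on the right, since $8\hat c\, Q_i \cap \elam \neq \emptyset$ by \descref{W4}{W4}, and the time-oscillation estimate of Lemma \ref{lemma_crucial_2} with $\varphi \equiv 1$ is valid precisely because $i \in \Th_1$ places $\htq_i \subset \mathbb{R}^n \times S_2$. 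The fixed factor $\hat c$ is absorbed into the universal constant, producing the claimed scale $\min\{\scalex{\al_0}{z_0}\rho, \scalex{\la}{z_i}r_i\}\la^{1/p(z_i)}$ multiplied by $16\rho/(\rho_b-\rho_a)$.

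\emph{Boundary case.} If $\tuh^j = 0$ by \eqref{def_tuh}, then $2Q_j \not\subset (\Om \cap B^{\al_0}_{\rho_2}(x_0))\times \mathbb{R}$: either $2B_j$ escapes $B^{\al_0}_{\rho_2}(x_0)$, so $\eta \equiv 0$ on a definite proportion of $2B_j$, or $2B_j \cap \pa\Om \neq \emptyset$, in which case Assumption \ref{uniform_measure} together with the zero extension of $u$ outside $\Om$ forces $\tuh(\cdot,t) \equiv 0$ on a subset of $2B_j$ of measure at least $c_n m_\varepsilon |2B_j|$ for every $t$. In either sub-case the zero set of $\tuh$ inside $\hat c\, B_i \supset 2B_j$ has uniform positive density, and the Gagliardo--Nirenberg type Lemma \ref{lemma_crucial_3} in space, combined with Lemma \ref{lemma_crucial_2} in time (still with $\varphi \equiv 1$ thanks to $i \in \Th_1$), controls $|\tuh^i| \leq \tfrac{|\htq_i|}{|2Q_i|}\fiint_{\htq_i}|\tuh|$ by the very same right-hand side as in Lemma \ref{improved_est}. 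The symmetric sub-case $\tuh^i = 0$ is handled identically by interchanging the roles of $i$ and $j$ via $2Q_i \subset \hat c\, Q_j$ from \descref{W5}{W5}; if both vanish the inequality is trivial.

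The main obstacle is the careful rerun of the test-function construction of Lemma \ref{improved_est} on the dilated cylinder $\htq_i$ and, in the boundary branch, the application of Assumption \ref{uniform_measure} to produce a zero set of controlled density on which to invoke the Poincaré-type inequality. Once these are in place, the two-regime minimum $\min\{\scalex{\al_0}{z_0}\rho,\scalex{\la}{z_i}r_i\}$ emerges for the same reason as in Lemma \ref{improved_est}, namely that the pointwise bound \eqref{lemma3.6_pre_one} on $\tuh$ is sharper than Poincaré when $\scalex{\la}{z_i}r_i$ is large, while the converse holds in the opposite regime.
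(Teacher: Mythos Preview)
Your reduction of $|\tuh^i - \tuh^j|$ to a mean oscillation on $\htq_i$ via \descref{W3}{W3} and \descref{W5}{W5}, followed by rerunning Lemma~\ref{improved_est} on the dilated cylinder, is exactly the paper's argument; the paper merely organizes by the size comparison $\scalex{\al_0}{z_0}\rho$ versus $\scalex{\la}{z_i}r_i$ first rather than by your interior/boundary split, but the content is the same. You are also right to isolate the case $\tuh^j=0$, which the paper's triangle inequality in its second case silently assumes away.

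There is, however, a genuine inaccuracy in your boundary branch. The assertion that ``$\eta\equiv 0$ on a definite proportion of $2B_j$'' when $2B_j$ escapes $B^{\al_0}_{\rho_2}(x_0)$ is unjustified: $2B_j$ may protrude by an arbitrarily thin sliver, so $|2B_j\setminus B^{\al_0}_{\rho_2}(x_0)|/|2B_j|$ can be made as small as one likes. The parallel claim in the $\pa\Om$ sub-case has the same defect, since Assumption~\ref{uniform_measure} applies to balls \emph{centered} on $\pa\Om$, and the boundary point you locate in $2B_j$ may sit right at $\pa(2B_j)$, giving no lower bound on $|2B_j\cap\Om^c|$. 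The repair is a single extra dilation: that boundary point $y$ lies in $2B_j\subset \hat c B_i$, so $B_{\hat c\,\scalex{\la}{z_i}r_i}(y)\subset 2\hat c B_i$, and either the tangent half-space to $B^{\al_0}_{\rho_2}(x_0)$ at $y$ or Assumption~\ref{uniform_measure} then furnishes a zero set of $\tuh(\cdot,t)$ of density $\ge c_{(n,m_e)}$ inside $2\hat c B_i$, uniformly in $t$. Purely spatial Poincar\'e (Lemma~\ref{lemma_crucial_3} with $\theta=1$) on each time slice now gives $\fiint_{2\hat c Q_i}|\tuh|\apprle \scalex{\la}{z_i}r_i\fiint_{2\hat c Q_i}|\nabla\tuh|$, and the gradient bound \eqref{lemma3.6_pre_two} (still valid since $2\hat c Q_i\subset 8\hat c Q_i$ and $8\hat c Q_i\cap\elam\neq\emptyset$ by \descref{W4}{W4}) closes the estimate; Lemma~\ref{lemma_crucial_2} is not needed in this branch --- which is just as well, since in the $\pa\Om$ sub-case $2\hat c B_i$ need not lie inside $\Om$.
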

\begin{proof}
From \eqref{theta_1}, we see that $j \in \Th_1$ for every $j \in \mci_i$ with $i \in \Th_1$. Thus we can split the proof into two cases:
\begin{description}[leftmargin=*]
\item[Case $\scalex{\al_0}{z_0}\rho \leq  \scalex{\la}{z_i}r_i$:] In this case, using triangle inequality along with $i, j\in \Th_1$, we get
\[
|\tuh^i - \tuh^j| \leq |\tuh^i| + |\tuh^j| \overset{\eqref{6.18}}{\apprle} \lbr \scalex{\al_0}{z_0} \rho\rbr^{\tht} \la^{\frac{\tht}{p(z_i)}}.
\]

\item[Case $\scalex{\al_0}{z_0}\rho \geq  \scalex{\la}{z_i}r_i$:]  From \descref{W5}{W5}, we have $Q_j \subset \hat{c}Q_i$, thus we get from triangle inequality
\begin{equation*}
\begin{array}{rcl}
|\tuh^i - \tuh^j| & \leq &  \fiint_{2Q_i} |\tuh(\tz) - \avgs{\tuh}{\hat{c}Q_i}| \ d\tz + \fiint_{2Q_j} |\tuh(\tz) - \avgs{\tuh}{\hat{c}Q_i}| \ d\tz \\
& \leq & \fiint_{\hat{c}Q_i} |\tuh(\tz) - \avgs{\tuh}{\hat{c}Q_i}| \ d\tz + \frac{|\hat{c}Q_i|}{|2Q_j|} \fiint_{\hat{c}Q_i} |\tuh(\tz) - \avgs{\tuh}{\hat{c}Q_i}| \ d\tz \\
& \overset{\text{\descref{W3}{W3}}}{\apprle} & \fiint_{\hat{c}Q_i} |\tuh(\tz) - \avgs{\tuh}{\hat{c}Q_i}| \ d\tz.
\end{array}
\end{equation*}

Applying Lemma \ref{lemma_crucial_2} with  $\mu \in C_c^{\infty}(\hat{c}B_i)$ satisfying $|\mu(x)| \apprle \frac{1}{\lbr \scalex{\la}{z_i} \hat{c} r_i\rbr^n}$ and $|\nabla \mu(x)| \apprle \frac{1}{\lbr \scalex{\la}{z_i} \hat{c} r_i\rbr^{n+1}}$, we get
\begin{equation}
\label{6.19-two}
\fiint_{\hat{c}Q_i} |\tuh(\tz)- \avgs{\tuh}{\hat{c}Q_i}| \,  d\tz \apprle \lbr \scalex{\la}{z_i} \hat{c} r_i\rbr \fiint_{\hat{c}Q_i} |\nabla \tuh| \, d\tz + \sup_{t_1,t_2 \in \hat{c}I_i} |\avgs{\tuh}{\mu}(t_2) - \avgs{\tuh}{\mu}(t_1)|.
\end{equation}
Using \eqref{lemma3.6_pre_two}, we estimate the first term on the right-hand side of \eqref{6.19-two} to get
\begin{equation}
\label{est_J_1-two}
 \scalex{\la}{z_i} r_i \fiint_{\hat{c}Q_i} |\nabla \tuh|^{\tht} \ d\tz \apprle \lbr \scalex{\la}{z_i} r_i\rbr   \lbr \frac{16\rho}{\rho_b - \rho_a} \rbr \la^{\frac{1}{p(z_i)}}.
\end{equation}

To estimate the second term on the right-hand side of \eqref{6.19-two}, we make proceed analogous to  \eqref{est_J_2}, which gives
\begin{equation}
\label{est_J_2-two}
|\avgs{\tuh}{\mu}(t_2) - \avgs{\tuh}{\mu}(t_1)| \apprle \lbr \scalex{\la}{z_i} \hat{c} r_i \rbr \la^{\frac{1}{p(z_i)}}.
\end{equation}
Combining \eqref{est_J_1-two} and \eqref{est_J_2-two} into \eqref{6.19-two} and making use of the fact that $\frac{16\rho}{\rho_b - \rho_a} \geq 1$, we have
\begin{equation*}
\label{6.31_bnd}
\fiint_{\hat{c}Q_i} |\tuh(\tz) - \avgs{\tuh}{\hat{c}Q_i}| \ d\tz \apprle_{(n,\plog,\lamot,m_e)} \lbr \scalex{\la}{z_i} r_i\rbr   \musym \la^{\frac{1}{p(z_i)}}.
\end{equation*}

\end{description}
This completes the proof of the corollary. 
\end{proof}

\subsubsection{Bounds on \texorpdfstring{$\vlh$}\  and \texorpdfstring{$\nabla \vlh$}.}

\begin{lemma}
\label{lemma6.7-1}
Let $Q_i$ be a parabolic Whitney type cylinder. Then for any $z \in 2Q_i$, we have the following bound in the case $i \in \Th_1$ or $i\in\Th_2$ with $\scalex{\al_0}{z_0}(\rho_b-\rho_a) \leq \scalex{\la}{z_i} 15 r_i$:
\begin{equation}
\label{lemma6.7-1_est}
\frac{1}{\scalex{\al_0}{z_0} \rho } |\vlh(z)| + |\nabla \vlh(z)| \apprle_{(n,\plog,\lamot,m_e)} \musym^2 \la^{\frac{1}{p(z_i)}}.
\end{equation}
\end{lemma}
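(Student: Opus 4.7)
The plan is to exploit the partition of unity from Lemma \ref{partition_unity} to reduce $\vlh$ and $\nabla\vlh$ on $2Q_i$ to telescoping sums involving only the local averages $\tuh^j$ for those indices $j$ whose supports meet that of $\psi_i$, and then to bound these differences using the oscillation estimates already developed. Since \descref{W4}{W4} gives $2Q_i \subset \htq_i \subset \RR^{n+1}\setminus\elam$ and Lemma \ref{partition_unity} yields $\sum_j \psi_j \equiv 1$ on $\qfur \setminus \elam$ (hence $\sum_j \nabla\psi_j \equiv 0$), subtracting $\tuh^i$ and the constant $\tuh(z)$ respectively gives
\begin{gather*}
\vlh(z) - \tuh^i = \sum_{j \in \mci_z} \psi_j(z)(\tuh^j - \tuh^i), \\
\nabla\vlh(z) = \sum_{j \in \mci_z} \nabla\psi_j(z)(\tuh^j - \tuh^i),
\end{gather*}
for every $z \in 2Q_i$. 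By \descref{W2}{W2} the set $\mci_z$ has uniformly bounded cardinality, Lemma \ref{partition_unity} gives $|\nabla\psi_j(z)| \apprle 1/(\scalex{\la}{z_j} r_j)$, and \descref{W5}{W5} together with \descref{W3}{W3} yields $\scalex{\la}{z_j} r_j \approx \scalex{\la}{z_i} r_i$ for $j \in \mci_z$. The problem thus reduces to estimating $|\tuh^i|$ (immediate from Lemma \ref{lemma3.6_pre} with $\tht = 1$) and $\max_{j \in \mci_z} |\tuh^j - \tuh^i|$.

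For the oscillation $|\tuh^j - \tuh^i|$, the inclusion $2Q_j \subset \htq_i$ from \descref{W5}{W5} reduces matters to an average oscillation of $\tuh$ on $\htq_i$. When $i \in \Th_1$ this is precisely Corollary \ref{corollary3.7}, giving
\[
|\tuh^j - \tuh^i| \apprle \musym\, \min\{\scalex{\al_0}{z_0}\rho,\, \scalex{\la}{z_i} r_i\}\, \la^{1/p(z_i)}.
\]
In the case $i \in \Th_2$ with $\scalex{\al_0}{z_0}(\rho_b-\rho_a) \leq 15\scalex{\la}{z_i} r_i$, Corollary \ref{corollary3.7} does not apply, since $\htq_i$ is no longer contained in $\RR^n \times S_2$, so the time cut-off $\zeta$ entering the definition of $\tuh$ is non-constant on $\htq_i$. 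Here I would repeat the argument of Lemma \ref{improved_est} by applying Lemma \ref{lemma_crucial_2} with $\phi = \mu\eta$ and $\varphi = \zeta$. The first right-hand side term of Lemma \ref{lemma_crucial_2} reproduces the spatial contribution via \eqref{lemma3.6_pre_two} and \eqref{abounded}; the second, now non-trivial term is estimated using $\|\zeta'\|_\infty \apprle 1/s$ for $s = \scalet{\al_0}{z_0}(\rho_b-\rho_a)\rho$, the lower bound $\scalet{\la}{z_i} r_i^2 \gtrsim s$ from Remark \ref{bound_s}, and the $L^1$ bound on $[u]_h$ over $\htq_i$ coming from $8\hat{c}Q_i \cap \elam \neq \emptyset$ (\descref{W4}{W4}) combined with the definition of $g$ in \eqref{elam}. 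Converting ambient lengths into intrinsic ones by way of the hypothesis $\scalex{\al_0}{z_0}(\rho_b-\rho_a) \leq 15\scalex{\la}{z_i} r_i$ introduces one extra factor of $\musym$ beyond the $i \in \Th_1$ bound, yielding
\[
|\tuh^j - \tuh^i| \apprle \musym^2\, \min\{\scalex{\al_0}{z_0}\rho,\, \scalex{\la}{z_i} r_i\}\, \la^{1/p(z_i)}.
\]

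Plugging the oscillation bounds into the partition-of-unity identities and using the scale comparabilities above yields in both cases
\[
|\nabla\vlh(z)| \apprle \frac{\musym^2\, \min\{\scalex{\al_0}{z_0}\rho,\, \scalex{\la}{z_i} r_i\}\, \la^{1/p(z_i)}}{\scalex{\la}{z_i} r_i} \apprle \musym^2\, \la^{1/p(z_i)},
\]
while $|\vlh(z)|/(\scalex{\al_0}{z_0}\rho)$ is dominated by the same quantity once $|\tuh^i|/(\scalex{\al_0}{z_0}\rho) \apprle \la^{1/p(z_i)}$ from Lemma \ref{lemma3.6_pre} is added. The main obstacle is the $i \in \Th_2$ analysis: the new boundary-in-time term produced by $\zeta'$ is absent in Corollary \ref{corollary3.7}, so estimating it requires a careful balance between $\|\zeta'\|_\infty \apprle 1/s$ and the lower bound $\scalet{\la}{z_i} r_i^2 \gtrsim s$ of Remark \ref{bound_s}, as well as the conversion of lengths via the hypothesis $\scalex{\al_0}{z_0}(\rho_b-\rho_a) \leq 15\scalex{\la}{z_i} r_i$ — this is precisely where the second copy of $\musym$ enters. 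A minor additional subtlety arises when $\tuh^j = 0$ by the convention in \eqref{def_tuh}, i.e.\ when $2Q_j$ exits the spatial cut-off region; in that case $\tuh$ itself vanishes on a large portion of $2Q_j$, and the same oscillation bound continues to control $|\tuh^j - \tuh^i|$.
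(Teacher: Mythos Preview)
Your treatment of the $|\vlh|$ bound and of the $i\in\Th_1$ case for $|\nabla\vlh|$ is essentially the paper's: Corollary~\ref{lemma3.6} (equivalently, \eqref{lemma3.6_pre_one} plus the telescoping identity) handles the first term, and for the gradient you use $\sum_j\nabla\psi_j=0$, Corollary~\ref{corollary3.7}, and the partition-of-unity bounds exactly as the paper does.

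For the $i\in\Th_2$ case, however, you take a substantially harder route than the paper. You attempt to reprove an oscillation estimate on $|\tuh^j-\tuh^i|$ by reopening the argument of Lemma~\ref{improved_est} with $\varphi=\zeta$ in Lemma~\ref{lemma_crucial_2}, and then absorbing the new $\zeta'$ contribution via $\|\zeta'\|_\infty\apprle 1/s$, \eqref{bound_low_s}, and the hypothesis. This plan is not obviously airtight: the function $\langle\tuh\rangle_\mu(t)=\zeta(t)\int[u]_h\eta\mu\,dx$ does not fit Lemma~\ref{lemma_crucial_2} directly unless $\eta\equiv1$ on $\spt\mu$, and near the lateral boundary $\mu\eta$ need not be compactly supported in $\Om$; moreover the $\zeta'$ term produces a quantity you only sketch how to control. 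All of this is avoidable.

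The paper's argument for $i\in\Th_2$ is much simpler: it \emph{abandons oscillation altogether} and uses the crude bound
\[
|\nabla\vlh(z)|\leq\sum_{j\in\mci_z}|\tuh^j|\,|\nabla\psi_j(z)|
\apprle \frac{\scalex{\al_0}{z_0}\rho}{\scalex{\la}{z_i}r_i}\,\la^{\frac{1}{p(z_i)}},
\]
where \eqref{lemma3.6_pre_one} with $\tht=1$ gives $|\tuh^j|\apprle \scalex{\al_0}{z_0}\rho\,\la^{1/p(z_i)}$. The sole role of the hypothesis $\scalex{\al_0}{z_0}(\rho_b-\rho_a)\le 15\,\scalex{\la}{z_i}r_i$ is then to convert the length ratio:
\[
\frac{\scalex{\al_0}{z_0}\rho}{\scalex{\la}{z_i}r_i}
\le \frac{15\rho}{\rho_b-\rho_a}\apprle\musym,
\]
which immediately yields $|\nabla\vlh(z)|\apprle\musym\,\la^{1/p(z_i)}\le\musym^2\la^{1/p(z_i)}$. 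No equation, no $\zeta'$, no Poincar\'e step is needed here. This is the point of the size hypothesis in the statement, and your proposal misses it.
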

\begin{proof}
The bound for $\vlh(z)$ follows directly from Corollary \ref{lemma3.6} and the fact that $\frac{16\rho}{\rho_b-\rho_a} \geq 1$.

In order to bound $\nabla \vlh(z)$, we consider the two cases:
\begin{description}[leftmargin=*]
\item[Case $i \in \Th_1$:] Since $\sum_{j\in\NN} \psi_j(z)  = 1$, we must have $\sum_{j \in \NN} \nabla \psi_j(z) = 0$ which combined with \descref{W2}{W2}  and Lemma \ref{partition_unity} gives the following sequence of estimates:
\begin{equation*}
\begin{array}{rcl}
|\nabla \vlh(z)| & = & \abs{\sum_{j \in I_i}\lbr \tuh^j - \tuh^i\rbr \nabla \psi_j(z) } \\
& \overset{\text{Corollary \ref{corollary3.7}}}{\apprle} & \lbr \frac{16\rho}{\rho_b - \rho_a} \rbr \frac{ \min\left\{\scalex{\la}{z_i} r_i, \scalex{\al_0}{z_0} \rho \right\}}{\scalex{\la}{z_i} r_i} \la^{\frac{1}{p(z_i)}}\\
& \apprle & \musym^2 \la^{\frac{1}{p(z_i)}}.
\end{array}
\end{equation*}

\item[Case $i \in \Th_2$ and $\scalex{\al_0}{z_0}\rho \leq \scalex{\la}{z_i} r_i$:] In this case, we can again make use of \descref{W2}{W2} along with the bound \eqref{lemma3.6_pre_one} applied with $\tht=1$ and Lemma \ref{partition_unity}, we get
\begin{equation*}
\begin{array}{rcl}
|\nabla \vlh(z)| & \leq \sum_{j \in I_i} |\tuh^j| |\nabla \psi_j(z)| \apprle \frac{\scalex{\al_0}{z_0} \rho \la^{\frac{1}{p(z_i)}}}{\scalex{\la}{z_i} r_i} \apprle \lbr \frac{16\rho}{\rho_b - \rho_a} \rbr \la^{\frac{1}{p(z_i)}}.
\end{array}
\end{equation*}
\end{description}
This completes the proof of the lemma. 
\end{proof}

 \begin{corollary}
 \label{corollary6.7-2}
Let $z \in \qfur \setminus \elam$, then $z \in 2Q_i$ for some $i \in \NN$. Suppose $i \in \Th_1$, then  for any $\de \in (0,1]$, there holds 
 \begin{gather}
 \frac{1}{\scalex{\la}{z_i} r_i } |\vlh(z)| \apprle_{{(n,\plog,\lamot,m_e)}} \lbr \frac{16\rho}{\rho_b - \rho_a} \rbr \frac{\la^{\frac{1}{p(z_i)}}}{\de} +  \frac{\de}{\lbr \scalex{\la}{z_i}r_i\rbr^2 \la^{\frac{1}{p(z_i)}}} |\tuh^i|^2, \label{bound+6.31}\\
  |\nabla \vlh(z)| \apprle_{{(n,\plog,\lamot,m_e)}} \lbr \frac{16\rho}{\rho_b - \rho_a} \rbr \frac{\la^{\frac{1}{p(z_i)}}}{\de}. \label{bound+6.31_two}
 \end{gather}

 \end{corollary}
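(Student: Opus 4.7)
The gradient bound \eqref{bound+6.31_two} follows essentially from refining Lemma \ref{lemma6.7-1} in the case $i\in\Th_1$: inspecting the calculation there one actually obtains
\[
|\nabla \vlh(z)| \apprle \musym \,\frac{\min\{\scalex{\la}{z_i}r_i,\,\scalex{\al_0}{z_0}\rho\}}{\scalex{\la}{z_i}r_i}\,\la^{1/p(z_i)} \leq \musym\,\la^{1/p(z_i)},
\]
which combined with $\de\in(0,1]$, i.e.\ $1/\de\geq 1$, is exactly the claim.

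The estimate \eqref{bound+6.31} is more delicate: the direct bound $|\vlh(z)|\apprle\musym^2\,\scalex{\al_0}{z_0}\rho\,\la^{1/p(z_i)}$ coming from Lemma \ref{lemma6.7-1}, once divided by $\scalex{\la}{z_i}r_i$, introduces the uncontrolled ratio $\scalex{\al_0}{z_0}\rho/\scalex{\la}{z_i}r_i$. My plan is therefore to subtract off the constant value $\tuh^i$ before dividing. Since $\sum_{j\in\mci_z}\psi_j(z)=1$ on $\qfur\setminus\elam$, we may write
\[
\vlh(z) - \tuh^i \,=\, \sum_{j\in\mci_z}\psi_j(z)\bigl(\tuh^j - \tuh^i\bigr),
\]
and for each $j\in\mci_z$ the point $z$ lies in $2Q_i\cap 2Q_j$, so Corollary \ref{corollary3.7} applies (its proof only requires $2Q_i\cap 2Q_j\neq\emptyset$ via \descref{W5}{W5}) and gives $|\tuh^j-\tuh^i|\apprle \musym \min\{\scalex{\al_0}{z_0}\rho,\scalex{\la}{z_i}r_i\}\la^{1/p(z_i)}$. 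Using the bounded overlap \descref{W2}{W2} together with $\|\psi_j\|_\infty\leq C$, this produces
\[
|\vlh(z) - \tuh^i| \apprle \musym\,\scalex{\la}{z_i}r_i\,\la^{1/p(z_i)}.
\]

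Dividing by $\scalex{\la}{z_i}r_i$, the only remaining quantity to control is $|\tuh^i|/\scalex{\la}{z_i}r_i$, and it is here that the parameter $\de$ enters. Applying Young's inequality $ab\leq \tfrac12(a^2+b^2)$ with
\[
a = \sqrt{\de}\,\frac{|\tuh^i|}{\scalex{\la}{z_i}r_i\,\la^{1/(2p(z_i))}}, \qquad b = \frac{\la^{1/(2p(z_i))}}{\sqrt{\de}},
\]
produces exactly the two terms $\tfrac{\de\,|\tuh^i|^2}{(\scalex{\la}{z_i}r_i)^2\la^{1/p(z_i)}}$ and $\tfrac{\la^{1/p(z_i)}}{\de}$ appearing on the right-hand side of \eqref{bound+6.31}, and the leftover $\musym\,\la^{1/p(z_i)}$ is absorbed into $\musym\,\la^{1/p(z_i)}/\de$ using $\de\leq 1$. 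The only genuinely non-routine step is calibrating the Young's inequality so that the $\la$ and $\de$ powers land correctly; every other ingredient is a direct consequence of the partition-of-unity structure and Corollary \ref{corollary3.7}.
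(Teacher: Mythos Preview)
Your proposal is correct and follows essentially the same approach as the paper: for \eqref{bound+6.31_two} the paper also simply quotes the gradient bound from Lemma~\ref{lemma6.7-1} and uses $\de\le 1$, and for \eqref{bound+6.31} the paper likewise writes $\vlh(z)=\sum_j\psi_j(z)(\tuh^j-\tuh^i)+\tuh^i$, bounds the sum via Corollary~\ref{corollary3.7}, and then applies exactly the same Young-inequality splitting to $|\tuh^i|$ that you describe.
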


 \begin{proof}
 Let us prove each of the estimates as follows:
 \begin{description}[leftmargin=*]
 \item[Proof of \eqref{bound+6.31}:] Since $\sum_{j \in I_i}\psi(z) = 1$, we make use of \descref{W2}{W2} and prove the desired estimate as follows:
 \begin{equation*}
 \begin{array}{rcl}
 |\vlh(z)| & \leq & \abs{\sum_{j \in I_i} \lbr \tuh^j - \tuh^i \rbr \psi_j(z)  } + |\tuh^i| \\
 & \overset{\text{Corollary \ref{corollary3.7}}}{\apprle} & \lbr \frac{16\rho}{\rho_b - \rho_a} \rbr \scalex{\la}{z_i} r_i \la^{\frac{1}{p(z_i)}} + |\tuh^i| \\
 & = & \lbr \frac{16\rho}{\rho_b - \rho_a} \rbr \scalex{\la}{z_i} r_i \la^{\frac{1}{p(z_i)}} + |\tuh^i|\sqrt{\frac{\de}{\scalex{\la}{z_i}r_i\la^{\frac{1}{p(z_i)}}}}\sqrt{\frac{\scalex{\la}{z_i}r_i\la^{\frac{1}{p(z_i)}}}{\de}} \\
 & \overset{\redlabel{6.31.a}{a}}{\apprle} & \lbr \frac{16\rho}{\rho_b - \rho_a} \rbr \frac{\scalex{\la}{z_i} r_i}{\de} \la^{\frac{1}{p(z_i)}} + |\tuh^i|^2 {\frac{\de}{\scalex{\la}{z_i}r_i\la^{\frac{1}{p(z_i)}}}}.
 \end{array}
 \end{equation*}
 To obtain \redref{6.31.a}{a}, we made use of Young's inequality along with the fact that $\frac{16\rho}{\rho_b - \rho_a}  \geq 1$. 

 \item[Proof of \eqref{bound+6.31_two}:] From \eqref{lemma6.7-1_est}, we have 
 \begin{equation*}
 |\nabla \vlh(z)| \apprle_{(n,\plog,\lamot,m_e)} \lbr \frac{16\rho}{\rho_b - \rho_a} \rbr \la^{\frac{1}{p(z_i)}} \leq \lbr \frac{16\rho}{\rho_b - \rho_a} \rbr \frac{\la^{\frac{1}{p(z_i)}}}{\de}.
 \end{equation*}
 \end{description}
 This completes the proof of the corollary. 
 \end{proof}

 \begin{lemma}
 \label{lemma6.7-3}
 Let $z \in \qfur \setminus \elam$, then $z \in 2Q_i$ for some $i \in \NN$. Suppose $i \in \Th_2$, then there holds
 \begin{gather}
 |\vlh(z)| \apprle_{(n,\plog,\lamot,m_e)} \scalex{\la}{z_i} r_i \la^{\frac{1}{p(z_i)}} + \frac{\scalex{\la}{z_i}r_i  \la^{{\frac{1-p(z_i)}{p(z_i)}}}}{s} \fiint_{\htq_i} |\tuh(\tz)|^2 \ d\tz \label{bound_6.7-3-1}, \\
 |\nabla \vlh(z)| \apprle_{(n,\plog,\lamot,m_e)} \la^{\frac{1}{p(z_i)}} + \frac{\la^{\frac{1-p(z_i)}{p(z_i)}}}{s} \fiint_{\htq_i} |\tuh(\tz)|^2 \ d\tz \label{bound_6.7-3-2}.
 \end{gather}
 \end{lemma}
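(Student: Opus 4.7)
The plan is to exploit the defining feature of the boundary case $i \in \Th_2$, namely the lower bound $\scalet{\la}{z_i} r_i^2 \geq s/(9\hat{c})$ from Remark \ref{bound_s}. This bound converts a crude Jensen--Cauchy--Schwarz estimate on $\tuh^j$, tensorised against a well-chosen Young's inequality, into exactly the two terms of \eqref{bound_6.7-3-1} and \eqref{bound_6.7-3-2}. In contrast to the $\Th_1$ case treated in Corollary \ref{corollary6.7-2}, no application of the weak formulation via Lemma \ref{lemma_crucial_2} is needed: the $L^2$ integral of $\tuh$ is simply left on the right-hand side, to be absorbed at a later stage.

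\emph{Step 1 (Reduction to $\max_j |\tuh^j|$).} Since $z \in 2Q_i \subset \qfur \setminus \elam$ one has $\sum_{j \in \mci_z}\psi_j(z) = 1$, and hence $\vlh(z) = \sum_{j \in \mci_z}\psi_j(z)\tuh^j$ and $\nabla\vlh(z) = \sum_{j \in \mci_z}(\nabla\psi_j)(z)\tuh^j$. The bounded-overlap property \descref{W2}{W2}, the pointwise bounds on $\psi_j,\nabla\psi_j$ from Lemma \ref{partition_unity}, and the comparability $\scalex{\la}{z_j}r_j \approx \scalex{\la}{z_i}r_i$ (from \descref{W3}{W3} together with \eqref{2.2.28-1}) collapse these sums to
\[
|\vlh(z)| \leq C \max_{j \in \mci_i}|\tuh^j|, \qquad |\nabla\vlh(z)| \leq \frac{C}{\scalex{\la}{z_i}r_i} \max_{j \in \mci_i}|\tuh^j|.
\]

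\emph{Step 2 (Jensen on $\htq_i$).} For each $j \in \mci_i$, property \descref{W5}{W5} yields $2Q_j \subset \htq_i$. Jensen's inequality followed by \descref{W3}{W3} gives
\[
|\tuh^j|^2 \leq \fiint_{2Q_j}|\tuh|^2\,d\tz \leq \frac{|\htq_i|}{|2Q_j|}\fiint_{\htq_i}|\tuh|^2\,d\tz \leq C \fiint_{\htq_i}|\tuh|^2\,d\tz.
\]

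\emph{Step 3 (Young with the right $A$).} Combining Steps 1 and 2 yields $|\vlh(z)|\leq C(\fiint_{\htq_i}|\tuh|^2\,d\tz)^{1/2}$ and a similar bound for $|\nabla\vlh(z)|$. Apply AM--GM in the form $\sqrt{X} \leq A/2 + X/(2A)$ with $A := \scalex{\la}{z_i}r_i\,\la^{1/p(z_i)}$. Using $\scalex{\la}{z_i} = \la^{-1/p(z_i)+d/2}$ and $\scalet{\la}{z_i}=\la^{d-1}$, a direct calculation gives the identity
\[
\frac{1}{\scalex{\la}{z_i}r_i\,\la^{1/p(z_i)}} = \frac{\scalex{\la}{z_i}r_i\,\la^{(1-p(z_i))/p(z_i)}}{\scalet{\la}{z_i}r_i^2},
\]
which by \eqref{bound_low_s} is bounded by $9\hat{c}\,\scalex{\la}{z_i}r_i\,\la^{(1-p(z_i))/p(z_i)}/s$. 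Substituting into Young's inequality reproduces \eqref{bound_6.7-3-1}, and dividing through by $\scalex{\la}{z_i}r_i$ yields \eqref{bound_6.7-3-2}.

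\emph{Main obstacle.} The only nontrivial step is the coefficient matching in Step 3: one must check that the extra factor produced by Young's inequality has precisely the form of the second summand claimed in \eqref{bound_6.7-3-1}--\eqref{bound_6.7-3-2}. This matching works \emph{exactly} because of the intrinsic scaling relation $s \lesssim \scalet{\la}{z_i}r_i^2$ guaranteed by Remark \ref{bound_s} for every Whitney cylinder in $\Th_2$; without that bound no pointwise estimate of the claimed form is possible, which is why the case $i \in \Th_2$ must be isolated and treated separately from the $\Th_1$ case.
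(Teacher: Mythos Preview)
Your proof is correct and follows essentially the same approach as the paper. The paper also reduces to $\sum_{j}|\tuh^j||\psi_j|$ (respectively $|\nabla\psi_j|$), bounds each $|\tuh^j|$ via the Young-type splitting $|\tuh^j|\apprle A + A^{-1}\fiint|\tuh|^2$ with $A=\scalex{\la}{z_i}r_i\la^{1/p(z_i)}$, and then converts $A^{-1}$ into $s^{-1}\scalex{\la}{z_i}r_i\la^{(1-p(z_i))/p(z_i)}$ using the scaling identity and \eqref{bound_low_s}; the only cosmetic difference is that the paper applies Young's inequality pointwise to $|\tuh(\tz)|$ and then averages, whereas you first pass to the $L^2$-average via Jensen and then apply AM--GM to the square root.
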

 \begin{proof}
 Note that since $i\in \Th_2$, there must hold $\scalet{\la}{z_i} r_i^2 \apprge s$ from \eqref{bound_low_s}. Let us first obtain a rough estimate of the form
 \begin{equation}
 \label{bound6.7-3-one}
 |\tuh^i| \apprle \fiint_{2Q_i} |\tuh(\tz)| \ d\tz \apprle \scalex{\la}{z_i} r_i\la^{\frac{1}{p(z_i)}} + \frac{1}{\scalex{\la}{z_i} r_i\la^{\frac{1}{p(z_i)}}} \fiint_{2Q_i} |\tuh(\tz)|^2 \ d\tz.
 \end{equation}
 
 We now prove both the assertions of the lemma as follows:
 \begin{description}[leftmargin=*]
 \item[Estimate \eqref{bound_6.7-3-1}:] Using \eqref{bound6.7-3-one}, we get
\begin{equation*}
\begin{array}{rcl}
|\vlh(z)| & \leq & \sum_{j \in I_i} |\tuh^j| |\psi_j(z)| \\
& \overset{\text{\descref{W2}{W2}},\eqref{bound6.7-3-one}}{\apprle} & \scalex{\la}{z_i} r_i\la^{\frac{1}{p(z_i)}} + \frac{1}{\scalex{\la}{z_i} r_i\la^{\frac{1}{p(z_i)}}} \fiint_{\htq_i} |\tuh(\tz)|^2 \ d\tz \\
& \overset{\eqref{bound_low_s}}{\apprle} & \scalex{\la}{z_i} r_i\la^{\frac{1}{p(z_i)}} + \frac{\scalex{\la}{z_i} r_i \la^{\frac{1-p(z_i)}{p(z_i)}}}{s} \fiint_{\htq_i} |\tuh(\tz)|^2 \ d\tz.
\end{array}
\end{equation*}

 \item[Estimate \eqref{bound_6.7-3-2}:] Again making use of \eqref{bound6.7-3-one} along with Lemma \ref{partition_unity}, we have
\begin{equation*}
\begin{array}{rcl}
|\nabla \vlh(z)| & \leq & \sum_{j \in I_i} |\tuh^j| |\nabla \psi_j(z)| \\
& \overset{\text{\descref{W2}{W2}},\eqref{bound6.7-3-one}}{\apprle} & \frac{1}{\scalex{\la}{z_i} r_i} \lbr \scalex{\la}{z_i} r_i\la^{\frac{1}{p(z_i)}} + \frac{1}{\scalex{\la}{z_i} r_i\la^{\frac{1}{p(z_i)}}} \fiint_{\htq_i} |\tuh(\tz)|^2 \ d\tz \rbr\\
& \overset{\eqref{bound_low_s}}{\apprle} & \frac{1}{\scalex{\la}{z_i} r_i} \lbr \scalex{\la}{z_i} r_i\la^{\frac{1}{p(z_i)}} + \frac{\scalex{\la}{z_i} r_i \la^{\frac{1-p(z_i)}{p(z_i)}}}{s} \fiint_{\htq_i} |\tuh(\tz)|^2 \ d\tz \rbr \\
& \apprle & \la^{\frac{1}{p(z_i)}} + \frac{ \la^{\frac{1-p(z_i)}{p(z_i)}}}{s} \fiint_{\htq_i} |\tuh(\tz)|^2 \ d\tz.
\end{array}
\end{equation*}
 \end{description}
 \end{proof}

 \subsubsection{Bounds on \texorpdfstring{$\pa_t \vlh$}.}

 \begin{lemma}
 \label{time_vlh}
 Let $ z \in \qfur$, then $z \in 2Q_i$ for some $i \in \Th$. We then have the following estimates for the time derivative of $\vlh$: in the case $i \in \Th_1$, there holds
 \begin{equation}
 \label{bound_time_vlh_one}
 |\pa_t \vlh(\tz)| \apprle_{(n,\plog,\lamot,m_e)} \musym \la^{\frac{p(z_i)-1}{p(z_i)}} \frac{1}{\lbr \scalex{\la}{z_i} r_i\rbr^2} \min \left\{\scalex{\la}{z_i}r_i, \scalex{\al_0}{z_0} \rho \right\},
 \end{equation}
and in the case $i \in \Th_2$, there holds
\begin{equation}
\label{bound_time_vlh_two}
 |\pa_t \vlh(\tz)| \apprle_{(n,\plog,\lamot,m_e)} \musym  \frac{\scalex{\al_0}{z_0} \rho}{s} \la^{\frac{1}{p(z_i)}}.
\end{equation}

 \end{lemma}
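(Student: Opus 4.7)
The plan is to express $\pa_t\vlh$ via the partition of unity and then reduce both estimates to tools already established --- Corollary \ref{corollary3.7} in the case $i\in\Th_1$, and Lemma \ref{lemma3.6_pre} combined with Remark \ref{bound_s} in the case $i\in\Th_2$. Since $z\in 2Q_i\subset\hat{c}Q_i\subset\RR^{n+1}\setminus\elam$ by \descref{W4}{W4}, on a neighbourhood of $z$ one has $\sum_{j}\psi_j\equiv 1$, and hence $\vlh(z)=\sum_{j}\psi_j(z)\tuh^j$ because each $\tuh^j$ is a constant. Differentiating in time and using $\sum_{j}\pa_t\psi_j(z)=0$, I would subtract $\tuh^i$ inside the sum to obtain
\[
\pa_t\vlh(z)=\sum_{j\in\mci_z}\pa_t\psi_j(z)\bigl(\tuh^j-\tuh^i\bigr).
\]
Each $j\in\mci_z$ satisfies $z\in 2Q_j\cap 2Q_i$, so by \descref{W3}{W3} we have $\scalet{\la}{z_j}r_j^2\approx\scalet{\la}{z_i}r_i^2$, and Lemma \ref{partition_unity} then yields $|\pa_t\psi_j(z)|\apprle\bigl(\scalet{\la}{z_i}r_i^2\bigr)^{-1}$ uniformly in $j$; by \descref{W2}{W2} the cardinality of $\mci_z$ is also universally bounded.

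For $i\in\Th_1$, each $j\in\mci_z$ lies in $\mci_i$ in the sense used by Corollary \ref{corollary3.7}, which therefore furnishes
\[
|\tuh^j-\tuh^i|\apprle\musym\min\bigl\{\scalex{\al_0}{z_0}\rho,\scalex{\la}{z_i}r_i\bigr\}\la^{\frac{1}{p(z_i)}}.
\]
The key algebraic identity, immediate from $\scalet{\la}{z_i}=\la^{-1+d}$ and $\bigl(\scalex{\la}{z_i}\bigr)^2=\la^{-2/p(z_i)+d}$, is
\[
\frac{1}{\scalet{\la}{z_i}r_i^2}=\frac{1}{\bigl(\scalex{\la}{z_i}r_i\bigr)^2}\,\la^{\frac{p(z_i)-2}{p(z_i)}}.
\]
Combining the two displays above and absorbing the exponents via $\tfrac{1}{p(z_i)}+\tfrac{p(z_i)-2}{p(z_i)}=\tfrac{p(z_i)-1}{p(z_i)}$, I arrive precisely at \eqref{bound_time_vlh_one} after summing over the finitely many indices in $\mci_z$.

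For $i\in\Th_2$, Corollary \ref{corollary3.7} is no longer at hand, but Remark \ref{bound_s} supplies the crucial lower bound $\scalet{\la}{z_i}r_i^2\apprge s$, so that $|\pa_t\psi_j(z)|\apprle s^{-1}$. The rough estimate \eqref{lemma3.6_pre_one} applied with $\tht=1$ gives $|\tuh^j|\apprle\scalex{\al_0}{z_0}\rho\,\la^{1/p(z_j)}$, and \eqref{2.2.28-1} replaces $p(z_j)$ by $p(z_i)$ up to a universal factor; the same is true of $|\tuh^i|$. Bounding $|\tuh^j-\tuh^i|\leq|\tuh^j|+|\tuh^i|$ and summing over $\mci_z$ delivers
\[
|\pa_t\vlh(z)|\apprle\frac{\scalex{\al_0}{z_0}\rho}{s}\la^{\frac{1}{p(z_i)}},
\]
which implies \eqref{bound_time_vlh_two} since $\musym\geq 1$. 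The main bookkeeping obstacle is the exponent juggling in $\la$: one must combine the identity for $\scalet{\la}{z_i}/\bigl(\scalex{\la}{z_i}\bigr)^2$ with Corollary \ref{corollary3.7} in Case $\Th_1$, and repeatedly invoke \eqref{2.2.28-1} to absorb the mismatch between $p(z_j)$ and $p(z_i)$ into a universal constant.
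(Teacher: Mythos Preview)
Your proposal is correct and follows essentially the same approach as the paper: in both cases one writes $\pa_t\vlh(z)=\sum_j\pa_t\psi_j(z)\,\tuh^j$, exploits the cancellation $\sum_j\pa_t\psi_j=0$ together with Corollary~\ref{corollary3.7} in the $\Th_1$ case, and in the $\Th_2$ case uses \eqref{lemma3.6_pre_one} combined with the lower bound \eqref{bound_low_s}. The only cosmetic difference is that in the $\Th_2$ case the paper bounds $\sum_j|\tuh^j||\pa_t\psi_j|$ directly rather than first subtracting $\tuh^i$ and applying the triangle inequality, but the two routes are equivalent.
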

 \begin{proof}
 Let us prove each of the assertions as follows:
 \begin{description}[leftmargin=*]
 \item[Estimate \eqref{bound_time_vlh_one}:] From the fact that $\sum_{j \in I_i} \psi_j(z) = 1$, we see that $\sum_{j \in I_i} \pa_t \psi_j(z) = 0$, which along with Lemma \ref{partition_unity} gives the following sequence of estimates
 \begin{equation*}
 \begin{array}{rcl}
 |\pa_t\vlh(z)| & = & \abs{\sum_{j \in I_i} \lbr \tuh^j - \tuh^i \rbr \pa_t \psi_j(z)  } \\
 & \overset{\text{Corollary \ref{corollary3.7}}}{\apprle} & \frac{1}{\scalet{\la}{z_0} r_i^2} \lbr \frac{16\rho}{\rho_b - \rho_a} \rbr \min\left\{ \scalex{\al_0}{z_0}\rho, \scalex{\la}{z_i}r_i\right\} \la^{\frac{1}{p(z_i)}} \\
 & = & \lbr \frac{16\rho}{\rho_b - \rho_a} \rbr  \frac{\la^{\frac{p(z_i)-1}{p(z_i)}}}{\lbr \scalex{\la}{z_i} r_i\rbr^2} \min\left\{ \scalex{\al_0}{z_0}\rho, \scalex{\la}{z_i}r_i\right\}.
 \end{array}
 \end{equation*}

 \item[Estimate \eqref{bound_time_vlh_two}:] In this case, we make use of \eqref{bound_low_s} to obtain 
 \begin{equation*}
 \begin{array}{rcl}
 |\pa_t \vlh(z)| & \leq & \sum_{j \in I_i} |\tuh^j| |\pa_t\psi_j(z)| 
 \overset{\eqref{lemma3.6_pre_one}}{\apprle}   \frac{1}{\scalet{\la}{z_i} r_i^2} \scalex{\al_0}{z_0} \rho \la^{\frac{1}{p(z_i)}} \\
 & \overset{\eqref{bound_low_s}}{\apprle} & \frac{1}{s} \scalex{\al_0}{z_0} \rho \la^{\frac{1}{p(z_i)}} 
  \apprle  \frac{1}{s} \musym   \scalex{\al_0}{z_0} \rho \la^{\frac{1}{p(z_i)}}.
 \end{array}
 \end{equation*}

 \end{description}

 \end{proof}
 
 \subsection{Some important estimates for the test function}
 
 \begin{lemma}
 \label{lemma6.8}
 Let $Q_i$ be a Whitney-type parabolic cylinder, then $i \in \Th$. Then for any $\vt \in [1,2]$, there holds
 \begin{equation}
 \label{lemma6.8-one}
 \iint_{\qfur \setminus \elam} |\vlh(z)|^{\vt} \ dz \apprle_{(n,\plog,\lamot,m_e)} \iint_{\qthrs \setminus \elam} |\tuh(z)|^{\vt} \ dz.
 \end{equation}
\end{lemma}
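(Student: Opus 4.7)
The plan is to use the explicit formula for $\vlh$ on the bad set combined with the partition of unity and the bounded overlap property of the Whitney cylinders. On $\qfur \setminus \elam$, since $\sum_{i} \psi_i \equiv 1$ there by Lemma \ref{partition_unity}, the definition \eqref{lipschitz_function} collapses to
\[
\vlh(z) = \sum_{i} \psi_i(z)\, \tuh^i \qquad \text{for } z \in \qfur \setminus \elam.
\]
Thus the first step is to apply Jensen's inequality (valid since $\vt \geq 1$ and $\sum_i \psi_i(z) = 1$) to get $|\vlh(z)|^\vt \leq \sum_i \psi_i(z)\,|\tuh^i|^\vt$, and then use $\|\psi_i\|_\infty \leq C$ together with $\spt(\psi_i) \subset 2Q_i$ to produce
\[
\iint_{\qfur \setminus \elam} |\vlh(z)|^\vt \, dz \;\apprle\; \sum_{i} |2Q_i|\,|\tuh^i|^\vt.
\]

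Next, I would apply Jensen's inequality in the other direction, to the average $\tuh^i = \fiint_{2Q_i} \tuh \, dz$ (or $=0$), yielding $|2Q_i|\,|\tuh^i|^\vt \leq \iint_{2Q_i} |\tuh(z)|^\vt \, dz$. Crucially, the cutoff structure $\tuh = [u]_h \cdot \eta(x)\zeta(t)$ with $\eta \in C_c^\infty(B_{\rho_2}^{\al_0}(x_0))$ and $\zeta \in C_c^\infty(I_{\rho_2}^{\al_0}(t_0))$ (see \eqref{cut_off_function}) gives $\spt(\tuh) \subset \qthrs$, so
\[
\iint_{2Q_i} |\tuh|^\vt \, dz \;=\; \iint_{2Q_i \cap \qthrs} |\tuh|^\vt \, dz.
\]

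The last step is to sum over $i$ and invoke the bounded overlap property \descref{W2}{W2}, which gives
\[
\sum_{i} \iint_{2Q_i \cap \qthrs} |\tuh|^\vt \, dz \;\apprle\; \iint_{\big(\bigcup_i 2Q_i\big) \cap \qthrs} |\tuh|^\vt \, dz.
\]
To convert the domain into $\qthrs \setminus \elam$, I would use property \descref{W4}{W4}: since $\hat{c} Q_i \subset \RR^{n+1} \setminus \elam$ and $\hat{c} \geq 9 \geq 2$, we have $2Q_i \cap \elam = \emptyset$ for every $i$, hence $\bigcup_i 2Q_i \subset \RR^{n+1} \setminus \elam$, which yields the inclusion $(\bigcup_i 2Q_i) \cap \qthrs \subset \qthrs \setminus \elam$ and closes the estimate.

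This is essentially a bookkeeping argument, so there is no major obstacle; the only subtlety is correctly combining the spatial-temporal support of $\tuh$ (which places integrals inside $\qthrs$) with the separation of the Whitney cylinders $2Q_i$ from the good set $\elam$ (which restricts the domain further to $\qthrs \setminus \elam$). The assumption $\vt \in [1,2]$ is only used lightly, for Jensen's inequality in the first step; the upper bound $\vt \leq 2$ is presumably imposed here for consistency with subsequent applications of the lemma rather than being required by the present argument.
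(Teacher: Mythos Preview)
Your proof is correct and follows essentially the same approach as the paper: reduce $\vlh$ to $\sum_i \psi_i \tuh^i$ on the bad set, apply Jensen's inequality, bound by $\sum_i \iint_{2Q_i} |\tuh|^\vt$, and use bounded overlap together with $\spt(\tuh) \subset \qthrs$ and $2Q_i \subset \RR^{n+1}\setminus\elam$. The only cosmetic point is that the inclusion $2Q_i \cap \qthrs \subset \qfur \setminus \elam$ (from \descref{W4}{W4} and $\qthrs \subset \qfur$) should logically precede the application of \descref{W2}{W2}, since \descref{W2}{W2} is stated only for points in $\qfur \setminus \elam$; you have all the ingredients but in slightly swapped order.
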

 \begin{proof}
 Since $\qfur \setminus \elam \subset \bigcup_{i =1}^{\infty} \htq_i$, using \descref{W2}{W2} and \descref{W5}{W5}, we infer that $\{ \htq_i\}_{i = 1}^{\infty}$ has finite overlap. This gives
 \begin{equation*}
 \begin{array}{rcl}
 \iint_{\qfur\setminus \elam} |\vlh(z)|^2 \ dz & \apprle & \sum_{i \in \NN} \iint_{Q_i} |\psi_j(z)|^2 |\tuh^j|^2 \ dz 
  \leq  \sum_{i \in \NN}  \iint_{2Q_i} |\tuh(z)|^2 \ dz \\
 & \apprle &  \iint_{\RR^{n+1} \setminus \elam} |\tuh(z)|^2 \ dz \overset{\eqref{cut_off_function}}{=} \iint_{\qthrs \setminus \elam} |\tuh(z)|^2 \ dz.
 \end{array}
 \end{equation*}

 \end{proof}

 \begin{lemma}
 \label{lemma6.8-1}
 Let $Q_i$ be a Whitney-type parabolic cylinder, then $i \in \Th$. Suppose that $i \in \Th_1$, we have
 \begin{equation}
 \label{lemma6.8-two}
 \fiint_{Q_i} |\vlh(z) - \tuh(z)| \ dz \apprle_{(n,\plog,\lamot,m_e)} \musym \min\left\{ \scalex{\la}{z_i} r_i, \scalex{\al_0}{z_0} \rho \right\} \la^{\frac{1}{p(z_i)}}.
 \end{equation}

 \end{lemma}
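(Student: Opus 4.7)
The strategy is to unwind the definition of $\vlh$ on $Q_i$, reduce the resulting expression to quantities that have already been estimated in Corollary \ref{corollary3.7} and Lemma \ref{improved_est}, and then invoke the finite overlap property \descref{W2}{W2} of the partition of unity.

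First I would observe that since $i\in\Th_1\subset\Th$, property \descref{W4}{W4} gives $Q_i \subset \hat{c}Q_i \subset \RR^{n+1}\setminus \elam$, so every $z\in Q_i$ belongs to the bad set where $\sum_{j}\psi_j(z)=1$. Moreover Lemma \ref{partition_unity} guarantees $\psi_i>0$ on $Q_i$, so $Q_i\subset \spt(\psi_i)$, and hence the only indices $j$ contributing to the sum at a point $z\in Q_i$ lie in $\mci(i)=I_i$ as defined in \eqref{mcii}. Using $\sum_{j\in I_i}\psi_j(z)=1$ on $Q_i$, the definition \eqref{lipschitz_function} of the truncation rewrites as
\[
\vlh(z)-\tuh(z)=\sum_{j\in I_i}\psi_j(z)\bigl(\tuh^j-\tuh(z)\bigr),
\]
and by inserting and subtracting $\tuh^i$ together with $\|\psi_j\|_{\infty}\apprle 1$ from Lemma \ref{partition_unity},
\[
|\vlh(z)-\tuh(z)|\apprle \sum_{j\in I_i}|\tuh^j-\tuh^i|\;+\;\#I_i\cdot |\tuh^i-\tuh(z)|.
\]

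Next, averaging over $Q_i$ and using \descref{W2}{W2} to bound $\#I_i\leq C_{(n,\plog,m_e)}$, I would reduce to estimating two pieces. For the first piece, note that \eqref{theta_1} ensures $j\in\Th_1$ whenever $j\in I_i$ and $i\in\Th_1$, so Corollary \ref{corollary3.7} applies and yields
\[
|\tuh^j-\tuh^i|\apprle \musym \min\bigl\{\scalex{\al_0}{z_0}\rho,\;\scalex{\la}{z_i}r_i\bigr\}\,\la^{\frac{1}{p(z_i)}}
\]
uniformly in $j\in I_i$. For the second piece, since $|Q_i|\approx |2Q_i|$,
\[
\fiint_{Q_i}|\tuh^i-\tuh(z)|\,dz\apprle \fiint_{2Q_i}|\tuh(z)-\tuh^i|\,dz,
\]
and Lemma \ref{improved_est} applied with $\tht=1$ (noting that $i\in\Th_1$ is exactly its hypothesis) provides precisely the bound
\[
\fiint_{2Q_i}|\tuh(z)-\tuh^i|\,dz\apprle \musym \min\bigl\{\scalex{\al_0}{z_0}\rho,\;\scalex{\la}{z_i}r_i\bigr\}\,\la^{\frac{1}{p(z_i)}}.
\]
Combining these two estimates yields \eqref{lemma6.8-two}.

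There is no real obstacle here: the lemma is essentially a bookkeeping consequence of the previously established pointwise bounds on $\tuh^j-\tuh^i$ and on the oscillation of $\tuh$ on $2Q_i$. The only points requiring a bit of care are (i) verifying that on $Q_i$ the sum defining $\vlh-\tuh$ collapses to a finite sum over the neighbour set $I_i$, which uses \descref{W4}{W4} together with $\psi_i>0$ on $Q_i$, and (ii) checking that all $j\in I_i$ inherit membership in $\Th_1$ so that Corollary \ref{corollary3.7} is available; the latter follows from \descref{W5}{W5} and the definition \eqref{theta_1}.
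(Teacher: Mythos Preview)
Your proof is correct and follows essentially the same approach as the paper's. The only cosmetic difference is the choice of intermediate value in the triangle inequality: you split $\tuh^j-\tuh(z)$ through $\tuh^i$ and then invoke Corollary \ref{corollary3.7} and Lemma \ref{improved_est} separately, whereas the paper splits through $\avgs{\tuh}{\htq_i}$ and bounds everything at once by $\fiint_{\htq_i}|\tuh-\avgs{\tuh}{\htq_i}|\,dz$, appealing to the argument of Lemma \ref{improved_est} on the enlarged cylinder $\htq_i$; both routes reduce to the same underlying oscillation estimate.
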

 \begin{proof}
 Since $i \in \Th_1$, using triangle inequality along with the fact that $\{\htq_i\}_{i \in \NN}$ has finite overlap, we get
 \begin{equation*}
 \begin{array}{rcl}
 \fiint_{Q_i} |\vlh(z) - \tuh(z)| \ dz & \leq & \sum_{j \in I_i} \fiint_{Q_i} \psi_j(z) \lbr \tuh(z) - \tuh^j \rbr \ dz \\
 & \apprle & \fiint_{\htq_i} |\tuh(z) - \avgs{\tuh}{\htq_i} | \ dz \\
 & \overset{\text{Lemma \ref{improved_est}}}{\apprle} & \lbr \frac{16\rho}{\rho_b - \rho_a} \rbr \min\left\{ \scalex{\al_0}{z_0}\rho, \scalex{\la}{z_i}r_i\right\} \la^{\frac{1}{p(z_i)}}.
 \end{array}
 \end{equation*}
This proves the lemma. 
 \end{proof}

 \begin{lemma}
 \label{lemma6.8-2}
 Let $Q_i$ be a Whitney-type parabolic cylinder, then $i \in \Th$. Then there holds
 \begin{multline}
 \label{lemma6.8-three}
 \iint_{\bfur \times S_1 } \left| \pa_t \vlh(z) \lbr \vlh(z) - \tuh(z)\rbr \right| dz \\
  \apprle_{(n,\plog,\lamot,m_e)}  \musym^2 \la |\RR^{n+1} \setminus \elam| + \frac{1}{s} \iint_{\qthrs} |\tuh(z)|^2 \, dz,
 \end{multline}
 where $s:= \scalet{\al_0}{z_0} (\rho_b - \rho_a) \rho$ is from \eqref{def_s}.
 \end{lemma}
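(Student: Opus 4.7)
The key observation is that the Lipschitz truncation satisfies $\vlh = \tuh$ on the good set $\elam$, so the integrand $|\pa_t \vlh (\vlh - \tuh)|$ vanishes there and the integration effectively reduces to $(\bfur \times S_1) \setminus \elam$. By \descref{W1}{W1} this bad set is covered by $\bigcup_{i \in \Th} 2Q_i$, and by the finite overlap property \descref{W2}{W2}, the full integral is controlled, up to a universal constant, by $J_1 + J_2$ where $J_j := \sum_{i \in \Th_j} \iint_{2Q_i} |\pa_t \vlh (\vlh - \tuh)| \, dz$ for $j=1,2$. I estimate $J_1$ and $J_2$ separately.

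For $J_1$, pull out the pointwise bound on $\pa_t \vlh$ given by \eqref{bound_time_vlh_one} and apply the argument of Lemma \ref{lemma6.8-1} (which extends from $Q_i$ to $2Q_i$ without essential change, since $\htq_i$ already absorbs the enlarged cylinder) to control $\iint_{2Q_i} |\vlh - \tuh| \, dz$. The $\la$-powers combine as $\la^{(p(z_i)-1)/p(z_i)} \cdot \la^{1/p(z_i)} = \la$, while the ratio $\min\{\scalex{\la}{z_i} r_i, \scalex{\al_0}{z_0}\rho\}^2/(\scalex{\la}{z_i} r_i)^2 \le 1$ cancels favorably, yielding $\iint_{2Q_i} |\pa_t \vlh (\vlh - \tuh)| \, dz \apprle \musym^2 \la |Q_i|$. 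Summing over $i \in \Th_1$ and invoking the finite overlap produces $J_1 \apprle \musym^2 \la |\RR^{n+1} \setminus \elam|$, which is the first term of the claimed bound.

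For $J_2$, use the triangle inequality $|\vlh - \tuh| \leq |\vlh| + |\tuh|$ together with the bound \eqref{bound_time_vlh_two} on $\pa_t \vlh$. The $|\vlh|$-contribution is controlled via the $\Th_2$ estimate \eqref{bound_6.7-3-1}, after which the characteristic inequality $\scalet{\la}{z_i} r_i^2 \geq s/(9\hat{c})$ from \eqref{bound_low_s} is used to trade $1/s$-factors for $1/(\scalet{\la}{z_i} r_i^2)$-factors and vice versa; the end result is again a contribution $\apprle \musym^2 \la |Q_i|$ per cylinder, which sums to $\apprle \musym^2 \la |\RR^{n+1} \setminus \elam|$. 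For the $|\tuh|$-contribution, apply Young's inequality to split $|\pa_t \vlh||\tuh| \apprle \musym^2 \la \chi_{2Q_i} + \frac{1}{s}|\tuh|^2$ with a universal constant in front of $\chi_{2Q_i}$, and then sum using finite overlap and the inclusion $\bigcup_{i \in \Th_2} 2Q_i \subset \qthrs$ to produce the second term.

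The main obstacle is the bookkeeping in the $\Th_2$ case: one must switch repeatedly between the $1/s$-scaling inherited from \eqref{bound_time_vlh_two} and the $1/(\scalet{\la}{z_i} r_i^2)$-scaling of the Whitney cylinders, making crucial use of \eqref{bound_low_s}. Additionally, the various $\la$-powers arising from the pointwise bounds must combine to exactly $\la^1$ in every summand, and the log-H\"older fluctuations $\la^{p^+_{2Q_i} - p^-_{2Q_i}}$ produced when the variable exponent is evaluated at different points of $2Q_i$ must be kept bounded by universal constants through \eqref{2.2.28-1}.
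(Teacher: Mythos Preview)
Your treatment of $J_1$ matches the paper's: pull out the pointwise bound \eqref{bound_time_vlh_one}, combine with the average estimate of Lemma \ref{lemma6.8-1}, and observe that the $\la$-powers collapse to $\la$ while the ratio of spatial scales is at most $1$. Summing via \descref{W2}{W2} gives $\musym^2\la\,|\RR^{n+1}\setminus\elam|$.

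Your treatment of $J_2$, however, has a genuine gap. The combination of \eqref{bound_time_vlh_two} with either \eqref{bound_6.7-3-1} (for the $|\vlh|$-part) or with Young's inequality (for the $|\tuh|$-part) produces a pointwise factor of order
\[
\musym \frac{\scalex{\al_0}{z_0}\rho}{s}\,\la^{1/p(z_i)}\cdot \scalex{\la}{z_i}r_i\,\la^{1/p(z_i)}
\apprle \musym^2\,\al_0^{\,1-2/p(z_0)}\,\la^{\,2/p(z_i)},
\]
and for this to be $\apprle \musym^2\la$ you would need $\al_0^{\,1-2/p(z_0)}\la^{\,2/p(z_i)-1}\apprle 1$. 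In the singular range $p(z_i)<2$ the exponent $2/p(z_i)-1$ is strictly positive, while $\la$ ranges over $[c_e\al_0,\infty)$ independently of $\al_0$; the factor $\al_0^{1-2/p(z_0)}\le 1$ does not compensate, so the bound fails as $\la\to\infty$. The available localisation estimate \eqref{2.2.28-1} only controls $\la^{\,p^+_{2Q_i}-p^-_{2Q_i}}$ at the Whitney scale, not $\la^{\,2/p(z_i)-1}$.

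The paper avoids this by not using any $\la$-power bound for $J_2$. Instead it derives the crude estimate $|\pa_t\vlh|\apprle \frac{1}{\scalet{\la}{z_i}r_i^2}\fiint_{\htq_i}|\tuh|\apprle \frac{1}{s}\fiint_{\htq_i}|\tuh|$ directly from $\pa_t\vlh=\sum_j\tuh^j\pa_t\psi_j$ and \eqref{bound_low_s}; combined with $|\vlh|+|\tuh|\apprle \fiint_{\htq_i}|\tuh|$ after integration, Jensen's inequality yields $\iint_{Q_i}|\pa_t\vlh|(|\vlh|+|\tuh|)\,dz\apprle \frac{|\htq_i|}{s}\bigl(\fiint_{\htq_i}|\tuh|\bigr)^2\le \frac{1}{s}\iint_{\htq_i}|\tuh|^2$. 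No $\la$-power appears, and $J_2$ contributes only the second term of \eqref{lemma6.8-three}.
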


\begin{proof}
We write the term on the left-hand side of \eqref{lemma6.8-three} as follows:
\begin{align*}
\nonumber \iint_{\bfur \times S_1} \left| \pa_t \vlh(z) \lbr \vlh(z) - \tuh(z)\rbr \right| dz & \leq  \sum_{i \in\Th_1}\iint_{Q_i } \abs{\pa_t \vlh(z) \lbr \vlh(z) - \tuh(z)\rbr} dz \\
\nonumber &  \quad + \sum_{i \in \Th_2}\iint_{Q_i} \abs{\pa_t \vlh(z) \lbr \vlh(z) - \tuh(z)\rbr} dz\\
 & := J_1 + J_2.
\end{align*}
We shall now estimate each of the terms as follows:
\begin{description}[leftmargin=*]
\item[Estimate for $J_1$:]  Since $i \in \Th_1$, making use of \descref{W2}{W2} along with \eqref{bound_time_vlh_one} and \eqref{lemma6.8-two}, we have
\begin{equation*}
J_1 \apprle \sum_{i \in \Th_1} \musym^2 \la^{\frac{p(z_i)-1}{p(z_i)}}  \la^{\frac{1}{p(z_i)}} |Q_i| \apprle \musym^2 \la |\RR^{n+1} \setminus \elam|.
\end{equation*}

\item[Estimate for $J_2$:] Using triangle inequality along with \eqref{bound_low_s}, for some $z \in 2Q_i$, there holds
\begin{equation}
\label{lm6.8-1.3}
|\pa_t \vlh(z)| \apprle \frac{1}{\scalet{\la}{z_i} r_i^2} \fiint_{\htq_i} |\tuh(\tz)| \, d\tz \overset{\eqref{bound_low_s}}{\apprle} \frac{1}{s} \fiint_{\htq_i} |\tuh(\tz)| \, d\tz. 
\end{equation}
Using the above estimate, we obtain
\begin{align*}
J_2 & \apprle \frac{1}{s}\sum_{i \in \Th_2} \lbr \fiint_{\htq_i} |\tuh(\tz)| \, d\tz \rbr \lbr \iint_{Q_i} |\vlh(\tz)| + |\tuh(z)| \ d\tz \rbr \\
& \apprle \frac{1}{s} \sum_{i \in \Th_2} |\htq_i| \lbr \fiint_{\htq_i} |\tuh(\tz)| \ d\tz\rbr^2 \\
& \apprle \frac{1}{s} \iint_{\qthrs} |\tuh(\tz)|^2 \, d\tz.
\end{align*}

\end{description}

Combining both the estimates proves the lemma. 

\end{proof}

Recall that $\htq_i = \hat{c}Q_i = Q_{\hat{c}r_i}^{\la} = Q_{\htr_i}^{\la}$ where we have set $\htr_i = \hat{c} r_i$ with $\hat{c}$ from \descref{W4}{W4}. 
\begin{corollary}
\label{corollary6.9}
Let $Q_i$ be a Whitney-type cylinder with $i \in \Th$. Then with $s:= \scalet{\al_0}{z_0} (\rho_b-\rho_a) \rho$, there holds
\begin{multline}
\label{lm6.9-one}
\iint_{2Q_i} \lbr |\nabla u| + 1 \rbr^{p(z) -1}  \lbr \frac{ \lsb{\chi}{\Th_1}} {\scalex{\al_0}{z_0} \rho} + \lsb{\chi}{\Th_2}\max\left\{\frac{1}{\scalex{\la}{z_i} r_i}, \frac{1}{\scalex{\al_0}{z_0} \rho} \right\}\rbr|\vlh(z)| \ dz \\
\hfill \apprle_{(n,\plog,\lamot,m_e)} \musym \la |2Q_i| + \frac{\lsb{\chi}{\Th_2} }{s} \iint_{\htq_i} |\tuh(z)|^2 \ dz.
\end{multline}
There also holds
\begin{equation}
\label{lm6.9-two}
\iint_{2Q_i} \lbr |\nabla u| + 1 \rbr^{p(z) -1}  |\nabla \vlh(z)| \, dz \apprle_{(n,\plog,\lamot,m_e)} \musym \la |2Q_i| + \frac{\lsb{\chi}{\Th_2} }{s} \iint_{\htq_i} |\tuh(z)|^2 \, dz.
\end{equation}
Here we have used the notation $\lsb{\chi}{\Th_1} = 1$ if and only if $i \in \Th_1$ and $\lsb{\chi}{\Th_1} = 0$ else. Similarly $\lsb{\chi}{\Th_2} =1$ if and only if $i \in \Th_2$ and $\lsb{\chi}{\Th_2} = 0$ else. 
\end{corollary}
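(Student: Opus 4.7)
The plan is to prove both \eqref{lm6.9-one} and \eqref{lm6.9-two} by a case split on whether $i\in\Th_1$ or $i\in\Th_2$, after first establishing the common auxiliary average bound
\[
\fiint_{2Q_i}(|\nabla u|+1)^{p(z)-1}\,dz\apprle_{(n,\plog,\lamot,m_e)}\la^{(p(z_i)-1)/p(z_i)}.
\]
This preliminary step uses \descref{W4}{W4} to choose $z^{*}\in 8\hat{c}Q_i\cap\elam$; the definitions of $g$ and $\elam$ then force $\mm\bigl((|\nabla u|+1+|u|/(\scalex{\al_0}{z_0}\rho))^{p(\cdot)/q}\chi_{\qfur}\bigr)(z^{*})\leq\lambda^{1/q}$, and hence $\fiint_{8\hat{c}Q_i}(|\nabla u|+1)^{p(z)/q}\,dz\leq\lambda^{1/q}$. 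Since the choice of $q$ in Definition \ref{restriction_be_0} gives $q(p^{+}-1)<p^{-}$, one passes to the constant exponents $p^{\pm}_{2Q_i}$ and applies Jensen's inequality with $\sigma:=p^{-}_{2Q_i}/(q(p^{+}_{2Q_i}-1))>1$ to deduce $\fiint_{2Q_i}(|\nabla u|+1)^{p^{+}_{2Q_i}-1}\,dz\leq\lambda^{(p^{+}_{2Q_i}-1)/p^{-}_{2Q_i}}$; the oscillation factor $\lambda^{(p^{+}_{2Q_i}-p^{-}_{2Q_i})/p^{-}_{2Q_i}}$ is absorbed using \eqref{2.2.28-1}, yielding the claimed bound.

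For \eqref{lm6.9-one} with $i\in\Th_1$, Corollary \ref{lemma3.6} immediately gives $|\vlh|/(\scalex{\al_0}{z_0}\rho)\apprle\lambda^{1/p(z_i)}$, so multiplying by the auxiliary bound and $|2Q_i|$ produces $\la\,|2Q_i|\leq\musym\la\,|2Q_i|$. For $i\in\Th_2$, the multiplicative factor is $\max\{(\scalex{\la}{z_i}r_i)^{-1},(\scalex{\al_0}{z_0}\rho)^{-1}\}$, and I would split by which scale is smaller. When $\scalex{\la}{z_i}r_i\geq\scalex{\al_0}{z_0}\rho$, Corollary \ref{lemma3.6} again delivers the clean $\la\,|2Q_i|$ contribution. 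When $\scalex{\la}{z_i}r_i\leq\scalex{\al_0}{z_0}\rho$, the bound \eqref{bound_6.7-3-1} of Lemma \ref{lemma6.7-3} gives
\[
\frac{|\vlh|}{\scalex{\la}{z_i}r_i}\apprle\lambda^{1/p(z_i)}+\frac{\lambda^{(1-p(z_i))/p(z_i)}}{s}\fiint_{\htq_i}|\tuh|^{2}\,dz,
\]
and combining this with the auxiliary bound, the exponent identity $\la^{1/p(z_i)}\cdot\la^{(p(z_i)-1)/p(z_i)}=\la$, and the volume comparability $|2Q_i|\approx|\htq_i|$ from \descref{W3}{W3} produces exactly the two terms on the right of \eqref{lm6.9-one}.

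The proof of \eqref{lm6.9-two} follows the identical pattern with $|\nabla\vlh|$ in place of $|\vlh|$: for $i\in\Th_1$, Lemma \ref{lemma6.7-1} gives $|\nabla\vlh|\apprle\musym\lambda^{1/p(z_i)}$ (the $\min\{\scalex{\la}{z_i}r_i,\scalex{\al_0}{z_0}\rho\}/(\scalex{\la}{z_i}r_i)\leq 1$ factor appearing in that proof reduces the apparent $\musym^{2}$ to a single $\musym$), while for $i\in\Th_2$ estimate \eqref{bound_6.7-3-2} of Lemma \ref{lemma6.7-3} provides the analogous two-term decomposition; multiplication by the auxiliary average bound then yields \eqref{lm6.9-two}. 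The main obstacle throughout is the variable-exponent H\"older step in the preliminary average bound: one must pass from the pointwise exponent $p(z)$ to the constants $p^{\pm}_{2Q_i}$ in order to apply Jensen's inequality, and then absorb the resulting oscillation factor $\lambda^{(p^{+}_{2Q_i}-p^{-}_{2Q_i})/p^{-}_{2Q_i}}$ via \eqref{2.2.28-1}, which is precisely the place where the $\log$-H\"older continuity of $\pp$ and the smallness of $\rho_0$ in Definition \ref{restriction_rho_0} enter essentially.
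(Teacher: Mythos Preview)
Your proposal is correct and follows essentially the same route as the paper: the paper first records the auxiliary average bound $\fiint_{2Q_i}(|\nabla u|+1)^{p(z)-1}\,dz\apprle\la^{(p^+_{2Q_i}-1)/p^-_{2Q_i}}$ via the same H\"older/Jensen step with \descref{W4}{W4} and the choice of $q$, and then combines it with Corollary~\ref{lemma3.6}, Lemma~\ref{lemma6.7-1}/Corollary~\ref{corollary6.7-2}, and Lemma~\ref{lemma6.7-3} exactly along your case split, absorbing the oscillation factor via \eqref{2.2.28-1}. Two cosmetic remarks: the comparability $|2Q_i|\approx|\htq_i|$ is just the fixed dilation by $\hat{c}$ (not \descref{W3}{W3}, which compares neighboring Whitney cylinders), and the paper keeps the auxiliary bound in the form $\la^{(p^+_{2Q_i}-1)/p^-_{2Q_i}}$ and only invokes \eqref{2.2.28-1} after multiplying by $\la^{1/p(z_i)}$, whereas you pre-convert to $\la^{(p(z_i)-1)/p(z_i)}$; both orderings are equivalent.
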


\begin{proof}
Using \descref{W4}{W4} and Definition \ref{restriction_be_0} which restricts $1 < q < \frac{p^+-1}{p^-}$, we get
\begin{equation}
\label{lm6.9-1}
\fiint_{2Q_i} ( |\nabla u| + 1)^{p(z) - 1} \ dz  \leq  \lbr \fiint_{2Q_i} (|\nabla u|+1 )^{\frac{p(z)}{q}} \ dz \rbr^{\frac{q(p^+_{2Q_i} - 1)}{p^-_{2Q_i}}}  \overset{\eqref{elam}}{\apprle}  \la^{\frac{p^+_{2Q_i} - 1}{p^-_{2Q_i}}}.
\end{equation}
Let us prove each of the estimate as follows:

\begin{description}[leftmargin=*]
\item[Estimate \eqref{lm6.9-one} when $i \in \Th_1$:]  Since $i \in \Th_1$, we make use of Corollary \ref{lemma3.6} along with \eqref{lm6.9-1} to get
\begin{equation*}
\begin{array}{rcl}
\iint_{2Q_i} \lbr |\nabla u| + 1 \rbr^{p(z) -1}   \frac{ 1} {\scalex{\al_0}{z_0} \rho} |\vlh(z)| \ dz & \overset{\text{Corollary \ref{lemma3.6}}}{\leq} & \la^{\frac{1}{p(z_i)}} \iint_{2Q_i} \lbr |\nabla u| + 1 \rbr^{p(z) -1}  \ dz \\
& \overset{\eqref{lm6.9-1}}{\apprle} & |Q_i| \la^{\frac{1}{p(z_i)}} \la^{\frac{p^+_{2Q_i} - 1}{p^-_{2Q_i}}} \\
& \overset{\eqref{2.2.28-1}}{\apprle} & \musym \la  |Q_i|.
\end{array}
\end{equation*}

\item[Estimate \eqref{lm6.9-one} when $i \in \Th_2$:] In the case $i \in \Th_2$ with $\scalex{\al_0}{z_0} \rho \leq \scalex{\la}{z_i}r_i$, it follows from \eqref{lemma6.7-1_est} that
\begin{equation*}
\begin{array}{rcl}
\iint_{2Q_i} \lbr |\nabla u| + 1 \rbr^{p(z) -1}   \frac{ 1} {\scalex{\al_0}{z_0} \rho} |\vlh(z)| \ dz & \overset{\eqref{lemma6.7-1_est}}{\leq} & \la^{\frac{1}{p(z_i)}} \iint_{2Q_i} \lbr |\nabla u| + 1 \rbr^{p(z) -1}  \ dz \\
& \overset{\eqref{lm6.9-1}}{\apprle} & |Q_i| \la^{\frac{1}{p(z_i)}} \la^{\frac{p^+_{2Q_i} - 1}{p^-_{2Q_i}}} \\
& \overset{\eqref{2.2.28-1}}{\apprle} & \musym \la  |Q_i|.
\end{array}
\end{equation*}

In the case $i \in \Th_2$ with $\scalex{\al_0}{z_0} \rho \geq \scalex{\la}{z_i}r_i$, we make use of \eqref{bound_6.7-3-1} along with \eqref{lm6.9-1} to have
\begin{equation*}
\begin{array}{rcl}
\iint_{2Q_i} \lbr |\nabla u| + 1 \rbr^{p(z) -1}   \frac{ 1} {\scalex{\la}{z_i} r_i} |\vlh(z)| \, dz & \apprle & \lbr \la^{\frac{1}{p(z_i)}} + \frac{\la^{\frac{1-p(z_i)}{p(z_i)}}}{s} \fiint_{2Q_i} |\tuh(z)|^2 \, dz \rbr \la^{\frac{p^+_{2Q_i} - 1}{p^-_{2Q_i}}} |Q_i| \\
& \overset{\eqref{2.2.28-1}}{\apprle} & \musym \la  |Q_i| + \frac{1}{s} \fiint_{\htq_i} |\tuh(z)|^2 \, dz.
\end{array}
\end{equation*}

\item[Estimate \eqref{lm6.9-two}:] In the case $i \in \Th_1$, we make use of \eqref{bound+6.31_two} and in the case $i\in \Th_2$, we make use of \eqref{bound_6.7-3-2} to obtain
\begin{equation*}
\begin{array}{rcl}
\iint_{2Q_i} \lbr |\nabla u| + 1 \rbr^{p(z) -1}  |\nabla \vlh(z)| \, dz  & \apprle  & \lbr \la^{\frac{1}{p(z_i)}} + \frac{\la^{\frac{1-p(z_i)}{p(z_i)}}}{s} \fiint_{2Q_i} |\tuh(z)|^2 \, dz \rbr \la^{\frac{p^+_{2Q_i} - 1}{p^-_{2Q_i}}} |Q_i| \\
& \overset{\eqref{2.2.28-1}}{\apprle} & \musym \la  |Q_i| + \frac{1}{s} \fiint_{\htq_i} |\tuh(z)|^2 \, dz.
\end{array}
\end{equation*}
\end{description}

This completes the proof of the corollary. 
\end{proof}

\begin{corollary}
\label{corollary6.10}
Let $Q_i$ be a Whitney-type cylinder with $i \in \Th$ and furthermore suppose  that the following restriction  $\scalex{\la}{z_i}\htr_i \leq \frac{\scalex{\al_0}{z_0}(\rho_b-\rho_a)}{15}$ is true, then for any $\de \in (0,1)$, there holds
\begin{multline*}
\iint_{2Q_i} \lbr |\nabla u| + 1 \rbr^{p(z) -1} \left[ \frac{1}{\scalex{\la}{z_i} r_i}|\vlh(z)| + |\nabla \vlh(z)| \right] dz \\
\apprle_{(n,\plog,\lamot,m_e)} \musym \frac{\la}{\de} |2Q_i| +  \lsb{\chi}{\Th_1} \de |B_i| |\tuh^i|^2 + \frac{\lsb{\chi}{\Th_2} }{s} \iint_{\htq_i} |\tuh(z)|^2 \, dz.
\end{multline*}
\end{corollary}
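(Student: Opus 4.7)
The plan is to split the integral over $2Q_i$ according to whether $i\in\Th_1$ or $i\in\Th_2$, apply the pointwise bounds on $\vlh$ and $\nabla\vlh$ that already incorporate the free parameter $\de$, and then absorb the $(|\nabla u|+1)^{p(z)-1}$ factor using \eqref{lm6.9-1}. The extra hypothesis $\scalex{\la}{z_i}\htr_i \leq \scalex{\al_0}{z_0}(\rho_b-\rho_a)/15$ is used only to guarantee that the arguments of Corollary \ref{corollary6.7-2} and the relevant cases of Lemma \ref{lemma6.7-3} are applicable; combined with \eqref{rho_def}, it implies in particular that $\scalex{\la}{z_i}r_i \leq \scalex{\al_0}{z_0}\rho$, which collapses the maximum appearing in \eqref{lm6.9-one} to $1/(\scalex{\la}{z_i}r_i)$.

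\textbf{Case $i\in\Th_1$.} Here I invoke the two refined estimates \eqref{bound+6.31} and \eqref{bound+6.31_two} of Corollary \ref{corollary6.7-2} to obtain the pointwise bound
\[
\frac{|\vlh(z)|}{\scalex{\la}{z_i}r_i} + |\nabla\vlh(z)| \apprle \musym\,\frac{\la^{1/p(z_i)}}{\de} + \frac{\de\,|\tuh^i|^2}{\lbr\scalex{\la}{z_i}r_i\rbr^2\la^{1/p(z_i)}}
\]
valid for a.e.\ $z\in 2Q_i$. Multiplying by $(|\nabla u|+1)^{p(z)-1}$ and integrating, I use \eqref{lm6.9-1} to extract the factor $\la^{(p^+_{2Q_i}-1)/p^-_{2Q_i}}|2Q_i|$. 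The first resulting $\la$-exponent simplifies to $\apprle 1$ upon exactly the calculation \eqref{6.22}, yielding the leading term $\musym\la|2Q_i|/\de$. For the second term, I pull out the constant $|\tuh^i|^2$, write $|2Q_i|/\lbr\scalex{\la}{z_i}r_i\rbr^2 = |B_i|\la^{-1+2/p(z_i)}$ using the explicit formulas for $|B_i|$ and $|I_i|$ from \ref{not4}, and then once again appeal to \eqref{6.22} to collapse the remaining $\la$-exponent; this produces the contribution $\de|B_i||\tuh^i|^2$, matching the $\Th_1$ part of the asserted inequality.

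\textbf{Case $i\in\Th_2$.} Here I apply \eqref{bound_6.7-3-1} and \eqref{bound_6.7-3-2} of Lemma \ref{lemma6.7-3} to obtain the pointwise bound
\[
\frac{|\vlh(z)|}{\scalex{\la}{z_i}r_i} + |\nabla\vlh(z)| \apprle \la^{1/p(z_i)} + \frac{\la^{(1-p(z_i))/p(z_i)}}{s}\fiint_{\htq_i}|\tuh(\tz)|^2\,d\tz.
\]
Again multiplying by $(|\nabla u|+1)^{p(z)-1}$, integrating over $2Q_i$, and absorbing the maximal-function factor via \eqref{lm6.9-1}, I collapse the combined $\la$-exponents in both terms by \eqref{6.22}; the first term becomes $\musym\la|2Q_i| \apprle \musym\la|2Q_i|/\de$ (since $\de\leq 1$), and the second yields $(1/s)\iint_{\htq_i}|\tuh(\tz)|^2\,d\tz$ after using $|2Q_i|\approx|\htq_i|$ from \descref{W3}{W3}. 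Finally, summing the $\Th_1$ and $\Th_2$ contributions with the indicator factors $\lsb{\chi}{\Th_1}$ and $\lsb{\chi}{\Th_2}$ gives the stated estimate.

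\textbf{Main difficulty.} The routine part is the integration and the case split. The delicate point is the bookkeeping of the $\la$-exponents: each term picks up a factor of the form $\la^{1/p(z_i) + (p^+_{2Q_i}-1)/p^-_{2Q_i} - 1}$, and reducing this to an absolute constant requires the $\log$-H\"older estimate in the precise form \eqref{2.2.28-1}--\eqref{6.22}. Ensuring that the same cancellation simultaneously works for the $|\tuh^i|^2$-term (where we also divide by $\scalex{\la}{z_i}r_i$ twice and by $\la^{1/p(z_i)}$) is the step that I expect to require the most care.
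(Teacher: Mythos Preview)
Your proposal is correct and follows essentially the same route as the paper: split into $i\in\Th_1$ versus $i\in\Th_2$, insert the pointwise bounds from Corollary \ref{corollary6.7-2} and Lemma \ref{lemma6.7-3}, integrate against $(|\nabla u|+1)^{p(z)-1}$ via \eqref{lm6.9-1}, and collapse the resulting $\la$-exponents using \eqref{2.2.28-1}/\eqref{6.22}. The only cosmetic difference is that the paper dispatches the $|\nabla\vlh|$ contribution by pointing back to \eqref{lm6.9-two} in Corollary \ref{corollary6.9}, whereas you treat both pieces in one pass; your identification $|2Q_i|/\lbr\scalex{\la}{z_i}r_i\rbr^2 \approx |B_i|\la^{-1+2/p(z_i)}$ and the ensuing exponent reduction are exactly the computation the paper carries out.
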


\begin{proof}
Note that the bound for the term containing $|\nabla \vlh|$ is the same as \eqref{lm6.9-two} and hence we only have to consider the term containing $|\vlh|$. 
It follows from the hypothesis that
\begin{equation*}\label{cor_6.9-one}
\scalex{\la}{z_i}\htr_i \leq  \frac{\scalex{\al_0}{z_0}(\rho_b-\rho_a)}{15} \leq \scalex{\al_0}{z_0} \rho. 
\end{equation*}
Let us split the proof into two cases as follows:
\begin{description}[leftmargin=*]
\item[Case $i \in \Th_1$:] In this case, we make use of \eqref{bound+6.31} along with \eqref{lm6.9-1} to have
\begin{equation*}
\begin{array}{rcl}
& & \hspace{-2cm} \iint_{2Q_i} \lbr |\nabla u| + 1 \rbr^{p(z) -1}   \frac{ |\vlh(z)|} {\scalex{\la}{z_i} r_i} \ dz \\
& \apprle & \lbr \lbr \frac{16\rho}{\rho_b - \rho_a} \rbr \frac{\la^{\frac{1}{p(z_i)}}}{\de} +  \frac{\de}{\lbr \scalex{\la}{z_i}r_i\rbr^2 \la^{\frac{1}{p(z_i)}}} |\tuh^i|^2 \rbr\la^{\frac{p^+_{2Q_i} - 1}{p^-_{2Q_i}}} |Q_i| \\
& \overset{\eqref{2.2.28-1}}{\apprle} & \lbr \frac{16\rho}{\rho_b - \rho_a} \rbr  \frac{\la}{\de} |2Q_i| + \frac{\de}{\lbr \scalex{\la}{z_i}r_i\rbr^2} \la^{\frac{p^+_{2Q_i} - 1}{p^-_{2Q_i}}-\frac{1}{p(z_i)}} |Q_i| |\tuh^i|^2\\
& \apprle  & \lbr \frac{16\rho}{\rho_b - \rho_a} \rbr  \frac{\la}{\de} |2Q_i| + \de |B_i| \la^{-1+\frac{p^+_{2Q_i} - 1}{p^-_{2Q_i}}+\frac{1}{p(z_i)}} |\tuh^i|^2\\
& \overset{\eqref{2.2.28-1}}{\apprle}  & \lbr \frac{16\rho}{\rho_b - \rho_a} \rbr  \frac{\la}{\de} |2Q_i| + \de |B_i|  |\tuh^i|^2.
\end{array}
\end{equation*}

\item[Case $i \in \Th_2$:] In this case, we make use of \eqref{bound_6.7-3-1} along with \eqref{lm6.9-1} to get
\begin{equation*}
\begin{array}{rcl}
\iint_{2Q_i} \lbr |\nabla u| + 1 \rbr^{p(z) -1}   \frac{ |\vlh(z)|} {\scalex{\la}{z_i} r_i} \, dz & \apprle & \lbr \la^{\frac{1}{p(z_i)}} + \frac{\la^{\frac{1-p(z_i)}{p(z_i)}}}{s} \fiint_{2Q_i} |\tuh(z)|^2 \, dz \rbr \la^{\frac{p^+_{2Q_i} - 1}{p^-_{2Q_i}}} |Q_i| \\
& \overset{\eqref{2.2.28-1}}{\apprle} & \musym \frac{\la}{\de}  |Q_i| + \frac{1}{s} \fiint_{\htq_i} |\tuh(z)|^2 \ dz.
\end{array}
\end{equation*}
\end{description}
\end{proof}

\begin{corollary}
\label{cor6.10}
Under the assumptions of Corollary \ref{corollary6.9} and Corollary \ref{corollary6.10}, there holds
\begin{equation*}
\begin{array}{l}
\iint_{\qfur \setminus \elam} ( |\nabla u| + 1)^{p(z) -1} \left[ \frac{1}{\scalex{\al_0}{z_0} (\rho_b-\rho_a)} |\vlh| + |\nabla \vlh| \right] dz\\
\qquad \qquad \qquad   \apprle_{(n,\plog,\lamot,m_e)} \musym \sum_{i=1}^{\infty} \iint_{2Q_i} (|\nabla u| + 1)^{p(z) -1} \left[ \frac{1}{\scalex{\al_0}{z_0} \rho} |\vlh| + |\nabla \vlh| \right] dz  \\
\qquad \qquad \qquad   \apprle_{(n,\plog,\lamot,m_e)} \musym \la |\RR^{n+1} \setminus \elam| + \frac{1}{s} \iint_{\qfur \setminus \elam} |\tuh|^2 \, dz.
\end{array}
\end{equation*}

\end{corollary}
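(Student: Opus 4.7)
The plan is to prove the chain of two inequalities in two conceptually separate steps, both based on the Whitney covering machinery of Lemma \ref{whitney_covering} and the termwise estimates from Corollary \ref{corollary6.9}. For the first inequality, observe that \eqref{rho_def} gives $\musym \geq 1$, so the pointwise bound
\begin{equation*}
\frac{|\vlh|}{\scalex{\al_0}{z_0}(\rho_b-\rho_a)} + |\nabla \vlh| \leq C \musym \left( \frac{|\vlh|}{\scalex{\al_0}{z_0}\rho} + |\nabla \vlh|\right)
\end{equation*}
holds with a universal constant $C$. Combining this with the covering property \descref{W1}{W1} and the finite overlap \descref{W2}{W2} of $\{2Q_i\}_{i\in\NN}$ dominates the integral on the left-hand side of the statement by a universal multiple of $\musym \sum_i \iint_{2Q_i}(\cdots)\, dz$, which is exactly the first inequality in the chain.

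For the second inequality, I would apply Corollary \ref{corollary6.9} termwise. For $i \in \Th_1$, the estimates \eqref{lm6.9-one} and \eqref{lm6.9-two} directly give
\begin{equation*}
\iint_{2Q_i} (|\nabla u|+1)^{p(z)-1} \left[ \frac{|\vlh|}{\scalex{\al_0}{z_0}\rho} + |\nabla \vlh|\right] dz \apprle \musym \la |2Q_i|.
\end{equation*}
For $i \in \Th_2$, the trivial inequality $\frac{1}{\scalex{\al_0}{z_0}\rho} \leq \max\{\frac{1}{\scalex{\la}{z_i}r_i}, \frac{1}{\scalex{\al_0}{z_0}\rho}\}$ allows the same corollary to produce
\begin{equation*}
\iint_{2Q_i} (|\nabla u|+1)^{p(z)-1} \left[ \frac{|\vlh|}{\scalex{\al_0}{z_0}\rho} + |\nabla \vlh|\right] dz \apprle \musym \la |2Q_i| + \frac{1}{s} \iint_{\htq_i} |\tuh|^2\, dz.
\end{equation*}

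Summing these over $i \in \NN$ produces two kinds of terms. The sum $\musym \la \sum_i |2Q_i|$ is controlled by $\musym \la |\RR^{n+1} \setminus \elam|$, using the finite overlap \descref{W2}{W2} and the inclusion $\bigcup_i 2Q_i \subset \RR^{n+1}\setminus \elam$ from \descref{W1}{W1}. The sum $\sum_{i \in \Th_2} \iint_{\htq_i} |\tuh|^2\, dz$ is controlled by $\iint_{\qfur \setminus \elam} |\tuh|^2\, dz$, using the finite overlap of the enlarged family $\{\htq_i\}$ (a consequence of \descref{W5}{W5} together with the size comparability \descref{W3}{W3}), the inclusion $\htq_i \subset \RR^{n+1}\setminus \elam$ from \descref{W4}{W4}, and the support property $\spt(\tuh) \subset \qthrs \subset \qfur$. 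The main obstacle, and the only step requiring careful bookkeeping, is transferring the finite overlap from $\{2Q_i\}$ to the enlarged cylinders $\{\htq_i\}$ while simultaneously confining the effective integration domain to $\qfur \setminus \elam$; once this is verified, the rest is a routine assembly of the termwise estimates.
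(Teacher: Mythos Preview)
Your proposal is correct and matches the paper's intent: the corollary is stated in the paper without proof, precisely because it is the routine assembly you describe of the Whitney properties \descref{W1}{W1}--\descref{W5}{W5} with the termwise bounds from Corollary~\ref{corollary6.9}. One small slip to fix: the inclusion $2Q_i \subset \RR^{n+1}\setminus\elam$ that you need for $\sum_i |2Q_i| \apprle |\RR^{n+1}\setminus\elam|$ comes from \descref{W4}{W4} (since $\hat c \geq 9$ gives $2Q_i \subset \hat c Q_i \subset \RR^{n+1}\setminus\elam$), not from \descref{W1}{W1}.
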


\subsection{Crucial estimates for the test function}
In this subsection, we shall prove three crucial estimates that will be necessary in Section \ref{section_five}. Note that by the time these estimates are applied, we would have taken $h \searrow 0$ in the Steklov average. 

\begin{lemma}
\label{cruc_1}
Let $\la \geq c_e \al_0$, then for any $i \in \Th_1$, $\de \in (0,1]$ and a.e. $t \in S_1$, there holds
\begin{equation}
\label{cruc_est_1}
\abs{\int_{\bfur} \lbr \tu(x,t) - \tu^i \rbr \vl(x,t) \psi_i(x,t)\ dx} \apprle_{(n,\plog,\lamot,m_e)}  \musym^2 \lbr \frac{\la}{\de} |Q_i| + \de \lsb{\chi}{3} |B_i| |\tu^i|^2\rbr,
\end{equation}
where $\lsb{\chi}{3} = 1$ if and only if $\scalex{\la}{z_i}\htr_i \leq \frac{\scalex{\al_0}{z_0}(\rho_b-\rho_a)}{15}$ and $\lsb{\chi}{3} = 0$ else. 

In the case  $i \in \Th_2$, for a.e. $t \in S_1$, there holds
\begin{equation}
\label{cruc_est_2}
\abs{\int_{\bfur} \tu(x,t)\vl(x,t) \psi_i(x,t)\ dx} \apprle_{(n,\plog,\lamot,m_e)}  \musym \la |Q_i| + \frac{1}{s} \iint_{\htq_i} |u|^2 \lsb{\chi}{\qfur}  \ dz.
\end{equation}
\end{lemma}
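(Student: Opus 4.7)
The plan is to bound the integrand pointwise using the estimates for $\vl$ from Section \ref{section_four}, and then to handle the resulting fixed-time spatial integrals by either absorbing $|\tu^i|$-type terms or by converting them into space-time integrals via the weak equation and Lemma \ref{lemma_crucial_2}. For Estimate \eqref{cruc_est_1} (with $i\in\Th_1$), I split into two sub-cases according to the indicator $\lsb{\chi}{3}$. When $\lsb{\chi}{3}=1$, so that $\scalex{\la}{z_i}\htr_i\leq\scalex{\al_0}{z_0}(\rho_b-\rho_a)/15$, I invoke the refined pointwise bound \eqref{bound+6.31} of Corollary \ref{corollary6.7-2}: its first summand, multiplied by $|\tu-\tu^i|\psi_i$ and integrated over $\bfur$, yields the $\musym^2\frac{\la}{\de}|Q_i|$ contribution, while the summand carrying $|\tu^i|^2$ produces the $\musym^2\de|B_i||\tu^i|^2$ term. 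When $\lsb{\chi}{3}=0$, the reverse comparison $\scalex{\al_0}{z_0}(\rho_b-\rho_a)<15\scalex{\la}{z_i}\htr_i$ together with the cruder bound of Lemma \ref{lemma6.7-1} absorbs everything into the $\musym^2\frac{\la}{\de}|Q_i|$ term alone. A useful intermediate identity, valid since $\spt(\psi_i)\subset\RR^{n+1}\setminus\elam$ (where $\vl=\sum_{j\in\mci(i)}\psi_j\tu^j$ and $\sum_j\psi_j=1$), is
\[
\int(\tu-\tu^i)\vl\psi_i\,dx = \tu^i\int(\tu-\tu^i)\psi_i\,dx + \sum_{j\in\mci(i)}(\tu^j-\tu^i)\int(\tu-\tu^i)\psi_j\psi_i\,dx,
\]
whose second sum is controlled directly by Corollary \ref{corollary3.7}.

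For Estimate \eqref{cruc_est_2} (with $i\in\Th_2$), I use the pointwise bound from Lemma \ref{lemma6.7-3}: its first summand $\scalex{\la}{z_i}r_i\la^{1/p(z_i)}$ produces, after multiplication by $|\tu|\psi_i$ and integration, the $\musym\la|Q_i|$ contribution, while its second summand, combined with Young's inequality and the lower bound $\scalet{\la}{z_i}r_i^2\geq s/(9\hat{c})$ of Remark \ref{bound_s}, produces the space-time term $\frac{1}{s}\iint_{\htq_i}|u|^2\lsb{\chi}{\qfur}\,dz$. In both estimates, the precise matching of the intrinsic scaling powers is assisted by \eqref{2.2.28-1}, which renders $\la^{(p(z_i)-p^\pm_{2Q_i})/p^\mp_{2Q_i}}$ comparable to a universal constant.

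The main obstacle in both estimates is the reduction of fixed-time spatial integrals to space-time ones, since the bound must hold at a.e.\ $t\in S_1$ rather than in average. The key idea is that $\psi_i\in C_c^\infty(2Q_i)$ has time-support in $2I_i\subsetneq\hti_i$, so there exists $\tau^*\in\hti_i\setminus 2I_i$ at which $\psi_i(\cdot,\tau^*)\equiv 0$. Integrating from $\tau^*$ to $t$ via the fundamental theorem of calculus in time and applying the weak formulation \eqref{def_weak_solution} together with Lemma \ref{lemma_crucial_2} and the structure bound \eqref{abounded} converts the remaining fixed-time integrals into space-time integrals over $\htq_i$ of $(1+|\nabla u|)^{p(z)-1}$ and $|u|$, which are then controlled by $\musym\la|Q_i|$ via the maximal-function bound \eqref{lm6.9-1}. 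Tracking the factor $\musym^2$ in Estimate \eqref{cruc_est_1} (versus $\musym$ in Estimate \eqref{cruc_est_2}) through these rearrangements requires some care, as one $\musym$ comes from the pointwise bound for $\vl$ itself and the second arises from the factor $\scalex{\al_0}{z_0}\rho/(\scalex{\la}{z_i}r_i)$ in the $\lsb{\chi}{3}=0$ case.
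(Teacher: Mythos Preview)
Your plan has the right ingredients but the order of operations is backwards, and this creates a genuine gap. In paragraphs one and two you bound $|\vl|$ pointwise (via Corollary \ref{corollary6.7-2}, Lemma \ref{lemma6.7-1}, or Lemma \ref{lemma6.7-3}) and then multiply by $|\tu-\tu^i|\psi_i$ or $|\tu|\psi_i$ and integrate over $\bfur$ at the fixed time $t$. This leaves you with the quantity
\[
\int_{2B_i}|\tu(x,t)-\tu^i|\,dx \qquad\text{or}\qquad \int_{2B_i}|\tu(x,t)|\,dx
\]
at a single time slice, and there is no a priori sup-in-time control on these; all the available bounds (Lemma \ref{improved_est}, \eqref{lemma3.6_pre_one}, the maximal-function estimate) are space-time averages. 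Your paragraph three correctly identifies this as the obstacle and invokes FTC with the weak equation, but the FTC trick only converts \emph{signed} fixed-time integrals to space-time ones: once you have introduced the absolute value by bounding $|\vl|$ first, the equation cannot be applied. Your intermediate identity does avoid absolute values, but you do not explain how it meshes with the pointwise-bound strategy, and ``controlled directly by Corollary \ref{corollary3.7}'' is not enough for the second sum either, since $\int(\tu-\tu^i)\psi_j\psi_i\,dx$ at fixed $t$ still requires the equation.

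The paper reverses the order: it tests \eqref{def_weak_solution} with $\eta\zeta\psi_i\vlh$ itself and integrates from the bottom of $2I_i$ (where $\psi_i$ vanishes) to $t$. This immediately expresses the signed quantity $\int_\Om(\tuh-a)\psi_i\vlh(y,t)\,dy$ (with $a=\tuh^i$ for $i\in\Th_1$, $a=0$ for $i\in\Th_2$) as a sum of space-time integrals involving $(1+|\nabla u|)^{p-1}|\nabla(\eta\zeta\psi_i\vl)|$ and $|\tu-a|\,|\pa_t(\psi_i\vl)|$; see the display \eqref{3.123}. These space-time integrals are then bounded using Corollaries \ref{corollary6.9} and \ref{corollary6.10} (which \emph{internally} use the pointwise bounds for $|\vl|$, $|\nabla\vl|$ you cited) together with Lemma \ref{improved_est} for the $J_3$-term carrying $|\tu-\tu^i|$. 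The split according to $\lsb{\chi}{3}$ and the appearance of $\de|B_i||\tu^i|^2$ come out of Corollary \ref{corollary6.10} applied to the flux term, not from Corollary \ref{corollary6.7-2} applied to the original integrand. In short: apply the equation first, then use the pointwise estimates on the resulting space-time quantities; doing it in your order loses the sign structure that the equation needs.
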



\begin{proof}
Let us fix any $t \in S_1$,  $i \in \Th$  and take $\eta(y) \zeta(\tau) \om_i(y,\tau) \vlh(y,\tau)$ as a test function in \eqref{very_weak_solution}. Further integrating the resulting expression over $ \left(t_i - \scalet{\la}{z_i}r_i^2 , t\right)$ along with making use of  the fact that $\psi_i(y,t_i - \scalet{\la}{z_i} r_i^2) = 0$, we get for  any $a\in \RR$, the equality
%
\begin{equation}
 \label{3.123}
 \begin{array}{ll}
  \int_{\Om} \lbr[(]  (\tuh - a)  \psi_i \vlh \rbr (y,t) \ dy & = \int_{t_i - \scalet{\la}{z_i} r_i^2}^t \int_{\Om} \pa_t \left(  (\tuh - a) \psi_i \vlh \right) (y,\tau) \, dy \, d\tau \\
  & = \int_{t_i - \scalet{\la}{z_i} r_i^2	}^t \int_{\Om} \iprod{[\aa(y,\tau,\nabla u)]_h}{\nabla (\eta \zeta \psi_i \vlh)} \, dy \, d\tau  \\
  & \qquad +\  \int_{t_i - \scalet{\la}{z_i} r_i^2}^t \int_{\Om}   [u]_h  \ \pa_t\lbr \eta \zeta \psi_i \vlh \right) (y,\tau) \, dy \, d\tau  \\
  & \qquad -\  \int_{t_i - \scalet{\la}{z_i} r_i^2}^t \int_{\Om} a \pa_t \lbr \psi_i \vlh\rbr \, dy \, d\tau.
 \end{array}
\end{equation}

We can estimate $|\nabla (\eta \zeta \psi_i \vl)|$ using the chain rule, \eqref{cut_off_function} and Lemma \ref{partition_unity}, to get
\begin{equation}
 \label{3.126}
 \begin{array}{ll}
 |\nabla (\eta \zeta \psi_i \vl)| 
 & \apprle \frac{1}{\scalex{\al_0}{z_0}(\rho_b-\rho_a)} |\vl| + \frac{1}{\scalex{\la}{z_i}r_i} |\vl| + |\nabla \vl|.
 \end{array}
\end{equation}
Similarly, we can estimate $\left|\pa_t\lbr \eta \zeta \psi_i \vl \right)\right|$ using the chain rule, \eqref{cut_off_function}, \eqref{def_s}  and Lemma \ref{partition_unity}, to get
\begin{equation}\label{3.127}  \begin{array}{ll}
 \left|\pa_t\lbr \eta \zeta \psi_i \vl \right)\right|  &\apprle \frac{\lsb{\chi}{\Th_2}}{\scalet{\al_0}{z_0} (\rho_b^2-\rho_a^2)} |\vl| + \frac{1}{\scalet{\la}{z_i} r_i^2} |\vl| + |\pa_t \vl| \\
 & \apprle \frac{\lsb{\chi}{\Th_2}}{s} |\vl| + \frac{1}{\scalet{\la}{z_i} r_i^2} |\vl| + |\pa_t \vl|, 
 \end{array}\end{equation}
 \begin{align}
 \left| \pa_t \lbr \psi_i \vl\rbr\right| & \apprle  \frac{1}{\scalet{\la}{z_i} r_i^2} |\vl| + |\pa_t \vl|,\label{3.128}
\end{align}
where we have set $\lsb{\chi}{\Th_2} = 1$ if  $i \in \Th_2$ and $\lsb{\chi}{\Th_2}=0$ otherwise. 
Let us now prove each of the assertions of the lemma.
\begin{description}[leftmargin=*]
 \item[Proof of \eqref{cruc_est_1}:] Note that $i \in \Th_1$, which implies $\zeta(t) \equiv 1$ on $\hat{c}I_i$, thus taking $a=\tuh^i$ in the \eqref{3.123} followed by letting $h \searrow 0$ and making use of \eqref{3.126} and \eqref{abounded},  we get
 \begin{equation}
  \label{first_1}
  \begin{array}{ll}
   \left| \int_{8B} \lbr[(] (\tu - \tu^i) \om_i \vl \rbr (y,t) \ dy \right| & \apprle J_1 + J_2 + J_3,
  \end{array}
 \end{equation}
 where we have set 
 \begin{align}
  J_1& := \frac{1}{\min\{\scalex{\al_0}{z_0}\rho, \scalex{\la}{z_i}r_i\}} \iint_{\qfur} \lbr |\nabla u|+1\rbr^{p(z)-1}  |\vl|   \lsb{\chi}{2Q_i\cap \qfur} \ dz, \nonumber \\
  J_2& :=  \iint_{\qfur} \lbr |\nabla u|+1\rbr^{p(z)-1}  |\nabla \vl|   \lsb{\chi}{2Q_i\cap \qfur} \ dz,\nonumber \\
  J_3&:= \iint_{\qfur} |\tu-\tu^i| | \pa_t (\psi_i \vl)|  \lsb{\chi}{2Q_i\cap \qfur} \ dz. \nonumber
 \end{align}

 Let us now estimate each of the terms as follows: 
 \begin{description}
  \item[Bound for $J_1$:] Since $i \in \Th_1$, we can directly use \eqref{lm6.9-one} if $\scalex{\al_0}{z_0} \rho \leq \scalex{\la}{z_i} r_i$ or Corollary \ref{corollary6.10} when $\scalex{\al_0}{z_0} \rho \geq \scalex{\la}{z_i} r_i$ to get for any $\de \in (0,1]$, the bound
\begin{equation}
 \label{bound_I_1}
 J_1 \apprle \musym \frac{\la}{\de} |\htq_i| +  \de |\htb_i| |\tu^i|^2.
\end{equation}

  \item[Bound for $J_2$:] In this case, we can directly use \eqref{lm6.9-two} to get for any $\de \in (0,1]$, the bound
  \begin{equation}
   \label{bound_I_22}
    J_2 
   \apprle \frac{\la}{\de} |\htq_i|.
  \end{equation}
  \item[Bound for $J_3$:] 
  In order to estimate $J_3$, we further split the proof into the following two subcases:
  \begin{description}[leftmargin=*]
  \item[Subcase $ \scalex{\la}{z_i} 15\htr_i \geq \scalex{\al_0}{z_0} {(\rho_b-\rho_a)}$:] 
  Let us first obtain the following bound:
  \begin{equation}
  \label{6.74}
  \iint_{\qfur} |\tu(z) - \tu^i| \ dz  \apprle  |2Q_i| \fiint_{2Q_i} |\tu(z)| \ dz 
   \overset{\eqref{lemma3.6_pre_one}}{\apprle}  \scalex{\al_0}{z_0} \rho \la^{\frac{1}{p(z_i)}}|\htq_i|.
  \end{equation}
  
  Recall that $\htr_i = \hat{c} r_i$ where $\hat{c}$ is from \descref{W4}{W4}. In this case, we make use of Lemma \ref{lemma3.6}, \eqref{bound_time_vlh_one} and \eqref{3.128} to get
  \begin{equation}
  \label{6.75}
  \begin{array}{rcl}
     \left| \pa_t \lbr \psi_i \vl\rbr\right| & \apprle & \frac{1}{\scalet{\la}{z_i} r_i^2} \scalex{\al_0}{z_0} \rho \la^{\frac{1}{p(z_i)}} + \musym \frac{\la^{\frac{p(z_i)-1}{p(z_i)}}}{\scalex{\la}{z_i} r_i} \\
     & = & \musym \frac{\scalex{\al_0}{z_0} (\rho_b-\rho_a)}{\scalet{\la}{z_i}  \lbr \scalex{\la}{z_i}r_i\rbr^2}  \la^{\frac{1}{p(z_i)}} \lbr \scalex{\la}{z_i} \rbr^2 \\
     && +\musym \la^{\frac{p(z_i)-1}{p(z_i)}}\frac{1}{\scalex{\la}{z_i} r_i} \\
     & \overset{\text{hypothesis}}{\apprle} & \musym \la^{\frac{p(z_i)-1}{p(z_i)}}\frac{1}{\scalex{\la}{z_i} r_i}.
  \end{array}
  \end{equation}

  Combining \eqref{6.74} and \eqref{6.75} and using the hypothesis $ \scalex{\la}{z_i} 15\htr_i \geq \scalex{\al_0}{z_0} {(\rho_b-\rho_a)}$, we get
  \begin{equation}
  \label{6.76}
  J_3 \apprle \musym \frac{\la}{\de} |\htq_i|.
  \end{equation}

  \item[Subcase $ \scalex{\la}{z_i} 15\htr_i \leq \scalex{\al_0}{z_0} (\rho_b-\rho_a)$:]  Using Lemma \ref{lemma_crucial_1} applied with $\mu \in C_c^{\infty}(2Q_i)$ satisfying $|\nabla \mu| \apprle \frac{1}{\lbr \scalex{\la}{z_i}r_i \rbr^{n+1}}$ and $\vt =1$, we get
  \begin{equation}
  \label{6.77}
  \begin{array}{rcl}
  \fiint_{2Q_i} |\tu(z) - \tu^i| \, dz & \apprle & \scalex{\la}{z_i} r_i \fiint_{2Q_i} |\nabla \tu| \, dz + \sup_{2I_i} \abs{\avgs{\tu}{\mu}(t_2) - \avgs{\tu}{\mu}(t_1) }.
  \end{array}
  \end{equation}
  The first term on the right-hand side of \eqref{6.76} can be estimated as follows:
  \begin{equation}
  \label{6.78}
  \begin{array}{rcl}
   \fiint_{2Q_i} |\nabla \tu| \, dz  & \apprle & \musym  \lbr \fiint_{2Q_i} \left[ |\nabla u| + \abs{\frac{u}{\scalex{\al_0}{z_0} \rho}}+1 \right]^{\frac{p(z)}{q}} dz\rbr^{\frac{q}{p^-_{2Q_i}}} \\
   & \overset{\eqref{elam}}{\apprle} & \musym \la^{\frac{1}{p^-_{2Q_i}}} \overset{\eqref{2.2.28-1}}{\apprle} \musym \la^{\frac{1}{p(z_i)}}.
  \end{array}
  \end{equation}
  \end{description}

  The second term on the right-hand side of \eqref{6.76} can be estimated using Lemma \ref{lemma_crucial_2} to get
  \begin{equation}
  \label{6.79}
  \begin{array}{rcl}
  \abs{\avgs{\tu}{\mu}(t_2) - \avgs{\tu}{\mu}(t_1) } & \apprle & \frac{|Q_i|}{\lbr \scalex{\la}{z_i} r_i\rbr^{n+1}} \fiint_{2Q_i} (|\nabla u| + 1 )^{p(z)-1} \, dz \\
  & \overset{\eqref{elam}}{\apprle} &\frac{\scalet{\la}{z_i} r_i^2}{\scalex{\la}{z_i} r_i} \la^{\frac{p^+_{2Q_i}-1}{p^-_{2Q_i}}} \overset{\eqref{2.2.28-1}}{\apprle} \la^{\frac{d}{2}} r_i.
  \end{array}
  \end{equation}

  Combining \eqref{6.78},\eqref{6.79} and \eqref{6.77}, we get
  \begin{equation}
  \label{6.80}
  \iint_{2Q_i} |\tu(z) - \tu^i| \, dz \apprle \musym \scalex{\la}{z_i} r_i \la^{\frac{1}{p(z_i)}}|2Q_i|.
  \end{equation}

  In order to estimate $|\pa_t(\psi_i\vl)|$, we make use of Lemma \ref{partition_unity}, \eqref{bound+6.31} and \eqref{bound_time_vlh_one} to get for any $\de \in (0,1]$,
  \begin{equation}
  \label{6.81}
  \begin{array}{rcl}
  |\pa_t(\psi_i\vl)| & \apprle & \frac{1}{\scalet{\la}{z_i} r_i^2} \lbr \musym \frac{\la^{\frac{d}{2}}r_i}{\de} + \frac{\de}{\la^{\frac{d}{2}} r_i} |\tu^i|^2 \rbr + \musym \frac{\la^{1-\frac{d}{2}}}{\de r_i}.
  \end{array}
  \end{equation}
  
  Combining \eqref{6.80} and \eqref{6.81}, we get
  \begin{equation}
  \label{6.82}
  \begin{array}{rcl}
  J_3 & \apprle & \musym \la^{\frac{d}{2}} r_i |Q_i| \lbr\musym \frac{\la^{1-\frac{d}{2}}}{\de r_i} + \frac{\la^{1-\frac{3d}{2}}}{r_i^3}|\tu^i|^2 \rbr \\
  & \apprle & \musym^2 \frac{\la}{\de} |Q_i| + \de \frac{\la^{1-d}}{r_i^2} \scalet{\la}{z_i} r_i^2 |B_i| |\tu^i|^2 \\
  & = & \musym^2 \frac{\la}{\de} |Q_i| + \de |B_i| |\tu^i|^2.
  \end{array}
  \end{equation}

 \end{description}

 Combining the estimates \eqref{6.76} in the case $ \scalex{\la}{z_i} 15\htr_i \geq \scalex{\al_0}{z_0} (\rho_b-\rho_a)$ or \eqref{6.82} in the case $ \scalex{\la}{z_i} 15\htr_i \leq \scalex{\al_0}{z_0} (\rho_b-\rho_a)$  with \eqref{bound_I_1}, \eqref{bound_I_22} and \eqref{first_1}, we obtain the proof of  \eqref{cruc_est_1}. 
 
 \item[Proof of \eqref{cruc_est_2}:] Note that in this case, we have $\scalet{\la}{z_i} r_i^2 \apprge s$ from \eqref{bound_low_s}.  Setting $a =0$ in \eqref{3.123} along with making use of the the bounds  \eqref{3.126} and \eqref{3.127}, we get the following estimate:
 \begin{equation}
  \label{second_1}
  \begin{array}{ll}
   \left| \int_{8B} \lbr[(] \tu  \psi_i \vl \rbr (y,t) \ dy \right| & \apprle II_1 + II_2+II_3+II_4,
  \end{array}
 \end{equation}
where we have set 
\begin{equation*}
 \begin{array}{ll}
  II_1 & := \musym \frac{1}{\min\{\scalex{\al_0}{z_0}\rho, \scalex{\la}{z_i}r_i\}} \iint_{\qfur}\lbr  |\nabla u|+1\rbr^{p(z)-1} |\vl|   \lsb{\chi}{2Q_i\cap \qfur} \ dz, \\
  II_2 & :=  \iint_{\qfur} \lbr |\nabla u|+1\rbr^{p(z)-1}  |\nabla \vl|   \lsb{\chi}{2Q_i\cap \qfur} \ dz, \\
  II_3 & :=  \frac1{s} \iint_{\qfur} |u| |\vl|  \lsb{\chi}{2Q_i\cap \qfur} \, dy \, d\tau,\\
  II_4 & :=   \iint_{\qfur} |u| |\pa_t\vl|  \lsb{\chi}{2Q_i\cap \qfur} \, dy \, d\tau.
 \end{array}
\end{equation*}
We shall now estimate each of the terms as follows:
\begin{description}[leftmargin=*]
 \item[Bound for $II_1$:] Since $i \in \Th_2$, we can directly use \eqref{lm6.9-one} to bound this term to get
 \begin{equation}
  \label{II_1_1}
    II_1 \apprle \musym \la |2Q_i| + \frac{\lsb{\chi}{\Th_2} }{s} \iint_{\htq_i} |\tuh(z)|^2 \ dz.
  \end{equation}

 \item[Bound for $II_2$:] To bound this term, we make use of \eqref{lm6.9-two} to get
 \begin{equation}
  \label{II_2_1}
  \begin{array}{ll}
   II_2 & \apprle \musym \la |2Q_i| + \frac{\lsb{\chi}{\Th_2} }{s} \iint_{\htq_i} |\tuh(z)|^2 \ dz.
  \end{array}
 \end{equation}

 \item[Bound for $II_3$:] For bounding this term, we  use  \descref{W4}{W4}, $|\tu|\leq|u| \lsb{\chi}{\qfur}$ and \eqref{lipschitz_function}  to  get
 \begin{equation}
  \label{II_3_1}
  \begin{array}{ll}
   II_3 & \apprle \frac{1}{s} |\htq_i|\lbr \fiint_{\htq_i} |u| \lsb{\chi}{\qfur} \ dz\rbr\lbr  \fiint_{\htq_i} |\tu| \ dz \rbr \\
   & \apprle \frac{1}{s} \iint_{\htq_i} |u|^2 \lsb{\chi}{\qfur} \ dz.
  \end{array}
 \end{equation}

 \item[Bound for $II_4$:] In this case, we make use of \descref{W4}{W4} along with $|\tu|\leq|u| \lsb{\chi}{\qfur}$, \eqref{bound_low_s} and \eqref{lipschitz_function}, to  get
 \begin{equation}
  \label{II_4_1}
  \begin{array}{ll}
   II_4 & \apprle  |\htq_i|\lbr \fiint_{\htq_i} |u| \lsb{\chi}{\qfur} \ dz\rbr\lbr \frac{1}{\scalet{\la}{z_i} r_i^2} \fiint_{\htq_i} |\tu| \ dz \rbr \\
   & \apprle \frac{1}{s} \iint_{\htq_i} |u|^2 \lsb{\chi}{\qfur} \ dz.
  \end{array}
 \end{equation}
\end{description}
We finally combine the estimates \eqref{II_1_1}, \eqref{II_2_1}, \eqref{II_3_1}, \eqref{II_4_1} and \eqref{second_1} to  obtain \eqref{cruc_est_2}.
\end{description}
This completes the proof of the lemma. 
\end{proof}

\begin{lemma}
\label{cruc_3}
Let $\la \geq c_e \al_0$, then for a.e. $t \in S_1$, there exists a constant $C = C_{(n,\plog,\lamot,m_e)}$ such that there holds
\begin{equation}
\label{cruc_est_3}
\int_{\bfur\setminus \elam(t)} \lbr |\tu|^2 - |\tu - \vl|^2 \rbr \ dx \geq C \musym^2 \lbr -\la |\RR^{n+1} \setminus \elam| - \frac{1}{s} \iint_{\qfur} |\tu|^2  \ dz\rbr.
\end{equation}
\end{lemma}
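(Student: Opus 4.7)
The plan is to exploit the algebraic identity $|\tu|^2 - |\tu - \vl|^2 = \vl(2\tu - \vl)$ and decompose the spatial integral via the partition of unity $\sum_i \psi_i = 1$ on $\qfur \setminus \elam$. At the fixed time $t \in S_1$, only indices $i$ with $t \in 2I_i$ contribute (since $\psi_i(\cdot,t)$ vanishes otherwise), and I split these according to whether $i \in \Th_1$ or $i \in \Th_2$.

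For $i \in \Th_1$, the key rewriting is
\[
\psi_i \vl(2\tu - \vl) = 2\psi_i(\tu - \tu^i)\vl + \psi_i|\tu^i|^2 - \psi_i(\vl - \tu^i)^2.
\]
The first integral is controlled by Lemma \ref{cruc_1} with a parameter $\de \in (0,1]$. The middle piece is non-negative and, by the lower bound $\psi_i \apprge 1$ on $Q_i$ from Lemma \ref{partition_unity}, furnishes a positive reservoir $\apprge |B_i||\tu^i|^2$ (whenever $t \in I_i$) that will absorb the $\de\lsb{\chi}{3}|B_i||\tu^i|^2$ error from Lemma \ref{cruc_1}. For the third piece, I expand $\vl - \tu^i = \sum_{j \in \mci(i)}\psi_j(\tu^j - \tu^i)$ using $\sum_j \psi_j = 1$, apply Cauchy--Schwarz, and invoke Corollary \ref{corollary3.7} to obtain the pointwise bound $|\vl - \tu^i|^2 \apprle \musym^2(\scalex{\la}{z_i}r_i)^2\la^{2/p(z_i)}$; the scaling relations $\scalex{\la}{z_i} = \la^{-1/p(z_i)+d/2}$ and $\scalet{\la}{z_i} = \la^{-1+d}$ collapse this to $\int \psi_i(\vl - \tu^i)^2\,dx \apprle \musym^2 \la|Q_i|$.

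For $i \in \Th_2$, I write $\psi_i\vl(2\tu - \vl) = 2\psi_i\tu\vl - \psi_i|\vl|^2$. The first integral is directly bounded by Lemma \ref{cruc_1}, estimate \eqref{cruc_est_2}. For $\int \psi_i|\vl|^2\,dx$, I apply Cauchy--Schwarz together with $\sum_j \psi_j \leq 1$ to get $|\vl|^2 \leq \sum_{j \in \mci(i)}\psi_j|\tu^j|^2$, then use $|\tu^j|^2 \leq C\fiint_{\htq_j}|\tu|^2\,d\tz$, the Whitney comparability $|I_j| \approx |I_i|$ from \descref{W3}{W3}, and the lower bound $\scalet{\la}{z_i}r_i^2 \apprge s$ from \eqref{bound_low_s}, which together yield $\int \psi_i|\vl|^2\,dx \apprle \frac{1}{s}\sum_{j \in \mci(i)}\iint_{\htq_j}|\tu|^2\,dz$.

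Summing over $i$, the finite-overlap properties \descref{W2}{W2}--\descref{W3}{W3} convert $\sum_i |Q_i|$ into $|\RR^{n+1}\setminus\elam|$ and $\sum_{i,j}\iint_{\htq_j}|\tu|^2\,dz$ into $\iint_{\qfur}|\tu|^2\,dz$, producing the claimed lower bound. The main obstacle is the absorption of the $\de\lsb{\chi}{3}|B_i||\tu^i|^2$ error arising from Lemma \ref{cruc_1}: it cannot be controlled by $|\RR^{n+1}\setminus\elam|$ or $\iint_{\qfur}|\tu|^2\,dz$ alone and must be absorbed against the genuinely positive contribution $\int \psi_i|\tu^i|^2\,dx$. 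This is precisely why Lemma \ref{cruc_1} was formulated with the adjustable parameter $\de$; choosing $\de = \de_{(n,\plog,\lamot,m_e)} > 0$ sufficiently small (so that $\de \musym^2$ is dominated by the implicit constant in the reservoir lower bound) completes the absorption and delivers the asserted inequality.
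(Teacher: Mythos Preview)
Your proposal is correct and follows essentially the same approach as the paper's proof: the same algebraic identity $|\tu|^2 - |\tu-\vl|^2 = |\tu^i|^2 + 2\vl(\tu-\tu^i) - |\vl-\tu^i|^2$ for $i\in\Th_1$ and $|\tu|^2 - |\tu-\vl|^2 = 2\tu\vl - |\vl|^2$ for $i\in\Th_2$, the same invocation of Lemma~\ref{cruc_1} and Corollary~\ref{corollary3.7}, and the same absorption of the $\de\lsb{\chi}{3}|B_i||\tu^i|^2$ error into the nonnegative reservoir $\int\psi_i|\tu^i|^2\,dx$ by choosing $\de$ small. The paper's treatment of the reservoir step is equally terse (it simply says ``make use of a suitable choice for $\de$''), so your level of detail matches.
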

\begin{proof}
 Let us fix any $t\in S_1$ and any point $x \in \qfur \setminus \elam(t)$.  Now define
 \begin{equation*}
  \tTh := \left\{  i \in \Th: \spt(\psi_i) \cap \bfur \times \{t\} \neq \emptyset, \  |\tu| + |\vl| \neq 0 \  \text{on}\ \spt(\psi_i)\cap (\bfur \times \{t\}) \right\}.
 \end{equation*}

 From \eqref{theta}, we see that if $\spt(\psi_i) \cap \qfur \times \{t\} \neq \emptyset$, then  $i \in \Th$.  If $i \neq \tTh$, then $\tu = \vl = 0$ on $\spt(\psi_i) \cap \bfur \times \{t\}$, which  implies
 \[
  \int_{\spt(\psi_i) \cap \bfur \times\{t\}} |\tu|^2 - |\tu - \vl|^2 \ dx = 0.
 \]
 Hence we only need to consider $i \in \tTh$.  We now decompose  $\tTh = \tTh_1 \cup \tTh_2$, where $\tTh_1 := \tTh \cap \Th_1$ and $\tTh_2 := \tTh \cap \Th_2$. Noting that $\sum_{i \in \tTh} \om_i(\cdot, t) \equiv 1$ on $\RR^n \cap \elam(t)$, we can rewrite the left-hand side of \eqref{cruc_est_3} as 
 \begin{equation}
  \label{3124}
  \int_{\bfur \setminus \elam(t)} (|\tu|^2 - |\tu - \vl|^2)(x,t) \ dx = J_1 + J_2,
 \end{equation}
where we have set
\begin{gather*}
 J_1:= \sum_{i \in \tTh_1} \int_{\bfur} \psi_i (|\tu|^2 - |\tu - \vl|^2) \ dx, \qquad 
 J_2:= \sum_{i \in \tTh_2} \int_{\bfur} \psi_i (|\tu|^2 - |\tu - \vl|^2) \ dx. 
\end{gather*}
We shall now estimate each of the terms as follows:
\begin{description}[leftmargin=*]
 \item[Estimate of $J_1$:] We can further rewrite this term as follows:
 \begin{equation}
  \label{I_1}
  \begin{array}{ll}
  J_1 & = \sum_{i \in \tTh_1} \int_{\bfur} \psi_i(z) \lbr |\tu^i|^2  + 2 \vl (\tu - \tu^i) \rbr \ dx - \sum_{i \in \tTh_1} \int_{\qfur} \psi_i(z) |\vl - \tu^i|^2 \ dx\\
  & := J_1^1 + J_1^2.
  \end{array}
 \end{equation}
 \begin{description}
  \item[Estimate of $J_1^1$:] Using \eqref{cruc_est_1}, we get
  \begin{equation}
   \label{I_1_1}
   J_1^1 \apprge \sum_{i \in \tTh_1} \int_{\bfur} \om_i(z)  |\tu^i|^2 \ dz - \de\musym^2 \sum_{i \in \tTh_1} \lsb{\chi}{3}|\htb_i| |\tu^i|^2 - \musym^2\sum_{i \in \tTh_1} \frac{\la}{\de} |\htq_i|.
  \end{equation}
  From \eqref{def_tuh}, we have $\tu^i = 0$ whenever $\spt(\psi_i) \cap \lbr \bthrs \cap \Om \times \RR\rbr \neq \emptyset$. Hence we only have to sum over all those $i \in \tTh_1$ for which $\spt(\psi_i) \subset \bthrs \cap \Om \times \RR$.  In this case, we make use of a suitable choice for $\de \in (0,1]$, and use \descref{W4}{W4}  to estimate \eqref{I_1_1} from below. We get
  \begin{equation}
 \label{bound_I1}
 J_1^1 \apprge -\musym^2 \la |\RR^{n+1} \setminus \elam|. 
\end{equation}

  \item[Estimate of $J_1^2$:] For any $x \in \qfur \setminus \elam(t)$, we have from Lemma \ref{partition_unity} that $\sum_{j} \psi_j(x,t) = 1$, which gives
  \begin{equation}
   \label{I_2_1}
   \begin{array}{ll}
    \psi_i(z) |\vl(z) - \tu^i|^2  
    & \apprle   \sum_{j \in I_i} |\psi_j(z)|^2  \lbr \tu^j - \tu^i\rbr^2 \\
    & \overset{\redlabel{3.104.a}{a}}{\apprle} \musym^2 \min\{ \scalex{\al_0}{z_0}\rho, \scalex{\la}{z_i} r_i\}^2 \la^{\frac{2}{p(z_i)}}.
   \end{array}
  \end{equation}
To obtain \redref{3.104.a}{a} above, we made use of Corollary \ref{corollary3.7} (recall $i \in \tTh_1 \subset \Th_1$) along with \descref{W3}{W3}.  Substituting \eqref{I_2_1} into the expression for $J_1^2$ and using $|Q_i| = |B_i| \times \scalet{\la}{z_i} r_i^2$, we get
\begin{equation}
 \label{bound_I_2}
J_1^2  \apprle \musym^2 \sum_{i \in \tTh_1} |\bfur \cap 2B_i| {\lbr \scalex{\la}{z_i} r_i\rbr^2} \la^{\frac{2}{p(z_i)}}  \apprle \musym^2\la |\RR^{n+1} \setminus \elam|.
\end{equation}
 \end{description}
Substituting \eqref{bound_I1} and \eqref{bound_I_2} into \eqref{I_1}, we get
\begin{equation}
 \label{bound_I}
 J_1 \apprge - \musym^2 \la |\RR^{n+1} \setminus \elam|.
\end{equation}

 \item[Estimate of $J_2$:] From the identity $|\tu|^2 - |\tu-\vl|^2 = 2\tu \vl - |\vl|^2$, we get
 \begin{equation}
  \label{II1}
  \begin{array}{ll}
  \sum_{i \in \tTh_2} \left| \int_{\bfur} \psi_i (|\tu|^2 - |\tu - \vl|^2) \ dx\right| & \apprle \sum_{i \in \tTh_2} \left| \int_{\bfur} \psi_i \tu \vl \ dx\right| + \sum_{i \in \tTh_2} \left| \int_{\bfur} \psi_i |\vl|^2 \ dx\right| \\
  & := J_2^1 + J_2^2.
  \end{array}
 \end{equation}
\begin{description}
 \item[Estimate for $J_2^1$:]  This term can be easily estimated using \eqref{cruc_est_2} followed by summing over $i \in \tTh_2$ and using \descref{W1}{W1}, to get
 \begin{equation}
  \label{II1_1}
  \begin{array}{ll}
   \sum_{i \in \tTh_2} \left| \int_{\bfur} \psi_i \tu \vl \ dx\right| & \apprle \musym \sum_{i \in \tTh_2}\lbr  \la |Q_i| + \frac{1}{s} \iint_{\htq_i} |u|^2 \lsb{\chi}{\qfur}\ dz\rbr \\
   & \apprle \musym \la |\RR^{n+1} \setminus \elam| + \frac{1}{s} \iint_{\qfur} |u|^2 \ dz.
  \end{array}
 \end{equation}

 \item[Estimate for $J_2^2$:] To estimate this term, we make use of \eqref{lipschitz_function} along with $|Q_i| = |B_i| \times \scalet{\la}{z_i} r_i^2$, \eqref{bound_low_s}, the bound $|\tu| \leq |u| \lsb{\chi}{\qfur}$ and \descref{W1}{W1}, to get
 \begin{equation}
  \label{II2_1}
  \begin{array}{ll}
   \sum_{i \in \tTh_2} \left| \int_{\bfur} \psi_i |\vl|^2 \ dx\right| & \apprle \sum_{i \in \tTh_2}  \lbr \fiint_{\htq_i} |\tu| \ dz \rbr^2 |\qfur \cap 2B_i| 
    \apprle \frac{1}{s} \iint_{\qfur} |u|^2 \ dz.
  \end{array}
 \end{equation}
\end{description}
 We combine \eqref{II1_1}, \eqref{II2_1} and \eqref{II1} to obtain
 \begin{equation}
  \label{bound_II}
  J_2\apprle \musym \la |\RR^{n+1} \setminus \elam| + \frac{1}{s} \iint_{\qfur} |u|^2 \ dz.
 \end{equation}
\end{description}
Thus, from \eqref{bound_I}, \eqref{bound_II} and \eqref{3124}, the proof of the lemma follows.
\end{proof}


\subsection{Lipschitz regularity for the test function}

 We will now show that the function $\vlh$  constructed in \eqref{lipschitz_function} is Lipschitz continuous on $\bfur \times S_1$. To do this, we shall use the integral characterization of Lipschitz continuous functions obtained in \cite[Theorem 3.1]{Prato}. This technique for proving the Lipschitz regularity of \eqref{lipschitz_continuity} was first used in \cite{bogelein2013regularity}.
 \begin{lemma}[Lipschitz characterization]
 \label{deprato}
 Let $\tz \in \qfur$ and $r >0$ be given. Define the parabolic cylinder $Q_r(\tz) := B_r(\tx) \times (\tlt - r^2, \tlt+r^2)$, i.e., $Q_r(\tz) := \{ z \in \RR^{n+1}: d_p(z,\tz) \leq r\}$ where $d_p$ is as defined in \eqref{parabolic_metric}. Furthermore suppose that the  following expression is bounded independent of $\tz \in \bfur\times S_1$ and $r>0$
 \[
 I_r(\tz) := \frac{1}{\abs{\bfur \times S_1 \cap Q_r(\tz)}} \iint_{\bfur\times S_1 \cap Q_r(\tz)} \abs{\frac{\vlh(z) - \avgs{\vlh}{\bfur\times S_1\cap Q_r(\tz)}}{r}} \ dz < \infty,
 \]
then $\vlh \in C^{0,1}(\bfur \times S_1)$.
 \end{lemma}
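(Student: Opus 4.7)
The plan is to invoke a Campanato--DaPrato-type integral characterization of Lipschitz continuity adapted to the parabolic metric $d_p$: uniform boundedness of the scaled mean oscillation $I_r(\tz) \leq K$ (over $\tz \in \bfur \times S_1$ and $r>0$) will yield $\vlh \in C^{0,1}(\bfur \times S_1)$ with Lipschitz constant controlled by $K$. The proof of the characterization is a classical two-step Lebesgue-point plus telescoping argument, and the only issue specific to our setting is handling the restricted domain $\bfur \times S_1$.

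I would set $U := \bfur \times S_1$ and begin with a dyadic Lebesgue-point analysis. For any Lebesgue point $\tz$ of $\vlh$ in $U$, I consider the averages $a_k := \avgs{\vlh}{U \cap Q_{r_k}(\tz)}$ with $r_k := 2^{-k} r_0$ for some fixed $r_0 \leq \diam(U)$. Using the nesting $U \cap Q_{r_{k+1}}(\tz) \subset U \cap Q_{r_k}(\tz)$, the density comparison $|U \cap Q_{r_k}(\tz)| \approx |U \cap Q_{r_{k+1}}(\tz)|$ (addressed below), and the hypothesis $I_{r_k}(\tz) \leq K$, I obtain
\[
|a_k - a_{k+1}| \leq \frac{|U \cap Q_{r_k}(\tz)|}{|U \cap Q_{r_{k+1}}(\tz)|} \fiint_{U \cap Q_{r_k}(\tz)} |\vlh - a_k| \, dz \leq C K r_k.
\]
Summing the resulting geometric series and passing $k \to \infty$ via Lebesgue differentiation ($a_k \to \vlh(\tz)$) gives $|\vlh(\tz) - a_0| \leq C K r_0$.

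Next I upgrade this to a two-point Lipschitz bound. Given $z_1, z_2 \in U$ with $r := d_p(z_1, z_2)$, I pick $\rho \approx r$ and a point $\tz$ so that $\{z_1, z_2\} \subset Q_\rho(\tz) \cap U$, and use the triangle inequality
\[
|\vlh(z_1) - \vlh(z_2)| \leq |\vlh(z_1) - \avgs{\vlh}{U \cap Q_\rho(\tz)}| + |\vlh(z_2) - \avgs{\vlh}{U \cap Q_\rho(\tz)}|.
\]
Applying the previous step to each $z_i$ (with $Q_\rho(\tz)$ serving as the outer cylinder of a dyadic chain shrinking to $z_i$) bounds the right-hand side by $C K \rho \leq C' K r$, which is exactly the desired Lipschitz estimate.

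The hard part is the uniform measure-density bound $|U \cap Q_r(\tz)| \geq c_n |Q_r(\tz)|$ for all $\tz \in U$ and all admissible $r$, needed to guarantee that the averages in the hypothesis behave like genuine cylinder averages and that successive dyadic averages remain comparable. Since $U = \bfur \times S_1$ is the product of a Euclidean ball with an interval, it is convex, so $Q_r(\tz)$ always contains at least a quarter of a full parabolic cylinder about $\tz$, yielding $c_n \geq 2^{-(n+1)}$ as an admissible constant uniformly for $\tz \in \overline{U}$ and $r \leq \diam(U)$. With this density in hand, the telescoping argument runs uniformly and completes the proof that $\vlh \in C^{0,1}(\bfur \times S_1)$.
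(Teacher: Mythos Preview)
The paper does not actually prove this lemma: it is stated as a known characterization and attributed to \cite[Theorem~3.1]{Prato} (Da~Prato's Campanato-space result), with the subsequent Lemma~\ref{lipschitz_continuity} devoted to verifying the hypothesis rather than the conclusion. So there is no in-paper proof to compare against.

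Your sketch is the standard Campanato/Da~Prato argument and is essentially correct. The dyadic telescoping to a Lebesgue point, followed by linking two such chains through a common outer cylinder, is exactly the mechanism behind the cited theorem. Two small points worth tightening: (i) when you connect the chains for $z_1$ and $z_2$, the cylinders in each chain are centered at $z_i$, not at the common $\tz$, so you need one extra comparison between $\avgs{\vlh}{U\cap Q_\rho(\tz)}$ and $\avgs{\vlh}{U\cap Q_{\rho'}(z_i)}$ for a suitable $\rho'$---this follows from the same density bound and the hypothesis, but should be stated; (ii) your density constant ``at least a quarter'' is heuristic---the product structure $U=\bfur\times S_1$ gives a clean dimensional constant (roughly $4^{-(n+1)}$ for $r\le \diam U$) via the standard argument that a ball centered in $\overline{B_R}$ meets $B_R$ in a set containing a half-radius sub-ball shifted toward the center. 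With those details filled in, your argument reproduces precisely what the citation to Da~Prato is invoking.
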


 \begin{remark}
 From \eqref{exp_d} and the fact that $\la \geq 1$,  for any $z_1,z_2 \in \RR^{n+1}$ and any $z \in \RR^{n+1}$, we get
 \begin{equation}
 \label{equiv_dist}
 \begin{array}{rcl}
 d_p(z_1,z_2) & \overset{\eqref{parabolic_metric}}{:=} & \max\{|x_1-x_2|, \sqrt{|t_1-t_2|} \} \\
 & \leq&  \max\{\nscalex{\la}{z}|x_1-x_2|, \sqrt{\nscalet{\la}{z} |t_1-t_2|} \}\overset{\eqref{loc_parabolic_metric}}{ =:} d_z(z_1,z_2)\\
 & \leq &  \la^{{\frac{1}{p^-}-\frac{d}{2}}}\la^{\frac12-\frac{d}{2}}\max\{|x_1-x_2|, \sqrt{|t_1-t_2|} \} \leq C_{(\la,p^-,d)} d_p(z_1,z_2).
 \end{array}
 \end{equation}
 
 This shows that for any $z \in \RR^{n+1}$, we have $d_p \approx_{(\la,p^-,d)} d_z$. 
 \end{remark}

 In this subsection, we want to apply Lemma \ref{deprato}, hence we only need to ensure the constants involved are independent of $r>0$ and $\tz$ only. \emph{Only for this subsection, we will use the notation $o(1)$ to denote a constant which  can depend on $\la,\al_0,\plog,\lamot,n,\|\tuh\|_{L^1}, \|u\|_{L^1}$ but {\bf NOT} on  $r>0$ and the point $\tz$.}

 \begin{lemma}
 \label{lipschitz_continuity}
 Let $\la \geq c_e \al_0$ with $c_e$ from Lemma \ref{fix_ce}, then for any $\tz \in \bfur \times S_1$ and $r>0$, there exists a constant $C>0$ independent of $\tz$ and $r$ such that
\begin{equation}\label{da_pr}
 I_r(\tz) := \frac{1}{\abs{\bfur \times S_1 \cap Q_r(\tz)}} \iint_{\bfur\times S_1 \cap Q_r(\tz)} \abs{\frac{\vlh(z) - \avgs{\vlh}{\bfur\times S_1\cap Q_r(\tz)}}{r}} \ dz \leq C.
 \end{equation}
 In particular, this implies for any $z_1, z_2 \in \bfur \times S_1$, there exists a constant $K>0$ such that
 \[
 |\vlh(z_1) - \vlh(z_2)| \leq K d_p(z_1,z_2).
 \]
 \end{lemma}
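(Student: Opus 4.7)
The plan is to verify directly the integral criterion \eqref{da_pr} of Lemma \ref{deprato} with a bound $I_r(\tz) \leq o(1)$, from which the pointwise Lipschitz conclusion follows immediately by the Campanato-type result invoked there. Fix $\tz \in \bfur \times S_1$ and $r>0$, and write $U_r := \bfur \times S_1 \cap Q_r(\tz)$. By the triangle inequality
\[
\frac{1}{|U_r|}\iint_{U_r}|\vlh-\avgs{\vlh}{U_r}|\,dz \leq \frac{2}{|U_r|}\iint_{U_r}|\vlh(z)-c|\,dz
\]
for any constant $c$, it suffices to exhibit a single value $c=c(\tz,r)$ such that $|\vlh(z)-c|\leq o(1)\cdot r$ for a.e.\ $z\in U_r$.

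\textbf{Step 1: Dichotomy relative to the good set.} I would split according to whether $Q_r(\tz)\cap\elam$ is empty or not. In the case $Q_r(\tz)\cap\elam\neq\emptyset$, I would pick a Lebesgue point $\hz\in Q_r(\tz)\cap\elam$ and set $c:=\tuh(\hz)$; since $\sum_i \psi_i \equiv 0$ on $\elam$, the construction \eqref{lipschitz_function} gives $\vlh(\hz)=\tuh(\hz)=c$. In the case $Q_r(\tz)\subset\RR^{n+1}\setminus\elam$, I would pick $c:=\vlh(\tz)$; by \descref{W4}{W4} applied to a Whitney cube containing $\tz$, every $2Q_i$ meeting $Q_r(\tz)$ then satisfies $\scalex{\la}{z_i}r_i\apprge r$ (after converting between $d^{\la}_{z_i}$ and $d_p$ via \eqref{equiv_dist}), so $Q_r(\tz)$ is covered by finitely many Whitney cubes of comparable scale, by \descref{W2}{W2} and \descref{W3}{W3}.

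\textbf{Step 2: Pointwise control in the two cases.} In the disjoint case, on each Whitney cube the function $\vlh$ is smooth with derivatives controlled by Lemma \ref{lemma6.7-1}, Corollary \ref{corollary6.7-2}, Lemma \ref{lemma6.7-3} (for $|\vlh|$ and $|\nabla\vlh|$) and Lemma \ref{time_vlh} (for $|\pa_t \vlh|$). Using the finite overlap \descref{W2}{W2} together with $\scalex{\la}{z_i}r_i\apprge r$, these combine to give $|\vlh(z)-\vlh(\tz)|\leq o(1)\cdot d_p(z,\tz)\leq o(1)\cdot r$ on $Q_r(\tz)$. In the case $Q_r(\tz)\cap\elam\neq\emptyset$, I would split $U_r=(U_r\cap\elam)\cup(U_r\setminus\elam)$. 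On $U_r\cap\elam$ we have $\vlh=\tuh$, and the oscillation of $\tuh$ is controlled via the parabolic Poincar\'e inequality of Lemma \ref{lemma_crucial_1} together with the time-slice comparison Lemma \ref{lemma_crucial_2}, using that at every Lebesgue point of $\elam$ one has a pointwise bound of the form $|\nabla u|\leq C\la^{1/p(\hz)}$ coming from $g(\hz)\leq \la^{1-\be}$. On $U_r\setminus\elam$, any point lies in some $2Q_i$ with $z_i$ satisfying $d^{\la}_{z_i}(z_i,\elam)\apprle r$ by \descref{W4}{W4} and the availability of $\hz$, and the corresponding bounds from Corollary \ref{lemma3.6}, Corollary \ref{corollary6.7-2} and Lemma \ref{lemma6.7-3} estimate $|\vlh(z)-\tuh(\hz)|$ by $o(1)\cdot\scalex{\la}{z_i}r_i\leq o(1)\cdot r$.

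\textbf{Main obstacle.} The most delicate point is reconciling the intrinsic metric $d^{\la}_{z_i}$ (natural for the Whitney decomposition, in which the pointwise bounds of Section \ref{section_four} are phrased) with the parabolic metric $d_p$ used in Lemma \ref{deprato}. The exponent $d$ fixed in \eqref{exp_d} and the comparison \eqref{equiv_dist} give $d_p\approx d^{\la}_{z_i}$ up to a factor depending on $\la$, $p^-$ and $d$ but independent of $r$ and $\tz$; this is precisely the flexibility built into the $o(1)$ convention. A second subtlety is the transition across $\pa\elam$ in the mixed case: one must verify that when many Whitney cubes of different scales accumulate near $\hz$, the contribution to the oscillation remains controlled; this is ensured by the Whitney scale comparison \descref{W3}{W3} and \descref{W5}{W5}, which guarantee that all cubes meeting a given $2Q_i$ are of comparable size and contained in $\hat c Q_i$. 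Once these geometric comparisons are set up, the estimates of Step 2 assemble to yield the required uniform bound on $I_r(\tz)$.
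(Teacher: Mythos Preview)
Your outline captures the right dichotomy (inside versus outside $\elam$) and correctly identifies the metric comparison \eqref{equiv_dist} as the key geometric input, but the execution has two genuine gaps.

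\textbf{First, the pointwise strategy is too strong for the available tools.} You aim to show $|\vlh(z)-c|\leq o(1)\cdot r$ for a.e.\ $z\in U_r$, and for the part $U_r\cap\elam$ you invoke ``a pointwise bound of the form $|\nabla u|\leq C\la^{1/p(\hz)}$ coming from $g(\hz)\leq \la^{1-\be}$.'' This is false: membership of $\hz$ in $\elam$ controls \emph{averages} of $(|\nabla u|+1+|u|/\ldots)^{p/q}$ over every cylinder containing $\hz$, not the pointwise value of $|\nabla u|$. Likewise, choosing $c=\tuh(\hz)$ at a single Lebesgue point and trying to compare it to $\vlh(z)$ or to Whitney averages $\tuh^i$ does not follow from Corollaries \ref{lemma3.6}--\ref{corollary6.7-2} or Lemma \ref{lemma6.7-3}, which bound $|\vlh|$, $|\nabla\vlh|$ or $|\tuh-\tuh^i|$ in integral form, not $|\vlh(z)-\tuh(\hz)|$. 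The paper avoids this by never passing to a pointwise anchor: in the intersecting case it bounds the mean oscillation directly via the splitting
\[
I_r(\tz)\;\leq\; 2\,\fiint_{U_r}\frac{|\vlh-\tuh|}{r}\,dz \;+\; \fiint_{U_r}\frac{|\tuh-\avgs{\tuh}{U_r}|}{r}\,dz,
\]
handling the first term through the Whitney structure (using $r_i\apprle_{o(1)} r$ when $2Q_r(\tz)\cap\elam\neq\emptyset$) and the second via Lemma \ref{lemma_crucial_1} plus Lemma \ref{lemma_crucial_2} applied on $Q_r(\tz)$, where the \emph{average} gradient is controlled because $2Q_r(\tz)$ contains a good point.

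\textbf{Second, you miss the large-$r$ regime.} Your two-case split does not address what happens when $Q_r(\tz)$ is so large that it escapes $\bfve\times S_2$, or when $r$ exceeds a fixed fraction of $\scalex{\la}{}(\rho_b-\rho_a)$. In those situations the parabolic Poincar\'e argument on $Q_r(\tz)$ is not available (the equation is not satisfied on all of $Q_r(\tz)$, and Lemma \ref{lemma_crucial_2} requires the ball to sit inside the domain). The paper handles this separately (its cases \eqref{case_3} and \eqref{case_4}) by observing that then $|U_r|$ is bounded below by a fixed constant, so $I_r(\tz)\apprle\iint_{U_r}|\vlh|\,dz$, which is finite by Lemma \ref{lemma6.8}. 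Without this, your argument has no mechanism to close when $r$ is of order one.
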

\begin{proof}
In order to show \eqref{da_pr}, we will consider the following four cases:
\begin{gather}
2Q_r(\tz) \subset \bfve \times S_2 \setminus \elam, \label{case_1}\\
2Q_r(\tz) \cap \elam \neq \emptyset, \qquad 2Q_r(\tz) \subset \bfve \times S_2 \txt{and} r < \frac{1}{3} \la^{-\frac{1}{p^+} + \frac{d}{2}} (\rho_b-\rho_a),\label{case_2}\\
2Q_r(\tz) \cap \elam \neq \emptyset, \qquad 2Q_r(\tz) \subset \bfve \times S_2 \txt{and} r \geq \frac{1}{3} \la^{-\frac{1}{p^+} + \frac{d}{2}} (\rho_b-\rho_a),\label{case_3}\\
2Q_r(\tz) \setminus \bfve \times S_2 \neq \emptyset. \label{case_4}
\end{gather}
 and show \eqref{da_pr} holds in each of them. 
 
 From the fact that $\tz \in \bfur \times S_1$, it is easy to see that 
 \begin{equation}
 \label{meas_z_r_tz}
 |Q_r(\tz) \cap \bfur \times S_1| \geq c(n) r^{n+2}.
 \end{equation}

 \begin{description}[leftmargin=*]
 \item[Case \eqref{case_1}:] In this case, using \eqref{meas_z_r_tz}, we get
 \begin{equation}
 \label{6.107}
 \begin{array}{rcl}
 I_r(\tz) & \apprle & \frac{1}{r} \fiint_{Q_r(\tz) \cap \bfur \times S_1} \fiint_{Q_r(\tz) \cap \bfur \times S_1} |\vlh(\mfz_1)-\vlh(\mfz_2)| \ d\mfz_1 \ d\mfz_2 \\
 & \apprle &\sup_{z \in \bfur \times S_1} |\nabla \vlh(z)| + r |\pa_t \vlh(z)|.
 \end{array}
 \end{equation}

 In order to bound \eqref{6.107}, let us fix an $z \in \bfur \times S_1$, then $z \in Q_i$ for some $i \in \Th$.  Let $\mfz_i \in \elam$ be such that $d_{z_i} (z_i, \elam) = d_{z_i} (z_i,\mfz_i)$. Thus using the fact that $2Q_r(\tz) \cap \elam = \emptyset$, we get
 \begin{equation}
 \label{bound_r}
 r \leq d_p(z,\elam) \leq d_p(z,z_i) + d_p(z_i,\mfz_i) \overset{\eqref{equiv_dist}}{\leq} d_{z_i}(z,z_i) + d_{z_1}(z_i,\mfz_i) \leq 5\hat{c} r_i.
 \end{equation}

 If $i \in \Th_1$, then we can use \eqref{bound+6.31_two} and \eqref{bound_time_vlh_one} to get
 \begin{equation*}
 \label{6.108}
 \begin{array}{rcl}
 |\nabla \vlh(z)| + r |\pa_t\vlh(z)| & \apprle & \musym \la^{\frac{1}{p(z_i)}} + r \musym \frac{\la^{1-\frac{d}{2}}}{r_i} \\
 & \overset{\eqref{bound_r}}{\apprle}& \musym \la^{\frac{1}{p^-}} + \musym \la^{1-\frac{d}{2}} \leq o(1).
 \end{array}
 \end{equation*}
 
 If $i \in \Th_2$, using \eqref{bound_low_s}, we see that 
 \begin{equation*}
 \label{6.110}
 |\htq_i| = r_i^{n+2} \scalexn{\la}{z_i} \scalet{\la}{z_i} = \lbr \scalet{\la}{z_i} r_i^2\rbr^{\frac{n+2}{2}} \la^{\frac{n}{2} - \frac{n}{p(z_i)}} \overset{\eqref{bound_low_s}}{\apprge} s^{\frac{n+2}{2}} \la^{\frac{n}{2} - \frac{n}{p^+}}.
 \end{equation*}

 then we will use \eqref{bound_6.7-3-2} and \eqref{lm6.8-1.3} along  to get
 \begin{equation*}
 \label{6.111}
 \begin{array}{rcl}
 |\nabla \vlh(z)| + r |\pa_t\vlh(z)| & \apprle & \la^{\frac{1}{p^-}} + \frac{\la^{\frac{1}{p^-} -1}}{s|\htq_i|} \iint_{\qfur}|\tuh|^2 \ dz  + r \frac{1}{s |\htq_i|} \iint_{\qfur} |\tuh| \ dz\\
 & \apprle & C_{(\la,\plog,s,n, \|u\|_{L^2})}  +C_{(s,n,\plog)} \frac{r}{\scalexn{\la}{z_i}\scalet{\la}{z_i} r_i^{n+2}} \|u\|_{L^1}\\
 & \overset{\eqref{bound_r},\eqref{bound_low_s}}{\apprle} & o(1).
 \end{array}
 \end{equation*}

 \item[Case \eqref{case_2}:] Noting that \eqref{meas_z_r_tz} must also hold in this case, we apply triangle inequality and estimate $I_r(\tz)$ as follows:
 \begin{equation}
  \label{6.112}
   I_r(z) \leq 2J_1 + J_2,
 \end{equation}
 where we have set
 \begin{equation*}
 \label{def_J_1_2}
 \begin{array}{rcl}
 J_1&:=& \fiint_{Q_r(\tz) \cap \bfur \times S_1} \left| \frac{\vlh(z) - \tuh(z)}{r}\right| dz, \\
 J_2 &:=& \fiint_{Q_r(\tz) \cap \bfur \times S_1} \left| \frac{\tuh(z) - \avgs{\tuh}{Q_r(\tz) \cap \bfur \times S_1}}{r}\right|dz.
 \end{array}
 \end{equation*}

 \begin{description}[leftmargin=*]
 \item[Estimate for $J_1$:] If $2Q_r(\tz) \subset \elam$, then $\vlh = \tuh$ which implies $J_1=0$. Hence without loss of generality,  we can  assume $2Q_r(\tz) \cap \elam^c \neq \emptyset$. 
Using the construction of \eqref{lipschitz_function}, we get
\begin{equation}
 \label{3.81.1}
 \begin{array}{ll}
  J_1   & \apprle \sum_{i\in \Th} \frac{1}{|Q_r(\tz) \cap \bfur \times S_1|} \iint_{Q_r(\tz) \cap \bfur \times S_1 \cap 2Q_i} \left| \frac{\tuh(z) - \tuh^i}{r}\right| dz.
 \end{array}
\end{equation}

Let us fix an $i \in \Th$ and take two points $\mfz_1 \in 2Q_i \cap Q_r(\tz)$ and $\mfz_2 \in \elam \cap 2Q_r(\tz)$. Then from \eqref{equiv_dist}, we have
\[
r_i \leq \frac{1}{\hat{c}} d_{z_i} (z_i, \mfz_2) \leq \frac{1}{\hat{c}} \left[ d_{z_i}(z_i, \mfz_1) + d_{z_i} (\mfz_1, \mfz_2)\right] \leq \frac{1}{\hat{c}} \left[ 2r_i + \la^{\frac{1}{p^-} - d + \frac12}d_p(\mfz_1,\mfz_2) \right].
\]
Since $\hat{c} \geq 9$ (see \descref{W4}{W4}), we thus get
\begin{equation}
\label{bound_r_i}
r_i \leq C_{(\la,\plog,n)} r. 
\end{equation}
It follows from the triangle inequality, \eqref{meas_z_r_tz} and \eqref{bound_r_i} that
\begin{equation*}
 \label{3.84.1}
 J_1 \apprle \sum_{\substack{i\in\Th}} \frac{|\htq_i|}{r^{n+2}} \fiint_{\htq_i} \left| \frac{\tuh(z) - \avgs{\tuh}{\htq_1}}{r_i}\right| d\tz \apprle \sum_{i\in \Th}  \fiint_{\htq_i} \left| \frac{\tuh(z) - \avgs{\tuh}{\htq_i}}{r_i}\right| dz.
\end{equation*}
To bound \eqref{3.84.1},  in the case $i \in \Th_1$, we apply Lemma \ref{improved_est} to get 
\begin{equation}
\label{6.116}
 \fiint_{\htq_i} \left| \frac{\tuh(z) - \avgs{\tuh}{\htq_i}}{r_i}\right| dz \apprle \musym \la^{\frac{d}{2}} \leq o(1),
\end{equation}
and in the case $i \in \Th_1$, we use \eqref{bound_r_i} and \eqref{bound_low_s} to obtain
\begin{equation}
\label{6.117}
 \fiint_{\htq_i} \left| \frac{\tuh(z) - \avgs{\tuh}{\htq_i}}{r_i}\right| dz \apprle_{(\la,\plog,n)} \frac{1}{r_i^{n+3}} \|u\|_{L^1} \leq o(1). 
\end{equation}

Thus, combining  \eqref{6.116} and \eqref{6.117} with \eqref{meas_z_r_tz} and \eqref{3.81.1} gives 
\begin{equation}
\label{3.84.2}
 J_1 \leq C_{(\la,\plog,n,\al_0,\|u\|_{L^1})} = o(1).
\end{equation}

 \item[Estimate for $J_2$:] From triangle inequality, we see that 
 \begin{equation}
 \label{3.82}
   J_2  \apprle \fiint_{Q_r(\tz)}  \left| \frac{\tuh(z) - \avgs{\tuh}{Q_r(\tz)}}{r}\right| dz.
 \end{equation}
  If  $Q_r(\tz) \subset \Om \cap \bfur \times \RR$, then we estimate \eqref{3.82} by first applying Lemma \ref{lemma_crucial_1} for some  $\mu \in C_c^{\infty} (B_r(\tx))$ satisfying $|\mu| \leq \frac{C(n)}{r^{n}}$ and  $|\nabla \mu| \leq \frac{C(n)}{r^{n+1}}$ to get
 \begin{equation}
  \label{3.83}
   J_2  \apprle  \fiint_{Q_r(\tz)} |\nabla \tuh(z)|\, dz + \sup_{t_1,t_2 \in I_r(\tlt)}\left|\frac{\avgs{\tuh}{\mu}(t_2)-\avgs{\tuh}{\mu}(t_1)}{r} \right|.
 \end{equation}
 From hypothesis, we have $2Q_r(\tz) \cap \elam \neq \emptyset$, thus we can estimate the first term on the right-hand side of \eqref{3.83} as follows:
 \begin{equation}
 \label{6.121}
 \fiint_{Q_r(\tz)} |\nabla \tuh(z)|\ dz \overset{\eqref{cut_off_function}}{\apprle}  \fiint_{Q_r(\tz)} |\nabla u| + \left| \frac{u}{\scalex{\al_0}{z_0} \rho}\right|\ dz \overset{\eqref{elam}}{\apprle}  \la^{\frac{1}{p^-}}.
 \end{equation}

 To estimate the second term on the right-hand side of \eqref{3.83}, we observe from Lemma \ref{lemma_crucial_2} that
 \begin{equation}
 \label{6.122}
 \abs{\avgs{\tuh}{\mu}(t_2)-\avgs{\tuh}{\mu}(t_1)} \apprle \frac{|Q_r(\tz)|}{r^{n+1}} \fiint_{Q_r(\tz)} \lbr[[](1 + |\nabla u|)^{p(z) -1 }\rbr[]]_h \, dz  \apprle r \la^{\frac{p^+-1}{p^-}}.
 \end{equation}
 
 Thus in the case $Q_r(\tz) \subset \Om \cap \bfur \times \RR$, we can combine \eqref{6.122},\eqref{6.121} and \eqref{3.83} to get \begin{equation}\label{6.123}J_2 \leq o(1).\end{equation}

 On the other hand,  if $Q_r(\tz) \nsubseteq \Om \cap \bfur \times \RR$, then we can directly apply Poincar\'e's inequality  to get
 \begin{equation}
 \label{3.89}
  \begin{array}{ll}
   J_2 & \apprle \fiint_{Q_r(\tz)} \left| \frac{\tuh(z)}{r}\right| \ dz \apprle  \fiint_{Q_r(\tz)} |\nabla \tuh(z)| \ dz \overset{\eqref{6.121}}{\apprle}  \la^{\frac{1}{p^-}} = o(1).
  \end{array}
 \end{equation}
 \end{description}

Therefore, combining \eqref{6.123} or \eqref{3.89} along with \eqref{3.84.2} and  \eqref{6.112} shows that $I_r(\tz) \leq o(1)$ in the case \eqref{case_2} holds. 
 
 \item[Case \eqref{case_3}:] In this case, using \eqref{meas_z_r_tz} and the bound $r \geq \frac{1}{3} \la^{-\frac{1}{p^+} + \frac{d}{2}} (\rho_b-\rho_a) = C_{(\la,\plog,d,n,\rho_b,\rho_a)}$, 
we observe that \begin{equation}\label{6.300}|Q_r(\tz) \cap \bfur \times S_1| \overset{\eqref{meas_z_r_tz}}{\geq} C_{(n,\plog,\rho_a,\rho_b)}.\end{equation} 
We then obtain from \eqref{6.300} that
 \begin{equation}
 \label{6.301}
 I_r(\tz) \leq C_{(\la,\plog,d,n,\rho_b,\rho_a)} \iint_{Q_r(\tz) \cap \bfur \times S_1} |\vlh| \, dz \overset{\eqref{lemma6.8-one}}{\leq}  o(1). 
 \end{equation}

 \item[Case \eqref{case_4}:] Similar to the case \eqref{case_3}, we again obtain \eqref{6.300}, thus we can proceed exactly as in \eqref{6.301} to bound $I_r(\tz) \leq o(1)$.
 \end{description}
This completes the proof of the Lipschitz continuity. 
\end{proof}

 \section{Caccioppoli type inequality}
 \label{section_five}
 
 \begin{lemma}
 \label{Caccioppoli}
There exist a small constant $\be_0 = \be_0(n,\plog,\lamot,m_e) \in (0,1)$ and a positive constant $C_{cac} = C_{cac}(n,\plog,\lamot,m_e)$ such that the following holds:
Let $Q := B \times I = Q_{\rho}^{\al_0}(z_0)$ be a parabolic cylinder for some $\al_0\geq 1$ satisfying \eqref{est_bnd_al_0}. Then we have
 \begin{multline}
  \label{conclusion_1}
  \al_0^{1-\be}|\qone| +  \sup_{t \in \ione} \int_{\bone} \left| {u(x,t)} \right|^2 \tilde{\mathcal{M}}(x,t)^{-\be} \, dx \\
  \leq C_{cac} \left[ \iint_{\qfur}  \al_0^{1-\frac{2}{p(z_0)}-\be} \left| \frac{u}{\scalex{\al_0}{z_0}\rho_b} \right|^2 \ dz + \iint_{\qfur}\left| \frac{u}{\scalex{\al_0}{z_0} \rho_b} \right|^{p(z)(1-\be)} dz \right],
 \end{multline}
 where we have set $\tilde{\mathcal{M}}(x,t):=\max\{g(x,t)^{\frac{1}{1-\be}}, c_e\al_0\}$.

\end{lemma}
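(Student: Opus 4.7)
\textbf{Proof plan for Lemma \ref{Caccioppoli}.}

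The plan is to use the Lipschitz truncation $\vlh$ built in \eqref{lipschitz_function} as a test function in Definition \ref{very_weak_solution}, for each level $\la \geq c_e\al_0$, derive a \emph{level $\la$} Caccioppoli-type inequality, and then integrate against the measure $\la^{-\be-1}\,d\la$ on $[c_e\al_0,\infty)$ to produce the weighted term $\sup_t\int |u|^2 \tilde{\mm}^{-\be}\,dx$ on the left-hand side. The point is that $\vlh$ is admissible by Lemma \ref{lipschitz_continuity}, has the right support (contained in $\mcc^{\al_0}_{\rho_b}(x_0)$), and coincides with $\tuh$ on the good set $\elam$; on the bad set $\qfur\setminus\elam$ the machinery built in Section \ref{section_four} controls every error term that appears.

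First, I would insert $\phi=\vlh$ into \eqref{def_weak_solution}, integrate over $(-T,t)$ for $t\in S_1$, and rewrite $\int [u]_h'\,\vlh = \frac{d}{dt}\int [u]_h\vlh - \int [u]_h(\vlh)_t$. On the good-set piece $\bfur\setminus\elam(t)^c$ (where $\vlh\equiv\tuh$ up to the cut-offs $\eta,\zeta$) the parabolic time term produces, after elementary manipulation,
\[
\tfrac{1}{2}\int_{\bone}|u(\cdot,t)|^2\eta^2\,dx - \tfrac{1}{2}\int_{\bone\setminus\elam(t)}\bigl(|\tu|^2-|\tu-\vl|^2\bigr)\,dx,
\]
and the first piece will feed the $\sup_t$ term on the left-hand side after letting $h\searrow 0$; the difference $|\tu|^2-|\tu-\vl|^2$ is exactly the quantity estimated in Lemma \ref{cruc_3}, which bounds it from below by $-C\musym^{2}\bigl(\la|\RR^{n+1}\setminus\elam|+s^{-1}\iint_{\qfur}|\tu|^2\bigr)$. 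The spatial term $\iint \aa(\cdot,\nabla u)\cdot\nabla\vlh$ is split the same way: on $\elam$ the structure condition \eqref{abounded} gives the good $\int\La_0|\nabla u|^{p(z)}\eta^{\pp}\zeta$ energy (minus the standard $|u/(\scalex{\al_0}{z_0}(\rho_b-\rho_a))|^{p(z)}$ cut-off contribution), while on the bad part Corollary \ref{cor6.10} yields the uniform bound $\apprle \musym\la|\RR^{n+1}\setminus\elam| + s^{-1}\iint_{\qfur\setminus\elam}|\tuh|^2$. Altogether this gives, at level $\la$,
\begin{equation*}
\la^{1-\be}|\qone\cap\elam(t)^{\mathrm{pr}}| + \sup_{t\in\ione}\!\int_{\bone\cap\elam(t)}\!|u|^2\,dx \;\leq\; C\bigl[(\mathrm{RHS\ of\ }\eqref{conclusion_1})\bigr] + C\,\musym^{2}\la|\RR^{n+1}\setminus\elam| + \tfrac{C}{s}\iint_{\qfur}|\tuh|^2,
\end{equation*}
where the first bad-set term is controlled by the distributional bound $|\RR^{n+1}\setminus\elam|\apprle \la^{\be-1}\iint g\,dz$ coming from Lemma \ref{max_bnd} applied to $g$, and the $s^{-1}$-term is of lower order and will be absorbed later.

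Next I would multiply the level-$\la$ inequality by $\be\la^{-\be-1}$ and integrate $\la$ from $c_e\al_0$ to $\infty$. Fubini gives
\[
\be\int_{c_e\al_0}^{\infty}\la^{-\be-1}\!\!\int_{\bone\cap\elam(t)}\!|u|^2\,dx\,d\la \;=\; \int_{\bone}|u(\cdot,t)|^2\,\tilde{\mm}(x,t)^{-\be}\,dx - (c_e\al_0)^{-\be}\!\int_{\bone}|u|^2\,dx,
\]
and the piece $\la^{1-\be}|\qone\cdot|$ integrates to a term comparable to $\al_0^{1-\be}|\qone|$ using Definition \ref{first_bound_al_0}. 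For the bad-set contribution on the right one uses
$\int_{c_e\al_0}^{\infty}\la^{-\be}|\RR^{n+1}\setminus\elam|\,d\la \apprle \iint g^{1/(1-\be)}\tilde{\mm}^{-\be}\lsb{\chi}{\{g>(c_e\al_0)^{1-\be}\}}\,dz$, which via the definition of $g$ and Lemma \ref{max_bnd} is controlled by $\iint_{\qfur}(|\nabla u|+1+|u|/(\scalex{\al_0}{z_0}\rho))^{p(z)(1-\be)}\,dz$; the last integral is then bounded by the right-hand side of \eqref{conclusion_1} using the boundary Poincar\'e inequality, Theorem \ref{measure_density_poincare} (applicable thanks to Assumption \ref{uniform_measure} and Restriction \descref{rnone}{i} on $\rho_0$).

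The remaining $L^2$ piece of the right-hand side is produced in the expected way: Gagliardo--Nirenberg (Lemma \ref{lemma_crucial_3}) combined with the parabolic Poincar\'e inequality (Lemma \ref{lemma_crucial_1}) and the measure density condition at $\pa\Om$ allow one to absorb the $s^{-1}\iint|\tu|^2$ error and the $(c_e\al_0)^{-\be}\int|u|^2$ term into the first summand of \eqref{conclusion_1} (carrying the prefactor $\al_0^{1-2/p(z_0)-\be}$, where the $-2/p(z_0)$ scaling arises naturally from the factor $\scalet{\al_0}{z_0}$ hiding in $s$). The usual hole-filling / iteration Lemma \ref{iter_lemma} on the radii $\rho_a<\rho_1<\rho_2<\rho_b$ closes the estimate. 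The \emph{main obstacles} are (i) choosing $\be_0$ small enough so that all error terms can be absorbed uniformly across the full range $\frac{2n}{n+2}<\mathfrak{p}^-\leq\pp\leq\mathfrak{p}^+<\infty$ (this forces Restrictions \descref{beone}{i}--\descref{four}{iv} of Definition \ref{restriction_be_0}, in particular $2\be_0\leq\frac{d(n+2)}{2}-\frac{n}{p^-}$, which is precisely what equilibrates the singular and degenerate regimes under the unified scaling \eqref{exp_d}), and (ii) the careful bookkeeping of the $\la^{\pm(p(z_i)-p(z_j))}$ powers that appear throughout the Fubini step, which by \eqref{2.2.28-1} and \eqref{bnd_al_0_rho} remain $\log$-H\"older-bounded independently of $\la$, $\al_0$, $d$.
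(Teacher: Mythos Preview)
Your overall architecture is the paper's: test with $\vlh$, split the elliptic and time terms into good-set and bad-set parts, invoke Lemmas~\ref{lemma6.8-2}, \ref{cruc_3} and Corollary~\ref{cor6.10} for the bad-set errors, then multiply by $\la^{-1-\be}$ and integrate over $(c_e\al_0,\infty)$ so that Fubini turns the good-set $L^2$ piece into $\int |u|^2\tilde{\mm}^{-\be}$. That part is right.

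Two places, however, do not match what actually happens and would not close the argument as written. First, there is no term of the form ``$\la^{1-\be}|\qone\cap\elam|$'' at the level-$\la$ stage; the summand $\al_0^{1-\be}|\qone|$ on the left of \eqref{conclusion_1} arises \emph{only after} the $\la$-integration, from the weighted elliptic energy $\int\tilde{\mm}^{-\be}|\nabla u|^{p(z)}$. The mechanism is a splitting trick: set $S=\{z\in\qone:|\nabla u|^{p(z)}\geq\be\,\tilde{\mm}\}$, so that on $S$ one has $\tilde{\mm}^{-\be}|\nabla u|^{p(z)}\geq\be^{\be}|\nabla u|^{p(z)(1-\be)}$ and on $\qone\setminus S$ one has $|\nabla u|^{p(z)(1-\be)}\leq\be^{1-\be}\tilde{\mm}^{1-\be}$; this yields $\iint_{\qone}|\nabla u|^{p(z)(1-\be)}\apprle\aa_1+\be^{1-\be}\iint\tilde{\mm}^{1-\be}$, and the left side is then compared with $\al_0^{1-\be}|\qone|$ via \eqref{est_bnd_al_0}. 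Your account skips this step and replaces it with a nonexistent level-$\la$ volume term.

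Second, the end game is simpler than you suggest: neither Gagliardo--Nirenberg (Lemma~\ref{lemma_crucial_3}) nor the iteration Lemma~\ref{iter_lemma} is used here. The $s^{-1}\iint|\tu|^2$ contribution from $K_4$ is not ``absorbed''; since $s=\scalet{\al_0}{z_0}(\rho_b-\rho_a)\rho$, the factor $\al_0^{-\be}/s$ rewrites directly as $\al_0^{1-\frac{2}{p(z_0)}-\be}\big|1/(\scalex{\al_0}{z_0}\rho)\big|^{2}$ up to a $\musym$ power, so this \emph{is} the first summand on the right of \eqref{conclusion_1}. The paper then simply takes $\rho_a=\rho$, $\rho_b=16\rho$ so that $\musym$ is a fixed constant; there is no iteration in this lemma (iteration appears only in the next lemma, \ref{lemma_lower_order}).
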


\begin{proof}
Let $t_1 \in S_1 \setminus \itwos $ and $t \in \ione$ with $t_1 < t$.   We  shall make use of  $\eta(x)\vlh(x,\tau)$ where $\vlh$ is from \eqref{lipschitz_function} and $\eta$ is from \eqref{cut_off_function} as a test function in \eqref{very_weak_solution} (this is possible since $\spt(\vlh) \subset \Om \cap \bfur \times \RR$ and $\vlh \in C^{0,1}(\bfur \times S_1)$ from Lemma \ref{lipschitz_continuity}). Thus, after integrating the resulting expression of \eqref{very_weak_solution} over $(t_1,t)$, we get
%
 \begin{equation}
 \label{cac1}
  L_1 + L_2:=\int_{t_1}^t \left[ \int_{\Om_{8\rho}} \frac{d{[u]_h}}{d\tau} \eta(y) \vlh(y,\tau) + \iprod{[\aa(y,\tau,\nabla u)]_h}{\nabla (\eta \vlh)} \ dy\right]\ d\tau = 0.
 \end{equation}

 \begin{description}[leftmargin=*]
  \item[Estimate of $L_1$:] Note that $\zeta(\tau) = 1$ for all $\tau \in (t_1,t)$.
Using the standard hole filling technique, we have
  \begin{equation}
   \label{6.38}
   \begin{array}{ll}
    L_1 
    & = \int_{t_1}^t \int_{\Om_{8\rho}} \dds{\tu_h(y,\tau)}  \vlh(y,s) \ dy\ d\tau \\
    & = \int_{t_1}^t \int_{\Om_{8\rho}\setminus \elam^{\tau}}  \dds{\vlh}  (\vlh-\tu_h) \, dy \, d\tau +    \int_{t_1}^t \int_{\Om_{8\rho}}  \frac{d{\lbr (\tu_h)^2 - (\vlh - \tu_h)^2 \rbr }}{d\tau}  \, dy \, d\tau \\
    & := J_2 + J_1(t) - J_1(t_1),
   \end{array}
  \end{equation}
  where we have set 
  \begin{equation*}
  \label{def_i_1}J_1(\tau) := \frac12 \int_{\Om_{8\rho}} ( (\tu_h)^2 - (\vlh - \tu_h)^2 ) (y,\tau) \ dy.
  \end{equation*}

  \begin{description}
   \item[Estimate for $J_2$:] Taking absolute values and making use of Lemma \ref{lemma6.8-2}, we get
   \begin{equation}
    \label{6.39}
     |J_2|  \apprle \iint_{\qfur\setminus \elam}   \left| \dds{\vlh}  (\vlh-\tu_h)\right| dy \ d\tau \apprle\musym^2\la |\RR^{n+1} \setminus \elam| + \frac{1}{s} \iint_{\qfur} |[u]_h|^2 \, dy \ d\tau. 
   \end{equation}

   \item[Estimate for $J_1(t_1)$:] We first claim that we can choose $t_1\in S_1 \setminus \ione$ such that
   \begin{equation}
    \label{6.40}
    |J_1(t_1)|  
    \leq \frac{3}{s} \int_{S_1 \setminus \ione} \left| \int_{\Om \cap \bfur} \lbr |\tu_h|^2 - |\vlh - \tu_h|^2 (x,\tau)\rbr dy\right| d\tau
   \end{equation}
holds   with $s$ defined as in \eqref{def_s}. 
Suppose not, then for any $t_1 \in S_1 \setminus \ione$, we would have 
\begin{multline}
\label{J_1_t_11}
 \left| \int_{\Om \cap \bfur } \lbr |\tu_h|^2 - |\vlh - \tu_h|^2\rbr (y,t_1) \, dy \right| \\
 > \frac{3}{s} \int_{S_1 \setminus \ione} \left| \int_{\Om\cap \bfur} \lbr |\tu_h|^2 - |\vlh - \tu_h|^2 \rbr(y,\tau) \, dy\right| d\tau.
\end{multline}
But the gap between $S_1$ and $\ione$ is given by
$$\scalet{\al_0}{z_0} \left( 2\rho_a + \frac19 (\rho_b-\rho_a) \right) \frac{(\rho_b-\rho_a)}{9} \geq \frac{s}{3},$$
which gives
\begin{equation}
\label{J_1_t_22}
  \hint_{S_1 \setminus \ione} \left| \int_{\Om \cap \bfur} \lbr |\tu_h|^2 - |\vlh - \tu_h|^2 (y,\tau)\rbr dy\right| d\tau \geq \min_{\tau \in S_1\setminus \ione} |J_1(\tau)|.
\end{equation}
Combining \eqref{J_1_t_11} and \eqref{J_1_t_22}, for any $t_1 \in S_1 \setminus \ione$, we get $|J_1(t_1)| >  \min_{\tau \in S_1 \setminus \ione} |J_1(\tau)|$,
which is absurd. Hence \eqref{6.40}  must be true for some $t_1 \in S_1 \setminus \ione$.

From the construction \eqref{lipschitz_function},  we have $\vlh = \tu_h$ on $\elam$. Furthermore,  $\spt(\tu_h) \subset \qthr$ and $|\tu_h| \leq |[u]_h|\lsb{\chi}{\qfur}$ holds. For $t_1 \in S_1 \setminus \ione$ satisfying \eqref{6.40}, we then get
\begin{equation}
 \label{6.44}
 \begin{array}{ll}
  |J_1(t_1)| 
& \leq   \frac{3}{s} \iint_{\qfur\cap \elam } \left| |\tu_h|^2 - |\vlh - \tu_h|^2 \right| dz + \frac{3}{s} \iint_{\qfur\setminus \elam} \left| |\tu_h|^2 - |\vlh - \tu_h|^2 \right| dz \\
& \apprle   \frac{1}{s} \iint_{\qfur\cap \elam }  |\tu_h|^2  dz + \frac{1}{s} \iint_{\qfur\setminus \elam}  |\tu_h|^2 + |\vlh |^2  dz \\
& \overset{\redlabel{4.8.a}{a}}{\apprle} \frac{1}{s} \iint_{\qfur} |[u]_h|^2 \ dz.
 \end{array}
\end{equation}
To obtain \redref{4.8.a}{a}, we used Lemma \ref{lemma6.8} (applied with $\vartheta = 2$).

  \end{description}

  \item[Estimate for $L_2$:] 
   We decompose the expression as 
\begin{align}
\nonumber     L_2 & = \int_{t_1}^t \int_{\Om \cap \bfur \cap \elam(\tau)} \iprod{[\aa(y,\tau,\nabla u)]_h}{\nabla (\eta \vlh)} \, dy\ d\tau  \\
\nonumber     & \qquad \quad + \int_{t_1}^t \int_{\Om \cap \bfur \setminus \elam(\tau)} \iprod{[\aa(y,\tau,\nabla u)]_h}{\nabla (\eta \vlh)} \, dy\ d\tau \\
\label{4.9}     & := L_2^1 + L_2^2.
\end{align}

   \begin{description}
    \item[Estimate for $L_2^2$:] Using the chain rule, \eqref{abounded}, \eqref{cut_off_function} along with Corollary \ref{corollary6.9}, we get
\begin{align}
\nonumber L^2_2  & \leq \int_{t_1}^t \int_{\Om \cap \bfur  \setminus \elam(\tau)} {[(|\nabla u|+1)^{p(z)-1}]_h}{|\nabla (\eta \vlh)|} \, dy\ d\tau  \\
\nonumber    & \apprle \iint_{(\bfur \times S_1) \setminus \elam} {[(|\nabla u|+1)^{p(z)-1}]_h} \lbr \frac{|\vlh|}{\scalex{\al_0}{z_0}\rho} + |\nabla \vlh|\rbr \ dy\ d\tau  \\
\label{4.10} &\apprle \musym \la |\RR^{n+1} \setminus \elam| + \frac{1}{s} \iint_{\qfur} |\tuh|^2 \, dy\ d\tau.
\end{align}
\end{description} 
\end{description}

Substituting \eqref{4.10} into \eqref{4.9} and \eqref{6.39}, \eqref{6.44} into \eqref{6.38},  and finally making use of \eqref{cac1} along with the bound $|\tuh| \leq |[u]_h|\lsb{\chi}{\qfur}$, we obtain
 \begin{multline}
 \label{6.45}
    \frac12 \int_{\Om\cap \bfur} | (\tu_h)^2 - (\vlh - \tu_h)^2 | (y,t) \ dy +  \int_{t_1}^t \int_{\elam(\tau)} \iprod{[\aa(y,\tau,\nabla u)]_h}{\nabla (\eta \vlh)} \, dy \, d\tau   \\
   \hspace*{6cm} \apprle \mu^2 \la |\RR^{n+1} \setminus \elam| + \frac{1}{s} \iint_{\qfur} |[u]_h|^2 \, dy \, d\tau.
 \end{multline}
 Since the estimate in \eqref{6.45} is independent of $h$, we can let $h \searrow 0$ to get
 \begin{multline}
 \label{6.46.1}
    \frac12 \int_{\Om \cap \bfur} | (\tu)^2 - (\vl - \tu)^2 | (y,t) \ dy +  \int_{t_1}^t \int_{\elam(\tau)} \iprod{\aa(y,\tau,\nabla u)}{\nabla (\eta \vl)} \, dy \, d\tau   \\
   \hspace*{6cm} \apprle \mu^2 \la |\RR^{n+1} \setminus \elam| + \frac{1}{s} \iint_{\qfur} |u|^2 \, dy \, d\tau.
 \end{multline}
%
%
%
  Using the fact that $\tu = \vl$ on $\elam$ and \eqref{cruc_est_3},  we have
 \begin{equation}
 \label{4.13}
  \begin{array}{ll}
    \int_{\bfur} | (\tu)^2 - (\vl - \tu)^2 | (y,t) \ dy   
   & \apprge \int_{\elam(t)} | \tu (x,t)|^2 \ dx   - \musym^2 \la |\RR^{n+1} \setminus \elam| \\
   & \qquad - \musym^2 \frac{1}{s} \iint_{\qfur} |u|^2 \ dz. 
  \end{array}
 \end{equation}
%
%
Making use of the bounds \eqref{4.13} and \eqref{6.46.1}, followed by multiplying the resulting expression with $\la^{-1-\be}$ and integrating over the interval $(c_e\al_0,\infty)$ with respect to $\la$ (recall that  $c_e$ is as in Lemma \ref{fix_ce}), for almost every $t \in S_1$, we get 
 \begin{equation}
 \label{K_expression}
K_1 + K_2 \apprle K_3 + K_4,
 \end{equation}
 where we have set
 \begin{equation*}
  \begin{array}{@{}r@{}c@{}l@{}}
   K_1 \ &:=&\  \frac12\int_{c_e \al_0}^{\infty} \la^{-1-\be} \int_{\elam(t)} | \tu(y,t)|^2 \ dy \ d\la, \\
  K_2 \ &:=& \ \int_{c_e \al_0}^{\infty} \la^{-1-\be}\int_{t_1}^t \int_{\elam(\tau)} \iprod{\aa(y,\tau,\nabla u)}{\nabla (\eta u)} \ dy\ d\tau \ d\la, \\
  K_3\  &:=& \ \musym^2\int_{c_e \al_0}^{\infty} \la^{-1-\be}  \la |\RR^{n+1} \setminus \elam| \  d\la, \\
  K_4 \ &:=& \ \musym^2 \frac{1}{s} \int_{c_e \al_0}^{\infty} \la^{-1-\be}   \iint_{\qfur} |u|^2 \ dy  \ d\tau \ d\la .
  \end{array}
 \end{equation*}

 We now define the truncated Maximal function 
 \begin{equation}\label{def_mtilde}\tilde{\mathcal{M}}(z) := \max \{ c_e \al_0, \lbr g(z)\rbr^{\frac{1}{1-\be}}\}\end{equation} and then estimate each of the $K_i$ for $i \in \{1,2,3,4\}$ as follows:
 \begin{description}
  \item[Estimate for $K_1$:]  By applying Fubini, we obtain
  \begin{equation}
  \label{4.19}
    K_1 \geq \int_{\bfur} |\tu(y,t)|^2 \int_{c_e g(y,t)}^{\infty} \la^{-1-\be} \, d\la \, dy 
     \apprge \frac{1}{\be c_e^{\be}}  \int_{\bfur} \tilde{\mathcal{M}}(y,t)^{-\be} | \tu(y,t)|^2 \, dy .
  \end{equation}

  \item[Estimate for $K_2$:] Again applying Fubini, we get
  \begin{equation*}
  \label{4.20}
    K_2 = \frac{1}{\be c_e^{\be}} \int_{t_1}^s \int_{\Om \cap \bfur } \tilde{\mathcal{M}}(y,\tau)^{-\be} \iprod{\aa(y,\tau,\nabla u)}{\nabla (\eta u)} \, dy \, d\tau. 
  \end{equation*}
  Applying the chain rule along with \eqref{abounded}, we get
  \begin{equation}
\label{4.25}
 \begin{array}{ll}
  K_2 &= \int_{t_1}^s \int_{\Om \cap \bfur} \tilde{\mathcal{M}}(y,\tau)^{-\be} \iprod{\aa(y,\tau,\nabla u)}{\nabla u} \eta^2  \, dy \, d\tau   \\
  & \qquad + \int_{t_1}^s \int_{\Om \cap \bfur} \tilde{\mathcal{M}}(y,\tau)^{-\be} \iprod{\aa(y,\tau,\nabla u)}{\nabla \eta^2 } u \, dy \, d\tau  \\
  &\apprge \int_{t_1}^s \int_{\Om \cap \bfur} \tilde{\mathcal{M}}(y,\tau)^{-\be} |\nabla u|^{p(z)} \eta^2  \, dy \, d\tau  \\
  & \qquad - \int_{t_1}^s \int_{\Om \cap \bfur} \tilde{\mathcal{M}}(y,\tau)^{-\be} \lbr |\nabla u|+1\rbr^{p(z)-1} \frac{|u|}{\scalex{\al_0}{z_0}\rho} \, dy \, d\tau  \\
  & \apprge  \iint_{\qone} \tilde{\mathcal{M}}(y,\tau)^{-\be} |\nabla u|^{p(z)} \eta^2  \, dy \, d\tau  \\
  & \qquad - \iint_{\qfur} \tilde{\mathcal{M}}(y,\tau)^{-\be} \lbr |\nabla u|+1\rbr^{p(z)-1} \frac{|u|}{\scalex{\al_0}{z_0}\rho} \, dy \, d\tau  \\
  & := \aa_1 + \aa_2.
 \end{array}
\end{equation}

\begin{description}
  \item[Estimate for $\aa_1$:] Let $S :=\{ z \in \qone: |\nabla u(z)|^{p(z)} \geq \be \tilde{\mathcal{M}}(z)\},$ then we get
 \begin{equation}
 \label{4.26}
  \begin{array}{lcl}
   \iint_{\qone} |\nabla u|^{p(z)(1-\be)} \ dz  & =& \iint_S |\nabla u|^{p(z)(1-\be)} \ dz + \iint_{\qone\setminus S} |\nabla u|^{p(z)(1-\be)} \ dz \\
   & \leq &\be^{-\be} \iint_Q \tilde{\mathcal{M}}(z)^{-\be} |\nabla u|^{p(z)} \ dz + \be^{1-\be} \iint_{\qfur\setminus S} \tilde{\mathcal{M}}(z)^{1-\be} \ dz \\
            & \apprle  & \iint_{\qone} \tilde{\mathcal{M}}(z)^{-\be} |\nabla u|^{p(z)} \ dz   + \be^{1-\be} |\qfur| \al_0^{1-\be} \\
            && + \be^{1-\be} \iint_{\qfur} g(z) \ dz \\
            & \overset{\text{Lemma \ref{max_bnd}}}{\apprle}  & \iint_{\qone} \tilde{\mathcal{M}}(z)^{-\be} |\nabla u|^{p(z)} \ dz   + \be^{1-\be} |\qfur| \al_0^{1-\be} \\
            && + \be^{1-\be} \iint_{\qfur} |\nabla u|^{p(z)(1-\be)} + \abs{\frac{u}{\scalex{\al_0}{z_0}\rho}}^{p(z)(1-\be)} \ dz.
      \end{array}
 \end{equation}
 
 \item[Estimate for $\aa_2$:]  We use the bound  $\lsb{\chi}{\qfur}( |\nabla u(z)| +1)^{p(z)}  \leq \tilde{\mathcal{M}}(z)$ for a.e $z \in \RR^n$, along with Young's inequality, to get
 \begin{equation}
 \label{4.27}
  \begin{array}{rcl}
   \aa_2 & \apprle& \iint_{\qfur}   (|\nabla u|+ 1) ^{p(z)(1-\be)-1} \frac{|u|}{\scalex{\al_0}{z_0}\rho} \ dz\\
  & \apprle &  \ve \iint_{\qfur}(|\nabla u|+1)^{p(z)(1-\be)} + C(\ve) \iint_{\qfur} \left| \frac{u}{\scalex{\al_0}{z_0}\rho}\right|^{p(z)(1-\be)}\ dz \\
  & \apprle & \ve |\qfur| \al_0^{1-\be} + C(\ve) \iint_{\qfur} \left| \frac{u}{\scalex{\al_0}{z_0}\rho}\right|^{p(z)(1-\be)}\ dz.
  \end{array}
 \end{equation}
\end{description}

  \item[Estimate for $K_3$:] Applying the layer-cake representation (see for example \cite[Chapter 1]{Grafakos}), we get
  \begin{equation}
  \label{4.21}
   \begin{array}{rcl}
    K_3  & =& \musym^2 \frac{1}{1-\be} \iint_{\RR^{n+1}} \tilde{\mathcal{M}}(z)^{1-\be} \ dz \\
     & \apprle & \musym^2 \left[ \frac{1}{1-\be} \iint_{\qfur}  \lbr |\nabla u|+1\rbr^{p(z)(1-\be)} +\al_0^{1-\be}  + \left| \frac{u}{\scalex{\al_0}{z_0}\rho}\right|^{p(z)(1-\be)}\ dz \right] \\
    & \apprle & \musym^2 \left[ \al_0^{1-\be} |\qfur| + \frac{1}{1-\be} \iint_{\qfur} \left| \frac{u}{\scalex{\al_0}{z_0}\rho}\right|^{p(z)(1-\be)}\ dz \right].  
   \end{array}
  \end{equation}
  \item[Estimate for $K_4$] Again applying Fubini, we get 
  \begin{equation}
  \label{4.22}
K_4  = \musym^2 \frac{1}{s} \int_{{c_e \al_0}}^{\infty} \la^{-1-\be}   \iint_{\qfur} |u|^2 \ dz \ d\la  = \musym^2 \frac{1}{\be} \iint_{\qfur}  \al_0^{-\be}  \frac{|u|^2}{s} \ dz.
  \end{equation}
 \end{description}
%
%
Substituting \eqref{4.26} and \eqref{4.27} into \eqref{4.25} followed by combining  \eqref{4.19}, \eqref{4.21}, \eqref{4.22} and \eqref{K_expression}, we get
\begin{align*}
& \frac{1}{2\be} \int_{\bone} \tilde{\mathcal{M}}(y,t)^{-\be} | \tu(y,t)|^2 \, dy + \frac{1}{\be} \iint_{\qone} |\nabla u|^{p(z)(1-\be)} \, dz \\
& \apprle \musym^2 \left[ \frac{1}{\be} \be^{1-\be} \al_0^{1-\be} |\qfur|  + \frac{1}{\be} \varepsilon |Q| \al_0^{1-\be} + \frac{\be^{1-\be} + C(\ve)}{\be}  \iint_{\qfur} \left| \frac{u}{\scalex{\al_0}{z_0}\rho}\right|^{p(z)(1-\be)} dz \right. \\
  &  \qquad \qquad \qquad \qquad \quad +  \left. \al_0^{1-\be} |\qfur| + \frac{1}{\be} \iint_{\qfur} \al_0^{-\be}  \frac{|u|^2}{s} \ dz \right].
\end{align*}
Let us now take $\rho_a = \rho$ and $\rho_b = 16\rho$, then $\musym = constant$. This along with \eqref{def_s} gives
\begin{multline*}
         \int_{\bnot} \tilde{\mathcal{M}}(y,t)^{-\be} | \tu(y,t)|^2 \, dy + |\qnot| \al_0^{1-\be} \\
\apprle   \iint_{\qfve} \lbr \frac{|u|}{\scalex{\al_0}{z_0}\rho} \rbr^{p(z)(1-\be)} dz + \iint_{\qfve} \al_0^{1-\be-\frac{2}{p(z_0)}}  \lbr \frac{|u|}{\scalex{\al_0}{z_0}\rho}\rbr^2 dz.
\end{multline*}
This completes the proof of the lemma. 
\end{proof}

 \section{Reverse H\"older type inequality}
 \label{section_six}

\begin{lemma}
\label{lemma_lower_order}
Let $Q := B \times I = Q_{\rho}^{\al_0}(z_0)$ be a parabolic cylinder for some $\al_0\geq 1$ satisfying \eqref{est_bnd_al_0}, and let $\bM_0$ be given as in \eqref{def_M_0}.
Then there exists $\rho_0 = \rho_0(n,\La_1,\bM_0)>0$ such that if $0 < 32\rho \leq \rho_0$, then for $\sig = \max \{ 2, p^+_{\qfve}(1-\be) \}$ there holds
\begin{equation*}
\fiint_{\qfor} \left| \frac{u}{\scalex{\al_0}{z_0} \rho} \right|^{\sig} dz \apprle_{(n,\plog,\lamot,m_e)}  \al_0^{\frac{\sigma}{p(z_0)}}.
\end{equation*}
\end{lemma}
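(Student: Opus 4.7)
The plan is to bootstrap from the gradient hypothesis \eqref{est_bnd_al_0} to the desired $L^\sigma$ bound on $u$ via a two-step argument: first establish an $L^{p(z)(1-\beta)}$ estimate for $u$ itself using the variable-exponent Poincar\'e inequality (Theorem~\ref{measure_density_poincare}), then lift the exponent to $\sigma$. The key structural ingredient is that $u$ is extended by zero outside $\Omega$ and $\Omega^c$ satisfies Assumption~\ref{uniform_measure}, so that $u(\cdot,t)$ on any ball centered at $x_0 \in \partial \Omega$ vanishes on a subset of measure at least $m_e|B|$, which is precisely what is needed for Theorem~\ref{measure_density_poincare} and Lemma~\ref{lemma_crucial_3} to apply.

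First I would apply Theorem~\ref{measure_density_poincare} to $u(\cdot,t)$ on $B^{\al_0}_{4\rho}$ with $s(x) = p(x,t)(1-\be)$, the smallness hypothesis on $\rho$ being guaranteed by \descref{rnone}{rnone}. Integrating in $t \in I^{\al_0}_{4\rho}$, averaging, and invoking \eqref{est_bnd_al_0} then yields the intermediate estimate
\[
\fiint_{Q^{\al_0}_{4\rho}} \left|\frac{u}{\al_0^{-1/p(z_0)+d/2}\,\rho}\right|^{p(z)(1-\be)} dz \apprle \al_0^{1-\be}.
\]

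Next I would upgrade the exponent, handling two subcases. If $\sig = p^+_{Q_{16\rho}}(1-\be) \geq 2$, I would write $|v|^\sig = |v|^{p(z)(1-\be)}|v|^{(p^+-p(z))(1-\be)}$ and split $Q^{\al_0}_{4\rho}$ into $\{|v| \leq \al_0^{1/p(z_0)}\}$ and its complement. On the first set the correction factor is bounded by $\al_0^{(p^+-p^-)/p^-}$, which is uniformly controlled via \eqref{bnd_al_0_rho}; on the complement, Chebyshev's inequality together with \eqref{bnd_al_0_rho} and a Young-type absorption returns us to the preceding bound. If instead $\sig = 2 > p^+(1-\be)$ (the genuinely singular case), I would invoke Gagliardo--Nirenberg (Lemma~\ref{lemma_crucial_3}) slice-by-slice with $\sig = 2$, $\ga_1 = \ga_2 = p^-(1-\be)$, and $\theta \in (0,1)$; the admissibility condition $-n/2 \leq \theta(1-n/\ga_1)-(1-\theta)n/\ga_2$ then reduces to $p^-(1-\be) > 2n/(n+2)$, which is exactly restriction~\descref{four}{four}. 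After integrating in time and applying H\"older's inequality with exponents tuned so that the inner integrands become time averages of the $L^{p^-(1-\be)}$ norms already controlled, combining with the gradient bound \eqref{est_bnd_al_0} and the first step concludes the estimate.

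The principal difficulty is the variable-exponent bookkeeping. In the first subcase, the factor $|v|^{(p^+-p(z))(1-\be)}$ on $\{|v|>\al_0^{1/p(z_0)}\}$ has no \emph{a priori} pointwise bound, and only the interplay between log-H\"older continuity of $p$, the a priori bound $\al_0 \leq \rho^{-(n+3)/\tilde\ve}$ from \eqref{2.2.11}, and the resulting uniform estimates \eqref{bnd_al_0_rho} allow absorption. In the second subcase, calibrating the interpolation parameters in Gagliardo--Nirenberg and the subsequent H\"older exponents in time so that the total power of $\al_0$ produced matches $2/p(z_0)$---all \emph{without} recourse to an $L^\infty_t L^2_x$ estimate for $u$, which is unavailable at this stage of the argument---is the most delicate technical point.
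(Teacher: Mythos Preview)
Your premise that an $L^\infty_t L^2_x$ bound on $u$ is ``unavailable at this stage'' is mistaken, and this is precisely where the argument fails. Lemma~\ref{Caccioppoli} is proved before Lemma~\ref{lemma_lower_order} and supplies the weighted supremum estimate
\[
J:=\sup_{t\in I_{\rho_a}^{\al_0}}\fint_{B_{\rho_a}^{\al_0}}|u|^2\,\tilde{\mathcal M}^{-\be}\,dx
\apprle \musym^{2}\al_0^{d-\be}\Bigl[\al_0^{-2/p(z_0)}\Bigl(\fiint_{Q_{\rho_b}^{\al_0}}|u/(\scalex{\al_0}{z_0}\rho_b)|^{\sig}\,dz\Bigr)^{2/\sig}+1\Bigr],
\]
and the paper's proof uses exactly this as the engine: apply Gagliardo--Nirenberg with $\ga_2=2(1-\be)$, pull out $J^{(\sig-\tp)/2}$ via H\"older, feed the Caccioppoli bound back in, and close by iterating over $4\rho\le\rho_a<\rho_b\le8\rho$ with Lemma~\ref{iter_lemma}. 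The PDE structure is used, not avoided.

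Your attempt to circumvent this does not close. In the singular subcase $\sig=2$, after Lemma~\ref{lemma_crucial_3} with $\ga_1=\ga_2=p^-(1-\be)$ you obtain
\[
\fint_B |v|^2\,dx\apprle\Bigl(\fint_B|\nabla u|^{\ga_1}\Bigr)^{2\theta/\ga_1}\Bigl(\fint_B|v|^{\ga_1}\Bigr)^{2(1-\theta)/\ga_1},
\]
and the two exponents sum to $2/\ga_1>1$ since $\ga_1<2$. Hence no choice of conjugate H\"older exponents in time can reduce both factors to the space--time averages you control from \eqref{est_bnd_al_0}; one factor must be taken in $L^\infty_t$, which is exactly the information you are trying to do without. (Concentration in time of $|\nabla u|$ produces a counterexample to the inequality you would need.) In the degenerate subcase $\sig=p^+_{\qfve}(1-\be)$, the treatment of the set $\{|v|>\al_0^{1/p(z_0)}\}$ is a genuine gap: on that set the factor $|v|^{(p^+-p(z))(1-\be)}$ admits no pointwise bound, Chebyshev only controls the measure of the set, and any Young splitting with exponents $\sig/(p(z)(1-\be))$ and its conjugate is circular because both sides return $|v|^{\sig}$. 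Log--H\"older and \eqref{bnd_al_0_rho} let you freeze $\al_0$ and $\rho$, but not $|v|$ itself.
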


\begin{proof}
We set
\begin{equation*}
\tp := \frac{p^-_{\qfve}[2(1-\be)-\be \sig]}{2-\be p^-_{\qfve}}.
\end{equation*}
We shall use Lemma \ref{lemma_crucial_3} with $(\sig,\ga_1,\ga_2,\theta)$ replaced by $\left( \sig,\tp,2(1-\be),\frac{\tp}{\sig} \right)$.
To apply Lemma \ref{lemma_crucial_3}, we need to check that the condition
\begin{equation}
\label{claim_sig}
\frac{\sig}{\tp} \leq 1 + \frac{2(1-\be)}{n}
\end{equation}
holds true.
Let us consider the following two cases:
\begin{description}[leftmargin=*]
\item[Case $\sig=2$:]
In this case, the condition \eqref{claim_sig} is equivalent to
\begin{equation*}
\tp = \frac{p^-_{\qfve}(2-4\be)}{2-\be p^-_{\qfve}} \geq \frac{2n}{n+2-2\be}.
\end{equation*}
Since $p^-_{\qfve} \geq \mathfrak{p}^-$, it is enough to show that
\begin{equation*}
\label{claim_sig=2}
\frac{\mathfrak{p}^-(2-4\be)}{2-\be \mathfrak{p}^-} \geq \frac{2n}{n+2-2\be}.
\end{equation*}
Setting
\begin{equation*}
\Psi(\be) := \frac{\mathfrak{p}^-(2-4\be)}{2-\be \mathfrak{p}^-} - \frac{2n}{n+2-2\be},
\end{equation*}
we observe from $\mathfrak{p}^- > \frac{2n}{n+2}$ that $\Psi$ is continuous on the interval $\left( 0,\frac{1}{\mathfrak{p}^-} \right)$ and that $\Psi(0)>0$.
Therefore, there exists a small constant $\be_0 = \be_0(n,\mathfrak{p}^-)>0$ such that $\Psi(\be)>0$ for all $\be \in (0,\be_0)$.

\item[Case $\sig=p^+_{\qfve}(1-\be)$:]
In this case, we first observe from the definition of $\tp$ that there exists a small constant $\be_0 = \be_0(n,\plog,m_e)>0$ such that $\tp \geq 1$ for all $\be \in (0,\be_0)$.
Then we have
\begin{equation*}
\frac{\sig}{\tp} = 1 + \frac{\sig-\tp}{\tp} \leq 1+\sig-\tp
\end{equation*}
For $0 < \be \leq \frac{1}{\mathfrak{p}^+} \leq \frac{1}{p^-_{\qfve}}$, we obtain
\begin{align}
\label{8.1exp_0}  \sig - \tp = \frac{2 \left[ \sig - p^-_{\qfve}(1-\be) \right]}{2-\be p^-_{\qfve}} & = \frac{2(1-\be) \left( p^+_{\qfve}-p^-_{\qfve} \right)}{2-\be p^-_{\qfve}} 
  \leq 2(1-\be) \modp (\rho_0).
\end{align}
Now making use of  Restriction \ref{rthree}, 
we see that the condition \eqref{claim_sig} holds true.
\end{description}

We now apply Lemma \ref{lemma_crucial_3} with $(\sig,\ga_1,\ga_2,\theta)$ replaced by $\left( \sig,\tp,2(1-\be),\frac{\tp}{\sig} \right)$ to discover that
\begin{multline*}
\fiint_{\qone} \left| \frac{u}{\scalex{\al_0}{z_0} \rho_a} \right|^{\sig} dz 
\leq C \fint_{\ione} \left( \fint_{\bone} |\nabla u|^{\tp} dx \right) \left( \fint_{\bone} \left| \frac{u}{\scalex{\al_0}{z_0} \rho_a} \right|^{2(1-\be)} dx \right)^{\frac{\sig-\tp}{2(1-\be)}} dt,
\end{multline*}
where $C=C_{(n,\plog,m_e)}$ is a positive constant.
Then it follows from  H\"older's inequality and \eqref{def_mtilde}, for  a.e. $t \in \ione$, there holds
\begin{align}
\nonumber & \fint_{\bone \times \{t\}} \left| \frac{u}{\scalex{\al_0}{z_0} \rho_a} \right|^{2(1-\be)} dx \\
\nonumber & = \fint_{\bone \times \{t\}} \left| \frac{u}{\scalex{\al_0}{z_0} \rho_a} \right|^{2(1-\be)} \tilde{\mathcal{M}}(x,t)^{-\be(1-\be)} \tilde{\mathcal{M}}(x,t)^{\be(1-\be)} \, dx \\
\label{8.1pf_aux} & \leq \left( \fint_{\bone \times \{t\}} \left| \frac{u}{\scalex{\al_0}{z_0} \rho_a} \right|^2 \tilde{\mathcal{M}}(x,t)^{-\be} \, dx \right)^{1-\be} \left( \fint_{\bone \times \{t\}} \tilde{\mathcal{M}}(x,t)^{1-\be} \, dx \right)^{\be}.
\end{align}
Using the Caccioppoli type inequality from Lemma \ref{Caccioppoli}, the H\"older inequality, Lemma \ref{max_bnd} and Theorem \ref{measure_density_poincare}, we get
\begin{align}
\nonumber J & := \sup_{t \in \ione} \fint_{\bone \times \{t\}} \left| \frac{u}{\scalex{\al_0}{z_0} \rho_a} \right|^2 \tilde{\mathcal{M}}(x,t)^{-\be} \, dx \\
\nonumber & \apprle \musym^{2} \al_0^{-1+d} \left[ \fiint_{\qfur}  \al_0^{1-\frac{2}{p(z_0)}-\be} \left| \frac{u}{\scalex{\al_0}{z_0}\rho_b} \right|^2 \ dz + \fiint_{\qfur}\left| \frac{u}{\scalex{\al_0}{z_0} \rho_b} \right|^{p(z)(1-\be)} dz \right] \\
\nonumber & \apprle \musym^{2} \al_0^{-1+d} \left[ \al_0^{1-\frac{2}{p(z_0)}-\be} \fiint_{\qfur} \left| \frac{u}{\scalex{\al_0}{z_0}\rho_b} \right|^2 \ dz + \fiint_{\qfurr} (|\nabla u|+1)^{p(z)(1-\be)} dz \right] \\
\nonumber & \apprle \musym^{2} \al_0^{-1+d} \left[ \al_0^{1-\frac{2}{p(z_0)}-\be} \fiint_{\qfur} \left| \frac{u}{\scalex{\al_0}{z_0} \rho_b} \right|^2 \ dz + \al_0^{1-\be} \right] \\
\label{8.1pf_est_of_J} & \apprle \musym^{2} \al_0^{d-\be} \left[ \al_0^{-\frac{2}{p(z_0)}} \left( \fiint_{\qfur}  \left| \frac{u}{\scalex{\al_0}{z_0} \rho_b} \right|^{\sig} \ dz \right)^{\frac{2}{\sig}} + 1 \right].
\end{align}
We now apply the H\"older inequality with exponents $q=\frac{2(1-\be)}{\be(\sig-\tp)}$ and $q'=\frac{2(1-\be)}{2(1-\be)-\be(\sig-\tp)}$ to obtain
\begin{align*}
\nonumber \fiint_{\qone} \left| \frac{u}{\scalex{\al_0}{z_0} \rho_a} \right|^{\sig} dz & \apprle J^{\frac{\sig-\tp}{2}} \fint_{\ione} \left( \fint_{\bone} |\nabla u|^{\tp} \, dx \right) \left( \fint_{\bone \times \{t\}} \tilde{\mathcal{M}}(x,t)^{1-\be} \, dx \right)^{\frac{\be(\sig-\tp)}{2(1-\be)}} dt \\
\nonumber & \apprle J^{\frac{\sig-\tp}{2}} \left( \fiint_{\qone} |\nabla u|^{\tp q'} \, dz \right)^{\frac{1}{q'}} \left( \fiint_{\qone} \tilde{\mathcal{M}}(z)^{1-\be} \, dz \right)^{\frac{\be(\sig-\tp)}{2(1-\be)}} \\
\nonumber & \apprle J^{\frac{\sig-\tp}{2}} \left( \fiint_{\qone} (|\nabla u|+1)^{p(z)(1-\be)} \, dz\right)^{\frac{1}{q'}} \left( \fiint_{\qone} \tilde{\mathcal{M}}(z)^{1-\be} \, dz \right)^{\frac{\be(\sig-\tp)}{2(1-\be)}} \\
 & \apprle J^{\frac{\sig-\tp}{2}} \al_0^{\frac{1-\be}{q'}} \left( \fiint_{\qone} \tilde{\mathcal{M}}(z)^{1-\be} \, dz \right)^{\frac{\be(\sig-\tp)}{2(1-\be)}},
\end{align*}
where we have used the fact that $\tp q' = p^-_{\qfve}(1-\be) \leq p(z)(1-\be)$.
Furthermore, it follows from the definition of $\tilde{\mathcal{M}}(z)$, the boundedness of the strong Maximal function in Lemma \ref{max_bnd} and Theorem \ref{measure_density_poincare} that
\begin{align}
\nonumber \fiint_{\qone} \tilde{\mathcal{M}}(z)^{1-\be} \, dz & \leq (c_e\al_0)^{1-\be} + \fiint_{\qone} g(z) \, dz \\
\nonumber & \apprle \al_0^{1-\be} +  \fiint_{\qfur} \left( (|\nabla u|+1)^{p(z)(1-\be)}  +  \left|\frac{u}{\scalex{\al_0}{z_0}\rho}\right|^{p(z)(1-\be)} \right) dz \\
\nonumber & \apprle \al_0^{1-\be} +   \fiint_{\qfurr} (|\nabla u|+1)^{p(z)(1-\be)} \, dz\\
\label{8.1pf_2} & \apprle \al_0^{1-\be}.
\end{align}
Combining \eqref{8.1pf_est_of_J}--\eqref{8.1pf_2} and Young's inequality yields
\begin{align}
\nonumber \fiint_{\qone} \left| \frac{u}{\scalex{\al_0}{z_0} \rho_a} \right|^{\sig} dz & \leq C J^{\frac{\sig-\tp}{2}} \al_0^{\frac{1-\be}{q'} + \frac{\be(\sig-\tp)}{2}} \\
\nonumber & \leq C \musym^{\sig-\tp} \al_0^{\frac{d(\sig-\tp)}{2}+ \frac{1-\be}{q'} -\frac{\sig-\tp}{p(z_0)}} \left( \fiint_{\qfur}  \left| \frac{u}{\scalex{\al_0}{z_0} \rho_b} \right|^{\sig} \ dz \right)^{\frac{\sig-\tp}{\sig}} \\
\nonumber & \qquad + C \musym^{\sig-\tp} \al_0^{\frac{d(\sig-\tp)}{2}+ \frac{1-\be}{q'}} \\
\nonumber & \leq \frac{1}{2} \fiint_{\qfur}  \left| \frac{u}{\scalex{\al_0}{z_0} \rho_b} \right|^{\sig} \ dz + C \musym^{\frac{(\sig-\tp)\sig}{\tp}} \al_0^{\left[ \frac{d(\sig-\tp)}{2} + \frac{1-\be}{q'} - \frac{\sig-\tp}{p(z_0)} \right] \frac{\sig}{\tp}} \\
\label{8.1pf_est_ab} & \qquad + C \musym^{\sig-\tp} \al_0^{\frac{d(\sig-\tp)}{2}+ \frac{1-\be}{q'}}.
\end{align}
From the definition of $q'$, we infer
\begin{align}
\nonumber \left[ \frac{d(\sig-\tp)}{2} + \frac{1-\be}{q'} - \frac{\sig-\tp}{p(z_0)} \right] \frac{\sig}{\tp} & = \left[ \frac{d(\sig-\tp)}{2} + \frac{2(1-\be)-\be(\sig-\tp)}{2} - \frac{\sig-\tp}{p(z_0)} \right] \frac{\sig}{\tp} \\
\nonumber & = \left[ 1-\be - \frac{\be(\sig-\tp)}{2} \right] \frac{\sig}{\tp} + \left( \frac{d}{2} - \frac{1}{p(z_0)} \right) (\sig-\tp) \frac{\sig}{\tp} \\
\label{8.1exp_00} & = \frac{\sig}{p^-_{\qfve}} + \left( \frac{d}{2} - \frac{1}{p(z_0)} \right) (\sig-\tp) \frac{\sig}{\tp},
\end{align}
where we have used the following identity:
$$
\left[ 1-\be - \frac{\be(\sig-\tp)}{2} \right] \frac{\sig}{\tp} = \left[ 1-\be - \frac{\be \left( \sig-p^-_{\qfve}(1-\be) \right)}{2-\be p^-_{\qfve}} \right] \frac{\sig \left( 2 - \be p^-_{\qfve}\right)}{p^-_{\qfve} (2-2\be-\be \sig)} = \frac{\sig}{p^-_{\qfve}}.
$$
\begin{description}[leftmargin=*]
\item[Case $\sig=p^+_{\qfve}(1-\be)$:]
Since $d \leq 1$, it follows from \eqref{8.1exp_0} and \eqref{claim_sig} that
\begin{align}
\nonumber \left( \frac{d}{2} - \frac{1}{p(z_0)} \right) (\sig-\tp) \frac{\sig}{\tp} & \leq (1-\be) \left( p^+_{\qfve}-p^-_{\qfve} \right) \left( 1+\frac{2(1-\be)}{n} \right) \\
\nonumber & \leq 2 \left( p^+_{\qfve}-p^-_{\qfve} \right) \\
\label{8.1exp_01} & \leq 2 \modp(32\rho).
\end{align}
Combining \eqref{8.1exp_00} and \eqref{8.1exp_01} yields
\begin{equation}
\label{8.1exp_b1}
\al_0^{\left( \frac{d(\sig-\tp)}{2} + \frac{1-\be}{q'} - \frac{\sig-\tp}{p(z_0)} \right) \frac{\sig}{\tp}} \leq \al_0^{\frac{\sig}{p^-_{\qfve}} + 2\modp(32\rho)} \leq \al_0^{\frac{\sig}{p(z_0)} + (\sig+2)\modp(32\rho)} \leq c \al_0^{\frac{\sig}{p(z_0)}}.
\end{equation}
\item[Case $\sig=2$:]
In this case, we know that $p(z_0) \leq p^+_{\qfve} \leq \frac{2}{1-\be}$.
This gives
\begin{equation}
\label{8.1exp_02}
\frac{d}{2} - \frac{1}{p(z_0)} \leq \frac{d}{2} - \frac{1-\be}{2} = \frac{\be-(1-d)}{2} \leq 0, \quad \forall \be \in (0,1-d).
\end{equation}
Therefore, it follows from \eqref{8.1exp_00} and \eqref{8.1exp_02} that
\begin{equation}
\label{8.1exp_b2}
\al_0^{\left( \frac{d(\sig-\tp)}{2} + \frac{1-\be}{q'} - \frac{\sig-\tp}{p(z_0)} \right) \frac{\sig}{\tp}} \leq \al_0^{\frac{\sig}{p^-_{\qfve}}} \leq \al_0^{\frac{\sig}{p(z_0)} + 2 \modp(32\rho)} \leq c \al_0^{\frac{\sig}{p(z_0)}}.
\end{equation}
\end{description}

On the other hand, we note from the definition of $q'$ that
\begin{align}
\nonumber \frac{d(\sig-\tp)}{2} + \frac{1-\be}{q'} & = \frac{d(\sig-\tp)}{2} + \frac{2(1-\be)-\be(\sig-\tp)}{2} \\
\label{8.1exp_1} & = 1-\be + \frac{(d-\be)(\sig-\tp)}{2}.
\end{align}
\begin{description}[leftmargin=*]
\item[Case $\sig=p^+_{\qfve}(1-\be)$:]
Since $d \leq 1$, it follows from \eqref{8.1exp_0} and \eqref{8.1exp_1} that
$$
\frac{d(\sig-\tp)}{2} + \frac{1-\be}{q'} \leq (1-\be) + \left( p^+_{\qfve}-p^-_{\qfve} \right),
$$
and hence
\begin{equation}
\label{8.1exp_a1}
\al_0^{\frac{d(\sig-\tp)}{2}+ \frac{1-\be}{q'}} \leq \al_0^{(1-\be) + \left( p^+_{\qfve}-p^-_{\qfve} \right)} \leq c \al_0^{1-\be} = c \al_0^{\frac{\sig}{p^+_{\qfve}}} \leq c \al_0^{\frac{\sig}{p(z_0)}},
\end{equation}
where we have used \eqref{bnd_al_0_rho} and $\sig=p^+_{\qfve}(1-\be)$.

\item[Case $\sig=2$:]
In this case, we have
\begin{align*}
\frac{d(\sig-\tp)}{2} + \frac{1-\be}{q'} & = 1-\be + \frac{(d-\be)(\sig-\tp)}{2} \\
& = 1-\be + \frac{(d-\be)\left(2-(1-\be)p^-_{\qfve}\right)}{2-\be p^-_{\qfve}}.
\end{align*}
We claim that
\begin{equation}
\label{8.1_claim}
1-\be + \frac{(d-\be)\left(2-(1-\be)p^-_{\qfve}\right)}{2-\be p^-_{\qfve}} \leq \frac{2}{p^-_{\qfve}}.
\end{equation}
A direct computation shows that the above inequality is equivalent to
$$
d(1-\be)\left(p^-_{\qfve}\right)^2 - 2(1+d-\be) \, p^-_{\qfve} + 4 \geq 0.
$$
Setting $\Phi (p):=d(1-\be)p^2 - 2(1+d-\be) + 4$ for $p \in \RR$, we see that the quadratic function $\Phi : \RR \to \RR$ has the minimum at $p_*=\frac{1+d-\be}{d(1-\be)}$.
We observe that
$$p_*=\frac{1+d-\be}{d(1-\be)} > \frac{2}{1-\be}, \quad \forall \beta \in (0,1-d).$$
Since $p^-_{\qfve} \leq p^+_{\qfve} \leq \frac{2}{1-\be}$, we deduce that
\begin{multline*}
d(1-\be)\left(p^-_{\qfve}\right)^2 - 2(1+d-\be) \, p^-_{\qfve} + 4 \\
\geq d(1-\be)\left(\frac{2}{1-\be}\right)^2 - 2(1+d-\be)\left(\frac{2}{1-\be}\right) + 4 = \frac{4d-4(1+d-\be)+4(1-\be)}{1-\be} = 0,
\end{multline*}
which proves the claim \eqref{8.1_claim}.
Therefore, we obtain
\begin{equation}
\label{8.1exp_a2}
\al_0^{\frac{d(\sig-\tp)}{2}+ \frac{1-\be}{q'}} \leq \al_0^{\frac{2}{p^-_{\qfve}}} \leq \al_0^{\frac{2}{p(z_0)} + 2\modp(32\rho)} \apprle \al_0^{\frac{2}{p(z_0)}} = \al_0^{\frac{\sig}{p(z_0)}}.
\end{equation}
\end{description}

We now combine \eqref{8.1exp_b1}, \eqref{8.1exp_b2}, \eqref{8.1exp_a1}, \eqref{8.1exp_a2} with \eqref{8.1pf_est_ab} to discover that
\begin{multline*}
\fiint_{\qone} \left| \frac{u}{\scalex{\al_0}{z_0} \rho_a} \right|^{\sig} dz 
\leq \frac{1}{2} \fiint_{\qfur}  \left| \frac{u}{\scalex{\al_0}{z_0} \rho_b} \right|^{\sig} \ dz + C \left[ \musym^{\frac{(\sig-\tp)\sig}{\tp}} + \musym^{\sig-\tp} \right] \al_0^{\frac{\sig}{p(z_0)}}
\end{multline*}
holds for all $4\rho \leq \rho_a < \rho_b \leq 8\rho$.
Applying Lemma \ref{iter_lemma}, we can now obtain
\begin{equation*}
\fiint_{\qfor} \left| \frac{u}{\scalex{\al_0}{z_0} \rho} \right|^{\sig} dz \apprle \al_0^{\frac{\sig}{p(z_0)}},
\end{equation*}
which completes the proof.
\end{proof}

We now prove the following Reverse-H\"older type inequality.

\begin{lemma}
\label{lemma_reverse_Holder}
Let $Q := B \times I = Q_{\rho}^{\al_0}(z_0)$ be a parabolic cylinder for some $\al_0\geq 1$ satisfying \eqref{est_bnd_al_0}, and let $\bM_0$ be given as in \eqref{def_M_0}.
Then there exist $\rho_0 = \rho_0(n,\La_1,\bM_0)>0$ and $\tq = \tq(n,\plog)>1$ such that if $0 < 32\rho \leq \rho_0$, there holds
\begin{equation*}
\fiint_{\qnot} |\nabla u|^{p(z)(1-\be)} \, dz \apprle_{(n,\plog,\lamot,m_e)}  \left( \fiint_{\qtoo} |\nabla u|^{\frac{p(z)(1-\be)}{\tq}} \, dz \right)^{\tq} + 1.
\end{equation*}
\end{lemma}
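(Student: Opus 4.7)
The argument follows the classical Caccioppoli plus Gagliardo–Nirenberg plus iteration scheme, now adapted to the unified intrinsic cylinders $Q_\rho^{\alpha_0}(z_0)$. The starting point is to apply the Caccioppoli inequality \eqref{conclusion_1} of Lemma \ref{Caccioppoli} on nested cylinders $Q_{\rho_a}^{\alpha_0} \subset Q_{\rho_b}^{\alpha_0}$ with $\rho \le \rho_a < \rho_b \le 2\rho$. Combined with the lower half of the intrinsic hypothesis \eqref{est_bnd_al_0}, this produces simultaneously a control of $\fiint_{Q_{\rho_a}^{\alpha_0}} |\nabla u|^{p(z)(1-\beta)}\,dz$ (through the $\alpha_0^{1-\beta}|Q_{\rho_a}^{\alpha_0}|$ term) and a time-uniform bound
\[
\sup_{t \in I_{\rho_a}^{\alpha_0}(t_0)} \int_{B_{\rho_a}^{\alpha_0}(x_0)} |u(x,t)|^2 \, \tilde{\mathcal{M}}(x,t)^{-\beta}\,dx
\]
by a right-hand side made of $\fiint_{Q_{\rho_b}^{\alpha_0}} |u/(\alpha_0^{d/2-1/p(z_0)}\rho_b)|^{p(z)(1-\beta)}\,dz$ and $\alpha_0^{1-2/p(z_0)-\beta}\fiint_{Q_{\rho_b}^{\alpha_0}} |u/(\alpha_0^{d/2-1/p(z_0)}\rho_b)|^{2}\,dz$.

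Next I would handle these two Caccioppoli right-hand terms separately. The $L^2$ term is bounded immediately by $\alpha_0^{1-\beta}$ via Lemma \ref{lemma_lower_order} applied with $\sigma = 2$, which reproduces a quantity comparable to the left-hand side and will eventually be absorbed via the iteration lemma. The genuine $L^{p(\cdot)(1-\beta)}$ term is treated by Gagliardo–Nirenberg on each time slice (Lemma \ref{lemma_crucial_3}) with the choice $\sigma = p(z)(1-\beta)$, $\gamma_1 = p(z)(1-\beta)/\tilde q$ for some $\tilde q>1$ to be fixed, $\gamma_2 = 2$, and interpolation parameter $\theta$ determined by the Gagliardo–Nirenberg admissibility relation. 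Integrating the resulting product in time by H\"older, the $L^2$-in-$x$ factor is split as $\tilde{\mathcal{M}}^{-\beta/2}\cdot \tilde{\mathcal{M}}^{\beta/2}$ so that the weighted $L^2$ piece is swallowed by the sup-bound obtained from Caccioppoli, while the $\tilde{\mathcal{M}}^{\beta/2}$ factor is controlled through the Hardy–Littlewood maximal bound (Lemma \ref{max_bnd}) by a term comparable to $\alpha_0^{1-\beta}$. This yields an inequality of the form
\[
\iint_{Q_{\rho_a}^{\alpha_0}} |\nabla u|^{p(z)(1-\beta)}\,dz \le \varepsilon \iint_{Q_{\rho_b}^{\alpha_0}} |\nabla u|^{p(z)(1-\beta)}\,dz + C_\varepsilon \left(\iint_{Q_{2\rho}^{\alpha_0}} |\nabla u|^{p(z)(1-\beta)/\tilde q}\,dz\right)^{\tilde q} + C_\varepsilon,
\]
and the iteration lemma \ref{iter_lemma} applied across the family $\{Q_{\rho_a}^{\alpha_0}\}_{\rho \le \rho_a \le 2\rho}$ absorbs the $\varepsilon$-term and closes the argument.

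\textbf{Main obstacle.} The core technical issue is the $\alpha_0$ bookkeeping after the Gagliardo–Nirenberg step: the exponents of $\alpha_0$ produced by interpolation must collapse under the intrinsic scaling to precisely the intended power $\alpha_0^{1-\beta}$, which requires repeatedly exchanging $p(z)$ for $p(z_0)$ using the log-H\"older modulus and the bound $\alpha_0^{p^+-p^-}\apprle 1$ from \eqref{bnd_al_0_rho} — a computation analogous in spirit to \eqref{8.1exp_b1}--\eqref{8.1exp_a2} in Lemma \ref{lemma_lower_order} but now with the gradient exponent on the left. The second subtle point is the selection of $\tilde q = \tilde q(n,\plog)>1$: the Gagliardo–Nirenberg admissibility condition $-n/\sigma \le \theta(1-n/\gamma_1)-(1-\theta)n/\gamma_2$ forces a strictly super-linear $\tilde q$, and the assumption $\mathfrak{p}^- > 2n/(n+2)$ is exactly what makes this feasible \emph{simultaneously} in the singular and degenerate regimes, which is the whole raison d'\^etre of the unified intrinsic scaling developed in this paper.
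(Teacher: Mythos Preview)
Your overall architecture (Caccioppoli $\Rightarrow$ Gagliardo--Nirenberg on slices with the weighted $\sup$ bound $\Rightarrow$ $\alpha_0$--bookkeeping) is exactly the paper's, but there is a genuine gap in how you dispose of the $L^2$ term on the Caccioppoli right-hand side.

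You propose to bound $\alpha_0^{1-\frac{2}{p(z_0)}-\beta}\fiint_{Q_{\rho_b}^{\alpha_0}}|u/(\alpha_0^{d/2-1/p(z_0)}\rho_b)|^{2}\,dz$ directly by Lemma~\ref{lemma_lower_order} with $\sigma=2$, which gives $\apprle \alpha_0^{1-\beta}$, and then ``absorb via the iteration lemma''. This cannot work: $\alpha_0^{1-\beta}$ is precisely the intrinsic level, comparable (by \eqref{est_bnd_al_0}) to $\fiint_{Q_\rho^{\alpha_0}}(|\nabla u|+1)^{p(z)(1-\beta)}\,dz$ with a constant that has no reason to be small and no $(\rho_b-\rho_a)^{-\gamma}$ structure. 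Lemma~\ref{iter_lemma} requires $h(\rho_a)\le \theta\, h(\rho_b)+A/(\rho_b-\rho_a)^{\gamma}$ with $\theta<1$; a term of size $C\alpha_0^{1-\beta}$ independent of $\rho_a,\rho_b$ is neither of these pieces, and converting it back to a gradient integral via \eqref{est_bnd_al_0} only produces $C\,h(\rho_b)$ with $C\ge 1$. So your displayed inequality with the $\varepsilon$--term on the right does not follow from the steps you outline.

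The paper avoids this by treating the $L^2$ term \emph{exactly like} the $L^{p^+(1-\beta)}$ term: both are pushed through Gagliardo--Nirenberg (with $\gamma_2=2(1-\beta)$, not $2$, so that the $\tilde{\mathcal M}^{-\beta}$ splitting matches the Caccioppoli sup bound), producing a genuine lower-exponent gradient factor $\bigl(\fiint|\nabla u|^{q_1 r'}\bigr)^{1/r'}$ with $q_1 r'\le p^-(1-\beta)/\tilde q$. Lemma~\ref{lemma_lower_order} is used only inside the bound for $J=\sup_t\fint|u|^2\tilde{\mathcal M}^{-\beta}$, not to kill $I_2$ outright. After this, the $\alpha_0$--exponents in front of the two pieces are shown to be $\le 0$ (this is where the choice \eqref{exp_d} of $d$ is used, giving $d-2/p^+\le 0$ and $dp^+-2\le 0$), so the reverse H\"older inequality drops out directly---no iteration lemma is needed in this proof at all.
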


\begin{proof}
Using the Caccioppoli type inequality \eqref{conclusion_1} with \eqref{est_bnd_al_0}, we have
\begin{equation}\label{8.2pf_0} \begin{array}{rcl}
 \fiint_{\qnot} |\nabla u|^{p(z)(1-\be)} \, dz &  \apprle &  \fiint_{\qtoo}  \al_0^{1-\frac{2}{p(z_0)}-\be} \left| \frac{u}{\scalex{\al_0}{z_0}\rho} \right|^2 dz + \fiint_{\qtoo}\left| \frac{u}{\scalex{\al_0}{z_0} \rho} \right|^{p(z)(1-\be)} dz  \\
& \apprle &  \al_0^{1-\frac{2}{p(z_0)}-\be} \fiint_{\qtoo} \left| \frac{u}{\scalex{\al_0}{z_0}\rho} \right|^2 dz + \fiint_{\qtoo}\left| \frac{u}{\scalex{\al_0}{z_0} \rho} \right|^{p^+_{\qfve}(1-\be)} dz + 1  \\
 &  \apprle &  \al_0^{1-\frac{2}{p(z_0)}-\be} I_2 + I_{p^+_{\qfve}(1-\be)} + 1,
\end{array}\end{equation}
where we have defined
$$
I_{\sig} := \fiint_{\qtoo} \left| \frac{u}{\scalex{\al_0}{z_0}\rho} \right|^{\sig} dz
$$
for $\sig = 2$ and $\sig = p^+_{\qfve}(1-\be)$.
For such $\sig$, we set $q_1 := \frac{n\sig}{n+2-2\be}$.
Then it is clear that $q_1 < \sig$ and $\frac{\sig}{q_1} = 1 + \frac{2(1-\be)}{n}$.
Applying Lemma \ref{lemma_crucial_3} with $(\sig,\ga_1,\ga_2,\theta)$ replaced by $\left( \sig,q_1,2(1-\be),\frac{q_1}{\sig} \right)$, we obtain
\begin{equation}
\label{8.2pf_1}
I_{\sig} \apprle \fint_{\itoo} \left( \fint_{\btoo} |\nabla u|^{q_1} dx \right) \left( \fint_{\btoo} \left| \frac{u}{\scalex{\al_0}{z_0} \rho} \right|^{2(1-\be)} dx \right)^{\frac{\sig-q_1}{2(1-\be)}} dt.
\end{equation}
As in \eqref{8.1pf_aux}, it follows from the H\"older inequality that for a.e. $t \in \itoo$ (recall \eqref{def_mtilde}),
\begin{equation}
\label{8.2pf_2}
\fint_{\btoo \times \{t\}} \left| \frac{u}{\scalex{\al_0}{z_0} \rho} \right|^{2(1-\be)} dx \leq J^{1-\be} \left( \fint_{\btoo \times \{t\}} \tilde{\mathcal{M}}(x,t)^{1-\be} \, dx \right)^{\be},
\end{equation}
where
$$
J := \sup_{t \in \itoo} \fint_{\btoo \times \{t\}} \left| \frac{u}{\scalex{\al_0}{z_0} \rho} \right|^2 \tilde{\mathcal{M}}(x,t)^{-\be} \, dx.
$$
We now use Lemma \ref{Caccioppoli} (with $\rho_a=2\rho$ and $\rho_b=4\rho$) and Lemma \ref{lemma_lower_order} to get
\begin{align}
\nonumber J & \apprle \al_0^{-1+d} \left[ \al_0^{1-\frac{2}{p(z_0)}-\be} \fiint_{\qfor} \left| \frac{u}{\scalex{\al_0}{z_0}\rho} \right|^2 \ dz + \fiint_{\qfor} \left| \frac{u}{\scalex{\al_0}{z_0} \rho} \right|^{p(z)(1-\be)} dz \right] \\
\nonumber & \apprle \al_0^{-1+d} \left[ \al_0^{1-\frac{2}{p(z_0)}-\be} \fiint_{\qfor} \left| \frac{u}{\scalex{\al_0}{z_0}\rho} \right|^2 \ dz + \fiint_{\qfor} \left| \frac{u}{\scalex{\al_0}{z_0} \rho} \right|^{p^+_{\qfve}(1-\be)} dz + 1 \right] \\
\label{8.2pf_3} & \apprle \al_0^{-1+d} \left[ \al_0^{1-\frac{2}{p(z_0)}-\be} \al_0^{\frac{2}{p(z_0)}} + \al_0^{\frac{p^+_{\qfve}(1-\be)}{p(z_0)}} + 1 \right] \apprle \al_0^{d-\be},
\end{align}
where we have used the following inequality:
$$
\al_0^{\lbr \frac{ p^+_{\qfve}}{p(z_0)}\rbr (1-\be) }  \leq \al_0^{\frac{\left( p^+_{\qfve} - p^-_{\qfve} \right)(1-\be)}{p(z_0)}} \al_0^{1-\be} \apprle \al_0^{1-\be}.
$$
In addition, it follows from the definition of $\tilde{\mathcal{M}}(z)$, the boundedness of the strong Maximal function from Lemma \ref{max_bnd} and Theorem \ref{measure_density_poincare} that
\begin{align}
\nonumber \fiint_{\qtoo} \tilde{\mathcal{M}}(z)^{1-\be} \, dz & \leq (c_e\al_0)^{1-\be} + \fiint_{\qtoo} g(z) \, dz \\
\nonumber & \apprle \al_0^{1-\be} +  \fiint_{\qfor} \left( (|\nabla u|+1)^{p(z)(1-\be)}  +  \left|\frac{u}{\scalex{\al_0}{z_0}\rho}\right|^{p(z)(1-\be)} \right) dz \\
\nonumber & \apprle \al_0^{1-\be} +   \fiint_{\qfiv} (|\nabla u|+1)^{p(z)(1-\be)} \, dz\\
\label{8.2pf_4} & \apprle \al_0^{1-\be}.
\end{align}
Combining \eqref{8.2pf_1}, \eqref{8.2pf_2},\eqref{8.2pf_3} and \eqref{8.2pf_4} and using  H\"older inequality with exponents $r=\frac{2(1-\be)}{\be(\sig-q_1)}$ and its conjugate $r'=\frac{2(1-\be)}{2(1-\be)-\be(\sig-q_1)}$, we deduce that
\begin{align}
\nonumber I_{\sig} & \apprle J^{\frac{\sig-q_1}{2}} \fint_{\itoo} \left( \fint_{\btoo} |\nabla u|^{q_1} \, dx \right) \left( \fint_{\btoo \times \{t\}} \tilde{\mathcal{M}}(x,t)^{1-\be} \, dx \right)^{\frac{\be(\sig-q_1)}{2(1-\be)}} dt \\
\nonumber & \apprle J^{\frac{\sig-q_1}{2}} \left( \fiint_{\qtoo} |\nabla u|^{q_1 r'} \, dz \right)^{\frac{1}{r'}} \left( \fiint_{\qtoo} \tilde{\mathcal{M}}(z)^{1-\be} \, dz \right)^{\frac{\be(\sig-q_1)}{2(1-\be)}} \\
\label{8.2pf_5} & \apprle \al_0^{\frac{(d-\be)(\sig-q_1)}{2}} \left( \fiint_{\qtoo} |\nabla u|^{q_1 r'} \, dz\right)^{\frac{1}{r'}} \al_0^{\frac{\be(\sig-q_1)}{2}} =  \al_0^{\frac{d(\sig-q_1)}{2}} \left( \fiint_{\qtoo} |\nabla u|^{q_1 r'} \, dz\right)^{\frac{1}{r'}}.
\end{align}
An easy computation shows that
$$
q_1 r' = \frac{n\sig}{n+2(1-\be)                                                                                                                                                                                                                                                                                                                                                                                                                                                                                                                                                                                        } \cdot \frac{2(1-\be)}{2(1-\be)-\be(\sig-q_1)} = \frac{n\sig}{n+2-\be(\sig+2)}.
$$
We claim that there exists $\be = \be(n,\plog) > 0$ such that
\begin{equation}
\label{8.2pf_claim}
q_1 r' \leq \frac{p^-(1-\be)}{\tq} \leq \frac{p^-_{\qfve}(1-\be)}{\tq},
\end{equation}
where $\tq := \min \left\{ \frac{1}{2} \left( \frac{(n+2)p^-}{2n} + 1 \right), \frac{(n+1)^2}{n(n+2)}\right\} > 1$.

Indeed, when $\sig=2$, we have $q_1 r' = \frac{2n}{n+2-4\be}$.
Setting $\Psi(\be) := \frac{p^-(1-\be)(n+2-4\be)}{2n}$, we observe that $\Psi$ is continuous on $\RR$ and that $\Psi(0) = \frac{(n+2)p^-}{2n} > 1$.
Therefore, there exists $\be_0 = \be_0(n,\plog) > 0$ such that $\Psi(\be) \geq \frac{1}{2} \left( \frac{(n+2)p^-}{2n} + 1 \right) \geq \tq$ for all $ \be \in (0,\be_0)$. This yields \eqref{8.2pf_claim}.
When $\sig = p^+_{\qfve}(1-\be)$, we have $q_1 r' = \frac{np^+_{\qfve}(1-\be)}{n+2-\be \left( p^+_{\qfve}(1-\be)+2 \right)}$, and hence
\begin{equation*}
\frac{p^-(1-\be)}{q_1 r'} = \frac{p^- \left[ n+2-\be \left( p^+_{\qfve}(1-\be)+2 \right) \right]}{np^+_{\qfve}} \geq \frac{p^- \left[ n+2-\be \left( p^+ + 2 \right) \right]}{np^+} \geq \frac{p^-(n+1)}{np^+}, 
\end{equation*}
if $\beta < \frac{1}{p^+ +2}$.
Since
$$
\frac{p^-}{p^+} = 1 - \frac{p^+ - p^-}{p^+} \geq 1-(p^+ - p^-) \overset{\eqref{def_de_small}}{\geq} 1 - \frac{1}{n+2} = \frac{n+1}{n+2},
$$
we obtain
$$
\frac{p^-(1-\be)}{q_1 r'} \geq \frac{n+1}{n} \cdot \frac{p^-}{p^+} \geq \frac{(n+1)^2}{n(n+2)} \geq \tq,
$$
which proves our claim \eqref{8.2pf_claim}.

Now, it follows from \eqref{8.2pf_5}, \eqref{8.2pf_claim} and the H\"older inequality that
\begin{align*}
I_{\sig} & \apprle \al_0^{\frac{d(\sig-q_1)}{2}} \left( \fiint_{\qtoo} |\nabla u|^{q_1 r'} \, dz\right)^{\frac{1}{r'}} \\
& \apprle \al_0^{\frac{d(\sig-q_1)}{2}} \left( \fiint_{\qtoo} |\nabla u|^{\frac{p^-_{\qfve}(1-\be)}{\tq}} \, dz\right)^{\frac{q_1 \tq}{p^-_{\qfve}(1-\be)}} \\
& \apprle \al_0^{\frac{d(\sig-q_1)}{2}} \left( \fiint_{\qtoo} (|\nabla u|+1)^{\frac{p(z)(1-\be)}{\tq}} \, dz\right)^{\frac{q_1 \tq}{p^-_{\qfve}(1-\be)}} \\
& \apprle \al_0^{\frac{d(\sig-q_1)}{2}} \left( \fiint_{\qtoo} (|\nabla u|+1)^{\frac{p(z)(1-\be)}{\tq}} \, dz\right)^{\tq} \left( \fiint_{\qtoo} (|\nabla u|+1)^{p(z)(1-\be)} \, dz\right)^{\frac{q_1}{p^-_{\qfve}(1-\be)}-1} \\
& \apprle \al_0^{\frac{d(\sig-q_1)}{2}} \left( \fiint_{\qtoo} (|\nabla u|+1)^{\frac{p(z)(1-\be)}{\tq}} \, dz\right)^{\tq} \al_0^{\frac{q_1}{p^-_{\qfve}}-(1-\be)} \\
& =  \al_0^{\frac{d(\sig-q_1)}{2}+\frac{q_1}{p^-_{\qfve}}-(1-\be)} \left( \fiint_{\qtoo} (|\nabla u|+1)^{\frac{p(z)(1-\be)}{\tq}} \, dz\right)^{\tq}.
\end{align*}
We note from the definition of $q_1$ that
\begin{align*}
& \frac{d(\sig-q_1)}{2}+\frac{q_1}{p^-_{\qfve}}-(1-\be) \\
& \quad = \frac{d\sig(1-\be)}{n+2-2\be}+\frac{n\sig}{n+2-2\be} \cdot \frac{1}{p^-_{\qfve}} -(1-\be) \\
& \quad \leq \frac{d\sig(1-\be)}{n+2-2\be}+\frac{n\sig}{n+2-2\be} \cdot \frac{1}{p^+_{\qfve}} -(1-\be) + \sig \left( p^+_{\qfve}-p^-_{\qfve} \right).
\end{align*}
Therefore, we have
\begin{equation}
\label{8.2pf_6}
I_{\sig} \apprle \al_0^{\frac{d\sig(1-\be)}{n+2-2\be}+\frac{n\sig}{n+2-2\be} \cdot \frac{1}{p^+_{\qfve}} -(1-\be)} \left( \fiint_{\qtoo} (|\nabla u|+1)^{\frac{p(z)(1-\be)}{\tq}} \, dz\right)^{\tq}.
\end{equation}
Combining \eqref{8.2pf_0} and \eqref{8.2pf_6} gives
\begin{align*}
 \fiint_{\qnot} |\nabla u|^{p(z)(1-\be)} \, dz & \apprle \al_0^{-\frac{2}{p(z_0)}+\frac{2d(1-\be)}{n+2-2\be}+\frac{2n}{n+2-2\be} \cdot \frac{1}{p^+_{\qfve}}} \left( \fiint_{\qtoo} (|\nabla u|+1)^{\frac{p(z)(1-\be)}{\tq}} \, dz\right)^{\tq} \\
& \qquad +  \al_0^{\frac{dp^+_{\qfve}(1-\be)^2}{n+2-2\be}+\frac{n(1-\be)}{n+2-2\be}-(1-\be)} \left( \fiint_{\qtoo} (|\nabla u|+1)^{\frac{p(z)(1-\be)}{\tq}} \, dz\right)^{\tq} + 1.
\end{align*}
Here, we see from \eqref{exp_d} that the following two bounds hold:
\begin{align*}
& -\frac{2}{p(z_0)}+\frac{2d(1-\be)}{n+2-2\be}+\frac{2n}{n+2-2\be} \cdot \frac{1}{p^+_{\qfve}} \\
&\qquad \qquad\leq -\frac{2}{p^+_{\qfve}}+\frac{2d(1-\be)}{n+2-2\be}+\frac{2n}{n+2-2\be} \cdot \frac{1}{p^+_{\qfve}} \\
& \qquad\qquad= \frac{2d(1-\be)}{n+2-2\be} \left( d-\frac{2}{p^+_{\qfve}} \right) \leq \frac{2d(1-\be)}{n+2-2\be} \left( d-\frac{2}{p^+} \right) \leq 0,
\end{align*}
and 
\begin{align*}
\frac{dp^+_{\qfve}(1-\be)^2}{n+2-2\be}+\frac{n(1-\be)}{n+2-2\be} -(1-\be) & = \frac{(1-\be)^2}{n+2-2\be} \left( dp^+_{\qfve}-2 \right) \\
& \leq \frac{(1-\be)^2}{n+2-2\be} \left( dp^+-2 \right) \leq 0.
\end{align*}
Consequently, we get
\begin{align*}
\fiint_{\qnot} |\nabla u|^{p(z)(1-\be)} \, dz & \apprle \left( \fiint_{\qtoo} (|\nabla u|+1)^{\frac{p(z)(1-\be)}{\tq}} \, dz\right)^{\tq} + 1  \\
& \apprle \left( \fiint_{\qtoo} |\nabla u|^{\frac{p(z)(1-\be)}{\tq}} \, dz\right)^{\tq} + 1 ,
\end{align*}
which is the desired estimate.
\end{proof}

 \section{Proof of main theorem}
 \label{section_seven}
 
We are now ready to prove Theorem \ref{thm_main}. Let us  consider a parabolic cylinder $Q_{2r} \equiv Q_{2r}(\mathfrak{z}_0) \subset \mbfq$ and define
\begin{equation}
\label{def_la_0}
\la_0^{-\frac{n}{p_m}+\frac{nd}{2}+d-\be} := \fiint_{Q_{2r}} (|\nabla u|+1)^{p(z)(1-\be)} \, dz \geq 1,
\end{equation}
where $p_m := \inf_{Q_{2r}} p(\cdot)$.
We note from \eqref{exp_d} that if $0 < \be < \frac{1}{2} \left( \frac{n+2}{2}d-\frac{n}{p^-} \right)$, we get
\begin{equation}
\label{sec9pf_vt}
-\frac{n}{p_m}+\frac{nd}{2}+d-\be \geq -\frac{n}{p^-}+\frac{n+2}{2}d-\be > \frac{1}{2} \left( \frac{n+2}{2}d-\frac{n}{p^-} \right) > 0.
\end{equation}
For fixed $r \leq r_1 < r_2 \leq 2r$, we consider the concentric parabolic cylinders $Q_r \subseteq Q_{r_1} \subset Q_{r_2} \subseteq Q_{2r}$, and radii $\rho$ satisfying
\begin{equation}
\label{range_rho}
\frac{r_2-r_1}{4\mfi} \leq \rho \leq r_2-r_1,
\end{equation}
where $\mfi$ is the constant from Lemma \ref{lemma_vitali}.
We remark that this choice ensures $\qnot \subset Q_{r_2}$ for $z_0 \in Q_{r_1}$.
Let $\al_0$ be any number such that
\begin{equation}
\label{def_al_0}
\al_0 \geq B \la_0, \quad \text{where} \ \, B^{-\frac{n}{p_m}+\frac{nd}{2}+d-\be} := 	\left( \frac{8\mfi r}{r_2-r_1} \right)^{n+2}.
\end{equation}
Then we have
\begin{align}
\nonumber \fiint_{\qnot} (|\nabla u|+1)^{p(z)(1-\be)} \, dz & \leq \frac{|Q_{2r}|}{|\qnot|} \fiint_{Q_{2r}} (|\nabla u|+1)^{p(z)(1-\be)} \, dz \\
\nonumber & \leq \left( \frac{2r}{\rho} \right)^{n+2} \al_0^{\frac{n}{p(z_0)}-\frac{nd}{2}+1-d} \la_0^{-\frac{n}{p_m}+\frac{nd}{2}+d-\be} \\
\nonumber & \leq \left( \frac{2r}{\rho} \right)^{n+2} B^{\frac{n}{p_m}-\frac{nd}{2}-d+\be} \al_0^{1-\be} \\
\label{sec9pf_1} & \leq \al_0^{1-\be},
\end{align}
for any $\rho$ satisfying \eqref{range_rho}.
We now consider the upper level set
\begin{equation*}
E(\al_0; r_1) := \left\{ z \in Q_{r_1} : |\nabla u(z)|^{p(z)} > \al_0 \right\}.
\end{equation*}
Then it follows from the Lebesgue differentiation theorem that for every $z_0 \in E(\al_0; r_1)$, there holds
\begin{equation}
\label{sec9pf_2}
\lim_{\varrho \searrow 0} \fiint_{Q_{\varrho}^{\al_0}(z_0)} (|\nabla u|+1)^{p(z)(1-\be)} \, dz \geq |\nabla u(z_0)|^{p(z_0)(1-\be)} > \al_0^{1-\be}.
\end{equation}
Thus, from \eqref{sec9pf_1} and \eqref{sec9pf_2}, we see that there exists a radius $\rho_{z_0} \in \left( 0, \frac{r_2-r_1}{4\mfi} \right)$ such that
\begin{equation}
\label{sec9pf_stop1}
\fiint_{\qzero} (|\nabla u|+1)^{p(z)(1-\be)} \, dz = \al_0^{1-\be}
\end{equation}
and
\begin{equation}
\label{sec9pf_stop2}
\fiint_{\qnot} (|\nabla u|+1)^{p(z)(1-\be)} \, dz \leq \al_0^{1-\be}, \quad \forall \rho \in \left( \rho_{z_0}, r_2-r_1 \right].
\end{equation}
Observing that $Q_{4\mfi\rho_{z_0}}^{\al_0}(z_0) \subset Q_{r_2}$, we obtain
\begin{equation}
\label{sec9pf_3}
\al_0^{1-\be} = \fiint_{\qzero} (|\nabla u|+1)^{p(z)(1-\be)} \, dz \qquad \text{and} \qquad \fiint_{\qzeros} (|\nabla u|+1)^{p(z)(1-\be)} \, dz \leq \al_0^{1-\be}.
\end{equation}
Therefore, we can apply Lemma \ref{lemma_reverse_Holder} which gives
\begin{equation}
\label{sec9pf_reverse}
\fiint_{\qzero} |\nabla u|^{p(z)(1-\be)} \, dz \apprle \left( \fiint_{\qzeros} |\nabla u|^{\frac{p(z)(1-\be)}{\tq}} \, dz \right)^{\tq} + 1.
\end{equation}
We observe that for any $\nu \in (0,1)$,
\begin{align}
\nonumber & \left( \fiint_{\qzeros} |\nabla u|^{\frac{p(z)(1-\be)}{\tq}} \, dz \right)^{\tq} \\
\nonumber & \quad \leq \left( (\nu \al_0)^{\frac{1-\be}{\tq}} + \frac{1}{|\qzeros|} \iint_{\qzeros \cap E(\nu \al_0; r_2)} |\nabla u|^{\frac{p(z)(1-\be)}{\tq}} \, dz \right)^{\tq} \\
\nonumber & \quad \apprle (\nu \al_0)^{1-\be} + \frac{1}{|\qzeros|} \iint_{\qzeros \cap E(\nu \al_0; r_2)} |\nabla u|^{\frac{p(z)(1-\be)}{\tq}} \, dz  \left( \fiint_{\qzeros} |\nabla u|^{p(z)(1-\be)} \, dz \right)^{\frac{\tq-1}{\tq}} \\
\nonumber & \quad \apprle (\nu \al_0)^{1-\be} + \frac{ \al_0^{\frac{(\tq-1)(1-\be)}{\tq}}}{|\qzeros|} \iint_{\qzeros \cap E(\nu \al_0; r_2)} |\nabla u|^{\frac{p(z)(1-\be)}{\tq}} \, dz \\
\label{sec9pf_4} & \quad =  \nu^{1-\be} \fiint_{\qzero} (|\nabla u|+1)^{p(z)(1-\be)} \, dz  + \frac{1}{|\qzeros|} \iint_{\qzeros \cap E(\nu \al_0; r_2)} \al_0^{\frac{(\tq-1)(1-\be)}{\tq}} |\nabla u|^{\frac{p(z)(1-\be)}{\tq}} \, dz,
\end{align}
where we have used \eqref{sec9pf_3}.
Moreover, we have
\begin{align}
 1 & \leq (\nu \al_0)^{1-\be} + \chi_{\{ \nu \al_0 < 1\}}  \leq \nu^{1-\be} \fiint_{\qzero} (|\nabla u|+1)^{p(z)(1-\be)} \, dz + \chi_{\{ \nu \al_0 < 1\}},\label{sec9pf_4_2}
\end{align}
where $\chi_{\{ \nu \al_0 < 1 \}} := 1$ if $\nu \al_0 < 1$, and $\chi_{\{ \nu \al_0 < 1 \}} := 0$ if $\nu \al_0 \geq 1$.
Choosing $\nu \equiv \nu(n,p^-, p^+ ,\La_0, \La_1) \in (0,1)$ small enough and combining \eqref{sec9pf_reverse} with \eqref{sec9pf_4} and \eqref{sec9pf_4_2}, we discover that
\begin{equation*}
\fiint_{\qzero} |\nabla u|^{p(z)(1-\be)} \, dz \apprle \frac{1}{|\qzeros|} \iint_{\qzeros \cap E(\nu \al_0; r_2)} \al_0^{\frac{(\tq-1)(1-\be)}{\tq}} |\nabla u|^{\frac{p(z)(1-\be)}{\tq}} \, dz +  \chi_{\{ \nu \al_0 < 1\}}.
\end{equation*}
Then it follows from \eqref{sec9pf_stop1} and \eqref{sec9pf_stop2} that
\begin{equation*}
\fiint_{\qzeross} |\nabla u|^{p(z)(1-\be)} \, dz \apprle \frac{1}{|\qzeros|} \iint_{\qzeros \cap E(\nu \al_0; r_2)} \al_0^{\frac{(\tq-1)(1-\be)}{\tq}} |\nabla u|^{\frac{p(z)(1-\be)}{\tq}} \, dz +  \chi_{\{ \nu \al_0 < 1\}}.
\end{equation*}
Multiplying this estimate by $|\qzeross|$, we get
\begin{multline}
\label{sec9pf_5}
\iint_{\qzeross} |\nabla u|^{p(z)(1-\be)} \, dz 
\apprle \iint_{\qzeros \cap E(\nu \al_0; r_2)} \al_0^{\frac{(\tq-1)(1-\be)}{\tq}} |\nabla u|^{\frac{p(z)(1-\be)}{\tq}} \, dz +  \chi_{\{ \nu \al_0 < 1\}} |\qzero|.
\end{multline}
Note that the family $\left\{ \qzerof \right\}_{z_0 \in E(\al_0; r_1)}$  covers $E(\al_0; r_1)$.
Hence, by the Vitali covering lemma, there exists a countable family of disjoint sets $\left\{ \qzerofi \right\}_{i=1}^{\infty}$ with $\rho_{z_i} \in \left( 0, \frac{r_2-r_1}{4\mfi} \right)$ such that
\begin{equation*}
E(\al_0; r_1) \subset \bigcup_{i=1}^{\infty} \qzerossi \subset Q_{r_2},
\end{equation*}
up to a set of measure zero.
Since the parabolic cylinders $\left\{ \qzerofi \right\}_{i=1}^{\infty}$ are disjoint, we obtain from \eqref{sec9pf_5} that
\begin{equation}
\label{sec9pf_6}
\iint_{E(\al_0; r_1)} |\nabla u|^{p(z)(1-\be)} \, dz \apprle \iint_{E(\nu \al_0; r_2)} \al_0^{\frac{(\tq-1)(1-\be)}{\tq}} |\nabla u|^{\frac{p(z)(1-\be)}{\tq}} \, dz +  \chi_{\{ \nu \al_0 < 1\}} |Q_{r_2}|.
\end{equation}

To estimate the integral of $|\nabla u|^{p(z)}$, we recall  Fubini's theorem to discover
\begin{equation*}
\be \int_0^M \al_0^{\be-1} \iint_{E(\al_0; r_1)} |\nabla u|^{p(z)(1-\be)} \, dz \, d\al_0 = \iint_{Q_{r_1}} |\nabla u|^{p(z)(1-\be)} \left( |\nabla u|^{p(z)} \right)_M^{\be} dz,
\end{equation*}
for any $M>0$, where $\left( |\nabla u|^{p(z)} \right)_M := \min \left\{ |\nabla u|^{p(z)}, M \right\}$ is the truncated function of $|\nabla u|^{p(z)}$.
We note that the right-hand side of the above identity is finite, as $|\nabla u|^{p(z)(1-\be)} \in L^1$ and the truncated function $\left( |\nabla u|^{p(z)} \right)_M$ is bounded.
Then it follow from \eqref{def_al_0}, \eqref{sec9pf_6} and a change of variables that the following holds for any $M> \max \left\{ B\la_0, \frac{1}{\nu} \right\}$, there holds
\begin{align*}
 \iint_{Q_{r_1}} &|\nabla u|^{p(z)(1-\be)} \left( |\nabla u|^{p(z)} \right)_M^{\be} dz \\
& \leq \be \int_0^{B\la_0} \al_0^{\be-1} \iint_{Q_{r_1}} |\nabla u|^{p(z)(1-\be)} \, dz \, d\al_0 \\
& \quad + C \be \int_{B\la_0}^M \al_0^{\be-1} \left[ \iint_{E(\nu \al_0; r_2)} \al_0^{\frac{(\tq-1)(1-\be)}{\tq}} |\nabla u|^{\frac{p(z)(1-\be)}{\tq}} \, dz + \chi_{\{ \nu \al_0 < 1\}} |Q_{r_2}| \right] d\al_0 \\
& \leq (B\la_0)^{\be} \iint_{Q_{r_1}} |\nabla u|^{p(z)(1-\be)} \, dz \, d\al_0 \\
& \quad + C \be \int_0^M \al_0^{\frac{\be-1}{\tq}} \iint_{E(\nu \al_0; r_2)} |\nabla u|^{\frac{p(z)(1-\be)}{\tq}} \, dz \, d\al_0 + C |Q_{r_2}| \be \int_0^M \al_0^{\be-1} \chi_{\{ \nu \al_0 < 1\}} d\al_0 \\
& \leq B \la_0^{\be} \iint_{Q_{r_1}} |\nabla u|^{p(z)(1-\be)} \, dz \, d\al_0 +\frac{C \be \tq}{\nu(\tq-1)} \iint_{Q_{r_2}} |\nabla u|^{\frac{p(z)(1-\be)}{\tq}} \left( |\nabla u|^{p(z)} \right)_M^{\frac{\tq+\be-1}{\tq}} dz + \frac{C}{\nu}  |Q_{r_2}| \\
& \leq B \la_0^{\be} \iint_{Q_{r_1}} |\nabla u|^{p(z)(1-\be)} \, dz \, d\al_0 +\frac{C \be \tq}{\nu(\tq-1)} \iint_{Q_{r_2}} |\nabla u|^{p(z)(1-\be)} \left( |\nabla u|^{p(z)} \right)_M^{\be} dz + \frac{C}{\nu} |Q_{r_2}|,
\end{align*}
where for the last inequality we have used the fact that $\left( |\nabla u|^{p(z)} \right)_M \leq |\nabla u|^{p(z)}$.
Since $\tq \equiv \tq(n,\plog)>1$ and $\nu \equiv \nu(n,\plog,\lamot) \in (0,1)$ are universal constants, we choose
$$ 0 < \be \leq \be_0 \equiv \be_0(n,\plog,\lamot,m_e) := \frac{\nu(\tq-1)}{2C\tq} $$
to find that
\begin{multline*}
\iint_{Q_{r_1}} |\nabla u|^{p(z)(1-\be)} \left( |\nabla u|^{p(z)} \right)_M^{\be} dz \\
\leq \frac{1}{2} \iint_{Q_{r_2}} |\nabla u|^{p(z)(1-\be)} \left( |\nabla u|^{p(z)} \right)_M^{\be} dz + \frac{C (8\mfi r)^{\frac{n+2}{\vt}} \la_0^{\be}}{(r_2-r_1)^{\frac{n+2}{\vt}}} \iint_{Q_{2r}} |\nabla u|^{p(z)(1-\be)} \, dz + C|Q_{2r}|,
\end{multline*}
where $\vt := -\frac{n}{p_m}+\frac{nd}{2}+d-\be > 0$.
Since $r \leq r_1 < r_2 \leq 2r$ are arbitrary, we apply Lemma \ref{iter_lemma} to get
\begin{equation*}
\iint_{Q_r} |\nabla u|^{p(z)(1-\be)} \left( |\nabla u|^{p(z)} \right)_M^{\be} dz \apprle C(\vt) \left[ \la_0^{\be} \iint_{Q_{2r}} |\nabla u|^{p(z)(1-\be)} \, dz + |Q_{2r}| \right].
\end{equation*}
Letting $M \to \infty$ and using Fatou's lemma, we have
\begin{equation*}
\iint_{Q_r} |\nabla u|^{p(z)} \, dz \apprle \la_0^{\be} \iint_{Q_{2r}} |\nabla u|^{p(z)(1-\be)} \, dz + |Q_{2r}|.
\end{equation*}
We remark from \eqref{sec9pf_vt} that the dependence on $\vt$ can be eliminated.
Recalling \eqref{def_la_0}, we see that
\begin{equation}
\label{sec9pf_7}
\fiint_{Q_r(\mathfrak{z}_0)} |\nabla u|^{p(z)} \, dz \apprle \left( \fiint_{Q_{2r}(\mathfrak{z}_0)} |\nabla u|^{p(z)(1-\be)} \, dz \right)^{1+\frac{\be}{\vt}} + 1	.
\end{equation}
It only remains to replace $\vt$ in \eqref{sec9pf_7} by $\vt_0 := -\frac{n}{p_0}+\frac{nd}{2}+d-\be$, where $p_0 := p(\mathfrak{z}_0)$.
Observing that
$$
\frac{\be}{\vt}-\frac{\be}{\vt_0} = \frac{\be(\vt_0-\vt)}{\vt \vt_0} = \frac{\be \left( \frac{n}{p_m}-\frac{n}{p_0} \right)}{\vt \vt_0} = \frac{n \be (p_0-p_m)}{\vt \vt_0 p_0 p_m},
$$
we have
$$
0 \leq \frac{\be}{\vt}-\frac{\be}{\vt_0} = \frac{n \be (p_0-p_m)}{\vt \vt_0 p_0 p_m} \overset{\eqref{sec9pf_vt}}{\leq} \frac{4n \be (p_0-p_m)}{\left(  \frac{n+2}{2}d-\frac{n}{p^-} \right)^2} \leq c \modp(4r),
$$
and hence
\begin{align*}
\left( \fiint_{Q_{2r}} |\nabla u|^{p(z)(1-\be)} \, dz \right)^{\frac{\be}{\vt}-\frac{\be}{\vt_0}} & \leq \left( \fiint_{Q_{2r}} (|\nabla u|+1)^{p(z)(1-\be)} \, dz \right)^{c\modp(4r)} \\
& \overset{\eqref{def_M_0}}{\apprle}  \left( \frac{1}{2r} \right)^{(n+2)c\modp(4r)} \bM_0^{c\modp(4r)} \\
& \apprle \left( \frac{1}{2r} \right)^{(n+2)c\modp(4r)} \left( \frac{1}{1024r} \right)^{c\modp(4r)} \leq C.
\end{align*}
Consequently, we obtain that
\begin{equation*}
\fiint_{Q_r(\mathfrak{z}_0)} |\nabla u|^{p(z)} \, dz \apprle  \left( \fiint_{Q_{2r}(\mathfrak{z}_0)} |\nabla u|^{p(z)(1-\be)} \, dz \right)^{1+\frac{\be}{\vt_0}} + 1,
\end{equation*}
which completes the proof of Theorem \ref{thm_main}.


\section*{References}

\end{document}